\newlength\celldim \newlength\fontheight \newlength\extraheight
\newcounter{sqcolumns}
\newcolumntype{S}{ @{}
>{\centering \rule[-0.5\extraheight]{0pt}{\fontheight + \extraheight}}
p{\celldim} @{} }
\newcolumntype{Z}{ @{} >{\centering} p{\celldim} @{} }
\newenvironment{squarecells}[1]
{\setlength\celldim{0.5cm}%
\settoheight\fontheight{A}%
\setlength\extraheight{\celldim - \fontheight}%
\setcounter{sqcolumns}{#1 - 1}%
\begin{tabular}{|S|*{\value{sqcolumns}}{Z|}}\hline}
{\end{tabular}}
\newcommand\nl{\tabularnewline\hline}
\newenvironment{squarecells2}[1]
{\setlength\celldim{0.75cm}%
\settoheight\fontheight{A}%
\setlength\extraheight{\celldim - \fontheight}%
\setcounter{sqcolumns}{#1 - 1}%
\begin{tabular}{|S|*{\value{sqcolumns}}{Z|}}\hline}
{\end{tabular}}
\newenvironment{squarecells3}[1]
{\setlength\celldim{0.6cm}%
\settoheight\fontheight{A}%
\setlength\extraheight{\celldim - \fontheight}%
\setcounter{sqcolumns}{#1 - 1}%
\begin{tabular}{|S|*{\value{sqcolumns}}{Z|}}\hline}
{\end{tabular}}
\theoremstyle{plain}
\newtheorem{theorem}{Theorem}
\newtheorem{proposition}[theorem]{Proposition}
\newtheorem{lemma}[theorem]{Lemma}
\newtheorem{corollary}[theorem]{Corollary}
\newtheorem{fact}[theorem]{Fact}
\theoremstyle{definition}
\newtheorem{definition}[theorem]{Definition}
\newtheorem{remark}[theorem]{Remark}
\newtheorem{example}[theorem]{Example}
\newcommand{\Z}{{\mathbb Z}}
\newcommand{\N}{{\mathbb N}}
\newcommand{\C}{{\mathbb C}}
\newcommand{\X}{{\mathbb X}}
\newcommand{\mc}{\mathcal}
\newcommand{\A}{{\mc A}}
\newcommand{\ii}{{\mathrm{i}}}
\newcommand{\dd}{{\mathrm{d}}}
\newcommand{\ee}{{\mathrm{e}}}
\newcommand{\supp}{{\mathrm{supp}}}
\newcommand{\shift}[1]{\mathbb{X}_{#1}}
\newcommand{\exend}{\hfill $\Diamond$}
\DeclareMathOperator{\id}{id}
\title{Radius-zero Extended Symmetries and Irregular Fibres of $\mathbb{Z}^d$-Substitution Subshifts}
\author{\'{A}lvaro Bustos-Gajardo, Daniel Luz and Neil Ma\~nibo }
\subjclass[2020]{Primary: 37B10, Secondary: 52C23, 37B05}
\keywords{Higher-dimensional shifts, substitutions, extended symmetries, height, irregular fibres}
\begin{document}

\begin{abstract}
    In this work, we consider $\Z^d$-shifts generated by digit substitutions. 
    For such a shift $\X$, we study the elements  of the normaliser of $\Z^d$ in the group of self homeomorphisms (called extended symmetries) whose local maps guaranteed by the generalised Curtis--Hedlund--Lyndon theorem have radius zero. Using the formalism of minimal sets developed by Lema\'{n}czyk, M\"ullner and Yassawi, we provide an algorithm to compute elements of $\mathcal{N}(\X)$ that preserve the hierarchical structure. We also investigate the interaction of extended symmetries with (i) the height lattice and (ii) the irregular fibres over the maximal equicontinuous factor. Towards (ii), we introduce the notion of derived substitutions to provide a complete description of the irregular fibres, extending a result by Coven, Quas and Yassawi in the one-dimensional case. 
\end{abstract}

\maketitle

\section{Introduction}

The automorphism (or symmetry) group is a well studied object in topological dynamics. For a topological dynamical system (TDS) $(\mathbb{X},G)$, it is the \emph{centraliser} of the group $G$ within the space $\text{Homeo}(\mathbb{X})$ of self-homeomorphisms of the space $\mathbb{X}$; see \cite{KS}. In symbolic dynamics, where one has a subshift $\mathbb{X}$ admitting a group action (typically by $\mathbb{Z}$ or $\mathbb{Z}^d$), there are lots of open questions surrounding them, despite a rather concrete representation of the group elements provided by the Curtis--Hedlund--Lyndon (CHL) theorem. There has been a steady progress on shifts of low-complexity (e.g. substitution shifts, $\mathcal{S}$-adic shifts, some Toeplitz shifts) \cite{CK,DDMP,PS,EM}, as well as shifts of number-theoretic origin ($\mathcal{B}$-free subshifts, algebraic shifts) \cite{Men, FY, DKK, BBHLN}, where results regarding the size of the automorphism group (relative to $G$) and closed forms are abundant. 
There are also realisation and embeddability results in the case when the shift has positive entropy and has a large centraliser, which is typical of shifts of finite type, sofic shifts, and random substitution subshifts) \cite{BLR,FRS,Sal}.

Another interesting group is the \emph{normaliser} of $G$ in $\text{Homeo}(\mathbb{X})$, which we denote by $\mathcal{N}(\mathbb{X})$; compare \cite{BRY,CorP}.  This group (of which the centraliser is a normal subgroup) is also invariant under topological conjugacy, up to isomorphism. This has various names for shift spaces, such as reversing symmetries (stemming from time-reversing symmetries in physics) for $d=1$ \cite{LRC,LR,BR} and extended symmetries/isomorphisms for higher-dimensional shifts \cite{BRY,CabP}. In this setting, there is a generalised version of the CHL theorem, see Section~\ref{sec:ext-sym}.

There are several advantages of looking at the normaliser. 
For  $\mathbb{Z}^d$-shifts, it is clear that there is more geometric freedom that is not seen by the centraliser. The normaliser sees the maps that are compatible with shifts with inherent geometric symmetry.  As examples, one has the $\mathbb{Z}^2$-subshift derived from the chair tiling and the Ledrappier shift defined by a modular condition on triangles. The normalisers for these shifts contain copies of $D_4$ and $D_3$, respectively, which are completely missed by the centraliser; see \cite{BRY} for details. In the more abstract setting, this also becomes a natural group to look at when the group $G$ acting on $X$ is no longer abelian, or even worse, when $G$ admits a trivial center \cite{CorP}. In the latter case, $G$ is not even in its own centraliser. For number-theoretic shifts such as $k$-free shifts over an algebraic number field $K$, under certain assumptions, the normaliser admits a closed form involving canonical groups, namely the automorphism group $\text{Aut}_{\mathbb{Q}}(K)$ of $K$ and the unit group $\mathcal{O}_{K}^{\times}$ of the relevant ring of integers; see \cite{BBHLN,BBN,BBN-2,GK} for developments along this direction.

In this paper, we continue the investigation of the normaliser of  higher-dimensional substitution shifts over a finite alphabet $\mathcal{A}$ initiated in \cite{B,BLM}. 
We restrict to shifts generated by \emph{digit substitutions}, which are higher-dimensional generalisations of constant-length substitutions in dimension one. 
In this work, we primarily deal with substitution subshifts which are neither bijective  nor Toeplitz. These are precisely those whose column number $c^{ }_{\theta}$ satisfies $1< c^{ }_{\theta}<|\mathcal{A}|$, where $\mathcal{A}$ is the alphabet; see Section~\ref{sec:subs} for definitions.

For constant-length substitutions in one dimension, a bound for the radius for elements of the automorphism group $\mathcal{C}(\X)$ is derived in \cite{CQY}. In \cite{MY}, it is shown that, up to topological conjugacy, the elements of $\mathcal{C}(\X)$ (modulo a shift) can be chosen to have either radius $0$ or $1$. \emph{Radius-zero symmetries} are letter exchange maps, that is,  permutations of elements of $\A$ that extend to a map on the language of $\X$. When $\X$ is a bijective substitution subshift, it was shown in \cite{B} that every element of $\mathcal{C}(\X)$ (up to a composition with a shift) is radius-zero; see also \cite{LMe}. This also holds for elements of $\mathcal{N}(\X)$ for the same class of shifts \cite{BLM}. These automorphisms also preserve the hierarchical structure, because they map $n$-supertiles to $n$-supertiles, for $n$ large enough.

In Section~\ref{sec:supertile-shuff}, we focus on the elements of the normaliser whose block map in the CHL representation has radius zero.
In this setting, there have already been interesting recent developments. In \cite{CabP}, the authors presented the first example of zero-entropy shifts in $\mathbb{Z}^d$ (which are, in fact, substitutive and Toeplitz) with $|\mathcal{N}(\X)/\mathcal{C}(\X)|=\infty$. In particular, they construct a planar example that admits an extended symmetry for every $A\in \text{GL}(2,\mathbb{Z})$, and generalisations thereof with an explicit form for $\mathcal{N}(\mathbb{X})$. All of these extended symmetries have radius zero, but some of them (necessarily) break the hierarchical structure. 

In the class of bijective substitutions, there is an algorithm to determine all non-trivial elements of $\mathcal{N}(\X)$, which are induced by some permutation $\tau\colon \mathcal{A}\to\mathcal{A}$ \cite{BLM}. 
In a related work, M\"ullner and Yassawi exploited the structure of \emph{minimal sets} to provide an equivalent condition for the existence of radius-zero automorphisms for constant-length substitutions. We combine these strategies to detect the presence of radius-zero extended symmetries that preserve the hierarchical structure. We call such elements of $\mathcal{N}(\X)$ \emph{supertile-shuffling}, inspired by the \emph{shuffle group} in \cite{FRS}. Our approach is also based on minimal sets and the group structure they induce, and the result can be seen as a generalisation of those in \cite{BLM,MY}. We prove the following result in Section~\ref{sec:supertile-shuff}.

\begin{theorem}\label{thm:main}
Let $\theta$ be an aperiodic primitive block $\Z^d$-substitution on $\mathcal{A}$, whose column number is $1<c_{\theta}<|\mathcal{A}|$. Let $\mathcal{N}(\X_{\theta})$ be the extended symmetry group of the corresponding shift space. Then there is an algorithm to compute the elements of $\mathcal{N}(\X_{\theta})$ that are supertile-shuffling. 
\end{theorem}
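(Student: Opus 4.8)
The plan is to characterise supertile-shuffling elements of $\mathcal{N}(\X_\theta)$ entirely in terms of finite combinatorial data and then turn that characterisation into a terminating procedure. First I would fix, as usual for digit substitutions, the expansive matrix $Q \in \mathrm{GL}(d,\Q) \cap M_d(\Z)$ and the fundamental domain (the ``digit set'') $D \subseteq \Z^d$ with $|D| = |\det Q|$, so that $\theta$ is described by a map $D \times \mathcal{A} \to \mathcal{A}$, $(\mathbf{j},a) \mapsto \theta_{\mathbf j}(a)$. A radius-zero extended symmetry is a pair $(M,\tau)$ with $M \in \mathrm{GL}(d,\Z)$ and $\tau \colon \mathcal{A} \to \mathcal{A}$ a bijection, acting by $(h x)_{\mathbf n} = \tau(x_{M^{-1}\mathbf n})$; the requirement that $h$ preserve the hierarchical structure means that $M$ normalises $Q$ up to the symmetries of the digit tiling (i.e.\ $MQM^{-1}$ is again an admissible expansion with a compatible digit set, equivalently $M$ lies in the finite ``geometric symmetry'' group of the substitution data) and that $\tau$ intertwines $\theta$ with the $M$-rotated substitution $\theta^{(M)}$. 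The key structural input is the formalism of minimal sets of \cite{MY}: to each constant-length (here digit) substitution one associates the minimal nonempty subsets of $\mathcal{A}$ closed under the action of the monoid generated by the column maps $\{a \mapsto \theta_{\mathbf j}(a) : \mathbf j \in D\}$, and the existence of a letter-permutation commuting with (a power of) the substitution is equivalent to that permutation permuting the minimal sets compatibly. I would prove the analogous statement here: $(M,\tau)$ is a supertile-shuffling extended symmetry if and only if $M$ lies in the (finite, effectively computable) group $G_{\mathrm{geo}}$ of admissible lattice symmetries of $(Q,D)$ and $\tau$ is an isomorphism of the associated minimal-set structures of $\theta$ and $\theta^{(M)}$.

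The algorithm then assembles as follows. Step one: compute $Q$, $D$, and the finite candidate group $G_{\mathrm{geo}} = \{M \in \mathrm{GL}(d,\Z) : MDM^{-1} \text{ is a digit set for } MQM^{-1} \text{ with } MQM^{-1} \text{ admissible}\}$; finiteness follows because any such $M$ must preserve the norm structure forced by $Q$ being expansive (all such $M$ lie in a compact-hence-finite subgroup of $\mathrm{GL}(d,\Z)$), and it is decidable since one only needs to check the finitely many $M$ with bounded entries — the bound coming, as in \cite{BRY,BLM}, from the geometry of the supertile. Step two: for each $M \in G_{\mathrm{geo}}$, form the rotated substitution $\theta^{(M)}$ and compute, using the procedure of \cite{MY}, the minimal sets of $\theta$ and of $\theta^{(M)}$ together with the finite automaton recording how the column maps permute them. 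Step three: search over the finitely many bijections $\tau \colon \mathcal{A} \to \mathcal{A}$ (or, more efficiently, over the bijections between collections of minimal sets and extend) for those that conjugate the column-map action of $\theta$ into that of $\theta^{(M)}$; each such $\tau$ gives, and every supertile-shuffling element arises from, an extended symmetry $(M,\tau)$. One also has to pass to a suitable power $\theta^k$ so that ``preserves $n$-supertiles for $n$ large'' becomes ``preserves $1$-supertiles'', exactly as in the one-dimensional reduction of \cite{MY}; since the recognisability radius of a primitive aperiodic digit substitution is finite and computable, this $k$ is effectively bounded. Collecting the valid pairs $(M,\tau)$ over all $M \in G_{\mathrm{geo}}$ yields the full supertile-shuffling subgroup, and closure under composition and inverses is automatic from the characterisation.

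The hypothesis $1 < c_\theta < |\mathcal{A}|$ enters to ensure we are in the genuinely ``irregular'' regime: $c_\theta > 1$ rules out the Toeplitz/bijective-on-columns degenerate case where every letter map is a bijection (handled in \cite{BLM}) and $c_\theta < |\mathcal{A}|$ rules out the case with a nontrivial ``coincidence'' collapsing everything, so that the minimal sets genuinely carry the information and the reduction to them loses nothing. The main obstacle I expect is the ``only if'' direction: showing that an arbitrary radius-zero extended symmetry which preserves the hierarchical structure must have its linear part land in the finite geometric group $G_{\mathrm{geo}}$ and its letter part respect the minimal-set structure. The linear-part claim requires ruling out ``shearing'' phenomena of the type exhibited in \cite{CabP} — there, radius-zero extended symmetries with $M$ of unbounded size exist precisely because the relevant symmetries break the supertile hierarchy; one must show that demanding preservation of the hierarchy forces $M$ to map the supertile lattice $Q\Z^d$ (and the digit tiling) to itself up to a finite ambiguity, which is where recognisability and the aperiodicity/primitivity of $\theta$ are used. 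The letter-part claim is then a matter of transporting the \cite{MY} argument — that a letter map commuting with (a power of) a constant-length substitution is determined by its action on minimal sets — to the $\Z^d$ setting and to the twisted situation where $\tau$ intertwines $\theta$ with $\theta^{(M)}$ rather than with $\theta$ itself; this is largely bookkeeping once the right notion of ``minimal set of a digit substitution'' is in place, but it must be checked that the column monoid and its minimal sets behave correctly under the $M$-rotation.
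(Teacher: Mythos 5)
Your outline shares the paper's architecture (a finite group of block-preserving matrices for the linear part, the minimal-set formalism of \cite{MY} for the letter part, reduction to a suitable power, then a finite search over pairs), but the letter-part criterion you state is too weak. You attribute to \cite{MY} that a letter permutation commutes with a power of the substitution if and only if it permutes the minimal sets compatibly; that is not the criterion, and compatibility of $\tau$ with the column action on minimal sets is only condition \textbf{(1)} of Theorem~\ref{thm:supertile-shuffling}. One further needs that the two encodings $\nu$ and $\bar{\nu}$, built from a pair of idempotent columns at positions $\boldsymbol{n}$ and $A^{-1}\odot\boldsymbol{n}$, induce a well-defined permutation $\tau'$ of $[c_\theta]$ (Lemma~\ref{lem:tau-prime}), and that $\tau'$ conjugates the family $\beta^{-1}_{M,\boldsymbol{j}}$ into $\bar{\beta}^{-1}_{\tau[M],A^{-1}\odot\boldsymbol{j}}$ (Proposition~\ref{prop:beta-commute}); these conditions \textbf{(2)}--\textbf{(3)} carry genuine content exactly because $1<c_\theta<|\mathcal{A}|$, and they are what make the converse direction (conditions imply $\theta_{\boldsymbol{j}}\circ\tau=\tau\circ\theta_{A^{-1}\odot\boldsymbol{j}}$ on all of $\mathcal{A}$) go through. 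If, instead, your phrase ``conjugates the column-map action of $\theta$ into that of $\theta^{(M)}$'' is meant as full conjugation of the columns on the whole alphabet, then the statement is correct but the minimal sets do no work in your check, and everything rests on the second issue below. (A side remark: your reading of the hypothesis is inverted --- $c_\theta=|\mathcal{A}|$ is the bijective case of \cite{BLM} and $c_\theta=1$ the coincidence/Toeplitz case --- though this does not affect the algorithm.)

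The more serious gap is the choice of power at which the finite check is performed. Supertile-shuffling only requires the intertwining relation \eqref{eq:supertile-shuff} for all $n\ge N$, where $N$ is unknown and may depend on the candidate $(\tau,A)$; for the procedure to be both terminating and complete you must either exhibit a computable bound on $N$ uniform over all candidates, or check, at one fixed computable power, conditions that are provably equivalent to the existence of such an $N$. Your justification --- that the power is ``effectively bounded'' because the recognisability radius is finite and computable --- is unsubstantiated: recognisability governs unique desubstitution, and no argument is given (nor is one apparent) that it controls the level from which a letter exchange begins to intertwine supertiles; as stated, your procedure is only a semi-algorithm, since a symmetry that shuffles only high-order supertiles could be missed. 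The paper resolves precisely this point by fixing the power supplied by Lemma~\ref{lem:power-idempotent} (in which every minimal set is realised as an idempotent column, refined by the lcm observation in the remark following Theorem~\ref{thm:supertile-shuffling}), checking conditions \textbf{(1)}--\textbf{(3)} there, and proving that they propagate to all powers of $\theta$, so that Proposition~\ref{prop:supertile-shuffling} applies; this two-way equivalence at a computable power is the actual content behind Theorem~\ref{thm:main} and is missing from your proposal.
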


In Section~\ref{sec:height}, we look at the compatibility of extended symmetries with the \emph{height lattice} $\Gamma\leqslant \mathbb{Z}^d$ of $\mathbb{X}_{\theta}$. This lattice allows one to partition the alphabet $\mathcal{A}$ such that the shift map respects this partition. This also generates dynamical eigenvalues apart from those coming from the odometer factor \cite{Frank2,Bartlett}. We show that any extended symmetry must preserve $\Gamma$, hence also introducing constraints to allowable elements of $\mathcal{N}(\X)$. 

\begin{theorem}\label{thm:height}
Let $\theta$ be an aperiodic primitive digit substitution on $\mathbb{Z}^d$ (i.e., constant shape with expansive map $Q$) with height lattice $\Gamma$. If $\Phi\in \mathcal{N}(\mathbb{X})$ with linear component $A\in \textnormal{GL}(d,\mathbb{Z})$, then $A\Gamma=\Gamma$.
\end{theorem}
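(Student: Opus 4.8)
The plan is to exploit the fact that the height lattice $\Gamma$ is intrinsically defined from the dynamics of $(\X,\Z^d)$ — it records the "extra" rational eigenvalues of the system that are not forced by the $Q$-odometer structure — and that conjugation by an extended symmetry $\Phi$ with linear part $A$ transforms the $\Z^d$-action by pulling back the acting vector $n$ to $An$. Concretely, recall that $\Gamma$ can be characterised as the subgroup of $\Z^d$ generated by the differences of positions that can legally carry the same letter, or equivalently via the group rotation (the maximal equicontinuous factor splits as $Q$-odometer times a finite cyclic factor $\Z^d/\Gamma$). I would first fix whichever of these equivalent descriptions is set up in Section~\ref{sec:height}, most likely the one in terms of the finite abelian group $H := \Z^d/\Gamma$ acting as the non-odometer part of the Kronecker factor, with a factor map $\pi\colon \X \to \widehat{Q}\times H$.

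The key steps, in order, would be: (1) Recall from the generalised CHL theorem (Section~\ref{sec:ext-sym}) that $\Phi$ satisfies $\Phi\circ \sigma_n = \sigma_{An}\circ \Phi$ for all $n\in\Z^d$, where $A\in\mathrm{GL}(d,\Z)$ is the linear component. (2) Observe that $\Phi$ induces a homeomorphism $\Phi_{\mathrm{eq}}$ of the maximal equicontinuous factor conjugating the factor action accordingly; since the maximal equicontinuous factor is a topological invariant built functorially from the system, $\Phi$ descends to the factor $\widehat{Q}\times H$ and intertwines the $\Z^d$-rotation $n\mapsto n$ with $n\mapsto An$. (3) Argue that the $Q$-odometer part $\widehat{Q}$ and the finite part $H$ are canonically distinguished inside the Kronecker factor — the odometer is the inverse limit along $Q$ and is connected/profinite in a way that $H$ is not, or more robustly, $H$ is exactly the torsion quotient corresponding to eigenvalues of the form $\exp(2\pi i \langle k, \cdot\rangle)$ with $k$ rational but not of "$Q$-adic type" — so $\Phi_{\mathrm{eq}}$ must preserve the splitting and in particular induce an automorphism of the finite group $H=\Z^d/\Gamma$. (4) Track how the eigenvalue group transforms: an eigenfunction with eigenvalue $\lambda\colon \Z^d\to S^1$ for the original action becomes, after conjugation by $\Phi$, an eigenfunction with eigenvalue $n\mapsto \lambda(A^{-1}n)$ (or $A^{\mathsf T}$, depending on conventions). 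Since $A\in\mathrm{GL}(d,\Z)$, precomposition with $A$ permutes the set of characters of $H$, which forces $A$ to preserve the annihilator of that character group — and that annihilator is precisely $\Gamma$. Hence $A\Gamma=\Gamma$.

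The main obstacle I anticipate is step (3): cleanly separating, in a conjugacy-invariant way, the $Q$-odometer factor from the finite height factor inside the maximal equicontinuous factor, and verifying that $\Phi_{\mathrm{eq}}$ respects this internal decomposition rather than merely the product as an abstract group. One cannot simply invoke uniqueness of the splitting $\widehat{Q}\times H$ as abstract groups (there could in principle be $A$-twisted identifications mixing the two); the argument needs the \emph{topological} distinction (connectedness of components, or the $Q$-adic structure) together with the fact that $A$ has integer entries, so that it acts on $\Z^d$ and its completion compatibly. An alternative, possibly cleaner route that sidesteps the odometer entirely: characterise $\Gamma$ purely combinatorially as $\{\,m\in\Z^d : \text{every letter } a \text{ occurring at position } p \text{ also occurs at } p+m \text{ in some point of }\X\,\}$ — in fact as the group of "letter-preserving translation periods modulo the partition" — show this description is manifestly transported by $\Phi$ to the analogous set for the $A$-twisted action (here radius-zero is not needed, but it may simplify bookkeeping), and conclude $A\Gamma=\Gamma$ directly. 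I would present whichever of the two is shortest given the machinery already developed in Section~\ref{sec:height}.
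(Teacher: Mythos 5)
Your primary route is essentially the paper's own proof: the paper shows that precomposition with $\Phi$ sends eigenfunctions to eigenfunctions with $\operatorname{stab}(f\circ\Phi)=A^{-1}\operatorname{stab}(f)$, identifies the height eigenfunctions with the characters of the finite factor $\Z^d/\Gamma$ of $\mathbb{A}\cong\Z_Q^d\times(\Z^d/\Gamma)$ precisely via your step (3) — the torsion subgroup of $\mathbb{A}$ is characteristic, hence preserved, and this is transferred to the dual side by a Pontryagin-duality lemma — and then recovers $\Gamma$ as the common stabiliser of all height eigenfunctions, yielding $A^{-1}\Gamma=\Gamma$. Your step (4) phrased with annihilators of the character group of $\Z^d/\Gamma$ is the same computation in dual language; the only adjustment is that the invariant distinction must be made on $\mathbb{A}$ itself (torsion-free odometer versus finite torsion part), not on the eigenvalue side where all height and $Q$-adic eigenvalues alike are roots of unity, which is exactly the ``more robust'' fix you already propose.
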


In Section~\ref{sec:kappa}, we present an example of a substitution which admits a non-trivial element of $\mathcal{N}(\mathbb{X})$ that is radius-zero but not supertile-shuffling. We use this as a springboard for the next section on irregular fibres.

Another feature of subshifts coming from digit substitutions is that they have a non-trivial \emph{maximal equicontinuous factor} (MEF), which is a rotation on a profinite abelian group $\mathbb{A}$. In the case of trivial height, 
this group $\mathbb{A}$ is a generalised odometer $\Z_Q$, and the factor map $\pi_{\rm MEF}$ can be derived directly from the supertile structure. With this tool, one can carry out further analysis of the elements of $\mathcal{C}(\mathbb{X})$ and  $\mathcal{N}(\mathbb{X})$. More concretely, there exists a group homomorphism $\kappa\colon \mathcal{C}(\mathbb{X})\to \mathbb{A}$ which is compatible with the factor map that allows one to view an automorphism as a translation on $\mathbb{A}$; see \cite{BRY,CabP}. 

Under mild assumptions, the kernel of this map is exactly the subgroup of radius-zero elements of $\mathcal{C}(\X)$; see \cite{MY}. For $\mathcal{N}(\X)$, one gets a $\text{GL}(d,\mathbb{Z})$-cocycle of this map, where an extended symmetry can be viewed as an affine-type transformation on $\mathbb{A}$. 
This map, together with the set of irregular fibres over $\mathbb{A}$, provides certain restrictions on admissible extended symmetries.
In Section~\ref{sec:irregular-fibres}, we generalise the result of Coven, Quas, and Yassawi in \cite{CQY} to provide a complete characterisation of the irregular fibres, for block substitutions. 

\begin{theorem}\label{thm:irreg-fibres}
    Let $\theta$ be an aperiodic primitive block substitution on $\mathbb{Z}^d$ with trivial height lattice (i.e., $\Gamma=\mathbb{Z}^d$). Then, there exists a sofic shift $\mathcal{Z}_{\theta}$, such that $\boldsymbol{z}\in \mathbb{A}=\mathbb{Z}^{d}_{Q}$ corresponds to an irregular fibre if, and only if, $\boldsymbol{z}+\boldsymbol{n}\in \mathcal{Z}_{\theta}$ for some $\boldsymbol{n}\in \mathbb{Z}^d$. 
\end{theorem}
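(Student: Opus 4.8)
The plan is to make the factor map onto the odometer completely explicit through the supertile hierarchy, parametrise each fibre by a set of \emph{compatible supertile threads}, identify the generic fibre cardinality with the column number $c_\theta$, and then recognise the non-generic digit sequences by a finite automaton extracted from the \emph{derived substitution}; the $\mathbb{Z}^d$-translation in the statement will enter only to absorb the fibres sitting over ``boundary'' points. \textbf{Step 1 (the address map).} By recognisability of primitive aperiodic block substitutions on $\mathbb{Z}^d$ (the analogue of Mossé's theorem), every $x\in\mathbb{X}_\theta$ admits, for each $n$, a unique level-$n$ supertile decomposition. Letting $a_n(x)\in\mathcal{A}$ be the supertile covering the origin and $\boldsymbol{k}_n(x)$ the position of the origin inside it (a point of the fundamental domain $\mathcal{D}_n$ of $Q^n$), the sequence $(\boldsymbol{k}_n(x))_n$ is coherent and defines $\pi_{\mathrm{MEF}}(x)=\boldsymbol z\in\mathbb{A}=\mathbb{Z}^d_Q$; for trivial height this is the maximal equicontinuous factor. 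Fixing a digit set $\mathcal{D}$ for $Q$ identifies $\mathbb{A}$, as a Cantor set, with the full shift $\mathcal{D}^{\mathbb{N}}$, turning $\boldsymbol z$ into a digit sequence $(d_i)_{i\ge 0}$, and the passage from level $n+1$ to level $n$ is governed by the \emph{column map} $\Theta_r\colon\mathcal{A}\to\mathcal{A}$, $\Theta_r(c)=\theta(c)[r]$, via $a_n(x)=\Theta_{d_n}(a_{n+1}(x))$.

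\textbf{Step 2 (parametrising a fibre).} After reducing $\theta$ so that it (hence each $\theta^n$) is injective on letters, one checks that for any $\boldsymbol z$ whose address $\boldsymbol{k}_n$ escapes the boundary of $\mathcal{D}_n$ in every direction, reconstructing a point from its supertiles gives a bijection between $\pi_{\mathrm{MEF}}^{-1}(\boldsymbol z)$ and the set of threads $(a_n)_n$ with $a_n=\Theta_{d_n}(a_{n+1})$, i.e.\ $\varprojlim(\mathcal{A},\Theta_{d_n})$. Since $\Theta_{d_N}\circ\cdots\circ\Theta_{d_{N+k-1}}$ is the column map of $\theta^{k}$ at the position with digits $d_N\cdots d_{N+k-1}$, its image has cardinality at least $c_\theta$ and is non-increasing in $k$, hence stabilises to a set $S_N'$ with $|S_N'|\ge c_\theta$; one then obtains $|\pi_{\mathrm{MEF}}^{-1}(\boldsymbol z)|=\sup_N|S_N'|$. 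A column of size exactly $c_\theta$ is hit along $Q$-adic-almost-every digit sequence, so the generic fibre has size $c_\theta$; a fibre is \emph{irregular} exactly when this value is exceeded, equivalently when the columns along $(d_i)$ never collapse to a minimal set (or when a boundary effect produces extra points).

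\textbf{Step 3 (the derived substitution and soficity).} The column maps generate a finite submonoid of $\mathrm{End}(\mathcal{A})$, inside which the maps with image of minimal size $c_\theta$ form a two-sided ideal, the absorbing class. The \emph{derived substitution} $\theta'$ is the block substitution with the same expansion $Q$, on the alphabet of column classes (the reachable sets $\Theta_w(\mathcal{A})$), defined by $E\mapsto(\Theta_r(E))_{r\in\mathcal{D}}$, with the minimal sets as absorbing letters. The key point is that ``the columns along $(d_i)$ never collapse to a minimal set'' says exactly that the automaton with state set the non-absorbing monoid elements and transitions $f\xrightarrow{\,r\,}f\circ\Theta_r$ admits an infinite run on $(d_i)$; since the absorbing class is absorbing, this collection of digit sequences is closed and shift-invariant, hence a (one-sided) sofic subshift $\mathcal{Z}_\theta\subseteq\mathbb{A}\cong\mathcal{D}^{\mathbb{N}}$, intrinsically the set of addresses of configurations in the $\theta'$-hierarchy that avoid the minimal letters. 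Finally, translation by $\boldsymbol n\in\mathbb{Z}^d$ is a self-homeomorphism of $\mathbb{X}_\theta$ intertwining $\pi_{\mathrm{MEF}}$ with $\boldsymbol z\mapsto\boldsymbol z+\boldsymbol n$, so the irregular set is $\mathbb{Z}^d$-invariant; for a boundary $\boldsymbol z$ one chooses $\boldsymbol n$ so that $\boldsymbol z+\boldsymbol n$ is back in non-singular position (or reduces to a lower-dimensional instance of the same analysis), and concludes that $\boldsymbol z$ is irregular if and only if $\boldsymbol z+\boldsymbol n\in\mathcal{Z}_\theta$ for some $\boldsymbol n\in\mathbb{Z}^d$.

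\textbf{Where the difficulty lies.} The genuinely delicate part — already the heart of the one-dimensional argument in \cite{CQY} — is the analysis of the singular fibres: when the address of $\boldsymbol z$ stays near the boundary of the fundamental domains in some directions, the supertiles fail to exhaust $\mathbb{Z}^d$, the naive thread count is no longer the fibre size, and one must show that the missing points are captured precisely by translating $\boldsymbol z$, and that the resulting parametrisation still lands inside $\mathcal{Z}_\theta$. A secondary technical step is to verify that ``the columns never collapse'' is truly a finite-state condition — this is exactly where the ideal (minimal-set) structure of the monoid of column maps is used — and that the derived substitution, together with its set of minimal letters, is well defined independently of the choices of digit set and of reduction of $\theta$.
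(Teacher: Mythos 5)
Your Steps 1--2 and the automaton of Step 3 are essentially the paper's treatment of the \emph{generic} case $J=\varnothing$ (all coordinates of $\boldsymbol{z}$ non-integer): there the fibre is indeed parametrised by threads through the column maps, and the coincidence graph $\mathcal{G}(\theta)$ with the minimal sets removed yields a sofic shift detecting irregularity. The gap is exactly where you place it, in the singular fibres, and your proposed mechanism for them does not work. If $z^{(j)}\in\mathbb{Z}$ for some $j$, then $z^{(j)}+n_j\in\mathbb{Z}$ for every $\boldsymbol{n}\in\mathbb{Z}^d$, so you can never ``choose $\boldsymbol{n}$ so that $\boldsymbol{z}+\boldsymbol{n}$ is back in non-singular position'': the translate in the statement only normalises within the $\mathbb{Z}^d$-orbit (e.g.\ sets the integer coordinates to $0$); it cannot remove the boundary behaviour. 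Moreover, for such $\boldsymbol{z}$ the supertiles of a preimage $x$ never exhaust $\mathbb{Z}^d$ in the directions indexed by $J$, so the fibre is not in bijection with threads of single letters, and its extra elements are controlled by which \emph{pairs} (more generally $2^{|J|}$-tuples) of letters can legally sit on opposite sides of the fault hyperplanes. That adjacency information is not visible to the monoid generated by the column maps of $\theta$, so the automaton you build in Step 3 cannot detect these fibres, and your parenthetical ``reduces to a lower-dimensional instance of the same analysis'' is precisely the step that needs a new construction rather than a repetition of the old one.

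The paper supplies that construction: for each $J\subseteq\{1,\dotsc,d\}$ it introduces the $J$-derived substitution $\partial_J\theta$, a $(d-|J|)$-dimensional block substitution whose alphabet is the set of $\theta$-legal patterns with support $B_J=\prod_{j\in J}\{-1,0\}\times\prod_{j\notin J}\{0\}$ (letters straddling the supertile walls), and whose columns are assembled from the boundary columns of $\theta$. The fibre over $\boldsymbol{z}$ with integer coordinates exactly on $J$ is then counted by infinite paths in the reversed coincidence graph $\mathcal{G}(\theta,J)^{\mathrm{op}}$ whose labels are the digits of $\boldsymbol{z}$ in the coordinates outside $J$, and $\mathcal{Z}_\theta$ is the union over all $J$ of the edge shifts of the pruned graphs $\widetilde{\mathcal{G}}(\theta,J)^{\mathrm{op}}$, which is why it is sofic and why a single graph on subsets of $\mathcal{A}$ (your column-class ``derived substitution'') does not suffice. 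To repair your argument you would need to formulate and prove this boundary-pattern analysis (including the consistency of iterated derived substitutions, $\partial_i\partial_j=\partial_{j-1}\partial_i$, and the counting theorem for each $J$), which is the actual content of the paper's Section on irregular fibres; as written, your proof establishes the theorem only for fibres over points with no integer coordinates.
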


The sofic shift $\mathcal{Z}_{\theta}$ can be constructed explicitly from the substitution $\theta$. We mention here that the explicit characterisation of irregular fibres in Theorem~\ref{thm:irreg-fibres}
might be of independent interest beyond questions surrounding extended symmetries. 
In particular, this finds possible applications in the study of tameness in topological dynamical systems \cite{FKY,KY} and multivariate notions of (mean) equicontinuity and sensitivity \cite{GL,GJY,BHJ}.

\section{Preliminaries}\label{sec:prelim}

\subsection{Shifts and substitutions}\label{sec:subs}

Let $\mathcal{A}$ be a finite \textit{alphabet}. The \textit{full shift} on $\mathcal{A}$ is the set $\mathcal{A}^{\mathbb{Z}^d}$ together with the shift action, that is a family of maps $\sigma^{ }_{\boldsymbol{n}}$, which act on an element $(x_{\boldsymbol{i}})_{\boldsymbol{i} \in \mathbb{Z}^d} \in \mathcal{A}^{\mathbb{Z}^d}$ via $\sigma^{ }_{\boldsymbol{n}}(x)_{\boldsymbol{i}}= x_{\boldsymbol{i}+\boldsymbol{n}}$.
In general, $\mathbb{Z}^d$ can be replaced with a more general group. 
A \textit{pattern} (\textit{patch}) $P$ is an element of $\mathcal{A}^B$, where is $B$ is a (finite) subset of  $\mathbb{Z}^d$. We refer to $B$ as the \textit{support} of $P$ and write $P_{\boldsymbol{b}}$ for the symbol at position $\boldsymbol{b}$.  We denote the set of finite patterns to be $\mathcal{A}^{*}$.
Most supports we are interested in will be hyperrectangular. We will denote the hypercube of sidelength $2r$ in $d$ dimensions centered around the origin by $[-r,r]^d$. 
The \textit{shift metric} $\dd$ is based on such hypercubes. Given $x,y\in \mathcal{A}^{\mathbb{Z}^d}$, $\dd$ is defined via 
\[
\dd(x,y):= \sup\big( \big\{ 2^{-r} \colon x|_{[-r,r]^d} \neq y|_{[-r,r]^d} \big\} \cup \{0\}\big).
\]
A \textit{cylinder set} $[P]$ of a patch $P \in \mathcal{A}^{B}$ is 
$
[P]= \{x \in \mathcal{A}^{\mathbb{Z}^d} \::\: x_{\boldsymbol{i}}=P_{\boldsymbol{i}} \; \text{ for all } {\boldsymbol{i}} \in B \}.
$ For patches of type $[-r,r]^d$, we see that the cylinder set consists of all elements that are at distance at most $2^{-r}$ from any fixed element of the cylinder (and each other) with respect to the shift metric.  

A \textit{subshift} $\mathbb{X}$ is a subset of $\mathcal{A}^{\mathbb{Z}^d}$ that is closed both with respect to the shift action and the shift metric. Equipped with the action of $\mathbb{Z}^d$ via the shift, $(\mathbb{X},\mathbb{Z}^d)$ is a topological dynamical system (TDS).  
Given two patterns $P,R \in \mathcal{A}^{*}$ or $\mathcal{A}^{\mathbb{Z}^d}$  we call $Z$ a subpattern of $P$ denoted by $R \sqsubset P$, if $P|_{B}=Z$, where $B$ is the support of $Z$. Here, we identify finite patterns up to translation. 
The \textit{language} $ \mathcal{L}_\mathbb{X}$ of the shift $\mathbb{X}$ is the collection of all its subpatterns, that is $\mathcal{L}_\mathbb{X}= \bigcup_{y \in \mathbb{X}} \bigcup_{B \Subset \mathbb{Z}^d} y|_{B}\subseteq \mathcal{A}^{*}$. We call these patterns \textit{legal} for $\mathbb{X}$. The language $\mathcal{L}_{\X}$ thas the following two properties:
\begin{itemize}
    \item it is \emph{extensible}, that is, for any $P\in\mathcal{L}_{\X}$ and any $U\supset\supp(P)$ there exists $Q\in\mathcal{A}^U\cap\mathcal{L}_{\X}$ for which $P\sqsubset Q$, and
    \item it is \emph{factorial}: if $P\sqsubset Q$ and $Q\in\mathcal{L}_{\X}$, then $P\in\mathcal{L}_{\X}$.
\end{itemize}
Any language with these two properties is necessarily the language of a uniquely determined shift space; the converse is also true.

The subshifts we investigate in this work are generated by \textit{digit substitutions}; compare \cite{FM}. These are also known as \emph{lattice substitutions} \cite{FS,LMS} and \emph{constant-shape substitutions} \cite{Cab,Cab2,CabP}, and are a generalisation of constant-length substitutions in one dimension. An expansive  endomorphism $Q\colon \mathbb{Z}^d\to \mathbb{Z}^d$  (that is $|\lambda|>1 \; \text{for all} \; \lambda  \in \textnormal{spec}(Q))$ gives rise to a non-trivial finite quotient $\mathbb{Z}^d/Q \mathbb{Z}^d$. Any collection of representatives from every coset gives rise to a (complete) \emph{digit set} $\mathcal{D}$; it naturally follows that $\mathbb{Z}^d= Q \mathbb{Z}^d + \mathcal{D}$. Such a digit system $(Q,\mathcal{D})$ gives a good baseline structure for a substitution since the shifted digit sets do not overlap and cover the whole space. We define $\mathcal
D^{(0)}=\left\{0\right\}$ and $\mathcal{D}^{(n)}:=Q\mathcal{D}^{(n-1)}+\mathcal{D}$. Every $\boldsymbol{n}\in \mathcal{D}^{(k)}$ admits the unique decomposition $\boldsymbol{n}=\sum_{i=0}^{k-1}Q^i(\boldsymbol{n}_i)$, where $\boldsymbol{n}_i\in \mathcal{D}$. The sequence $[\boldsymbol{n}_{k-1},\ldots,\boldsymbol{n}_0]$ is called the \emph{$Q$-adic decomposition} of $\boldsymbol{n}$. 

A \emph{digit substitution} is a map $  \theta=  \theta^{1}: \mathcal{A} \rightarrow \mathcal{A}^{\mathcal{D}}$. The image $\theta(a)$ is called a \emph{level-1 supertile} and using the previously mentioned coset structure we can iterate the substitution to define  \emph{$n$-supertiles} via: 
\[
    \theta^{n}(a)= \bigcup_{\boldsymbol{j} \in \supp (\theta^{n-1})} \theta(\theta^{n-1}(a)_{\boldsymbol{j}})+Q(\boldsymbol{j}). 
\]
where the above notation signifies that the $n$-supertile consists of level $1$-supertiles (obtained by the inflation of every symbol in the $(n-1)$-supertile) shifted to the correct position via $Q(\boldsymbol{j})$.
One can easily check that $\theta^{n}(a)$ is supported on $\mathcal{D}^{(n)}$.

To every digit substitution, we associate a language constructed as follows:
\[\mathcal{L}_\theta=\{P\in\mathcal{A^*} \mid (\exists a\in\mathcal{A},\,k\geqslant 1)\colon P\sqsubset\theta^k(a) \}.\]
This always results in a factorial language; thus, if the language is extensible, it defines a unique shift space, which we shall denote by $\X_\theta$. However, this language may fail to be extensible, since there might be too many gaps in the supertiles. Nevertheless, there are effectively checkable conditions on the digit set $\mathcal{D}$ that guarantee this; see \cite{Cab,Cab2} and \cite{Vince}. 
In this work, we will only deal with digit substitutions, and we will assume that they generate subshifts. For brevity, we will omit `digit' and just use `substitution' throughout.
When $\mathcal{D}$ is hyperrectangular, we call the corresponding substitution a \emph{block substitution}; compare \cite{Frank2,BG}.

\begin{figure}[!h]

\begin{subfigure}[b]{0.3\textwidth}
  \centering\raisebox{1.5cm}{\includegraphics[scale=0.6]{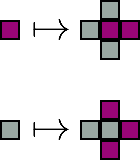}}  
  \caption{The substitution $\theta$.}
 \end{subfigure} \qquad 
\begin{subfigure}[b]{0.5\textwidth}
  \centering
  \includegraphics[scale=0.4]{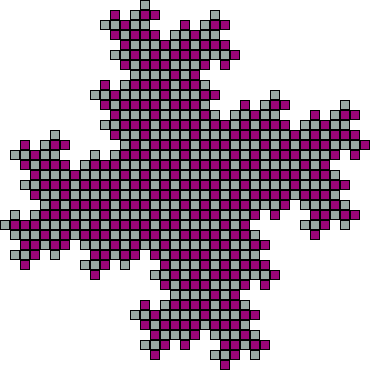}   
  \caption{$\theta^4(\textcolor{violet!80!black}{\blacksquare})$}
\end{subfigure}
\caption{The \emph{helix substitution}, an example of a two-dimensional digit substitution $\theta$ on two letters. Here, $\mathcal{D}=\left\{0,\boldsymbol{e}_1, -\boldsymbol{e}_1,\boldsymbol{e}_2,-\boldsymbol{e}_2\right\}$ and $Q=\begin{psmallmatrix}
    1 & {-}2\\
    2 & 1
\end{psmallmatrix}$.}
\end{figure}

  A substitution is called \textit{primitive} if there is a natural number $k$ such that, for every $a \in \mathcal{A}$, $\theta^k(a)$ contains every letter in the alphabet. The shift $\mathbb{X}_{\theta}$ defined by a primitive substitution is \textit{minimal}, that is, it equals the closure  of the $\mathbb{Z}^d$-orbit of any element $\omega\in\mathbb{X}_{\theta}$ (also called the \emph{hull} of $\omega$). Here, the closure is taken with respect to the local topology induced by the shift metric. 
If $\theta$ is primitive, for a large enough $k$, the level-$k$ supertile of an arbitrary starting letter contains every subpattern found in the inflation of the other letters. Thus, one may fix the letter $a$ in the definition of $\mathcal{L}_\theta$ (instead of running through all letters of $\mathcal{A})$, and obtain the same language. It immediately follows that, for a primitive $\theta$, $\mathcal{L}_\theta=\mathcal{L}_{\theta^k}$ and thus $\mathbb{X}_{\theta}=\mathbb{X}_{\theta^k}$.

 Given a substitution $\theta$, we can define a map $\theta_{\boldsymbol{j}}\colon \mathcal{A} \rightarrow \mathcal{A}$ via $\theta_{\boldsymbol{j}}(a):=\theta(a)_{\boldsymbol{j}}$, for every ${\boldsymbol{j}} \in \text{supp}(\theta)=\mathcal{D}$. The map $\theta_{\boldsymbol{j}}$ is called the \textit{$\boldsymbol{j}$-th column} of the substitution. 
If every column of $\theta$ is a bijection on $\mathcal{A}$,  $\theta$ is called \textit{bijective}.

An infinite configuration $\omega\in \mathcal{A}^{\mathbb{Z}^d}$ over a finite alphabet is (fully) \textit{aperiodic} when the hull generated by the shift action does not contain a periodic sequence, with respect to any sublattice of $\mathbb{Z}^d$. 
A primitive substitution $\theta$ is aperiodic if the unique hull that it generates only contains aperiodic elements. 
A configuration $x \in \mathbb{X}_\theta$ is \emph{recognizable}, if, for every $n\in\mathbb{N}$, there is a unique $y^{(n)} \in \mathbb{X}_{\theta}$ and $\boldsymbol{j}_n \in \text{supp}(\theta^n)=\mathcal{D}^{(n)}$ such that $x=\sigma^{ }_{\boldsymbol{j}_n}\circ \theta^n(y^{(n)})$. In our setting, assuming that $\theta$ generates a well-defined subshift, aperiodicity implies recognizability; see \cite{Sol}. 

\subsection{Extended symmetries}\label{sec:ext-sym}
Let $\mathbb{X}\subset \mathcal{A}^{\mathbb{Z}^d}$ be a subshift over a finite alphabet $\mathcal{A}$ and consider the topological dynamical system $(\mathbb{X},\mathbb{Z}^d)$. One can think of the action of $\mathbb{Z}^d$ to be one generated by $d$ commuting transformations $\left\{\mathcal{T}_i\colon 1\leq i\leq d\right\}$, which act faithfully on $\mathbb{X}$.

Let $\text{Homeo}(\mathbb{X})$ be the space of all self-homeomorphisms of $\mathbb{X}$. 
The group consisting of all $f\in\text{Homeo}(\mathbb{X}) $ that commute with $\sigma^{ }_{\boldsymbol{n}}$, for all $\boldsymbol{n}\in\mathbb{Z}^d$, (that is, $f\in \text{cent}_{\text{Homeo}(\mathbb{X})}(\mathbb{Z}^d)$) is called the \emph{symmetry group} $\mathcal{C}(\X)$ of $\mathbb{X}$ (also called the \emph{automorphism group} in the literature). 
Similarly, the group $\mathcal{N}(\mathbb{X}):=\text{norm}_{\text{Homeo}(\mathbb{X})}(\mathbb{Z}^d)$ is called the \textit{extended symmetry group} of $\mathbb{X}$. This group is also known as the \emph{isomorphism group} and the group of \emph{$\mathrm{GL}(d,\mathbb{Z})$-self  conjugacies.}
From the faithfulness of the action, one can obtain the following structural result about $\mathcal{N}(\mathbb{X})$. 

\begin{fact}[{\hspace{-0.02em}\cite[Prop.~2]{BRY}}]
Let $(\mathbb{X},\mathbb{Z}^d)$ be as above. 
\begin{enumerate}
\item There is a group homomorphism $\psi\colon \mathcal{N}(\mathbb{X})\to \textnormal{GL}(d,\mathbb{Z})$, whose kernel is $\mathcal{C}(\mathbb{X})$. 
\item If $\mathcal{N}(\mathbb{X})$ contains a subgroup $\mathcal{H}$ that satisfies $\mathcal{H}\simeq \psi(\mathcal{H})$ and $\mathcal{H}=\psi(\mathcal{N}(\mathbb{X}))$, one has the short exact sequence
$1\longrightarrow \mathcal{C}(\mathbb{X})\longrightarrow \mathcal{N}(\mathbb{X}) \longrightarrow  \mathcal{H}^{\prime}=\psi(\mathcal{H})\longrightarrow 1$ 
and one has that $\mathcal{N}(\mathbb{X})=\mathcal{C}(\mathbb{X})\rtimes \mathcal{H}$.\qed 
\end{enumerate}
\end{fact}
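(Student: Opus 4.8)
The plan is to prove both items in a purely group-theoretic way, as soft consequences of the definition $\mathcal{N}(\mathbb{X})=\text{norm}_{\text{Homeo}(\mathbb{X})}(\mathbb{Z}^d)$ together with the faithfulness of the $\mathbb{Z}^d$-action; no appeal to the (generalised) CHL representation is needed. For (1), I would fix the realisation of $\mathbb{Z}^d$ as the abelian subgroup $\Sigma=\{\sigma^{ }_{\boldsymbol{n}}\colon \boldsymbol{n}\in\mathbb{Z}^d\}\leqslant \text{Homeo}(\mathbb{X})$; faithfulness of the action makes $\boldsymbol{n}\mapsto\sigma^{ }_{\boldsymbol{n}}$ a group isomorphism $\mathbb{Z}^d\to\Sigma$. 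For $\Phi\in\mathcal{N}(\mathbb{X})$ the normaliser condition says $\Phi\Sigma\Phi^{-1}=\Sigma$, so conjugation $c^{ }_\Phi\colon g\mapsto\Phi g\Phi^{-1}$ restricts to an automorphism of $\Sigma$; transporting it through the above isomorphism yields an element $A^{ }_\Phi\in\Aut(\mathbb{Z}^d)=\textnormal{GL}(d,\mathbb{Z})$, characterised by $\Phi\sigma^{ }_{\boldsymbol{n}}\Phi^{-1}=\sigma^{ }_{A^{ }_\Phi\boldsymbol{n}}$ for all $\boldsymbol{n}\in\mathbb{Z}^d$. Setting $\psi(\Phi):=A^{ }_\Phi$, a one-line computation with $(\Phi\Psi)\sigma^{ }_{\boldsymbol{n}}(\Phi\Psi)^{-1}=\Phi\sigma^{ }_{A^{ }_\Psi\boldsymbol{n}}\Phi^{-1}=\sigma^{ }_{A^{ }_\Phi A^{ }_\Psi\boldsymbol{n}}$ shows $\psi$ is a homomorphism. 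By construction $\Phi\in\ker\psi$ iff $c^{ }_\Phi$ fixes $\Sigma$ pointwise, i.e.\ $\Phi$ commutes with every $\sigma^{ }_{\boldsymbol{n}}$, which is exactly membership in $\mathcal{C}(\mathbb{X})$; hence $\ker\psi=\mathcal{C}(\mathbb{X})$, in particular normal in $\mathcal{N}(\mathbb{X})$, and $\psi$ induces $\mathcal{N}(\mathbb{X})/\mathcal{C}(\mathbb{X})\cong\psi(\mathcal{N}(\mathbb{X}))\leqslant\textnormal{GL}(d,\mathbb{Z})$.

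For (2), I would invoke the splitting lemma for (not necessarily abelian) group extensions, applied to the short exact sequence $1\longrightarrow\mathcal{C}(\mathbb{X})\longrightarrow\mathcal{N}(\mathbb{X})\xrightarrow{\psi}\psi(\mathcal{N}(\mathbb{X}))\longrightarrow 1$ from (1). The hypotheses on $\mathcal{H}$ are precisely that $\psi|_{\mathcal{H}}\colon\mathcal{H}\to\psi(\mathcal{N}(\mathbb{X}))$ is an isomorphism: injectivity amounts to $\mathcal{H}\cap\ker\psi=\{\id\}$ (this is the content of $\mathcal{H}\simeq\psi(\mathcal{H})$ via $\psi$), and surjectivity is $\psi(\mathcal{H})=\psi(\mathcal{N}(\mathbb{X}))$. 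Then $s:=(\psi|_{\mathcal{H}})^{-1}$ is a group-theoretic section of $\psi$, so the extension splits: every $\Phi\in\mathcal{N}(\mathbb{X})$ factors uniquely as $\Phi=C\hspace{0.08em}H$ with $H=s(\psi(\Phi))\in\mathcal{H}$ and $C=\Phi H^{-1}\in\ker\psi=\mathcal{C}(\mathbb{X})$. Therefore $\mathcal{N}(\mathbb{X})=\mathcal{C}(\mathbb{X})\rtimes\mathcal{H}$, with $\mathcal{H}$ acting on the normal subgroup $\mathcal{C}(\mathbb{X})$ by conjugation inside $\text{Homeo}(\mathbb{X})$, and with quotient $\mathcal{H}'=\psi(\mathcal{H})=\psi(\mathcal{N}(\mathbb{X}))$.

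I do not expect a genuine obstacle, since the statement is formal. The only points requiring care are: (a) fixing conventions so that $\psi$ comes out multiplicative (with $\Phi\sigma^{ }_{\boldsymbol{n}}\Phi^{-1}=\sigma^{ }_{A^{ }_\Phi\boldsymbol{n}}$ rather than an opposite convention) and making the identification $\Aut(\mathbb{Z}^d)=\textnormal{GL}(d,\mathbb{Z})$ explicit; and (b) recognising that (2) is genuinely conditional, as the sequence in (1) need not split in general, so the existence of a complement $\mathcal{H}$ with the stated properties is an added hypothesis. The substantive work, namely verifying the existence of such an $\mathcal{H}$ and computing $\psi(\mathcal{N}(\mathbb{X}))$ for concrete substitution subshifts, lies elsewhere in the paper and is exactly what the later sections address.
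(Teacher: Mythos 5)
Your proposal is correct; note that the paper does not prove this statement at all, citing it directly from \cite[Prop.~2]{BRY}, and your argument (conjugation on the faithfully embedded shift group $\Sigma\cong\mathbb{Z}^d$ to define $\psi$, kernel identification with $\mathcal{C}(\mathbb{X})$, then the standard splitting via the section $(\psi|_{\mathcal{H}})^{-1}$) is exactly the expected proof from that reference. Your reading of ``$\mathcal{H}\simeq\psi(\mathcal{H})$'' as an isomorphism realised by $\psi$ itself (equivalently $\mathcal{H}\cap\mathcal{C}(\mathbb{X})=\{\mathrm{id}\}$) is the intended one and is needed for the semidirect product, so flagging it as you did is appropriate.
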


When the generators $\left\{\mathcal{T}_{i}\right\}$ of the action correspond to translations along the canonical basis vectors $\boldsymbol{e}_i$ of $\Z^d$, one has 
\[
\mathcal{N}(\mathbb{X})=\left\{\Phi\in \text{Homeo}(\mathbb{X})\colon \Phi\circ \sigma^{ }_{\boldsymbol{n}}=\sigma^{ }_{\boldsymbol{An}}\circ \Phi, \text{ for all }\boldsymbol{n}\in \mathbb{Z}^d, \text{ for some } A\in \text{GL}(d,\mathbb{Z}) \right\}.
\]
In general, one has the following version of the Curtis--Hedlund--Lyndon theorem for extended symmetries.

\begin{proposition}[\hspace{-0.02em}{\cite[Prop.~3]{BRY}}] Let $(\mathbb{X},\mathbb{Z}^d)$  and $\mathcal{N}(\mathbb{X})$ be the same as above. Any extended symmetry $\Phi\in \mathcal{N}(\mathbb{X})$ is of the form $\Phi=\tau\circ f_{A}$, with $\psi(\Phi)=A$, and where $\tau\colon f_{A}(\mathbb{X})\to \mathbb{X}$ is induced by a local map of finite radius and where $f_A$ defined by $\left(f_A(x)\right)_{\boldsymbol{n}}:=x_{A^{-1}\boldsymbol{n}}$ is the linear component. \qed
\end{proposition}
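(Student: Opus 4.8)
The plan is to strip off the linear part of $\Phi$ and reduce to the classical Curtis--Hedlund--Lyndon theorem. Fix $\Phi\in\mathcal{N}(\mathbb{X})$ and let $A:=\psi(\Phi)\in\mathrm{GL}(d,\mathbb{Z})$, so that $\Phi\circ\sigma_{\boldsymbol{n}}=\sigma_{A\boldsymbol{n}}\circ\Phi$ for all $\boldsymbol{n}\in\mathbb{Z}^d$ by the description of $\mathcal{N}(\mathbb{X})$ recalled above. First I would record the elementary properties of the coordinate relabelling $f_A\colon\mathcal{A}^{\mathbb{Z}^d}\to\mathcal{A}^{\mathbb{Z}^d}$, $(f_A(x))_{\boldsymbol{n}}=x_{A^{-1}\boldsymbol{n}}$: it is well defined because $A^{-1}$ is integral, it is a homeomorphism of the full shift with inverse $f_{A^{-1}}$, and a one-line index computation gives $f_A\circ\sigma_{\boldsymbol{n}}=\sigma_{A\boldsymbol{n}}\circ f_A$. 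Since $f_A$ is a homeomorphism of a compact space, $f_A(\mathbb{X})$ is closed, and the intertwining relation together with the invertibility of $A$ shows it is shift-invariant; hence $f_A(\mathbb{X})$ is again a subshift over $\mathcal{A}$.

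Next I would set $\tau:=\Phi\circ f_A^{-1}\colon f_A(\mathbb{X})\to\mathbb{X}$. This is a homeomorphism onto $\mathbb{X}$, being a composite of homeomorphisms, and I claim it commutes with the shift. Using $f_A^{-1}\circ\sigma_{A\boldsymbol{n}}=\sigma_{\boldsymbol{n}}\circ f_A^{-1}$ and the intertwining relation for $\Phi$, one obtains $\tau\circ\sigma_{A\boldsymbol{n}}=\Phi\circ\sigma_{\boldsymbol{n}}\circ f_A^{-1}=\sigma_{A\boldsymbol{n}}\circ\tau$; since $A$ is invertible, $\{A\boldsymbol{n}:\boldsymbol{n}\in\mathbb{Z}^d\}=\mathbb{Z}^d$, so $\tau\circ\sigma_{\boldsymbol{m}}=\sigma_{\boldsymbol{m}}\circ\tau$ for every $\boldsymbol{m}$. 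Thus $\tau$ is a topological conjugacy between the subshifts $f_A(\mathbb{X})$ and $\mathbb{X}$, and by construction $\Phi=\tau\circ f_A$ with $\psi(\Phi)=A$, which is the asserted factorisation.

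It then remains to invoke the classical CHL theorem for $\mathbb{Z}^d$-subshifts: a continuous map between subshifts that intertwines the two shift actions is a sliding block code of finite radius. I would give the standard compactness argument: the map $x\mapsto\tau(x)_{\boldsymbol{0}}$ is continuous from $f_A(\mathbb{X})$ into the discrete alphabet $\mathcal{A}$, hence locally constant; compactness of $f_A(\mathbb{X})$ then yields a uniform radius $r$ such that $\tau(x)_{\boldsymbol{0}}$ depends only on $x|_{[-r,r]^d}$, and shift-equivariance of $\tau$ propagates this to every coordinate, producing the local map of radius $r$ that induces $\tau$.

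I expect the only genuinely delicate point to be bookkeeping rather than mathematics: keeping track of which $\mathbb{Z}^d$-action lives on which space, so that passing through $f_A^{-1}$ really does turn the twisted relation $\Phi\circ\sigma_{\boldsymbol{n}}=\sigma_{A\boldsymbol{n}}\circ\Phi$ into an honest shift-commuting map $\tau$, and remembering that it is $f_A(\mathbb{X})$, not $\mathbb{X}$, on which the local map must be defined. Once the identification $\tau=\Phi\circ f_A^{-1}$ is in place and $f_A(\mathbb{X})$ is seen to be a genuine subshift, the remainder is the textbook CHL argument applied verbatim in dimension $d$.
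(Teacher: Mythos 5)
Your argument is correct and is the standard one: the paper itself states this proposition without proof (it is quoted from \cite[Prop.~3]{BRY}), and the argument in that reference is exactly your decomposition — check that $f_A$ is a shift-intertwining homeomorphism of the full shift, so that $\tau=\Phi\circ f_A^{-1}\colon f_A(\mathbb{X})\to\mathbb{X}$ is a shift-commuting homeomorphism between subshifts, and then apply the classical Curtis--Hedlund--Lyndon theorem to obtain the finite-radius local map. Nothing is missing; the only point worth keeping explicit, as you do, is that $f_A(\mathbb{X})$ is itself a subshift so that CHL applies to $\tau$ on that space.
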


\subsection{Coincidence graph and minimal sets}\label{sec:minsets}

In this section, we recall some notation regarding the \emph{coincidence graph} of a substitution $\theta$; see
\cite{Rob, FS,CQY}. We also recall some notation and important properties of minimal sets for constant-length substitutions developed in \cite{LMu,MY}, which trivially extend to our higher-dimensional setting. 
The columns of a non-bijective  substitution $\theta$ no longer generate a permutation subgroup, but only a semigroup which we will denote by $S_\theta$.  For the definitions below, we require that
the substitution $\theta$ has trivial height. 
We defer to the formal definition of height 
in Section~\ref{sec:height}. Throughout the text, $\Sigma_{F}$ denotes the symmetric group over the finite set $F$. 

Let $\theta$ be a substitution on the alphabet $\mathcal{A}$ with trivial height and let $\boldsymbol{j}\in \text{supp}(\theta)$. The \emph{cardinality} of a column $\theta_{\boldsymbol{j}}$ is given by $|\theta_{\boldsymbol{j}}(\mathcal{A})|$.  This extends naturally to columns of powers of $\theta$. The \emph{column number} of $\theta$ is $
c_\theta:= \min_{k\in \mathbb{N}}\min_{\boldsymbol{j}\in \text{supp}(\theta^k)}|\theta^k_{\boldsymbol{j}}(\mathcal{A})|$. Any set of the form $\theta^{k}_{\boldsymbol{j}}(\mathcal{A})$ with $|\theta^{k}_{\boldsymbol{j}}(\mathcal{A})|=c_{\theta}$ is called a \emph{minimal set}. We denote by $\mathcal{X}$ the set of all minimal sets of $\theta$.
 The subsets of $\mathcal{A}$ that appear as columns are encoded in the coincidence graph $\mathcal{G}(\theta)$,  which is defined as follows.

\begin{definition}
    The \emph{coincidence graph}  of the substitution $\theta$ is the graph $\mathcal{G}(\theta)$ defined, recursively, as follows.
    \begin{enumerate}
      \item[\textbf{(1)}] $\mathcal{A} \in V (\mathcal{G} (\theta))$, and every vertex of
      $\mathcal{G} (\theta)$ is some subset of $\mathcal{A}$.
      \item[\textbf{(2)}] For every $\boldsymbol{j}\in \text{supp}(\theta)$ and every $U \in V (\mathcal{G}
      (\theta))$, there is an edge with label $\boldsymbol{j}$ going from $U$ to
      $\theta_{\boldsymbol{j}} (U)$, its image under the column
      $\theta_{\boldsymbol{j}}$.  Note that we allow loops and multiedges, that is, $U =
      \theta_{\boldsymbol{j}} (U)$ or $\theta_{\boldsymbol{j}} (U) =
      \theta_{\boldsymbol{j}'} (U)$ for $\boldsymbol{j} \neq \boldsymbol{j}'$ are possible. 
      \item[\textbf{(3)}] $\mathcal{G} (\theta)$ is the smallest graph satisfying \textbf{(1)} and \textbf{(2)}. 
    \end{enumerate}
\end{definition}

This graph was also called the \emph{forward substitution graph} in \cite{Rob}. Minimal sets are mapped to minimal sets under any column $\theta_{\boldsymbol{j}}$, and hence $\mathcal{X}$ forms a closed subgraph of $\mathcal{G}(\theta)$.

We now recall several results in on the compatibility relations of minimal sets with symmetries; compare \cite[Sec.~4]{MY}.

\begin{fact}\label{fact:minset-sym}
Let $\tau$ be the local map of a radius-$0$ symmetry.  One has the following properties. 
\begin{enumerate}[leftmargin=*]
    \item[\textnormal{(i)}] For every $M\in \mathcal{X}$, $\tau(M)=M$. Moreover,  for a sufficiently large $r$ the following holds. 
	\begin{equation}\label{eq:tau-theta-commute}
	    \tau\circ(\theta^r)_{\boldsymbol{n}} = (\theta^r)_{\boldsymbol{n}}\circ\tau.
	\end{equation} 
    \item[\textnormal{(ii)}] For an arbitrary $M_0\in\mathcal{X}$, there exists and idempotent column $\iota = (\theta^k)_{\boldsymbol{j}}$ with $\text{im}(\iota)=M_0$. In particular, $\iota(m)=m$, for all $m\in M_0$.
    \item[\textnormal{(iii)}] Any bijection $\nu_0\colon M_0\to[c_\theta]=\{1,2,\dotsc,c_\theta\}$ extends to the whole alphabet $\mathcal{A}$ via $\nu=\nu_0\circ \iota$. Moreover, there exists $\tau^{\prime}\in \Sigma_{[c_{\theta}]}$ such that 
    the following diagram commutes. 
    \begin{center}
		\begin{tikzcd}
			{[c_\theta]}\arrow[r,"\tau'"] & {[c_\theta]} \\
			M\arrow[r,"\tau"]\arrow[u,"\nu"] & M\arrow[u,swap,"\nu"]
		\end{tikzcd}
	\end{center}
    Moreover, $\tau'$ is independent of the minimal set $M$.\qed 
\end{enumerate}
\end{fact}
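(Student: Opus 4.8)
\textbf{Proof proposal for Fact~\ref{fact:minset-sym}.}

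The plan is to leverage the recognizability of $\theta$ together with the CHL representation of a radius-$0$ symmetry, reducing everything to finite combinatorics on the columns of $\theta^r$. First I would establish \textnormal{(i)}: since $\tau$ has radius $0$, it acts as a fixed letter-exchange map $t\colon\mathcal{A}\to\mathcal{A}$ on every point of $\X_\theta$; I would show that for $r$ large enough, $t$ intertwines with the desubstitution map. Concretely, pick $x\in\X_\theta$ and use recognizability to write $x=\sigma_{\boldsymbol{j}_r}\circ\theta^r(y)$; applying $\tau$ and again recognizability to $\tau(x)$, one gets a letter $a=y_{\boldsymbol 0}$ and a position $\boldsymbol n\in\mathcal D^{(r)}$ such that $t(\theta^r(a)_{\boldsymbol n})$ depends only on $t(a)$ (because $\tau$ sends $r$-supertiles to $r$-supertiles once $r$ exceeds the recognizability radius plus the radius $0$ of $\tau$). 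This yields a symmetry $\tau^{(r)}$ on the desubstituted system, whose radius is again $0$ by the same argument, and we get a letter map $t^{(r)}$ with $t\circ(\theta^r)_{\boldsymbol n}=(\theta^r)_{\boldsymbol n}\circ t^{(r)}$ for all $\boldsymbol n$; minimality forces $t^{(r)}=t$, giving \eqref{eq:tau-theta-commute}. From this intertwining, $t$ maps $\mathrm{im}((\theta^r)_{\boldsymbol n})=\theta^r_{\boldsymbol n}(\mathcal A)$ bijectively to itself, and since $t$ is a bijection of $\mathcal A$, the image set has the same cardinality; applying this to a column realising $c_\theta$ shows $t$ permutes the minimal sets, and a counting/idempotent argument (any minimal set is the image of an idempotent column, see below) pins down $\tau(M)=t(M)=M$ for each $M\in\mathcal X$.

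For \textnormal{(ii)}, I would use that the semigroup $S_\theta$ generated by the columns of all powers $\theta^k$ is finite, hence every element has an idempotent power: given $M_0\in\mathcal X$, choose any column $\phi=(\theta^{k_0})_{\boldsymbol j_0}$ with $\mathrm{im}(\phi)=M_0$ (which exists by definition of minimal set), and replace $\phi$ by $\phi^{m}$ for the idempotent power $m$; since $\phi$ already has minimal image and images only shrink under composition, $\mathrm{im}(\phi^m)=M_0$ still, and idempotency of $\phi^m$ restricted to its image is the identity, i.e.\ $\iota|_{M_0}=\mathrm{id}$. The fact that $\phi^m=(\theta^k)_{\boldsymbol j}$ for appropriate $k,\boldsymbol j$ follows because composing columns of $\theta$ corresponds to reading off a single entry of a higher power of $\theta$ along the $Q$-adic decomposition of the position — this is the standard identity $(\theta^{k+l})_{Q^l\boldsymbol i+\boldsymbol j}=(\theta^k)_{\boldsymbol j}\circ(\theta^l)_{\boldsymbol i}$, which I would state and use.

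For \textnormal{(iii)}, given a bijection $\nu_0\colon M_0\to[c_\theta]$, define $\nu:=\nu_0\circ\iota$ on all of $\mathcal A$; this is well defined since $\iota$ lands in $M_0$, and $\nu|_{M_0}=\nu_0$ is a bijection. I then set $\tau':=\nu_0\circ\tau|_{M_0}\circ\nu_0^{-1}\in\Sigma_{[c_\theta]}$, which makes the square for $M=M_0$ commute by construction. For a general $M\in\mathcal X$, I would connect $M$ to $M_0$: since $\mathcal X$ is a closed strongly connected subgraph of the coincidence graph (primitivity), there are columns $\alpha$ with $\alpha(M)=M_0$ and $\beta$ with $\beta(M_0)=M$, both bijections between these $c_\theta$-element sets; the identification of $M$ with $[c_\theta]$ is forced (up to the correct bookkeeping) to be $\nu|_M$, and using \eqref{eq:tau-theta-commute} — which says $\tau$ commutes with every column of $\theta^r$, in particular with $\alpha$ and $\beta$ — one checks $\nu\circ\tau|_M=\tau'\circ\nu|_M$ with the \emph{same} $\tau'$. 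The independence of $\tau'$ from $M$ is exactly this last computation: conjugating $\tau|_M$ to $[c_\theta]$ via $\nu|_M$ and via the path through $M_0$ gives the same permutation because $\tau$ commutes with the connecting columns.

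The main obstacle I anticipate is \textnormal{(i)}, specifically making rigorous the passage ``$\tau$ of radius $0$ induces a radius-$0$ symmetry on the desubstituted system'' and extracting the commutation \eqref{eq:tau-theta-commute} uniformly in $\boldsymbol n$. One must be careful that a priori $\tau$ only permutes $r$-supertiles as \emph{sets of positions} up to a translation, and controlling that this translation is trivial (so that the $\boldsymbol n$-th column really maps to the $\boldsymbol n$-th column, not the $\boldsymbol n'$-th) requires using that $\tau$ is a symmetry, not merely an extended symmetry — the linear component is the identity — together with recognizability to rule out a global shift. Everything after that is finite bookkeeping on $S_\theta$ and the coincidence graph.
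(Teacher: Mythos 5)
Your parts \textnormal{(ii)} and \textnormal{(iii)} are essentially the standard arguments (idempotent powers in the finite semigroup of columns, then conjugating $\tau$ through the encoding $\nu=\nu_0\circ\iota$; for \textnormal{(iii)} you do not even need connecting columns $\alpha,\beta$ — commuting $\tau$ with $\iota$ itself already gives $\nu\circ\tau=\tau'\circ\nu$ on every $M$), and note that the paper does not prove this Fact at all but imports it from \cite{MY}, so the comparison is with that argument. The genuine gap is in \textnormal{(i)}, and it sits exactly where you flag it: the assertion that ``$\tau$ sends $r$-supertiles to $r$-supertiles once $r$ exceeds the recognizability radius'' is precisely the claim that a radius-zero symmetry preserves the supertile grid, i.e.\ $\pi_{\rm tile}(\tau(x))=\pi_{\rm tile}(x)$, equivalently $\kappa(\tau)=0$. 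Recognizability only yields a \emph{unique} supertile decomposition of $\tau(x)$; it says nothing about that decomposition being aligned with the one of $x$, and ``the linear component is the identity'' cannot rule out a grid shift either, since every shift map has identity linear part and nonzero $\kappa$-value. What does the work is radius zero itself: a radius-zero symmetry is sitewise a permutation $t\in\Sigma_{\mathcal A}$ (surjectivity of $\tau$ forces $t$ onto, hence bijective), so $\tau$ has finite order, hence $\kappa(\tau)$ is a torsion element of the MEF $\mathbb{Z}^d_Q$ (trivial height), which is torsion-free (e.g.\ $\prod_j\Z_{\ell_j}$ for block substitutions); therefore $\kappa(\tau)=0$ and only then do the grids align at every level.

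The second gap is the sentence ``minimality forces $t^{(r)}=t$'', which is unjustified and, for a fixed large $r$, in general false. Grid alignment plus recognizability only give $\tau\circ\theta^r=\theta^r\circ\tau_{[r]}$, where $\tau_{[r]}$ is another radius-zero symmetry whose letter map need not be $t$; columnwise this reads $\tau\circ(\theta^r)_{\boldsymbol n}=(\theta^r)_{\boldsymbol n}\circ\tau_{[r]}$, not \eqref{eq:tau-theta-commute}. The Fact only claims \eqref{eq:tau-theta-commute} for a \emph{suitable} $r$, and producing one needs a finiteness argument, not minimality: the maps $\tau_{[r]}$ lie in the finite set of letter-exchange symmetries and satisfy $\tau_{[r+1]}=(\tau_{[r]})_{[1]}$, conjugation by $\theta$ is injective on symmetries, so the sequence $r\mapsto\tau_{[r]}$ is genuinely periodic and one may take $r$ in that period, giving $\tau_{[r]}=\tau$ and hence \eqref{eq:tau-theta-commute}; the invariance $\tau(M)=M$ then follows because every minimal set occurs as a column of such a power (Lemma~\ref{lem:power-idempotent}). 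Without these two repairs, \textnormal{(i)} — and with it the $M$-independence of $\tau'$ in \textnormal{(iii)} — is not established. (A minor slip in \textnormal{(ii)}: the composition identity should read $(\theta^{k+l})_{Q^{l}\boldsymbol m+\boldsymbol n}=(\theta^{l})_{\boldsymbol n}\circ(\theta^{k})_{\boldsymbol m}$ with $\boldsymbol m\in\mathcal D^{(k)}$, $\boldsymbol n\in\mathcal D^{(l)}$; your indices are mismatched, though this does not affect the idempotent-power argument, where $k=l$.)
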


The commutativity relation in (iii) allows one to associate a family of permutations to each column $\theta_{\boldsymbol{n}}$, which is indexed by the minimal sets. 

\begin{fact}\label{fact:sym-beta}
    Let $\theta$, $\tau$ and $\nu$ be as in Fact~\ref{fact:minset-sym}. For every $M\in \mathcal{X}$, there is a  unique bijection $\beta_{M,\boldsymbol{n}}\colon[c_\theta]\to[c_\theta]$ for which the following diagram commutes:
	\begin{center}
		\begin{tikzcd}
			{[c_\theta]} & {[c_\theta]} \arrow[l,swap,dashed,"\beta_{M,\boldsymbol{n}}"] \\
			M\arrow[r,"\theta_{\boldsymbol{n}}"]\arrow[u,"\nu"] & \theta_{\boldsymbol{n}}[M]\arrow[u,swap,"\nu"]
		\end{tikzcd}
	\end{center}
    More generally, the existence of these bijections holds for any $f\in S_\theta$. Moreover, one has 
    \begin{equation*}
\beta_{M,f}^{-1}\circ\tau'=\tau'\circ \beta_{M,f}^{-1},\quad \text{ for all }f\in S_{\theta}. \qed
    \end{equation*} 
\end{fact}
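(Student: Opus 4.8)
The plan is to construct the permutations $\beta_{M,\boldsymbol n}$ explicitly and then reduce the displayed relation to the single claim that the letter exchange $\tau$ commutes, on each minimal set, with every element of the column semigroup $S_\theta$. I would start with the construction. Fix $M\in\mathcal X$. Since any column sends minimal sets to minimal sets, $\theta_{\boldsymbol n}[M]\in\mathcal X$, so $|\theta_{\boldsymbol n}[M]|=|M|=c_\theta$ and $\theta_{\boldsymbol n}|_M\colon M\to\theta_{\boldsymbol n}[M]$ is a surjection between finite sets of equal size, hence a bijection. One also needs $\nu$ to restrict to a bijection on every minimal set: writing $\nu=\nu_0\circ\iota$ with $\iota$ an idempotent column of image $M_0$ as in Fact~\ref{fact:minset-sym}(ii)--(iii), for any $N\in\mathcal X$ the set $\iota[N]$ is a minimal set contained in $M_0=\mathrm{im}(\iota)$, hence equals $M_0$, so $\iota|_N\colon N\to M_0$ and therefore $\nu|_N=\nu_0\circ\iota|_N\colon N\to[c_\theta]$ are bijections. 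Then $\beta_{M,\boldsymbol n}:=\nu|_M\circ(\theta_{\boldsymbol n}|_M)^{-1}\circ(\nu|_{\theta_{\boldsymbol n}[M]})^{-1}$ is a bijection of $[c_\theta]$ making the square commute, and it is the unique such map, because commutativity amounts to $\beta\circ\nu|_{\theta_{\boldsymbol n}[M]}\circ\theta_{\boldsymbol n}|_M=\nu|_M$ and the two maps precomposed with $\beta$ there are bijections. The same formula works with $\theta_{\boldsymbol n}$ replaced by any $f\in S_\theta$, since $f$ is a composition of columns and each column preserves $\mathcal X$, so $f|_M\colon M\to f[M]$ is again a bijection of minimal sets; in particular $\beta_{M,f}^{-1}=\nu|_{f[M]}\circ f|_M\circ(\nu|_M)^{-1}$.

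The key lemma is that $\tau\circ f=f\circ\tau$ on every minimal set, for every $f\in S_\theta$. By Fact~\ref{fact:minset-sym}(i), $\tau$ commutes with every column of $\theta^r$ once $r$ is large enough; fix $r$ large enough that this holds simultaneously for $\theta^r$ and $\theta^{r+1}$. Every minimal set $M$ can be realised as $(\theta^r)_{\boldsymbol n}[\mathcal A]$ for a suitable $\boldsymbol n$: indeed $M=(\theta^k)_{\boldsymbol j}[\mathcal A]$ for some $k\leq r$, and stacking $Q$-adic decompositions produces an $\boldsymbol n$ with $(\theta^r)_{\boldsymbol n}=(\theta^k)_{\boldsymbol j}\circ(\theta^{r-k})_{\boldsymbol n'}$, so $(\theta^r)_{\boldsymbol n}[\mathcal A]\subseteq M$, and $|(\theta^r)_{\boldsymbol n}[\mathcal A]|\geq c_\theta=|M|$ then forces equality. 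Chaining the stacking identity $(\theta^{r+1})_{\boldsymbol j+Q\boldsymbol n}=\theta_{\boldsymbol j}\circ(\theta^r)_{\boldsymbol n}$ with the commutation relations for $\theta^{r+1}$ and for $\theta^r$ gives $\tau\circ\theta_{\boldsymbol j}\circ(\theta^r)_{\boldsymbol n}=\theta_{\boldsymbol j}\circ\tau\circ(\theta^r)_{\boldsymbol n}$ as maps $\mathcal A\to\mathcal A$; since $(\theta^r)_{\boldsymbol n}$ maps $\mathcal A$ onto $M$, this says exactly that $\tau\circ\theta_{\boldsymbol j}=\theta_{\boldsymbol j}\circ\tau$ on $M$. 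A short induction on word length in the columns, using that each column takes minimal sets to minimal sets, then promotes this to $\tau\circ f=f\circ\tau$ on every minimal set, for all $f\in S_\theta$.

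To conclude, recall from Fact~\ref{fact:minset-sym}(i) that $\tau[N]=N$ and from Fact~\ref{fact:minset-sym}(iii) that the common permutation $\tau'\in\Sigma_{[c_\theta]}$ satisfies $\tau'=\nu|_N\circ\tau|_N\circ(\nu|_N)^{-1}$ for every $N\in\mathcal X$. Applying this with $N=f[M]$ and with $N=M$ to the two sides of the asserted identity yields
\[
\tau'\circ\beta_{M,f}^{-1}=\nu|_{f[M]}\circ(\tau\circ f)|_M\circ(\nu|_M)^{-1}=\nu|_{f[M]}\circ(f\circ\tau)|_M\circ(\nu|_M)^{-1}=\beta_{M,f}^{-1}\circ\tau',
\]
where the middle equality is precisely the key lemma. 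The only genuine obstacle is that key lemma: Fact~\ref{fact:minset-sym}(i) supplies commutation of $\tau$ only with large powers of $\theta$, and one must descend to commutation with an individual column. This works because a minimal set is already the full image of a high-power column, so one further application of $\theta$ cannot enlarge it, which is exactly what permits cancelling the extra column on both sides.
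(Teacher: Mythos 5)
Your proof is correct, and it supplies something the paper itself does not contain: Fact~\ref{fact:sym-beta} is recalled from \cite[Sec.~4]{MY} and carries only a \emph{qed} mark (the paper merely records the formula $\beta_{M,f}^{-1}=\nu\circ f\circ(\nu\rvert_M)^{-1}$ of Eq.~\eqref{eq:dfn_sigmas}), so there is no internal argument to compare against. Your construction of $\beta_{M,\boldsymbol{n}}$ coincides with that formula, your verification that $\nu$ restricts to a bijection on every minimal set (via the idempotent $\iota$) is exactly what makes it well defined, and your reduction of the displayed identity to ``$\tau\circ f=f\circ\tau$ on each minimal set'' is the right equivalence: using $\tau'=\nu\rvert_N\circ\tau\rvert_N\circ(\nu\rvert_N)^{-1}$ with $N=f[M]$ and $N=M$, the two sides become $\nu\rvert_{f[M]}\circ(\tau\circ f)\rvert_M\circ(\nu\rvert_M)^{-1}$ and $\nu\rvert_{f[M]}\circ(f\circ\tau)\rvert_M\circ(\nu\rvert_M)^{-1}$, and $\tau$ preserves the minimal sets $M$ and $f[M]$ by Fact~\ref{fact:minset-sym}(i), so the compositions make sense. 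The genuinely new ingredient you provide is the descent from commutation of $\tau$ with the columns of a high power $\theta^r$ (Eq.~\eqref{eq:tau-theta-commute}) to commutation with a single column on a minimal set, by realising $M$ as the full image of a column of $\theta^r$ (your cardinality argument for this is correct, since every column image has cardinality at least $c_\theta$) and cancelling through $(\theta^{r+1})_{\boldsymbol{j}+Q\boldsymbol{n}}=\theta_{\boldsymbol{j}}\circ(\theta^r)_{\boldsymbol{n}}$; the digit-stacking and the surjectivity of $(\theta^r)_{\boldsymbol{n}}$ onto $M$ are used correctly. One caveat worth making explicit: your argument needs Eq.~\eqref{eq:tau-theta-commute} for two consecutive exponents $r$ and $r+1$, i.e.\ the reading ``for \emph{all} sufficiently large $r$'' of Fact~\ref{fact:minset-sym}(i). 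This is the intended content (radius-zero symmetries permute level-$n$ supertiles for every large $n$, cf.\ Fact~\ref{fact:rad-0-symm}(4)), but since commutation at level $r$ does not formally propagate to level $r+1$ when $\theta^r$ is not injective on letters, you should state that reading rather than leave it implicit. Finally, the induction on word length can be dispensed with: for $f\in S_\theta$ a column of $\theta^m$ and $g$ a column of $\theta^r$ with $g[\mathcal{A}]=M$, applying the commutation to the column $f\circ g$ of $\theta^{m+r}$ and to $g$ gives $\tau\circ f=f\circ\tau$ on $M$ in one step.
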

These maps are introduced in \cite{MY} with the notation $\sigma^{ }_{M,j}$ via their inverses for the one-dimensional case. We prefer to use $\beta$ to avoid confusion with the usual notation for the shift map.
Note that the $\beta^{-1}_{M,f}$ is defined by 
\begin{equation}\label{eq:dfn_sigmas}
		\beta_{M,f}^{-1}\coloneqq \nu\circ f\circ (\nu\rvert_M)^{-1}.
	\end{equation}

We end this section with the following stronger version of (ii) in Fact~\ref{fact:minset-sym}, which we will be crucial in constructing analogues of the $\beta_{M,f}$ in Fact~\ref{fact:sym-beta} compatible with extended symmetries.
  
 \begin{lemma}\label{lem:power-idempotent}
   Let $\theta$ be a primitive substitution. Then, there exists a power $k\ge 1$ of $\theta$ such that every minimal set $M$ is realized as some column of $\theta^k$. Moreover, each of these columns acts as the identity on its corresponding minimal set. 
\end{lemma}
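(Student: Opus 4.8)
The plan is to leverage the semigroup structure of the columns together with finiteness. Recall that the columns $\theta_{\boldsymbol{j}}$, $\boldsymbol{j}\in\mathcal{D}$, generate a finite semigroup $S_\theta$ under composition, and that columns of $\theta^k$ are exactly the length-$k$ products of such generators, i.e. the elements of $S_\theta$ realized as words of length $k$ in the generators. Since $S_\theta$ is finite, it contains idempotents, and moreover every element $f\in S_\theta$ has a power $f^{m}$ that is idempotent; the image of such an idempotent is, by definition of $c_\theta$ together with the fact that images can only shrink along products, a minimal set once $f$ is chosen to already have image of minimal cardinality. So the first step is: for each minimal set $M\in\mathcal{X}$, invoke Fact~\ref{fact:minset-sym}(ii) to obtain an idempotent column $\iota_M=(\theta^{k_M})_{\boldsymbol{j}_M}$ with $\mathrm{im}(\iota_M)=M$ that fixes $M$ pointwise.

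The second step is to realize all of these simultaneously at a single power. The set $\mathcal{X}$ of minimal sets is finite (it is a subset of the power set of $\mathcal{A}$), so we have finitely many idempotent columns $\iota_{M_1},\dots,\iota_{M_s}$, occurring at levels $k_{M_1},\dots,k_{M_s}$. Let $K=\mathrm{lcm}(k_{M_1},\dots,k_{M_s})$, or even just $K=k_{M_1}\cdots k_{M_s}$. The key observation is that if $(\theta^{k})_{\boldsymbol{j}}$ is an idempotent column equal to $e\in S_\theta$, then $e=e^{t}$ for every $t\ge 1$, and $e^{t}$ is realized as a column of $\theta^{kt}$ — namely, concatenate the $Q$-adic addresses appropriately: the position $\boldsymbol{j}^{(t)}:=\sum_{i=0}^{t-1}Q^{k i}(\boldsymbol{j})$ (which lies in $\mathcal{D}^{(kt)}$) satisfies $(\theta^{kt})_{\boldsymbol{j}^{(t)}}=\bigl((\theta^{k})_{\boldsymbol{j}}\bigr)^{t}=e$. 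Hence each $\iota_{M_i}$ is realized, unchanged, as a column of $\theta^{K}$ by taking $t=K/k_{M_i}$. Since each such column is still the idempotent $\iota_{M_i}$ with image $M_i$ and fixes $M_i$ pointwise, the power $k:=K$ works: every minimal set is a column of $\theta^{k}$, and each such column acts as the identity on its minimal set.

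The main obstacle — and the point that needs care in writing — is the bookkeeping of $Q$-adic addresses showing that composing a column of $\theta^{k}$ with itself $t$ times is again a column of $\theta^{kt}$ at an explicitly describable position. This is the higher-dimensional analogue of the one-dimensional statement ``the $j$-th column of $\theta^{k}$ composed with the $j'$-th column of $\theta^{k'}$ is a column of $\theta^{k+k'}$''; concretely, from the definition $\theta^{n}(a)=\bigcup_{\boldsymbol{j}\in\mathrm{supp}(\theta^{n-1})}\theta(\theta^{n-1}(a)_{\boldsymbol{j}})+Q(\boldsymbol{j})$ one reads off that $(\theta^{m+n})_{Q^{n}(\boldsymbol{p})+\boldsymbol{q}}=(\theta^{m})_{\boldsymbol{p}}\circ(\theta^{n})_{\boldsymbol{q}}$ for $\boldsymbol{p}\in\mathcal{D}^{(m)}$, $\boldsymbol{q}\in\mathcal{D}^{(n)}$, and then one iterates. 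I would state this composition identity as a short preliminary observation (it also follows from, or is implicit in, the discussion of $Q$-adic decompositions in Section~\ref{sec:subs}) and then the rest is a one-line application of finiteness of $\mathcal{X}$ and of the semigroup $S_\theta$. Everything else — existence of the idempotent per minimal set, the pointwise-fixing property — is already granted by Fact~\ref{fact:minset-sym}(ii).
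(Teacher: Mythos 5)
Your proof is correct, but it takes a genuinely different route from the paper's. The paper does not invoke Fact~\ref{fact:minset-sym}(ii) at all: it works with the synchronising substitution $\tilde{\theta}$ on the collection $\mathcal{X}$ of minimal sets, uses primitivity of $\tilde{\theta}$ and a coincidence argument to produce one uniform power $k'$ such that, for every $M'\in\mathcal{X}$, some column of $\theta^{k'}$ takes values only in $M'$ (hence maps $M'$ onto $M'$), and then passes to $\theta^{k'\cdot c_\theta!}$ so that each such column becomes the identity on its minimal set. You instead take the per-minimal-set idempotent column granted by Fact~\ref{fact:minset-sym}(ii) and synchronise the finitely many levels $k_{M_i}$ via an lcm, using the column-composition identity to realise each idempotent as a column of $\theta^{K}$ at an explicit position. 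What the paper's route buys is self-containedness: it needs only primitivity and the definition of minimal sets, whereas Fact~\ref{fact:minset-sym}(ii) is recalled from \cite{MY} under the standing hypotheses of Section~\ref{sec:minsets} and is phrased for the local map of a radius-zero symmetry (harmless here, since item (ii) does not involve $\tau$ and one may take $\tau=\id$, but worth saying explicitly). What your route buys is brevity and an explicit address $\boldsymbol{j}^{(t)}=\sum_{i=0}^{t-1}Q^{ki}(\boldsymbol{j})$ for the idempotent column in the high power; note that the paper also implicitly relies on the same bookkeeping when it asserts that $[(\theta^{k'})_{\boldsymbol{j}_{M'}}]^{c_\theta!}$ is realised inside $\theta^{k'\cdot c_\theta!}$. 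One small correction to your preliminary observation: with the paper's convention $(\theta^{k})_{\boldsymbol{j}}=\theta_{\boldsymbol{j}_0}\circ\theta_{\boldsymbol{j}_1}\circ\cdots\circ\theta_{\boldsymbol{j}_{k-1}}$ for $\boldsymbol{j}=\sum_{i}Q^{i}(\boldsymbol{j}_i)$, the composition identity reads $(\theta^{m+n})_{Q^{n}\boldsymbol{p}+\boldsymbol{q}}=(\theta^{n})_{\boldsymbol{q}}\circ(\theta^{m})_{\boldsymbol{p}}$, i.e.\ your displayed identity has the two factors in the reverse order; this is immaterial for your application, since you only ever compose an idempotent with itself, but it should be fixed in a final write-up.
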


\begin{proof}
    As discussed in \cite{LMu} (see also \cite{MY}), to each substitution $\theta\colon\mathcal{A}\to\mathcal{A}^\mathcal{D}$ we may construct another substitution $\tilde{\theta}\colon\mathcal{X}\to\mathcal{X}^\mathcal{D}$ called the \emph{synchronising part} of $\theta$, where $\mathcal{X}$ is the collection of all minimal sets of the substitution $\theta$, given by:
        \[ \tilde{\theta}(M) = \bigcup_{\boldsymbol{j} \in \supp(\theta)} \theta_{\boldsymbol{j}}(M),\]
    where the right hand side is seen as a pattern in $\mathcal{X}^\mathcal{D}$.  Recall that any column maps a minimal set $M\in\mathcal{X}$ to another $M'\in\mathcal{X}$, making $\tilde{\theta}$ well-defined. It is not hard to verify, as well, that $\widetilde{\theta^k}=\tilde{\theta}^k$, thus we may replace $\theta$ by any situable power if so required.
    
    If $\theta$ is primitive, then $\tilde{\theta}$ is as well \cite{LMu}. Our result will follow from the same reasoning as in the proof of this statement given therein. If  $M=(\theta^k)_{\boldsymbol{j}}(\mathcal{A})$ is a minimal set, then for any other minimal set $M'$ we would have $(\theta^k)_{\boldsymbol{j}}(M')\subseteq (\theta^k)_{\boldsymbol{j}}(\mathcal{A})=M$, where this inclusion is forced to be an equality since  $\lvert(\theta^k)_{\boldsymbol{j}}(M')\rvert = c_\theta$ because of the minimality of $c_\theta$. Thus, $\tilde{\theta}$ must have a coincidence in its $k$-th power and, consequently, every power of $\tilde{\theta}^k$ has also a column where only $M$ appears. Iterating this argument with $\tilde{\theta}^k$ and the remaining minimal sets we eventually get a power $\tilde{\theta}^{k'}$ which has, for every minimal set $M'\in\mathcal{X}$, a column $(\tilde{\theta}^{k'})_{{\boldsymbol{j}}_{M'}}$ where only $M'$ appears.
    
    As the union of all minimal sets is $\mathcal{A}$, this implies that in the corresponding column $(\theta^{k'})_{{\boldsymbol{j}}_{M'}}$ only letters from $M'$ appear; in particular, by minimality, $(\theta^{k'})_{{\boldsymbol{j}}_{M'}}(M')=M'$ for all minimal sets $M'\in\mathcal{X}$. Since each $M'$ is finite and of cardinality $c_\theta$, it is then clear that $[(\theta^{k'})_{{\boldsymbol{j}}_{M'}}]^{c_\theta!}$ is the identity on $M'$. Thus, $\theta^{k'\cdot c_\theta!}$ is the desired power.
\end{proof}

\subsection{The MEF, the $\kappa$-map, and irregular fibres}\label{sec:mef-kappa}

Any TDS $(\mathbb{X},\mathbb{Z}^d)$ admits a \emph{maximal equicontinuous factor} (MEF), which is a rotation on a compact Abelian group $\mathbb{A}$, together with the corresponding factor map $\pi_{\rm MEF}\colon\X\to\mathbb{A}$. While for many kinds of systems this group $\mathbb{A}$ is trivial, in our setting it is guaranteed to be an infinite group, being a product of a generalised odometer $\Z_Q$ and a finite group. In particular, one may always define a \emph{tiling factor} $\pi_{\rm tile}\colon\X_\theta\to\Z_Q$, which is determined by the supertile structure of a point in $x$, in the sense that the first $k$ digits of $\pi_{\rm tile}(x)$ determine the position of the $k$-th order supertile in $x$; in the \emph{trivial height} case (discussed in Section~\ref{sec:height}), $\pi_{\rm tile}$ coincides with $\pi_{\rm MEF}$, while in the general case one may define $\pi_{\rm MEF}$ from $\pi_{\rm tile}$ with some additional information.

When the group $\mathbb{A}$ is nontrivial, any element $\alpha \in \mathcal{C}(\mathbb{X})$ induces a translation on $\mathbb{A}$ given by some $\kappa(\alpha)\in \mathbb{A}$. This induces a group homomorphism $\kappa\colon\mathcal{C}(\mathbb{X})\to\mathbb{A}$, which is  defined by the equation $
\pi(\alpha(x))=\kappa(\alpha)+\pi(x)$. This map extends to a $\mathrm{GL}(d,\mathbb{Z})$-cocycle for $\mathcal{N}(\mathbb{X})$ and is given in the following result.

\begin{theorem}[\hspace{-0.02em}{\cite[Thm.~5]{BRY}}]
Let $(\mathbb{X},\mathbb{Z}^d)$ be as above and assume additionally that it is transitive. Let $\mathbb{A}$ be its MEF and $\pi^{ }_{\textnormal{MEF}}\colon \mathbb{X}\to \mathbb{A}$ be the corresponding factor map, where the induced $\mathbb{Z}^d$-action has dense range in $\mathbb{A}$. 

If $\kappa(\mathbb{Z}^d)$ is a free Abelian group (that is, torsion-free, then there is a cocycle extension of $\kappa$ on $\mathcal{N}(\mathbb{X})$ defined by 
\[
\kappa(\Psi\circ\Phi)=\kappa(\Psi)+\zeta(\Psi)(\kappa(\Phi)),
\]
where $\zeta\colon \mathcal{N}(\mathbb{X})\to \textnormal{Aut}(\mathbb{A})$. The induced mapping on $\mathbb{A}$ is given by $z\mapsto \kappa(\Phi)+\zeta(\Phi)(z)$. \qed
\end{theorem}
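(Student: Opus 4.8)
The plan is to show that every extended symmetry $\Phi\in\mathcal{N}(\mathbb{X})$ descends to the MEF as an \emph{affine} self-homeomorphism of $\mathbb{A}$, and then to read off the cocycle identity by composing such affine maps. Write $A=\psi(\Phi)\in\textnormal{GL}(d,\mathbb{Z})$, let $T_a\colon\mathbb{A}\to\mathbb{A}$ denote translation by $a\in\mathbb{A}$, and let $\iota\colon\mathbb{Z}^d\to\mathbb{A}$, $\iota(\boldsymbol{n}):=\kappa(\sigma^{ }_{\boldsymbol{n}})$, be the dense homomorphism encoding the translation structure of the MEF, so that $\pi\circ\sigma^{ }_{\boldsymbol{n}}=T_{\iota(\boldsymbol{n})}\circ\pi$. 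The first step is descent. Let $S_{\mathrm{eq}}$ be the equicontinuous structure relation, i.e.\ the smallest closed $\mathbb{Z}^d$-invariant equivalence relation with $\mathbb{X}/S_{\mathrm{eq}}\cong\mathbb{A}$ equicontinuous. Since $\Phi$ conjugates the action via the \emph{automorphism} $A$ of $\mathbb{Z}^d$, the pushforward $\Phi(S_{\mathrm{eq}})$ is again such a relation; minimality of $S_{\mathrm{eq}}$ applied to both $\Phi$ and $\Phi^{-1}$ forces $\Phi(S_{\mathrm{eq}})=S_{\mathrm{eq}}$. Hence $\Phi$ induces a unique homeomorphism $\overline{\Phi}\colon\mathbb{A}\to\mathbb{A}$ with $\pi\circ\Phi=\overline{\Phi}\circ\pi$, and this assignment is functorial: $\overline{\Psi\circ\Phi}=\overline{\Psi}\circ\overline{\Phi}$.

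The second step extracts the affine form of $\overline{\Phi}$. Combining $\Phi\circ\sigma^{ }_{\boldsymbol{n}}=\sigma^{ }_{A\boldsymbol{n}}\circ\Phi$ with the two semiconjugacies above and cancelling the surjection $\pi$ yields
\[
\overline{\Phi}\bigl(z+\iota(\boldsymbol{n})\bigr)=\overline{\Phi}(z)+\iota(A\boldsymbol{n})\qquad\text{for all }z\in\mathbb{A},\ \boldsymbol{n}\in\mathbb{Z}^d.
\]
Setting $\kappa(\Phi):=\overline{\Phi}(0)$ and $\zeta(\Phi):=T_{-\kappa(\Phi)}\circ\overline{\Phi}$, this becomes $\zeta(\Phi)(z+\iota(\boldsymbol{n}))=\zeta(\Phi)(z)+\zeta(\Phi)(\iota(\boldsymbol{n}))$, so $\zeta(\Phi)$ is additive along the dense subgroup $\iota(\mathbb{Z}^d)$. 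Fixing $z$, the map $w\mapsto\zeta(\Phi)(z+w)-\zeta(\Phi)(z)-\zeta(\Phi)(w)$ is continuous and vanishes on $\iota(\mathbb{Z}^d)$, hence vanishes identically; thus $\zeta(\Phi)$ is a genuine continuous endomorphism of $\mathbb{A}$ fixing $0$. Being a composition of homeomorphisms, $\zeta(\Phi)=T_{-\kappa(\Phi)}\circ\overline{\Phi}$ is itself a homeomorphism, so $\zeta(\Phi)\in\textnormal{Aut}(\mathbb{A})$ and $\overline{\Phi}(z)=\kappa(\Phi)+\zeta(\Phi)(z)$.

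The free-Abelian hypothesis on $\kappa(\mathbb{Z}^d)=\iota(\mathbb{Z}^d)$ is what lets us treat $\zeta(\Phi)$ as canonically determined by $A$: taking $z=0$ gives $\zeta(\Phi)\rvert_{\iota(\mathbb{Z}^d)}\colon\iota(\boldsymbol{n})\mapsto\iota(A\boldsymbol{n})$, and torsion-freeness identifies this dense subgroup with a free quotient of $\mathbb{Z}^d$ on which $\textnormal{GL}(d,\mathbb{Z})$ acts unambiguously, so that the continuous extension $\zeta(\Phi)$ depends only on $\psi(\Phi)$ (any two extended symmetries with the same linear part differ by an element of $\mathcal{C}(\mathbb{X})=\ker\psi$, whose induced map is a pure translation). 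It then remains to compose. Using $\overline{\Psi\circ\Phi}=\overline{\Psi}\circ\overline{\Phi}$ and the affine forms,
\[
\overline{\Psi\circ\Phi}(z)=\kappa(\Psi)+\zeta(\Psi)\bigl(\kappa(\Phi)+\zeta(\Phi)(z)\bigr)=\bigl(\kappa(\Psi)+\zeta(\Psi)(\kappa(\Phi))\bigr)+\zeta(\Psi)\zeta(\Phi)(z).
\]
Evaluating at $z=0$ gives the cocycle identity $\kappa(\Psi\circ\Phi)=\kappa(\Psi)+\zeta(\Psi)(\kappa(\Phi))$, and comparing linear parts gives $\zeta(\Psi\circ\Phi)=\zeta(\Psi)\circ\zeta(\Phi)$, so $\zeta\colon\mathcal{N}(\mathbb{X})\to\textnormal{Aut}(\mathbb{A})$ is a homomorphism. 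Restricting to $\mathcal{C}(\mathbb{X})$, where $A=\textnormal{id}$ and hence $\zeta\equiv\textnormal{id}_{\mathbb{A}}$, recovers the defining equation $\pi(\alpha(x))=\kappa(\alpha)+\pi(x)$, confirming that this is an extension of $\kappa$.

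I expect the main obstacle to be the descent-plus-affineness step, namely proving that $\overline{\Phi}$ is not merely a homeomorphism but affine. This rests on the continuity-and-density upgrade from additivity on $\iota(\mathbb{Z}^d)$ to a homomorphism on all of $\mathbb{A}$, and on pinning down exactly how torsion-freeness of $\kappa(\mathbb{Z}^d)$ makes the linear part a well-defined automorphism determined by $A$ rather than an endomorphism ambiguous up to $\ker\iota$; isolating the precise point where torsion in $\kappa(\mathbb{Z}^d)$ (as would arise from a nontrivial finite equicontinuous factor) would break the canonical translation/linear splitting is the delicate part of the verification.
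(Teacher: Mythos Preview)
The paper does not contain a proof of this theorem: it is quoted verbatim from \cite{BRY} (hence the citation in the header and the bare \qed\ after the statement), so there is nothing in the present paper to compare your argument against. That said, your reconstruction is the standard one and is essentially how the result is established in \cite{BRY}: one shows that any $\Phi\in\mathcal{N}(\X)$ preserves the equicontinuous structure relation and therefore descends to a self-homeomorphism $\overline{\Phi}$ of $\mathbb{A}$, then uses the intertwining relation $\Phi\circ\sigma_{\boldsymbol{n}}=\sigma_{A\boldsymbol{n}}\circ\Phi$ together with density of $\iota(\Z^d)$ to force $\overline{\Phi}$ to be affine, after which the cocycle identity drops out of composing affine maps. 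Your density-and-continuity upgrade from additivity on $\iota(\Z^d)$ to a genuine group endomorphism of $\mathbb{A}$ is correct as written.

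One point worth sharpening: as you yourself flag in the closing paragraph, the torsion-freeness hypothesis on $\kappa(\Z^d)$ is not actually needed for the existence of $\zeta(\Phi)\in\Aut(\mathbb{A})$ or for the cocycle identity---both follow from your argument without it. Its role in \cite{BRY} is rather to guarantee that $\zeta$ factors through $\psi\colon\mathcal{N}(\X)\to\mathrm{GL}(d,\Z)$, i.e.\ that $\zeta(\Phi)$ depends only on the matrix $A=\psi(\Phi)$; torsion in $\iota(\Z^d)$ would allow two elements of $\mathcal{C}(\X)$ to induce distinct automorphisms of $\mathbb{A}$ (not just translations), spoiling this. Your sentence ``any two extended symmetries with the same linear part differ by an element of $\mathcal{C}(\mathbb{X})=\ker\psi$, whose induced map is a pure translation'' is exactly the place where this is used, and it would be worth making explicit that ``pure translation'' here relies on $\zeta\rvert_{\mathcal{C}(\X)}=\id_{\mathbb{A}}$, which in turn uses that $\iota$ is injective on the relevant part of $\Z^d$.
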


In the setting of the previous theorem, we call
$\kappa(\Phi):=\pi(\Phi(x))-\zeta(R)(\Phi(x))\in \mathbb{A}$  the \emph{$\kappa$-value} of $\Phi$. 
For symmetries (i.e., whenever $A=\text{id}$), we have the following results regarding the $\kappa$ map for one-dimensional subshifts generated by constant-length substitutions. 
Let $\theta$ be a primitive length-$L$ substitution on $\mathcal{A}$ with trivial height and column number $c_{\theta}$. 
It is well known that $\mathbb{X}_{\theta}$ is almost $c$-to-$1$ over the MEF $\mathbb{A}$. 
The set of \emph{irregular fibres} over the MEF comprises of elements $\boldsymbol{z}\in\mathbb{A}$ such that $|\pi^{-1}_{\text{MEF}}(z)|>c_{\theta}$.
Let us denote by $\mathcal{Z}_{\theta}^{(m)}:=\left\{\boldsymbol{z}\in \mathbb{A}\colon |\pi^{-1}_{\text{MEF}}(\boldsymbol{z})|=m\right\}$. In one dimension, there is a graph-theoretic representation of the set $\mathcal{Z}_{\theta}=\bigcup_{m>c} \mathcal{Z}^{(m)}_{\theta} $ of irregular fibres \cite[Lem.~3.12]{CQY}, which is a sofic shift that can be completely derived from $\mathcal{G}{(\theta)}$; see also \cite{FS,Rob}) for higher-dimensional examples.

The following set of results gives some details on what is known for symmetries in the one-dimensional case; contrast Example~\ref{ex:kappa-nonzero} to see how the situation differs for the normaliser.

\begin{fact}\label{fact:rad-0-symm}
Let $\theta$ be a primitive length-$L$ substitution with height $1$ and column number $c$, and let $\mathbb{X}_{\theta}$ be the subshift it defines. 
\begin{enumerate}
\item For any automorphism $\alpha\in \mathcal{C}(\mathbb{X}_{\theta})$, $\kappa(\alpha)\in \mathbb{Q}$.  {\textnormal{\cite[Prop.~3.24]{CQY}}}.
\item For $m> c$,  and any automorphism $\alpha $ one has $\kappa(\alpha)+\mathcal{Z}_{\theta}^{(m)}\subseteq \mathcal{Z}_{\theta}^{(m)}$. That is, $\kappa(\alpha)$ sends irregular fibres to irregular fibres. 
\item If $\alpha$ is an automorphism of radius $0$, then $\kappa(\alpha)=0$. The converse true if $\theta$ is strongly injective {\textnormal{\cite[Rem.~28]{MY}}}.
\item An $\alpha$ is an automorphism of radius $0$ if and only if it is supertile-shuffling, i.e., it induces a permutation of level-$n$ supertiles $\left\{\theta^n(a)\colon a\in \mathcal{A}\right\}$, for $n$ large enough {\textnormal{\cite[Prop.~27]{MY}}}. 
\end{enumerate}
\qed
\end{fact}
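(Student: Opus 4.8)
All four assertions are restatements of results from \cite{CQY,MY}, so the plan is to indicate how each is extracted from the tools assembled above rather than to rebuild the theory. Throughout write $\mathbb{A}=\mathbb{Z}_L$ for the tiling factor, which here coincides with the maximal equicontinuous factor, and recall that constant length forces $\theta\circ\sigma=\sigma^L\circ\theta$, hence $\pi_{\rm MEF}\circ\theta=L\cdot\pi_{\rm MEF}$.

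For \textbf{(1)}, I would invoke the uniform bound on automorphism radii from \cite{CQY}. Recognizability allows one to define a \emph{derivative} operation $\partial$ on $\mathcal{C}(\mathbb{X}_\theta)$, with $\partial\alpha$ essentially $\theta^{-1}\circ\alpha\circ\theta$; the radius of $\partial\alpha$ is at most roughly $\mathrm{radius}(\alpha)/L$ plus a universal constant, so after finitely many steps all iterates $\partial^n\alpha$ have radius below a fixed bound and therefore range over a finite set. Consequently $(\partial^n\alpha)_{n\geq 0}$ is eventually periodic. A computation with the defining relation of $\kappa$ and the identity $\pi_{\rm MEF}\circ\theta=L\cdot\pi_{\rm MEF}$ shows that passing from $\alpha$ to $\partial\alpha$ shifts the base-$L$ digit expansion of $\kappa(\alpha)$ by one place; eventual periodicity of the orbit then forces this expansion to be eventually periodic, which is equivalent to $\kappa(\alpha)\in\mathbb{Q}$.

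For \textbf{(2)}, the argument is soft: since $\alpha$ commutes with the shift it descends through $\pi_{\rm MEF}$ to the rotation $z\mapsto z+\kappa(\alpha)$ on $\mathbb{A}$, and being a homeomorphism it carries each fibre bijectively onto a fibre, $\alpha\bigl(\pi_{\rm MEF}^{-1}(z)\bigr)=\pi_{\rm MEF}^{-1}\bigl(z+\kappa(\alpha)\bigr)$. Hence $|\pi_{\rm MEF}^{-1}(z)|=|\pi_{\rm MEF}^{-1}(z+\kappa(\alpha))|$ for every $z$, so translation by $\kappa(\alpha)$ preserves each level set $\mathcal{Z}_\theta^{(m)}$ and in particular $\mathcal{Z}_\theta=\bigcup_{m>c}\mathcal{Z}_\theta^{(m)}$.

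For \textbf{(3)} and \textbf{(4)}, I would lean on Fact~\ref{fact:minset-sym}. A radius-$0$ automorphism is a letter-exchange $\tau\in\Sigma_{\mathcal{A}}$ which, by part (i) of that fact and \eqref{eq:tau-theta-commute}, commutes with every column of $\theta^r$ for $r$ large; therefore $\tau$ sends the $r$-supertile $\theta^r(a)$ to $\theta^r(\tau(a))$ without moving it in the hierarchy. This simultaneously exhibits $\tau$ as supertile-shuffling and shows that it acts trivially on the tiling factor, so $\kappa(\tau)=0$; this yields the forward direction of \textbf{(3)} and one implication of \textbf{(4)}. Conversely, if $\alpha$ permutes the level-$n$ supertiles while fixing their positions, then on the interior of each supertile $\alpha(x)_{\boldsymbol{i}}$ depends only on the letter of the supertile containing $\boldsymbol{i}$, and recognizability together with aperiodicity lets one read off a genuine permutation of $\mathcal{A}$ inducing $\alpha$, so $\alpha$ has radius $0$; this closes \textbf{(4)}. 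The remaining implication of \textbf{(3)}, that $\kappa(\alpha)=0$ forces radius $0$ when $\theta$ is strongly injective, is the one genuinely delicate point: $\kappa(\alpha)=0$ only yields the fibrewise invariance $\alpha\bigl(\pi_{\rm MEF}^{-1}(z)\bigr)=\pi_{\rm MEF}^{-1}(z)$, and strong injectivity is precisely the hypothesis forbidding $\alpha$ from permuting the finitely many points of a fibre in a way not induced by any map of $\mathcal{A}$. I expect this last step — upgrading $\kappa$-triviality to a letter-exchange map under strong injectivity, as in \cite[Rem.~28]{MY} — to be the main obstacle, everything else being bookkeeping with the cocycle or a direct consequence of the minimal-set formalism.
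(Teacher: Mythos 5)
The paper does not actually prove this Fact: it is a compendium of results imported verbatim from \cite{CQY} and \cite{MY} (and the fibre statement is the specialisation of Proposition~\ref{prop:ext-sym-kappa}), stated with citations and a \qed, so there is no internal argument to compare yours against line by line. Judged on its own terms, your reconstruction is sound where it is explicit: the contraction-of-radius/derivative argument for \textbf{(1)} is a legitimate route to rationality (the digit relation $\kappa(\alpha)=L\kappa(\partial\alpha)+r$ plus eventual periodicity of $(\partial^n\alpha)$ in a finite set of bounded-radius maps is exactly the right mechanism, and recognizability makes $\partial$ well defined); the soft argument for \textbf{(2)} is precisely the fibre-cardinality argument behind Proposition~\ref{prop:ext-sym-kappa}; and the forward directions of \textbf{(3)} and \textbf{(4)} follow, as you say, from Fact~\ref{fact:minset-sym} and Eq.~\eqref{eq:tau-theta-commute}, since a radius-zero $\tau$ then replaces each $n$-supertile in place, leaving the tiling coordinate untouched.

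Two points fall short of a proof, though. The converse of \textbf{(3)} you openly defer to \cite[Rem.~28]{MY}; since the paper does the same, that is acceptable, but it is not an argument. More substantively, your converse of \textbf{(4)} has a gap: knowing that $\alpha$ replaces the level-$n$ supertile $\theta^n(a)$ at its position by $\theta^n(\pi_n(a))$ only shows that $\alpha(x)_{\boldsymbol{i}}$ is a function of the pair (supertile type, offset within the supertile), i.e.\ $\alpha(x)_{\boldsymbol{i}}=\theta^n_{j}(\pi_n(a))$ when $x_{\boldsymbol{i}}=\theta^n_j(a)$. Radius zero requires the strictly stronger consistency that $\theta^n_j(a)=\theta^n_{j'}(a')$ forces $\theta^n_j(\pi_n(a))=\theta^n_{j'}(\pi_n(a'))$, so that the value depends on the single letter $x_{\boldsymbol{i}}$ alone; ``recognizability together with aperiodicity'' does not deliver this by itself. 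Closing it needs an extra ingredient — e.g.\ the compatibility of the $\pi_n$ across levels forced by unique desubstitution together with the a priori finite CHL radius of $\alpha$ (or the minimal-set/column formalism of Section~\ref{sec:minsets}) — and that is the actual content of \cite[Prop.~27]{MY} which your sketch elides.
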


Let $\theta$ be a primitive substitution in $\mathbb{Z}^d$, with trivial height lattice and column number $c_{\theta}$, and let $\mathbb{X}_{\theta}$ be the subshift it defines. The following proposition relates the $\kappa$-cocyle and irregular fibres. 

\begin{proposition}[\hspace{-0.02em}{\cite[Cor.~3]{BRY}}]\label{prop:ext-sym-kappa}
For $m> c_{\theta}$,  and any $\Phi\in\mathcal{N}(\mathbb{X}_{\theta})$, one has the inclusion $\kappa(\Phi)+\zeta(\Phi)\left(\mathcal{Z}^{(m)}_{\theta}\right)\subseteq \mathcal{Z}^{(m)}_{\theta}$. That is, $\kappa(\Phi)$ sends irregular fibres to irregular fibres along directions dictated by $\zeta(\Phi)$ (in fact, this holds for more general subshifts). \qed
\end{proposition}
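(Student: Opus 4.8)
The plan is to derive the inclusion purely from the cocycle structure on $\mathcal{N}(\mathbb{X}_\theta)$ recorded in \cite[Thm.~5]{BRY} (stated just above), together with the elementary fact that every $\Phi\in\mathcal{N}(\mathbb{X}_\theta)$ is a homeomorphism, hence a bijection, of $\mathbb{X}_\theta$. Write $\pi=\pi_{\mathrm{MEF}}\colon\mathbb{X}_\theta\to\mathbb{A}$.

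First I would record the relevant semiconjugacy. By the cited theorem, each $\Phi\in\mathcal{N}(\mathbb{X}_\theta)$ descends to the MEF: the affine homeomorphism $\hat\Phi\colon\mathbb{A}\to\mathbb{A}$ given by $\hat\Phi(\boldsymbol z)=\kappa(\Phi)+\zeta(\Phi)(\boldsymbol z)$ satisfies $\pi\circ\Phi=\hat\Phi\circ\pi$. Since $\zeta$ is a homomorphism into $\mathrm{Aut}(\mathbb{A})$ and $\kappa$ is a $\zeta$-cocycle, the assignment $\Phi\mapsto\hat\Phi$ is compatible with composition; applying it to $\Phi^{-1}\in\mathcal{N}(\mathbb{X}_\theta)$ shows that $\hat\Phi$ is invertible on $\mathbb{A}$ with $\widehat{\Phi^{-1}}=(\hat\Phi)^{-1}$.

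Second I would show that $\Phi$ restricts to a bijection between fibres. Fix $\boldsymbol z\in\mathbb{A}$. If $x\in\pi^{-1}(\boldsymbol z)$ then $\pi(\Phi(x))=\hat\Phi(\boldsymbol z)$, so $\Phi(\pi^{-1}(\boldsymbol z))\subseteq\pi^{-1}(\hat\Phi(\boldsymbol z))$. Applying the same reasoning to $\Phi^{-1}$ at the point $\hat\Phi(\boldsymbol z)$ gives $\Phi^{-1}(\pi^{-1}(\hat\Phi(\boldsymbol z)))\subseteq\pi^{-1}(\boldsymbol z)$, i.e., $\pi^{-1}(\hat\Phi(\boldsymbol z))\subseteq\Phi(\pi^{-1}(\boldsymbol z))$. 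Hence $\Phi$ maps $\pi^{-1}(\boldsymbol z)$ onto $\pi^{-1}(\hat\Phi(\boldsymbol z))$, and it does so injectively, being a restriction of the bijection $\Phi$. In particular $\lvert\pi^{-1}(\hat\Phi(\boldsymbol z))\rvert=\lvert\pi^{-1}(\boldsymbol z)\rvert$ for every $\boldsymbol z\in\mathbb{A}$.

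Finally I would conclude. If $\boldsymbol z\in\mathcal{Z}^{(m)}_\theta$, i.e., $\lvert\pi^{-1}(\boldsymbol z)\rvert=m$, then $\lvert\pi^{-1}(\kappa(\Phi)+\zeta(\Phi)(\boldsymbol z))\rvert=m$ as well, so $\kappa(\Phi)+\zeta(\Phi)(\boldsymbol z)\in\mathcal{Z}^{(m)}_\theta$; this is precisely $\kappa(\Phi)+\zeta(\Phi)(\mathcal{Z}^{(m)}_\theta)\subseteq\mathcal{Z}^{(m)}_\theta$. Running the argument with $\Phi^{-1}$ in fact upgrades this to an equality, and the hypothesis $m>c_\theta$ is used only to identify these sets as the irregular fibres — the counting argument itself is valid for every $m$. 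The only point needing any care is the functoriality $\widehat{\Phi^{-1}}=(\hat\Phi)^{-1}$ (equivalently, that passage to the MEF respects inverses), but this is already encoded in the cocycle identities $\zeta(\Psi\circ\Phi)=\zeta(\Psi)\,\zeta(\Phi)$ and $\kappa(\Psi\circ\Phi)=\kappa(\Psi)+\zeta(\Psi)(\kappa(\Phi))$ of the cited theorem; once that result is invoked, the proof is a one-line bijection count and there is no substantial obstacle.
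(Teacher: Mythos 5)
Your proposal is correct: the semiconjugacy $\pi\circ\Phi=\hat\Phi\circ\pi$ with $\hat\Phi(\boldsymbol z)=\kappa(\Phi)+\zeta(\Phi)(\boldsymbol z)$, applied to both $\Phi$ and $\Phi^{-1}$ (where $\widehat{\Phi^{-1}}=(\hat\Phi)^{-1}$ follows either from the cocycle identities as you say, or even more directly from surjectivity of $\pi$), gives a bijection between $\pi^{-1}(\boldsymbol z)$ and $\pi^{-1}(\hat\Phi(\boldsymbol z))$, and the cardinality count yields the inclusion. The paper itself offers no proof, quoting the statement from \cite[Cor.~3]{BRY}, and your fibre-counting argument is exactly the standard one underlying that citation, so there is nothing to add.
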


A more thorough discussion on the MEF (including a closed form for $\mathbb{A}$) in our setting is carried out in Section~\ref{sec:height}.

\section{Supertile-shuffling extended symmetries}\label{sec:supertile-shuff}

In this section, we present an algebraic characterisation of the existence of non-trivial radius-$0$ extended symmetries that are supertile-shuffling. 
We use an approach based on \cite{MY} for symmetries, which exploits the structure of minimal sets for constant-length substitutions, and which generalises the results in \cite{BLM} for bijective substitutions. 

\subsection{Supertile-shuffling}
Suppose the map $A\in \text{GL}(d,\mathbb{Z})$ preserves the block $B$ in $\mathbb{Z}^d$, that is $A[B]=\{A\boldsymbol{n}\::\:\boldsymbol{n}\in B\}$ is a translate of $B$.
Let $\boldsymbol{k}:=\boldsymbol{k}(B)$ the vector that translates $B$ and situates its center at the origin. 
The map $A\odot\boldsymbol{n}:\boldsymbol{n}\mapsto A\boldsymbol{n} +A\boldsymbol{k}-\boldsymbol{k}$ is a bijection from $B$ to itself (and, indeed, a group action of the set of matrices associated to block-preserving maps on $B$). From now on, we will use the notation $A\odot\boldsymbol{n}$ for this operation, and assume without further comment that rotations, reflections, etc., map the block $B$ to itself.

The following result generalises \cite[Prop.~3.21]{CQY} on radius-$0$ symmetries for constant length substitutions. 

\begin{proposition}\label{prop:supertile-shuffling}
Let $\theta$ be an aperiodic and primitive block substitution on a finite alphabet $\mathcal{A}$ in $\mathbb{Z}^d$. 
Suppose further that $A\in \textnormal{GL}(d,\mathbb{Z})$ satisfies $A^{-1}\odot\textnormal{supp}(\theta^n)=\textnormal{supp}(\theta^n)$, for all $n$. 
If a letter exchange map $\tau\in \Sigma_{\mathcal{A}}$ together with $A$ satisfies that, for some $N\in\N$ and for all $a\in\mathcal{A},\boldsymbol{i}\in\supp(\theta^n)$ and $n\ge N$,
\begin{equation}\label{eq:supertile-shuff}
\theta^n_{\boldsymbol{i}}(\tau(a))=\tau(\theta^n_{A^{-1}\odot \boldsymbol{i}}(a)),
\end{equation}
then $(\tau,A)$ defines a radius-$0$ extended symmetry.
\end{proposition}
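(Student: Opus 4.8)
The plan is to construct the candidate extended symmetry explicitly on supertiles and then check it is a well-defined homeomorphism conjugating the shift action by $A$. First I would fix $N$ as in the hypothesis and replace $\theta$ by a power $\theta^{N}$ (legitimate since $\mathcal{L}_\theta=\mathcal{L}_{\theta^N}$ and $\mathbb{X}_\theta=\mathbb{X}_{\theta^N}$ for primitive $\theta$, and since the hypothesis on $A$ is stated for all $n$), so that \eqref{eq:supertile-shuff} holds for every $n\ge 1$. Next, using recognizability (guaranteed by aperiodicity, as recalled in Section~\ref{sec:subs}), every $x\in\mathbb{X}_\theta$ has, for each $n$, a unique decomposition $x=\sigma_{\boldsymbol{j}_n}\circ\theta^n(y^{(n)})$ with $\boldsymbol{j}_n\in\mathcal{D}^{(n)}=\supp(\theta^n)$. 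I would define $\Phi$ by declaring that it sends the $n$-supertile $\theta^n(a)$ sitting at position $\boldsymbol{p}$ in $x$ (i.e. occupying $\boldsymbol{p}+Q^n\mathbb{Z}^d$-coset data) to the $n$-supertile $\theta^n(\tau(a))$, placed at the transformed position, with the linear component $f_A$ acting on the macroscopic lattice of supertile positions. Concretely, on a single $n$-supertile, the identity \eqref{eq:supertile-shuff} says exactly that applying $\tau$ letterwise and then reading off column $\boldsymbol{i}$ equals reading off column $A^{-1}\odot\boldsymbol{i}$ and then applying $\tau$; so $\tau$ intertwines $\theta^n(a)$ with $\theta^n(\tau(a))$ precomposed with the coordinate permutation $\boldsymbol{i}\mapsto A^{-1}\odot\boldsymbol{i}$ of $\supp(\theta^n)$. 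This is the microscopic statement that, glued across all supertiles, yields the macroscopic conjugacy $\Phi\circ\sigma_{\boldsymbol{n}}=\sigma_{A\boldsymbol{n}}\circ\Phi$.

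The key steps, in order, would be: (1) Show $\Phi$ is well-defined, i.e. the definition via the level-$n$ decomposition is consistent as $n$ varies — this follows because the level-$(n+1)$ decomposition refines the level-$n$ one, and \eqref{eq:supertile-shuff} is compatible with the recursive definition $\theta^{n+1}(a)=\bigcup_{\boldsymbol{j}}\theta(\theta^n(a)_{\boldsymbol{j}})+Q(\boldsymbol{j})$ together with the fact that $A^{-1}\odot$ is multiplicative with respect to the $Q$-adic digit decomposition (here one uses $A^{-1}\odot\supp(\theta^n)=\supp(\theta^n)$ for all $n$, plus the block structure so that $A^{-1}\odot$ acts diagonally on the digit expansion $[\boldsymbol{n}_{n-1},\dots,\boldsymbol{n}_0]$). (2) Show $\Phi$ maps $\mathbb{X}_\theta$ into itself: since $\Phi(x)$ is built entirely out of legal supertiles $\theta^n(\tau(a))$ arranged exactly as the supertiles of $x$ were (up to the fixed coordinate permutation, which preserves legality of patterns because $A^{-1}\odot$ permutes $\supp(\theta^n)$), every finite pattern of $\Phi(x)$ lies in $\mathcal{L}_\theta$. (3) Show $\Phi$ is continuous with a radius-$0$ local rule: from the formula, the symbol $(\Phi x)_{\boldsymbol{m}}$ depends only on which supertile (of any fixed level) $\boldsymbol{m}$ belongs to and on the corresponding letter of $x$, and in fact unwinding the definition shows $(\Phi x)_{A\boldsymbol{m}}=\tau(x_{\boldsymbol{m}})$, i.e. $\Phi=\tau\circ f_A$ with $\tau$ the letter exchange — the radius is $0$. (4) Verify the conjugacy relation $\Phi\circ\sigma_{\boldsymbol{n}}=\sigma_{A\boldsymbol{n}}\circ\Phi$, which is immediate from $(\Phi x)_{A\boldsymbol{m}}=\tau(x_{\boldsymbol{m}})$. (5) Show $\Phi$ is invertible: since $\tau\in\Sigma_{\mathcal{A}}$ and $A\in\text{GL}(d,\mathbb{Z})$, the candidate inverse is $(\tau^{-1}, A^{-1})$, and one checks $\tau^{-1}$ together with $A^{-1}$ satisfies the analogue of \eqref{eq:supertile-shuff} — apply $\tau^{-1}$ to both sides of \eqref{eq:supertile-shuff} and reindex $\boldsymbol{i}\mapsto A\odot\boldsymbol{i}$ — so $\Phi^{-1}$ is an extended symmetry by the same construction, whence $\Phi\in\mathcal{N}(\mathbb{X}_\theta)$ with $\psi(\Phi)=A$.

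I expect the main obstacle to be Step (1), the consistency/well-definedness across levels, because it is where the compatibility of the combinatorial hypothesis \eqref{eq:supertile-shuff} with the hierarchical $Q$-adic structure is genuinely used: one must check that the coordinate permutation $A^{-1}\odot(-)$ of $\supp(\theta^{n+1})$ restricts correctly to each sub-block $\theta(\theta^n(a)_{\boldsymbol{j}})+Q(\boldsymbol{j})$, i.e. that $A^{-1}\odot$ commutes with the operation $\boldsymbol{j}\mapsto Q(\boldsymbol{j})+\mathcal{D}$ of passing from level $n$ to level $n+1$. The block (hyperrectangular) hypothesis on $\supp(\theta)$ and the recentering convention $A\odot\boldsymbol{n}=A\boldsymbol{n}+A\boldsymbol{k}-\boldsymbol{k}$ are precisely what make $A^{-1}\odot$ act compatibly on the digit expansion, so the argument hinges on recording this carefully; once it is in place, steps (2)–(5) are short. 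A secondary, more routine point is to confirm that primitivity and aperiodicity are used only through recognizability and through the freedom to pass to a power, so that no further hypotheses are needed. Finally, one should note that the resulting $\Phi$ is supertile-shuffling essentially by construction, since it sends $\theta^n(a)$-supertiles to $\theta^n(\tau(a))$-supertiles, which is the remark that motivates Theorem~\ref{thm:main}.
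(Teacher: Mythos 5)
Your overall architecture is sound and, once unwound, lands on the same object as the paper: the map is $\Phi=\tau\circ f_A$, recognizability supplies the hierarchical decomposition, and the conjugacy relation and the inverse $(\tau^{-1},A^{-1})$ are handled exactly as in the paper. One remark on your step (1): the consistency across levels that you flag as the main obstacle actually dissolves once you carry out your own step (3). For each fixed $n$, the supertile-wise replacement (type $a$ on a region $R$ replaced by type $\tau(a)$ on $A[R]$) already coincides with $\tau\circ f_A$ on all of $\mathbb{Z}^d$, by a direct computation from \eqref{eq:supertile-shuff} at level $n$ together with $A[\supp(\theta^n)]=\supp(\theta^n)-(A\boldsymbol{k}_n-\boldsymbol{k}_n)$; since every level produces the same global map, there is nothing to match between levels, and no digit-wise compatibility of $A^{-1}\odot$ with the $Q$-adic expansion is needed.

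The genuine gap is in your step (2), which you treat as short but which is where the hypothesis must really be used. Knowing that $\Phi(x)$ is tiled by legal supertiles $\theta^n(\tau(a))$, arranged as the $A$-image of the supertile arrangement of $x$, does not yet show $\Phi(x)\in\X_\theta$: finite patterns of $\Phi(x)$ that straddle several level-$n$ supertiles are not controlled by the legality of the individual supertiles, and your parenthetical justification (``the coordinate permutation preserves legality because $A^{-1}\odot$ permutes $\supp(\theta^n)$'') is not a valid principle --- rearranging the coordinates of a legal pattern by a permutation of its support does not in general produce a legal pattern. Legality of the rearranged, relabelled configuration comes only from \eqref{eq:supertile-shuff} applied at \emph{higher} levels, and this is exactly how the paper argues: desubstitute $x$ to $y^{(m)}$, observe that any finite pattern of $\Phi(x)$ lies, for $m$ large, inside $\theta^m(\tau\circ A(w))$ for some $2\times\cdots\times 2$ block $w$ of $y^{(m)}$; since $w$ is legal and $\theta$ is primitive, $w\sqsubset\theta^r(a)$ for some $r$ and $a$, so \eqref{eq:supertile-shuff} at level $r$ gives $\tau\circ A(w)\sqsubset\theta^r(\tau(a))$, hence $\tau\circ A(w)$ and then $\theta^m(\tau\circ A(w))$ are legal. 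This boundary-crossing argument (or an equivalent one invoking the hypothesis at arbitrarily high levels) must be added to your step (2); with it, the rest of your outline goes through and matches the paper's proof.
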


\begin{proof}
Let $\Phi:=\tau\circ f_{A}$, where $f_{A}(x)_{\boldsymbol{n}}=x_{A^{-1}\boldsymbol{n}}$. We show that $\Phi$ is an extended symmetry, i.e., $\Phi\circ\sigma^{ }_{\boldsymbol{j}}=\sigma^{ }_{A\boldsymbol{j}}\circ \Phi$, for all  $\boldsymbol{j}\in\mathbb{Z}^d$ and $\Phi\in\text{Homeo} (\X_{\theta})$ . 

The first property follows easily from the definition of $f_{A}$ and the fact that $\tau$ is just a letter-exchange map.
More concretely, one has 
\[
(f_{A}\circ \sigma^{ }_{\boldsymbol{j}}(x))_{\boldsymbol{n}}=(\sigma^{ }_{\boldsymbol{j}}(x))_{A^{-1}\boldsymbol{n}}=x_{A^{-1}\boldsymbol{n}-\boldsymbol{j}}=(f_A(x))_{\boldsymbol{n}-A\boldsymbol{j}}=(\sigma^{ }_{A\boldsymbol{j}}\circ f_A(x))_{\boldsymbol{n}}
\]
for all $\boldsymbol{j},\boldsymbol{n}\in \mathbb{Z}^d$ and $A\in \text{GL}(d,\mathbb{Z})$.  Since any letter exchange map commutes with the shift, this immediately implies the claim.

We now show that $\Phi\in \text{Homeo}(\X_{\theta})$, that is, $\Phi(x)\in \X_{\theta}$ and $\Phi$ is continuous and invertible. We first show that, for $x\in \X_{\theta}$ and for  $n\geqslant N$
\begin{equation}\label{eq:Phi-commute-sub}
\Phi\circ \theta^n=\sigma^{ }_{\boldsymbol{\ell}}\circ \theta^n\circ \Phi,
\end{equation}
for some $\boldsymbol{\ell}:=\boldsymbol{\ell}(x)\in\mathbb{Z}^d$, $n\geqslant N$.

Let $x\in \X_{\theta}$, with $x_0=a$ say. Up to a shift $\sigma^{ }_{\boldsymbol{c}}$, one sees a (centred) $n$-supertile around the origin for $\Phi\circ \sigma^{ }_{\boldsymbol{c}}\circ\theta^n(x)$. 
On the other hand, one can first shift $x$ before applying $\Phi$ so that the $n$-supertile around the origin containing $x_0$ is centred. One then successively applies $\Phi$ and $\theta^n$, resulting in an element that has a level $2n$-supertile
around the origin, which contains the $n$-supertile $\theta^n(\tau(a))$ by construction. 
Up to some shifts $\sigma^{ }_{\boldsymbol{k}},\sigma^{ }_{\boldsymbol{s}}$ (depending only on the supertile structure of $x$), we then have 
\[
(\sigma^{ }_{\boldsymbol{s}}\circ \Phi\circ \theta^n(x))_{\boldsymbol{i}}=(\sigma^{ }_{\boldsymbol{k}}\circ\theta^n\circ \Phi(x))_{\boldsymbol{i}}
\]
for $\boldsymbol{i}\in \text{supp}(\theta^n)$ whenever Eq.~\eqref{eq:supertile-shuff} holds. 
Note that this holds for all translates $\sigma^{ }_{\boldsymbol{m}}(x)$, whence minimality and compactness of $\mathbb{X}_{\theta}$ imply that $\Phi\circ \theta^n=\sigma^{ }_{\boldsymbol{\ell}}\circ\theta^n\circ \Phi$.

\begin{figure}
    \centering
    \includegraphics[width=\linewidth]{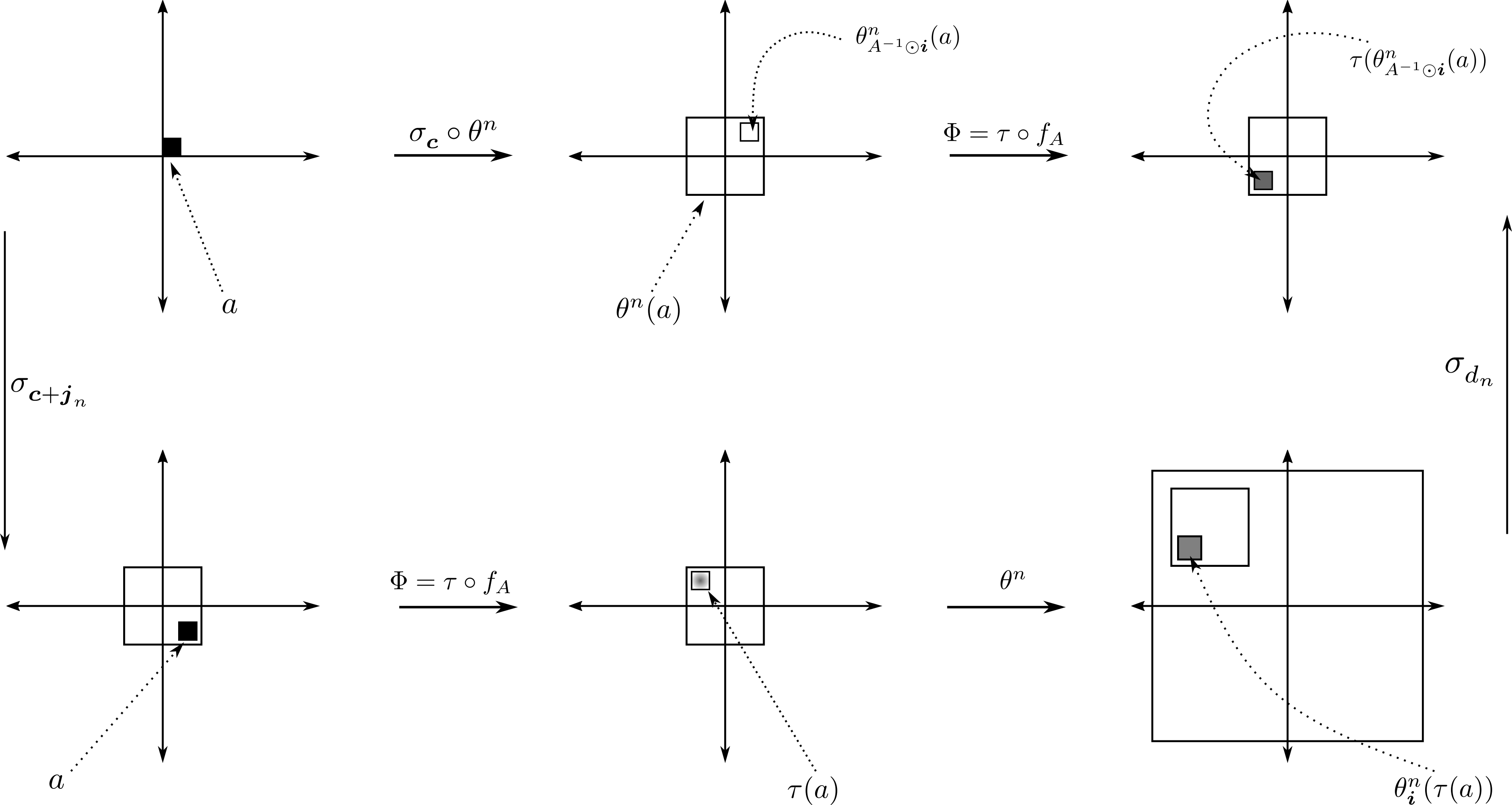}
    \caption{Illustration of the proof that Eq.~\eqref{eq:Phi-commute-sub} holds under the assumption on the supertiles}
    \label{fig:enter-label}
\end{figure}

Without loss of generality, we let $\theta:=\theta^N$. 
Now let $x\in \X_{\theta}$. For $m\in\N$, let $y^{(m)}\in \X_{\theta}$ and $\boldsymbol{j}_m\in\mathbb{Z}^d$ such that $x=\sigma^{ }_{\boldsymbol{j}_m}\circ \theta^{m}(y^{(m)})$. We have the identity $\theta^m\circ \sigma^{ }_{\boldsymbol{j}}=\sigma^{ }_{Q^m\boldsymbol{j}}\circ \theta^m$. Iterating Eq.~\eqref{eq:Phi-commute-sub} yields $\Phi\circ \theta^{m}=\sigma^{ }_{\boldsymbol{\ell}+Q\boldsymbol{\ell}+\cdots+Q^{m-1}\boldsymbol{\ell}}\circ\theta^m\circ\Phi$. For a fixed $m\in  N$, one then has 
\begin{align*}
\Phi(x)&=\Phi\circ \sigma^{ }_{\boldsymbol{j}_m}\circ \theta^{m}(y^{(m)})=\sigma^{ }_{A\boldsymbol{j}_{m}}\circ\Phi \circ \theta^{m}(y^{(m)})\\
&=\sigma^{ }_{A\boldsymbol{j}_{m}}\circ\sigma^{ }_{\boldsymbol{\ell}+Q\boldsymbol{\ell}+\cdots+Q^{m-1}\boldsymbol{\ell}}\circ\theta^n\circ\Phi(y^{(m)})
\end{align*}
Now let $w$ be a $2\times2\times \cdots\times 2$ block in $y^{(n)}$.
Since $w$ is legal and $\theta$ is primitive, it appears in some supertile $\theta^n(a)$, for large enough $n$ and for some $a\in \mathcal{A}$. It follows from Eq.~\eqref{eq:supertile-shuff} that 
$\tau \circ A(w)$ is also legal block, where $A(w)_{\boldsymbol{n}}=w_{A^{-1}\odot\,\boldsymbol{n}}$. This means $\theta^{n}(\tau \circ A(w))$ is also legal. Since $n$ can be chosen to be arbitrarily large, this means any block in $\Phi(x)$ of arbitrary size belongs to $\mathcal{L}_{\theta}$, and hence $\Phi(x)\in \X_{\theta}$. It is clear that $\Phi$ is continuous and  invertible with $\Phi^{-1}:=\tau^{-1}\circ f_{A^{-1}}$, which makes it a self-homeomorphism of $\X_{\theta}$, which completes the proof.
\end{proof}

In the setting of the previous result, we call the element $\Phi\in \mathcal{N}(\X)$ defined by the pair $(\tau,A)$ a \emph{supertile-shuffling} extended symmetry, and we call $\tau$ its corresponding local map.

\subsection{Minimal sets and extended symmetries}\label{sec:minsetextsym}

In this section, we generalise the construction of $\tau'$ and $\beta_{M,n}$ in Section~\ref{sec:minsets} in the setting of supertile-shuffling extended symmetries.

Let $\Phi$ be a supertile-shuffling extended symmetry with local map $\tau\colon\mathcal{A}\to\mathcal{A}$ and matrix $A$. It follows by definition that, for large enough power $r$,
\begin{equation}\label{eq:general_relabel_commutativity}
		\tau\circ(\theta^r)_{\boldsymbol{n}} = (\theta^r)_{A^{-1}\odot \boldsymbol{n}}\circ\tau,
	\end{equation}
	where the equality is stated in terms of columns. Without loss of generality, we shall assume that $r=1$ in what follows. The column
    $\theta_{A^{-1}\odot\boldsymbol{n}}$ necessarily has as image the minimal set $\tau[M]$ whenever $(\theta^r)_{\boldsymbol{n}}(\mathcal{A})=M$ . In particular, by replacing $\theta$ with a power if necessary, we see that $\tau$ maps minimal sets to minimal sets, and it induces a permutation $\tau_*$ of the collection $\mathcal{X}$ of all minimal sets, where if $M=\theta_{\boldsymbol{n}}[\mathcal{A}]$, then $\tau_*(M)=\tau[M]$ is the minimal set $\theta_{A^{-1}\odot\boldsymbol{n}}[\mathcal{A}]$. This is a consistency condition between $\tau$ and the columns of $\theta$.

Unlike the case for symmetries, we notice that  we \emph{do} have a dependency on the reference minimal set $M$ chosen, since $\tau[M]$ is not necessarily equal to $M$. This implies $\beta_{M,f}\ne\beta_{\tau(M),f}$ in general. Hence, the relation between the elements $f\in S_\theta$ and the corresponding $\beta$s is no longer a semigroup morphism, but rather a cocycle-type relation, as the commutative diagram below shows:
	\begin{center}
		\begin{tikzcd}
			{[c_\theta]} & {[c_\theta]} \arrow[l,swap,"\beta_{M,f}"] & {[c_\theta]}\arrow[l,swap,"\beta_{f[M],g}"] \arrow[ll,bend right,dashed,swap,"\beta_{M,g\circ f}"] \\
			M\arrow[r,"f"]\arrow[u,"\nu"]\arrow[rr,bend right,swap,"g\circ f"] & f[M]\arrow[r,"g"]\arrow[u,swap,"\nu"] & g[f[M]]\arrow[u,swap,"\nu"]
		\end{tikzcd}
	\end{center}
	which translates to the equality:
	\begin{equation}\label{eq:sigma_cocycle}
		\beta_{M,g\circ f}^{-1} = \beta_{f[M],g}^{-1}\circ\beta_{M,f}^{-1}.
	\end{equation}
      Thus $\tau'$, as defined previously, explicitly depends on the reference minimal set $M_0$ taken (as in, the permutation is not consistent along different minimal sets).
	
	To alleviate this, we shall redefine $\tau'$ in terms of \emph{two} encoding maps (related to $A$), chosen to ensure consistency. From \eqref{eq:general_relabel_commutativity}, we can see that if the column $\theta_{\boldsymbol{n}}$ is idempotent, then $\theta_{A^{-1}\odot\boldsymbol{n}}$ must be idempotent as well:
		\[\theta_{A^{-1}\odot\boldsymbol{n}}^2 = (\tau^{-1}\circ\theta_{\boldsymbol{n}}\circ\tau)^2 = \tau^{-1}\circ\theta_{\boldsymbol{n}}^2\circ\tau= \tau^{-1}\circ\theta_{\boldsymbol{n}}\circ\tau =\theta_{A^{-1}\odot\boldsymbol{n}}. \]
	We notice both columns have the same cardinality, and thus if they have cardinality $c_\theta$ they can be used to define encodings. Furthermore, by the discussion above, if the image of the first column is $M_0$, the second necessarily has image $\tau[M_0]$.
	
	Choose $\iota = \theta_{\boldsymbol{n}}$ as an appropriate column (as before) and $\bar{\iota} = \theta_{A^{-1}\odot \boldsymbol{n}}$; we then define $\nu\coloneqq\nu_0\circ\iota$ as before, and $\bar{\nu}\coloneqq (\nu\rvert_{\tau[M_0]})\circ\bar{\iota}$. The choice to use $\nu\rvert_{\tau[M_0]}$ as a bijection $\tau[M_0]\to[c_\theta]$ is entirely arbitrary; it ensures that $\nu\rvert_{\tau[M_0]} = \bar{\nu}\rvert_{\tau[M_0]}$, but this is not really necessary. We are now in position to prove the following result, which is a generalised version of Lemma~36 in \cite{MY}.
	\begin{lemma}\label{lem:tau-prime}
		Let $\tau$ be the local map of a supertile-shuffling extended symmetry with associated matrix $A$, and suppose $\nu$ and $\bar{\nu}$ are defined as in the above paragraph. Given $a,b\in\mathcal{A}$, if $\nu(a)=\nu(b)$, then $\bar{\nu}(\tau(a))=\bar{\nu}(\tau(b))$. Hence, there exists a unique permutation $\tau'\colon[c_\theta]\to[c_\theta]$ such that the following holds:
		\begin{equation}\label{eq:generalised_tau_prime}
			\tau'\circ\nu = \bar{\nu}\circ\tau.
		\end{equation}
	\end{lemma}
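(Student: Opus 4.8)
The plan is to reduce the statement to the commutativity relation \eqref{eq:general_relabel_commutativity} together with the definitions of $\nu$ and $\bar\nu$, and then read off $\tau'$ from a diagram chase. First I would recall the key structural facts: $\nu = \nu_0\circ\iota$ with $\iota = \theta_{\boldsymbol n}$ an idempotent column onto $M_0$, and $\bar\nu = (\nu\rvert_{\tau[M_0]})\circ\bar\iota$ with $\bar\iota = \theta_{A^{-1}\odot\boldsymbol n}$, which by the computation just above the lemma is also idempotent, with image $\tau[M_0]$. The crucial algebraic input is that, at the level of columns, $\tau\circ\iota = \bar\iota\circ\tau$, which is exactly the instance of \eqref{eq:general_relabel_commutativity} for $\boldsymbol n$ (after passing to the power $r=1$ by assumption).

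The main step is the implication: if $\nu(a)=\nu(b)$ then $\bar\nu(\tau(a))=\bar\nu(\tau(b))$. I would argue as follows. Since $\nu_0$ is a bijection on $M_0$, $\nu(a)=\nu(b)$ means $\iota(a)=\iota(b)$ in $M_0$. Applying $\tau$ and using $\tau\circ\iota=\bar\iota\circ\tau$, we get $\bar\iota(\tau(a)) = \tau(\iota(a)) = \tau(\iota(b)) = \bar\iota(\tau(b))$, an equality of elements of $\tau[M_0]$. Now apply the bijection $\nu\rvert_{\tau[M_0]}$ to both sides: $\bar\nu(\tau(a)) = (\nu\rvert_{\tau[M_0]})(\bar\iota(\tau(a))) = (\nu\rvert_{\tau[M_0]})(\bar\iota(\tau(b))) = \bar\nu(\tau(b))$, which is the desired equality. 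Since $\nu\colon\mathcal A\to[c_\theta]$ is surjective (being a surjection followed by a relabelling), this shows that the assignment $\nu(a)\mapsto \bar\nu(\tau(a))$ is a well-defined map $[c_\theta]\to[c_\theta]$; call it $\tau'$. By construction it satisfies $\tau'\circ\nu = \bar\nu\circ\tau$, and it is unique because $\nu$ is surjective.

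It remains to check that $\tau'$ is a bijection. Here I would use that both $\nu$ and $\bar\nu$, restricted to $M_0$ respectively $\tau[M_0]$, are bijections onto $[c_\theta]$, that $\tau\colon M_0\to\tau[M_0]$ is a bijection (as $\tau\in\Sigma_{\mathcal A}$ maps the minimal set $M_0$ bijectively onto the minimal set $\tau[M_0]$), and that $\iota$ fixes $M_0$ pointwise so $\nu\rvert_{M_0}=\nu_0$. Concretely, on $M_0$ the relation $\tau'\circ\nu = \bar\nu\circ\tau$ reads $\tau' = (\bar\nu\rvert_{\tau[M_0]})\circ\tau\rvert_{M_0}\circ(\nu\rvert_{M_0})^{-1}$, a composition of three bijections, hence a bijection. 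So $\tau'\in\Sigma_{[c_\theta]}$, completing the argument.

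I do not expect a serious obstacle here; the lemma is essentially a packaging of the idempotent-column commutativity into a well-definedness statement, mirroring Lemma~36 of \cite{MY}. The one point requiring care — and the place where the argument genuinely differs from the symmetry case — is keeping the two encodings $\nu$ (on $M_0$) and $\bar\nu$ (on $\tau[M_0]$) straight, since $\tau[M_0]\neq M_0$ in general; the whole purpose of introducing $\bar\nu$ via the companion idempotent column $\bar\iota=\theta_{A^{-1}\odot\boldsymbol n}$ is precisely to make the diagram close. I would also remark that one should verify the passage from general $r$ to $r=1$ is harmless, which is already granted by the "without loss of generality" reduction in the paragraph preceding the lemma.
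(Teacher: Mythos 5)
Your argument is correct and is essentially the paper's own proof: you reduce $\nu(a)=\nu(b)$ to $\iota(a)=\iota(b)$ via bijectivity of $\nu_0$, apply the commutation $\tau\circ\iota=\bar{\iota}\circ\tau$ from Eq.~\eqref{eq:general_relabel_commutativity}, and then post-compose with $\nu\rvert_{\tau[M_0]}$, exactly as in the paper, which likewise concludes by setting $\tau'=\bar{\nu}\circ\tau\circ(\nu\rvert_M)^{-1}$. Your added remarks on surjectivity of $\nu$ and on $\tau'$ being a composition of bijections merely make explicit what the paper leaves implicit.
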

	\begin{proof}
		This is a direct computation in the same vein as the aforementioned Lemma~36. Note that, as $\nu_0$ is a bijection, $\nu(a)=\nu(b)$ if, and only if, $\iota(a)=\iota(b)$; as $\iota=\theta_{\boldsymbol{n}}$ and $\bar{\iota}=\theta_{A^{-1}\odot\boldsymbol{n}}$, by  Eq.~\eqref{eq:general_relabel_commutativity} we see that $\tau\circ\iota=\bar{\iota}\circ\tau$. Thus:
		\begin{align*}
			\bar{\nu}(\tau(a)) &= (\nu\rvert_{\tau[M_0]})\circ\bar{\iota}\circ\tau(a)=(\nu\rvert_{\tau[M_0]})\circ\tau\circ\iota(a)\\
			&=(\nu\rvert_{\tau[M_0]})\circ\tau\circ\iota(b)=(\nu\rvert_{\tau[M_0]})\circ\bar{\iota}\circ\tau(b)=\bar{\nu}(\tau(b)).
		\end{align*}
		The following claim holds by taking $\tau'=\bar{\nu}\circ\tau\circ(\nu\rvert_M)^{-1}$, where $M$ can be any minimal set.
	\end{proof}

		We stress that the definition of $\tau'$ above is independent of the chosen $M\in\mathcal{X}$, as it depends only on the encodings $\nu$ and $\bar{\nu}$; any choice of $M$ yields the same permutation.
	Now, we can relate these permutations to the columns in the same way as in the $A=\text{id}$ case. However, as we are dealing with two encodings at once, we also have a \emph{pair} of $\beta$-maps, given by the equations:
		\[\beta_{M,f}^{-1}\coloneqq \nu\circ f\circ (\nu\rvert_M)^{-1},\quad\text{and}\quad\bar{\beta}_{M,f}^{-1}\coloneqq \bar{\nu}\circ f\circ (\bar{\nu}\rvert_M)^{-1}.\]
	It turns out that the appropriate generalisation of $\tau'$ relates one set of $\beta$-maps to the other, as seen in the commutative cube below.
	
	\begin{center}
		\begin{tikzcd}
		& {[c_\theta]}\arrow[rrr,"\tau'"] & & & {[c_\theta]} \\
		f[M]\arrow[ur,"\nu"] & & & \tau[f[M]]\arrow[ur,"\bar{\nu}"] \\
		& {[c_\theta]}\arrow[rrr,"\tau'"]\arrow[uu, near start,"\beta^{-1}_{M,f}"] & & & {[c_\theta]}\arrow[uu,"\bar{\beta}^{-1}_{\tau[M],\bar{f}}",swap] \\
		M\arrow[rrr,"\tau",crossing over]\arrow[uu,"f"]\arrow[ur,"\nu"] & & & \tau[M]\arrow[uu,swap,near start,"\bar{f}",crossing over]\arrow[ur,swap,"\bar{\nu}"]
		\arrow[from=2-1,to=2-4,"\tau",crossing over]
		\end{tikzcd}
	\end{center}
	Here, $f$ and $\bar{f}$ correspond to columns at position $\boldsymbol{n}$ and $A^{-1}\odot\boldsymbol{n}$, respectively, where one has $\tau[f[M]]=\bar{f}[\tau[M]]$ as a consequence of Eq.~\eqref{eq:general_relabel_commutativity}. Note that the left and right side faces of the cube correspond to the definition of $\beta_{M,f}$ and $\bar{\beta}_{M,f}$, respectively, the front face of the cube is Eq.~\eqref{eq:general_relabel_commutativity}, and the top and bottom faces of the cube correspond to the definition of $\tau'$, employing the independence of this definition with regards to the chosen minimal set. From all these equalities, we derive the proof of the identity corresponding to the back face of the cube, as seen in the following result. 
    
	\begin{proposition}\label{prop:beta-commute}
		Let $\tau$ be the local function of a supertile-shuffling extended symmetry with associated matrix $A$, and suppose $\nu$ and $\bar{\nu}$ are a pair of associated encodings. Let $f=\theta_{\boldsymbol{n}}$ and $\bar{f}=\theta_{A^{-1}\odot\boldsymbol{n}}$. Then, the following equality holds:
		\[\tau'\circ\beta_{M,f}^{-1} = \bar{\beta}_{\tau[M],\bar{f}}^{-1}\circ\tau',\quad\text{or, in terms of columns,}\quad\tau'\circ\beta_{M,\boldsymbol{n}}^{-1} = \bar{\beta}_{\tau[M],A^{-1}\odot\boldsymbol{n}}^{-1}\circ\tau'. \]
	\end{proposition}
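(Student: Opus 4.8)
The plan is to prove Proposition~\ref{prop:beta-commute} by a direct diagram chase around the commutative cube displayed above, using the five faces whose commutativity is already known: the front face is Eq.~\eqref{eq:general_relabel_commutativity} (with $r=1$, as assumed WLOG), the two side faces are the defining relations of $\beta^{-1}_{M,f}$ and $\bar\beta^{-1}_{\tau[M],\bar f}$, and the top and bottom faces are both instances of Eq.~\eqref{eq:generalised_tau_prime} from Lemma~\ref{lem:tau-prime} — here we use that $\tau'$ is independent of the reference minimal set, so we may instantiate the identity $\tau'\circ\nu=\bar{\nu}\circ\tau$ at both $M$ and $f[M]$. All maps involved are bijections between finite sets, so the claim can be verified either by composing the defining formulas or by tracking a single element $i\in[c_\theta]$ around the cube.

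Concretely, I would start from the left-hand side and expand using $\beta^{-1}_{M,f}=\nu\circ f\circ(\nu\rvert_M)^{-1}$, so that $\tau'\circ\beta^{-1}_{M,f}=\tau'\circ\nu\circ f\circ(\nu\rvert_M)^{-1}$. Applying Eq.~\eqref{eq:generalised_tau_prime} in the form $\tau'\circ\nu=\bar{\nu}\circ\tau$ turns this into $\bar{\nu}\circ\tau\circ f\circ(\nu\rvert_M)^{-1}$; then Eq.~\eqref{eq:general_relabel_commutativity}, read as the column identity $\tau\circ\theta_{\boldsymbol n}=\theta_{A^{-1}\odot\boldsymbol n}\circ\tau$, i.e.\ $\tau\circ f=\bar{f}\circ\tau$ on $\mathcal{A}$, gives $\bar{\nu}\circ\bar{f}\circ\tau\circ(\nu\rvert_M)^{-1}$. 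Finally, restricting Eq.~\eqref{eq:generalised_tau_prime} to $M$ — recalling that $\tau$ maps the minimal set $M$ bijectively onto $\tau[M]$ and that $\bar{\nu}\rvert_{\tau[M]}$ is a bijection onto $[c_\theta]$ — yields $\tau\rvert_M\circ(\nu\rvert_M)^{-1}=(\bar{\nu}\rvert_{\tau[M]})^{-1}\circ\tau'$, and substituting this gives $\bar{\nu}\circ\bar{f}\circ(\bar{\nu}\rvert_{\tau[M]})^{-1}\circ\tau'=\bar{\beta}^{-1}_{\tau[M],\bar{f}}\circ\tau'$, which is the desired equality. The second formulation is just this with $f=\theta_{\boldsymbol n}$ and $\bar{f}=\theta_{A^{-1}\odot\boldsymbol n}$.

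There is no real obstacle here; the only points that deserve a line of care are the bookkeeping ones. First, one must note that $\bar{\nu}$ restricts to a bijection on every minimal set (not just $\tau[M_0]$): this holds because the idempotent column $\bar{\iota}$ used to build $\bar{\nu}$ maps each minimal set onto $\tau[M_0]$, all minimal sets having cardinality $c_\theta$. Second, one must check that $\tau[f[M]]=\bar{f}[\tau[M]]$, so that the two occurrences of $\tau'$ on the top and bottom faces are genuinely the same permutation applied to consistent sets; this is exactly Eq.~\eqref{eq:general_relabel_commutativity} applied to the columns $f=\theta_{\boldsymbol n}$ and $\bar{f}=\theta_{A^{-1}\odot\boldsymbol n}$, as already observed in the paragraph preceding the statement. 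With these in hand, the computation above closes the back face of the cube.
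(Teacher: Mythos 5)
Your proof is correct and follows essentially the same route as the paper's: a direct composition of the defining formulas for $\beta^{-1}_{M,f}$, $\bar{\beta}^{-1}_{\tau[M],\bar f}$ and $\tau'$, combined with the column relation $\tau\circ f=\bar f\circ\tau$ from Eq.~\eqref{eq:general_relabel_commutativity}; the paper merely runs the same chain in the opposite direction, starting from $\bar{\beta}^{-1}_{\tau[M],\bar f}\circ\tau'$ and inserting identities $(\bar{\nu}\rvert_{\tau[M]})^{-1}\circ\bar{\nu}$ and $(\nu\rvert_{f[M]})^{-1}\circ\nu$ where you instead invoke the restricted form of $\tau'\circ\nu=\bar{\nu}\circ\tau$. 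Your bookkeeping remarks (bijectivity of $\bar{\nu}$ on each minimal set and $\tau[f[M]]=\bar f[\tau[M]]$) match the paper's implicit assumptions, so there is nothing to fix.
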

	\begin{proof}
		By direct computation, one has
		\begin{align*}
			\bar{\beta}_{\tau[M],\bar{f}}^{-1}\circ\tau' &= (\bar{\nu}\circ \bar{f}\circ \underbrace{(\bar{\nu}\rvert_{\tau[M]})^{-1})\circ(\bar{\nu}}_{\id_{\tau[M]}}\circ \underbrace{\tau\circ (\nu\rvert_M)^{-1}}_{\text{image is }\tau[M]}) = \bar{\nu}\circ \bar{f}\circ\tau\circ(\nu\rvert_M)^{-1} \\ &= \bar{\nu}\circ \underbrace{\tau\circ f}_{\text{by \eqref{eq:general_relabel_commutativity}}}\circ(\nu\rvert_M)^{-1} = \bar{\nu}\circ \tau\circ \underbrace{((\nu\rvert_{f[M]})^{-1}\circ\nu)}_{\id_{f[M]}}\circ  \underbrace{f\circ(\nu\rvert_M)^{-1}}_{\text{image is } f[M]} \,\,\, =\tau'\circ\beta_{M,f}^{-1}. \qedhere
		\end{align*}
	\end{proof}

\subsection{Main result}
 We are now poised to prove Theorem~\ref{thm:main}. The following result can be seen as a generalisation of \cite[Thm.~37]{MY} (to the setting of extended symmetries) and \cite[Thm.~3.13]{BLM} (in the case of non-bijective substitutions). In particular, conditions \textbf{(1)} and \textbf{(2)} are vacuously satisfied when $\theta$ is bijective. 
Here, we assume that $\theta$ is a primitive, aperiodic substitution, with $c_{\theta}>1$. 

 \begin{theorem}\label{thm:supertile-shuffling} Let $\tau\in \Sigma_{\mathcal{A}}$ and $A\in \mathrm{GL}(d,\mathbb{Z})$. 
The pair $(\tau,A)$ 
gives rise to a supertile-shuffling extended symmetry of $(\X_{\theta},\mathbb{Z}^d)$ if and only if it satisfies the following conditions.
\begin{enumerate}
    \item[\textnormal{\textbf{(1)}}] $\theta_{\boldsymbol{j}}(\tau[M])=\tau[\theta_{A^{-1}\odot\boldsymbol{j}}(M)]$, for all $\boldsymbol{j}\in \textnormal{supp}(\theta)$
    \item[\textnormal{\textbf{(2)}}] $\tau'$ is well defined via $\tau^{\prime}\circ \nu=\bar{\nu}\circ \tau$.
    \item[\textnormal{\textbf{(3)}}] $\tau'\circ \beta^{-1}_{M,\boldsymbol{j}}\circ (\tau^{\prime})^{-1}=\bar{\beta}^{-1}_{\tau[M],{A^{-1}\odot \boldsymbol{j}}}$, for every minimal set $M$ and for all $\boldsymbol{j}\in \textnormal{supp}(\theta)$
    \end{enumerate}  
\end{theorem}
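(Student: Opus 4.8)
\textbf{Proof plan for Theorem~\ref{thm:supertile-shuffling}.}

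The plan is to establish the equivalence by showing that conditions \textbf{(1)}--\textbf{(3)} are exactly what is needed to build the column relation \eqref{eq:supertile-shuff} of Proposition~\ref{prop:supertile-shuffling} for all powers $\theta^n$, $n\ge N$, and conversely that \eqref{eq:supertile-shuff} forces \textbf{(1)}--\textbf{(3)}.

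For the ``only if'' direction, I would start by assuming $(\tau,A)$ defines a supertile-shuffling extended symmetry, so that \eqref{eq:general_relabel_commutativity} holds for a suitable power of $\theta$; after replacing $\theta$ by that power (legitimate since $\X_\theta = \X_{\theta^k}$ and minimal sets are preserved, by Lemma~\ref{lem:power-idempotent}) we may take $r=1$. Condition \textbf{(1)} is then precisely \eqref{eq:general_relabel_commutativity} read at the level of minimal sets, i.e. the consistency condition described in Section~\ref{sec:minsetextsym}: the column $\theta_{A^{-1}\odot\boldsymbol{j}}$ sends $M$ to $\tau^{-1}[\theta_{\boldsymbol{j}}[\tau[M]]]$. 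Condition \textbf{(2)} follows from Lemma~\ref{lem:tau-prime}, which shows $\tau'$ is well defined precisely because \eqref{eq:general_relabel_commutativity} holds. Condition \textbf{(3)} is the content of Proposition~\ref{prop:beta-commute}, whose proof is the commutative-cube computation already carried out. So this direction is essentially a repackaging of the results of Section~\ref{sec:minsetextsym}.

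For the ``if'' direction I would argue that \textbf{(1)}--\textbf{(3)} let us bootstrap from columns of $\theta$ to columns of $\theta^n$ for all $n$, thereby verifying the hypothesis \eqref{eq:supertile-shuff} of Proposition~\ref{prop:supertile-shuffling}, which then yields the extended symmetry. Concretely: fix $a\in\mathcal{A}$ and $\boldsymbol{i}\in\supp(\theta^n)$, write the $Q$-adic decomposition $\boldsymbol{i}=\sum_{k=0}^{n-1}Q^k(\boldsymbol{i}_k)$ with $\boldsymbol{i}_k\in\mathcal{D}$, and observe $\theta^n_{\boldsymbol{i}} = \theta_{\boldsymbol{i}_{n-1}}\circ\cdots\circ\theta_{\boldsymbol{i}_0}$ as a composition of columns. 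One has $A^{-1}\odot\boldsymbol{i} = \sum_k Q^k(A^{-1}\odot\boldsymbol{i}_k)$ by the $Q$-equivariance of $A$ (this uses $A^{-1}\odot\supp(\theta^n)=\supp(\theta^n)$ and commutation of $A$ with $Q$ — which must hold since $A$ preserves every $\supp(\theta^n)$, a point worth a sentence), hence $\theta^n_{A^{-1}\odot\boldsymbol{i}}$ is the analogous composition of the columns $\theta_{A^{-1}\odot\boldsymbol{i}_k}$. Restricting to a minimal set $M$, each factor commutes with $\tau$ in the twisted sense governed by \textbf{(1)} and the $\beta$-cocycle relation \eqref{eq:sigma_cocycle} together with \textbf{(3)}; using the encodings $\nu,\bar\nu$ one translates the composition into a composition of $\beta^{-1}$'s and conjugates by $\tau'$ using \textbf{(2)}. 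The upshot is $\tau\circ\theta^n_{A^{-1}\odot\boldsymbol{i}}\big|_M = \theta^n_{\boldsymbol{i}}\circ\tau\big|_{\tau[M]}$ for every minimal set, and since $n$ can be taken large enough (via Lemma~\ref{lem:power-idempotent}) that every letter lies in the image of an idempotent column onto some minimal set, idempotency propagates the identity from minimal sets to all of $\mathcal{A}$: one precomposes with such an idempotent column, which acts as the identity on its minimal set and whose $A^{-1}\odot$-counterpart is again idempotent. This establishes \eqref{eq:supertile-shuff}, and Proposition~\ref{prop:supertile-shuffling} finishes the proof.

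I expect the main obstacle to be the bootstrapping step: passing from the single-step column identities to the $n$-fold composition while keeping track of which minimal set each intermediate column lands in, and then extending from minimal sets to the full alphabet. The cocycle relation \eqref{eq:sigma_cocycle} is what makes the $\beta$-bookkeeping go through, but one has to be careful that the ``base point'' minimal set changes at each step ($M\mapsto f[M]$), and that $\tau'$ is genuinely independent of this base point — which is exactly why Lemma~\ref{lem:tau-prime} and the remark following it are set up as they are. The extension to $\mathcal{A}$ via idempotent columns is the other delicate point, and it is precisely where the hypothesis $c_\theta>1$ and primitivity (through Lemma~\ref{lem:power-idempotent}) are used; I would state this as a short lemma or invoke Lemma~\ref{lem:power-idempotent} directly, noting that the $A^{-1}\odot$-image of an idempotent column is idempotent, as already observed before Lemma~\ref{lem:tau-prime}. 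Finally, I would remark that conditions \textbf{(1)}--\textbf{(3)} are decidable (finitely many minimal sets, finitely many columns, finite alphabet, and $A$ ranges over a computable set of block-preserving matrices of $\supp(\theta)$), which is what makes this an algorithm and delivers Theorem~\ref{thm:main}.
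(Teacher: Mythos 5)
Your plan is essentially the paper's proof. The "only if" direction coincides with it exactly: \textbf{(1)} is Eq.~\eqref{eq:general_relabel_commutativity} read on minimal sets, \textbf{(2)} is Lemma~\ref{lem:tau-prime}, \textbf{(3)} is Proposition~\ref{prop:beta-commute}. For the converse the paper uses the same ingredients you list (the $Q$-adic factorisation of columns, the cocycle relation \eqref{eq:sigma_cocycle}, and \textbf{(1)} to track the reference minimal set along the composition), only ordered differently: it first shows that \textbf{(1)}--\textbf{(3)} for $\theta$ alone already give the letter-level identity $\theta_{\boldsymbol{j}}(\tau(a))=\tau(\theta_{A^{-1}\odot\boldsymbol{j}}(a))$ for \emph{every} $a\in\mathcal{A}$, via a chain of the form $\nu\rvert_{\widetilde M}^{-1}\circ\nu(\cdot)$ which works globally because $\nu,\bar{\nu}$ and condition \textbf{(2)} are defined on all of $\mathcal{A}$ and $\tau(a)$ lies in some minimal set $M$; it then shows \textbf{(1)} and \textbf{(3)} persist for all powers $\theta^k$ (with \textbf{(2)} trivially power-independent), so the identity holds for all large $n$ and Proposition~\ref{prop:supertile-shuffling} finishes. (Incidentally, with the paper's conventions the factorisation is $\theta^n_{\boldsymbol{i}}=\theta_{\boldsymbol{i}_0}\circ\cdots\circ\theta_{\boldsymbol{i}_{n-1}}$, the reverse of the order you wrote; this matters when tracking which minimal set each factor lands in.)

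The one step of your plan that does not work as written is the passage from minimal sets to all of $\mathcal{A}$ by "precomposing with an idempotent column". To insert $\iota$ in front of an arbitrary letter you would need $\iota(\tau(a))=\tau(\bar{\iota}(a))$, which is exactly the identity you are trying to prove for the column $\iota$ at an arbitrary letter --- circular --- while composing $\theta^n_{\boldsymbol{i}}\circ\iota$ instead only yields the identity for those columns of $\theta^{n+k}$ whose most significant digits are those of $\iota$, not for all positions as the hypothesis \eqref{eq:supertile-shuff} of Proposition~\ref{prop:supertile-shuffling} requires. The repair is much simpler and is what the paper uses tacitly: by primitivity every letter lies in some minimal set (if $x=\theta^{k'}_{\boldsymbol{j}'}(m)$ with $m\in M$ minimal, then $\theta^{k'}_{\boldsymbol{j}'}(M)$ is a minimal set containing $x$ --- the same observation as in the proof of Lemma~\ref{lem:power-idempotent}), so your identity "restricted to minimal sets" already covers every $a\in\mathcal{A}$ and no idempotent precomposition is needed; primitivity, not $c_\theta>1$, is what this uses. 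Your remark that the $Q$-adic digits of $A^{-1}\odot\boldsymbol{i}$ are $A^{-1}\odot\boldsymbol{i}_k$ deserves the sentence you propose (the paper asserts this expansion without proof), and your closing remark on decidability is indeed how Theorem~\ref{thm:main} is obtained.
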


\begin{proof}

Assume $(\tau,A)$ gives rise to a supertile-shuffling extended symmetry. By definition, 
$\theta_{\boldsymbol{j}}(\tau(a))=\tau(\theta_{A^{-1}\odot \boldsymbol{j}}(a))$, which holds for all  $a\in\mathcal{A}$ and $\boldsymbol{j}\in \text{supp}(\theta)$. Note that property \textbf{(1)} follows from this immediately. 
Properties \textbf{(2)} and \textbf{(3)} follow from Lemma~\ref{lem:tau-prime} and Proposition~\ref{prop:beta-commute}, respectively.

For the other direction, we first show that these three conditions imply $\theta_{\boldsymbol{j}}(\tau(a))=\tau(\theta_{A^{-1}\odot\boldsymbol{j}}(a))$. Suppose that $\tau(a)\in M$ and let $\widetilde{M}=\theta_{\boldsymbol{j}}[M]$. 
\begin{align*}
 \theta_{\boldsymbol{j}}(\tau(a))&=\nu\big|^{-1}_{\widetilde{M}}\circ \nu (\theta_{\boldsymbol{j}}(\tau(a))) 
 \tag{$\theta_{\boldsymbol{j}}(\tau(a))\in \widetilde{M}$}\\
&=\nu\big|^{-1}_{\widetilde{M}}\circ \beta^{-1}_{M,\boldsymbol{j}}\circ\nu (\tau(a)) \tag{\text{Def. of} $\beta^{-1}_{M,\boldsymbol{j}}$}\\
&=\nu\big|^{-1}_{\widetilde{M}}\circ \beta^{-1}_{M,\boldsymbol{j}}\circ\tau' \circ \bar{\nu}(a) \tag{Property \textbf{(2)}}\\
&=\nu\big|^{-1}_{\widetilde{M}}\circ \tau^{\prime} \circ \bar{\beta}^{-1}_{\tau^{-1}[M],A^{-1}\odot\boldsymbol{j}} \circ \bar{\nu}(a)\tag{Property \textbf{(3)}}\\
&=\nu\big|^{-1}_{\widetilde{M}}\circ \tau^{\prime} \circ \bar{\nu}\circ \theta_{A^{-1}\odot\boldsymbol{j}} \circ \bar{\nu}\big|^{-1}_{\tau^{-1}[M]} \circ \bar{\nu}(a)
\tag{\text{Def. of} $\beta^{-1}_{\tau^{-1}[M],A^{-1}\odot\boldsymbol{j}}$}\\
&=\nu\big|^{-1}_{\widetilde{M}} \circ \nu\circ \tau \circ \theta_{A^{-1}\odot\boldsymbol{j}}(a)
\tag{$a\in \tau^{-1}[M]$}\\
&=\tau(\theta_{A^{-1}\odot\boldsymbol{j}}(a))
\tag{$\tau(\theta_{A^{-1}\odot\boldsymbol{j}}(a))\in \widetilde{M}$ by \textbf{(1)}}
\end{align*}

We now show that if the conditions in Theorem~\ref{thm:supertile-shuffling} are satisfied, then they are satisfied for all powers $\theta^k$ of $\theta$. Note that by Lemma~\ref{lem:tau-prime}, $\tau^{\prime}$ does not depend on the power $k$, so \textbf{(2)} extends trivially. 
It remains to prove the claim for \textbf{(1)} and \textbf{(3)}. 

Suppose \textbf{(1)} holds. Let $k\in \mathbb{N}$ be fixed. Recall that, for $\boldsymbol{j}\in \text{supp}(\theta^k)$, the columns $\theta_{\boldsymbol{j}}$ and $\theta_{A^{-1}\odot \boldsymbol{j}}$ admit the expansions
\begin{align*}
    \theta^{ }_{\boldsymbol{j}}&=\theta^{ }_{\boldsymbol{j}_{0}}\circ \theta^{ }_{\boldsymbol{j}_{1}}\circ \cdots \circ \theta^{ }_{\boldsymbol{j}_{k-1}}\\
     \theta^{ }_{A^{-1}\odot\boldsymbol{j}}&=\theta^{ }_{A^{-1}\odot\boldsymbol{j}_{0}}\circ \theta^{ }_{A^{-1}\odot\boldsymbol{j}_{1}}\circ \cdots \circ \theta^{ }_{A^{-1}\odot\boldsymbol{j}_{k-1}}
\end{align*}
where $[\boldsymbol{j}_{k-1},\ldots,\boldsymbol{j}_0]$ is the $Q$-adic expansion of $\boldsymbol{j}$; see Section~\ref{sec:subs}.
We then have
\begin{align*}
    \tau\circ \theta^{ }_{\boldsymbol{j}}(M)&=\tau \circ \theta^{ }_{\boldsymbol{j}_{0}}\circ \theta^{ }_{\boldsymbol{j}_{1}}\circ \cdots \circ \theta^{ }_{\boldsymbol{j}_{k-1}}(M)\\
    &=\theta^{ }_{A^{-1}\odot\boldsymbol{j}_0} \circ \tau \circ \theta^{ }_{\boldsymbol{j}_{1}}\circ \cdots \circ \theta^{ }_{\boldsymbol{j}_{k-1}}(M) \\
    &=\theta^{ }_{A^{-1}\odot\boldsymbol{j}_{0}}\circ \theta^{ }_{A^{-1}\odot\boldsymbol{j}_{1}}\circ \cdots \circ \theta^{ }_{A^{-1}\odot\boldsymbol{j}_{k-1}}\circ \tau (M)\\
    &=\theta^{ }_{A^{-1}\odot \boldsymbol{j}}(\tau[M]).
\end{align*}
Here, the second equality holds because $\theta^{ }_{\boldsymbol{j}_{1}}\circ \cdots \circ \theta^{ }_{\boldsymbol{j}_{k-1}} (M)=M^{\prime}$ is a minimal set and \textbf{(1)} holds for $M^{\prime}$ as well. This proves the claim for \textbf{(1)}. 

Next we show that an analogous statement holds for property \textbf{(3)}. Recall from Eq.~\eqref{eq:sigma_cocycle} that we have the cocycle relation
\[
\beta_{M,g\circ f}^{-1} = \beta_{f[M],g}^{-1}\circ\beta_{M,f}^{-1}.
\]
for any $M$ and $f,g\in S_{\theta}$. 

Let $\boldsymbol{j}\in \text{supp}(\theta^k)$, with $Q$-adic expansion $\boldsymbol{j}=[\boldsymbol{j}_{k-1},\ldots,\boldsymbol{j}_0]$.
We then have
\begin{equation}\label{eq:beta-levelk}
\beta^{-1}_{M,\boldsymbol{j}}=
\beta^{-1}_{\theta_{\boldsymbol{j}_1}\circ\cdots  \circ\theta_{\boldsymbol{j}_{k-1}}[M],\boldsymbol{n_0}}\circ \cdots \circ \beta^{-1}_{\theta_{\boldsymbol{j}_{k-1}}[M],\boldsymbol{j}_{k-2}}\circ 
\beta^{-1}_{M,\boldsymbol{j}_{k-1}}
\end{equation}
Suppose \textbf{(3)} holds for all $\boldsymbol{j}_j\in \text{supp}(\theta)$. 
We then have
\begin{align*}
\tau'\circ \beta^{-1}_{M,\boldsymbol{j}}&=
\tau'\circ \beta^{-1}_{\theta_{\boldsymbol{j}_1}\circ\cdots  \circ\theta_{\boldsymbol{j}_{k-1}}[M],\boldsymbol{n_0}}\circ \cdots \circ \beta^{-1}_{\theta_{\boldsymbol{j}_{k-1}}[M],\boldsymbol{j}_{k-2}}\circ 
\beta^{-1}_{M,\boldsymbol{j}_{k-1}}\\
&=\bar{\beta}^{-1}_{\tau\circ\theta_{\boldsymbol{j}_1}
\circ \cdots  \circ\,\theta_{\boldsymbol{j}_{k-1}}[M],A^{-1}\odot\boldsymbol{n_0}} \circ \tau' \circ \cdots \circ \beta^{-1}_{\theta_{\boldsymbol{j}_{k-1}}[M],\boldsymbol{j}_{k-2}}\circ 
\beta^{-1}_{M,\boldsymbol{j}_{k-1}}\\
&=\bar{\beta}^{-1}_{\theta_{A^{-1}\odot\boldsymbol{j}_1}
\circ \cdots  \circ\,\theta_{A^{-1}\odot\boldsymbol{j}_{k-1}}(\tau[M]),A^{-1}\odot\boldsymbol{n_0}} \circ \tau' \circ \cdots \circ \beta^{-1}_{\theta_{\boldsymbol{j}_{k-1}}[M],\boldsymbol{j}_{k-2}}\circ 
\beta^{-1}_{M,\boldsymbol{j}_{k-1}}
\end{align*}

For the third equality, we have used \textbf{(1)}, i.e., $\tau(\theta_{\boldsymbol{j}_j}[M])=\theta_{A^{-1}\odot \boldsymbol{j}_j}(\tau[M])$ to simplify the corresponding minimal set. 
Iterating this process  and invoking Eq.~\eqref{eq:beta-levelk} yields
\[
\tau'\circ \beta^{-1}_{M,\boldsymbol{j}}=
\bar{\beta}^{-1}_{\tau[M],A^{-1}\odot\boldsymbol{j}}\circ \tau'. \qedhere
\]
\end{proof}

\begin{remark}
    Given a candidate $(\tau,A)$ for a supertile-shuffling extended symmetry, it is then necessary that the minimal sets of the orbit of every point in the support under $A$ are consistent with the letter exchange map; compare condition \textbf{(1)} in Theorem~\ref{thm:supertile-shuffling}. Then, simultaneous identities on minimal sets along a single orbit can be achieved by taking the least common multiple of the powers determined in Lemma~\ref{lem:power-idempotent}, for each element in the orbit. 
\end{remark}

We demonstrate the result of the previous theorem with an example.

\begin{example}
    Consider the following one-dimensional substitution on $\mathcal{A}=\{a,b,c\}$:
    \begin{equation}\label{eq:subs-rev}
    \theta\colon \begin{cases}
        a\mapsto aacbaa &\\
        b\mapsto bcaacc &\\
        c\mapsto bbaabc
    \end{cases}.
    \end{equation}

We show that $\tau=(bc)\in \Sigma_{\mathcal{A}}$ generates a supertile-shuffling reversor. This example has $c_{\theta}=2$. The collection $\mathcal{X}$ of minimal sets is given by $\mathcal{X}=\left\{M_1,M_2\right\}$, with $M_1=\{a,b\},\,M_2=\{a,c\}$. 
    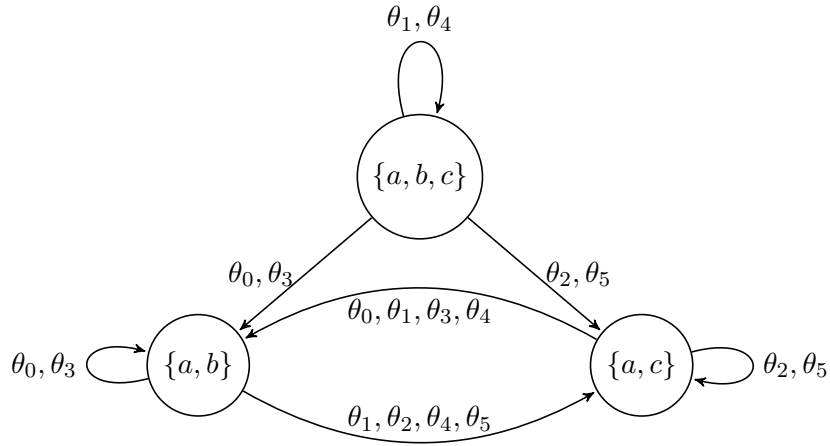
\begin{figure}[!h]
   \begin{center}
  \begin{tikzpicture}[->,>=stealth',shorten >=1pt,auto,node distance=2.5cm,
                    semithick]
  \tikzstyle{every state}=[fill=none,text=black]

    \node[state] (AB)                    {$\{a,b,c\}$};
    \node[state] (A) [below of=AB, left =1.4cm of AB]       {$\{a,b\}$};
    \node[state] (B) [below of=AB, right =1.4cm of AB]       {$\{a,c\}$};

  \path (AB) edge  [left, pos=0.5]    node {$\theta_0,\theta_3$} (A)
        (AB) edge  [right, pos=0.5]    node {$\theta_2,\theta_5$} (B)
        (AB) edge  [loop above]    node {$\theta_1,\theta_4$} (AB)
        
        (A) edge  [bend right =90, out=330, in=210]    node {$\theta_1,\theta_2,\theta_4, \theta_5$} (B)
        (A) edge  [loop left]    node {$\theta_0,\theta_3$} (A)

        (B) edge  [bend right =90, out=330, in=210]    node {$\theta_0,\theta_1,\theta_3,\theta_4$} (A)
        (B) edge  [loop right]    node {$\theta_2,\theta_5$} (B)
        ;
\end{tikzpicture}
\end{center}
        \caption{The coincidence graph for the substitution in Eq.~\eqref{eq:subs-rev}}
        \label{fig:min-sets}
    \end{figure}

Note that both $M_1$ and $M_2$ appear as idempotent columns of $\theta$ (as the first and last columns, respectively), and this pair can be used to define the respective encodings $\nu,\bar{\nu}\colon \mathcal{A}\to [c_\theta]$. 
More explicitly, these are given by
\[
\nu\colon \begin{cases}
    a\mapsto 0 &\\
    b\mapsto 1&\\
    c\mapsto 1&
\end{cases}
\quad \text{and} \quad \quad  
\bar{\nu}\colon \begin{cases}
    a\mapsto 0 &\\
    b\mapsto 1&\\
    c\mapsto 1.&
\end{cases}
\]
 In other words, these two idempotents  define the same encoding.

First, note that the set of length-2 legal words $\mathcal{L}_2$ contains all length-2 words in $\mathcal{A}^{+}$, hence the condition $(\tau\circ A)\cdot(\mathcal{L}_2)\subseteq \mathcal{L}_2$ is trivially satisfied.
Next, we check whether the conditions of Theorem~\ref{thm:supertile-shuffling} are satisfied. Since the idempotent columns we used to define the encodings $\nu$ and 
$\bar{\nu}$ already appear in $\theta$, it suffices to check the three conditions in the theorem for this power of the substitution.

For condition \textbf{(1)}, we need to check $\theta_{\boldsymbol{j}}(\tau[M])=\tau[\theta_{A\odot\boldsymbol{j}}(M)]$ for all $\boldsymbol{j}\in \left\{0,\ldots,5\right\}$. Here, we get two simplifications. Since, $\tau[M_1]=M_2$ and vice versa, it suffices to check whether $\theta_{\boldsymbol{j}}(M)=\tau[\theta_{A\odot\boldsymbol{j}}(\overline{M})]$, where $\overline{M_i}=M_{(12)i}$. 
Moreover, $A^{-1}\odot \boldsymbol{j}=L-\boldsymbol{j}-1$, since we are in dimension one, which yields $\theta_{\boldsymbol{j}}(M)=\tau[\theta_{L-\boldsymbol{j}-1}(\overline{M})]$. The table of images of minimal sets under the columns is given below. 
\end{example}

\begin{table}[!h]
    \centering
    \begin{tabular}{c|c|c|c|c|c|c}
        & 0 & 1 & 2 &3 &4 & 5\\
         \hline
         $M_1$ & $M_1$ & $M_2$ &$M_2$ & $M_1$ & $M_2$ & \textcolor{red}{$M_2$}  \\
         \hline 
        $M_2$ & \textcolor{red}{$M_1$} & $M_1$ &$M_2$ & $M_1$ & $M_1$ & $M_2$ \\
         \hline 
    \end{tabular}
    \caption{Images of minimal sets under the columns of $\theta$ in Eq.~\eqref{eq:subs-rev}}
    \label{tab:minsets}
\end{table}

One can check that condition \textbf{(1)} is satisfied for all columns and all minimal sets. As an example, if we pick $\boldsymbol{j}=0$ for $M_2$, this yields 
\[
\theta_0(M_2)=M_1=\tau[M_2]=\tau[\theta_{5}(M_1)]; 
\]
see the highlighted entries in the table above. 

Condition \textbf{(2)} is easily seen to be satisfied, with $\tau^{\prime}$ being the identity map on $[c_\theta]=\{0,1\}$. This simplifies Condition \textbf{(3)} into $
\beta^{ }_{M,\boldsymbol{j}}=\beta^{ }_{\overline{M},L-\boldsymbol{j}-1}$.
The table of the permutations $\beta_{M,\boldsymbol{j}}\in \Sigma_{[c_\theta]}$ is given below. 

\begin{table}[!h]
    \centering
    \begin{tabular}{c|c|c|c|c|c|c}
        & 0 & 1 & 2 &3 &4 & 5\\
         \hline
         $M_1$ & id & id & $(01)$ & $(01)$ & id & id  \\
         \hline 
        $M_2$  & id & id & $(01)$ & $(01)$ & id & id\\
         \hline 
    \end{tabular}
    \caption{Collection of $\beta^{ }_{M,\boldsymbol{j}}$ for the substitution $\theta$ in Eq.~\eqref{eq:subs-rev}}
    \label{tab:beta}
\end{table}

For this example, $\beta^{ }_{M,\boldsymbol{j}}$ is independent of $M$, whence it suffices to check one row to confirm the validity of Condition \textbf{(3)}, which is visually apparent from the table. Note that this is not true in general, in particular, when $\nu\neq \bar{\nu}$. It then follows from Theorem~\ref{thm:supertile-shuffling} that the mirroring map $m(x)_{n}:=x_{-1-n}$, together with the letter exchange map $\tau=(bc)$, generates a reversor for $(\X_{\theta},\mathbb{Z})$.

\section{The non-trivial height case}\label{sec:height}

\subsection{Height and substitutions}
Let $\theta\colon\mathcal{A}\to\mathcal{A}^\ell$ be a primitive substitution and $u\in \shift{\theta}$ be one of its $\theta$-fixed (or periodic) points; in its most standard form, the height $h:=h(\theta)$ of $\theta$ is defined as:
\begin{equation}\label{eq:1D_height}
    h = \gcd\{k\in\mathbb{N} \::\: u_k = u_0 \text{ and } \gcd(k,\ell) = 1\}.
\end{equation}
The chosen $u\in \shift{\theta}$, and whether it is a fixed point or not, turns out to be irrelevant because of minimality. Note that $h$ is always coprime to $\ell$, by definition. Other known characterisations of height in the aperiodic case are as follows; see \cite{Dek}.

\begin{fact}\label{fact:height-1d}
    Let $\theta$ be a primitive, aperiodic length-$\ell$ substitution, and let $h\ge 1$ be an integer coprime to $\ell$. The following are equivalent:
    \begin{enumerate}[label=\textup{(\roman*)},leftmargin=*]
        \item $h$ is the height of $\shift{\theta}$,
        \item $h$ is the largest integer coprime to $\ell$ such that the alphabet $\mathcal{A}$ partitions into $h$ non-empty sets $\{\mathcal{A}_j\}_{j\in\mathbb{Z}/h\mathbb{Z}}$ so that, if $uv\in\mathcal{L}_2(\shift{\theta})$ is legal and       $u\in\mathcal{A}_j$, then $v\in\mathcal{A}_{j+1}$,
        \item $h$ is the largest integer coprime to $\ell$ such that $\ee^{2\pi \ii/h}$ is a \emph{continuous eigenvalue} of $\shift{\theta}$, that is, there is a continuous non-zero function $f\colon\shift{\theta}\to\mathbb{C}$ such that $f\circ\sigma =\ee^{2\pi \ii/h}\cdot f $,
        \item the \textnormal{MEF} of $\shift{\theta}$ is the group rotation $(\mathbb{Z}_\ell \times \mathbb{Z}/h\mathbb{Z},+(1,1))$.\qed 
    \end{enumerate}
\end{fact}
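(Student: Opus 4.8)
\textbf{Proof plan for Fact~\ref{fact:height-1d}.}
The natural strategy is to prove a cycle of implications, say (i)$\Rightarrow$(ii)$\Rightarrow$(iv)$\Rightarrow$(iii)$\Rightarrow$(i), since (iv) packages the MEF description and (iii) is the analytic reformulation; (ii) is the purely combinatorial heart of the matter. Throughout I would fix a $\theta$-fixed point $u\in\shift\theta$ (passing to a power of $\theta$ if necessary to guarantee one exists, which does not change $\shift\theta$ or $h$) and use primitivity to move freely between "property holds for $u$" and "property holds for all of $\shift\theta$".

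For (i)$\Rightarrow$(ii), let $H=\{k\in\mathbb{N}\colon u_k=u_0,\ \gcd(k,\ell)=1\}$ and $h=\gcd H$; the first task is to show $H$ is, up to finitely many exceptions, a union of residue classes, and more precisely that $u_k=u_m$ whenever $k\equiv m\pmod h$ — this is where the self-similarity $u=\theta(u)$ is used, expanding indices in base $\ell$ and exploiting that $h$ is coprime to $\ell$ so multiplication by $\ell$ is invertible mod $h$. Once $u_k$ depends only on $k\bmod h$, define $\mathcal{A}_j=\{u_k\colon k\equiv j\pmod h\}$; these are non-empty (primitivity), and one checks using recognizability/the supertile structure that they are pairwise disjoint and that the transition rule $u\in\mathcal{A}_j,\ uv\in\mathcal{L}_2\Rightarrow v\in\mathcal{A}_{j+1}$ holds. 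Maximality of $h$ among coprime-to-$\ell$ integers with such a partition follows because any finer partition would refine the residue-class structure of $\{k\colon u_k=u_0\}$, contradicting $h=\gcd H$.

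The implication (ii)$\Rightarrow$(iv) is the most substantial. Given the partition $\{\mathcal{A}_j\}_{j\in\mathbb{Z}/h\mathbb{Z}}$, define $g\colon\shift\theta\to\mathbb{Z}/h\mathbb{Z}$ by $g(x)=j$ iff $x_0\in\mathcal{A}_j$; this is continuous (it depends only on $x_0$) and satisfies $g\circ\sigma=g+1$, so $\mathrm{e}^{2\pi\ii/h}$ is a continuous eigenvalue and $\mathbb{Z}/h\mathbb{Z}$ is a factor. Independently, the supertile/recognizability structure yields the odometer factor $\shift\theta\to\mathbb{Z}_\ell$ via $\pi_{\mathrm{tile}}$. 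The key point is that the joint map $\shift\theta\to\mathbb{Z}_\ell\times\mathbb{Z}/h\mathbb{Z}$ is the \emph{maximal} equicontinuous factor: one must show every continuous eigenvalue is accounted for. I would argue that a continuous eigenvalue $\lambda$ forces, via the fixed point equation, $\lambda^\ell$-type relations placing $\lambda$ in the group generated by $\ell$-power roots of unity and $h$-th roots of unity; the coprimality $\gcd(h,\ell)=1$ then makes the product group $\mathbb{Z}_\ell\times\mathbb{Z}/h\mathbb{Z}$ exactly the dual of the eigenvalue group, i.e.\ the Pontryagin/Halmos--von Neumann picture identifies the MEF. This is essentially Dekking's theorem, so I would either cite \cite{Dek} for the nontrivial direction or reproduce the eigenvalue computation. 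Then (iv)$\Rightarrow$(iii) is immediate (read off the eigenvalue $\mathrm{e}^{2\pi\ii/h}$ from the $\mathbb{Z}/h\mathbb{Z}$ factor, with maximality of $h$ coming from maximality of the factor), and (iii)$\Rightarrow$(i) comes by pulling back the eigenfunction to level sets on $u$, which reconstructs a partition and hence shows the $h$ of (iii) divides no larger coprime-to-$\ell$ quantity than the combinatorial one, closing the loop.

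The main obstacle is the "maximality" half of (ii)$\Rightarrow$(iv): showing no continuous eigenvalue escapes $\mathbb{Z}_\ell\times\mathbb{Z}/h\mathbb{Z}$. Everything else is bookkeeping with base-$\ell$ expansions and the transition rule, but ruling out extra eigenvalues requires either invoking the structure theorem for constant-length substitution systems (Dekking, or Queffélec) or carefully analysing how a hypothetical eigenfunction interacts with the inflation, and arguing its eigenvalue must be a root of unity of order dividing a power of $\ell$ times something coprime-to-$\ell$ that the combinatorial $h$ already captures. Given that the statement is explicitly attributed to \cite{Dek}, the cleanest route is to prove (i)$\Leftrightarrow$(ii) in detail — the genuinely elementary part — and invoke Dekking's theorem for the equivalence with (iii) and (iv), noting that the coprimality condition $\gcd(h,\ell)=1$ is what guarantees the two factors are independent and their product is the full MEF.
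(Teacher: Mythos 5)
The paper itself offers no proof of this statement: it is recorded as a Fact with a pointer to Dekking \cite{Dek}, so there is no internal argument to compare against. Your plan --- prove (i)$\Leftrightarrow$(ii) by hand via the fixed point, base-$\ell$ expansions and coprimality of $h$ and $\ell$, and delegate the eigenvalue/MEF equivalences (iii), (iv) to Dekking's structure theorem --- is exactly the standard route and is consistent with how the paper treats the result (pure citation). Your maximality argument for (ii) is also correct: a partition into $h'$ classes with the transition property forces $h'\mid k$ for every return time $k$ of $u_0$, hence $h'\mid\gcd H=h$.

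There is, however, one assertion in your (i)$\Rightarrow$(ii) step that is false as written: you claim that $u_k=u_m$ whenever $k\equiv m\pmod h$, i.e.\ that $u_k$ depends only on $k\bmod h$. Combined with the disjointness of the classes $\mathcal{A}_j$ (which you also assert), this would make the fixed point $u$ periodic with period $h$, contradicting aperiodicity. What the self-similarity $u=\theta(u)$ and the invertibility of $\ell$ modulo $h$ actually yield --- and all that your construction needs --- is the \emph{reverse} implication: if $u_k=u_m$ then $k\equiv m\pmod h$; equivalently, the sets $\mathcal{A}_j=\{u_k\colon k\equiv j\pmod h\}$ are pairwise disjoint. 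Once disjointness is in hand, non-emptiness (each residue class is infinite and every letter occurs in $u$ by primitivity), the transition rule, and your maximality argument go through as you describe; and with Dekking cited for the ``no further eigenvalues'' half of (ii)$\Rightarrow$(iv), the outline is sound. So the plan is repairable, but the lemma you isolate as ``the first task'' must be restated in the correct direction, since as phrased it is incompatible with the hypotheses.
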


We follow the definition  of Frank for the height of a substitutive $\mathbb{Z}^d$-shift, which is done in several steps; see \cite{Frank2, Bartlett, Cab2}.
    Let $\Gamma_1,\Gamma_2\le\mathbb{Z}^2$ be two sublattices of $\mathbb{Z}^d$. We say $\Gamma_1$ and $\Gamma_2$ are \emph{coprime} if $\Gamma_1+\Gamma_2=\{\boldsymbol{m}_1+\boldsymbol{m}_2 \::\:\boldsymbol{m}_j\in\Gamma_j\}$ equals the whole of $\mathbb{Z}^d$.
   Given a minimal $\mathbb{Z}^d$-subshift $\X$ and a fixed $x\in \X$, we define the set of \emph{return times} of $\X$ as 
  \[
    \Xi = \lbrace \boldsymbol{j} \in \mathbb{Z}^d\colon \exists\, \boldsymbol{k} \text{ such that } x_{\boldsymbol{j}+\boldsymbol{k}}=x_{\boldsymbol{k}}\rbrace.    
  \]
   The \emph{return module} of $\X$, $\mathcal{L}\coloneqq\langle\Xi\rangle$, is the lattice generated by $\Xi$, that is, the set of all $\Z$-linear combinations of elements of $\Xi$.

For a minimal shift space (e.g., for one generated by a primitive substitution), the definition of the set $\Xi$, and consequently of the return module $\mathcal{L}$, are independent of the point $x$ chosen as reference. The same applies to the height lattice, which directly generalises the notion of one-dimensional height, and is defined as follows.

\begin{definition}\label{def:height_lattice}
    Given an aperiodic, primitive  substitution $\theta\colon\mathcal{A}\to\mathcal{A}^\mathcal{D}$ in $\mathbb{Z}^d$ with  associated inflation matrix $Q$, the \emph{height lattice} of $\shift{\theta}$ is the smallest lattice $\Gamma\geq \mathbb{Z}^d$ which contains the return module $\mathcal{L}$ and is coprime with the supertile lattice $Q\Z^d$.
\end{definition}

The height lattice can be characterised via an appropriate partition of the alphabet. This is the generalisation of (ii) in Fact~\ref{fact:height-1d}.

\begin{fact}\label{fact:alph-partition}
    Let $\X\subseteq\mathcal{A}^{\Z^d}$ be a minimal subshift and $\Gamma\le\Z^d$ be the associated height lattice. If $\mathcal{D}_{\Gamma}$ is a fundamental domain for $\Gamma$, then there exists a partition $(\mathcal{A}_{\boldsymbol{k}})_{\boldsymbol{k}\in \mathcal{D}_{\Gamma}}$ of the alphabet of $\X$ into non-empty sets indexed by $\mathcal{D}_{\Gamma}$ so that, for any $x\in \X$, there exists $\boldsymbol{k}_0\in\Z^d$ such that $x_{\boldsymbol{j}}\in\mathcal{A}_{\boldsymbol{k}}$ if, and only if, $\boldsymbol{j}+\boldsymbol{k}_0\equiv\boldsymbol{k}\pmod{\Gamma}$. \qed
\end{fact}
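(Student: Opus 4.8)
The plan is to colour the letters of $\mathcal{A}$ by the $\Gamma$-coset of the positions at which they occur, and to show by minimality that this colouring is well defined and intrinsic up to an overall translation. The only structural input needed is the chain $\Xi\subseteq\mathcal{L}\subseteq\Gamma$ provided by Definition~\ref{def:height_lattice}, together with the fact (recalled just before that definition) that $\Xi$, and hence $\mathcal{L}$, is independent of the reference point. We also note in passing that $[\Z^d:\mathcal{L}]<\infty$: along each axis direction $\boldsymbol{e}_i$, two of the $|\mathcal{A}|+1$ symbols $x_{\boldsymbol{0}},x_{\boldsymbol{e}_i},\dots,x_{|\mathcal{A}|\boldsymbol{e}_i}$ coincide, so $\Xi$ contains a nonzero multiple of each $\boldsymbol{e}_i$, whence $\mathcal{D}_\Gamma$ is finite and the partition below is genuinely finite.

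Given $x\in\X$ and a letter $a$ occurring in $x$, I would pick any $\boldsymbol{p}$ with $x_{\boldsymbol{p}}=a$ and set $c_x(a):=\boldsymbol{p}+\Gamma\in\Z^d/\Gamma$. This is well defined, since $x_{\boldsymbol{p}}=x_{\boldsymbol{p}'}=a$ forces $\boldsymbol{p}-\boldsymbol{p}'\in\Xi\subseteq\Gamma$, and by minimality every $a\in\mathcal{A}$ occurs in every $x\in\X$, so $c_x\colon\mathcal{A}\to\Z^d/\Gamma$ is defined for all $x$. The crucial point is that the difference $c_x(a)-c_x(b)$ does not depend on $x$: given $a,b\in\mathcal{A}$ and $x,x'\in\X$, pick occurrences $x'_{\boldsymbol{p}'}=a$, $x'_{\boldsymbol{q}'}=b$ and a finite window $W\ni\boldsymbol{p}',\boldsymbol{q}'$; by uniform recurrence (minimality) there is $\boldsymbol{n}\in\Z^d$ with $x_{\boldsymbol{w}+\boldsymbol{n}}=x'_{\boldsymbol{w}}$ for all $\boldsymbol{w}\in W$, so $a,b$ occur in $x$ at $\boldsymbol{p}'+\boldsymbol{n}$ and $\boldsymbol{q}'+\boldsymbol{n}$, giving $c_x(a)-c_x(b)=(\boldsymbol{p}'-\boldsymbol{q}')+\Gamma=c_{x'}(a)-c_{x'}(b)$. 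Fixing a reference point $x^*\in\X$, it then follows that $c_{x^*}(a)-c_x(a)$ is the same coset for all $a\in\mathcal{A}$; I would choose $\boldsymbol{k}_0=\boldsymbol{k}_0(x)\in\Z^d$ representing it, so that $c_{x^*}(a)\equiv c_x(a)+\boldsymbol{k}_0\pmod{\Gamma}$ for every $a$.

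The partition is then read off by setting $\mathcal{A}_{\boldsymbol{k}}:=\{a\in\mathcal{A}\colon c_{x^*}(a)=\boldsymbol{k}+\Gamma\}$ for $\boldsymbol{k}\in\mathcal{D}_\Gamma$: each $a$ lies in exactly one $\mathcal{A}_{\boldsymbol{k}}$, and $\mathcal{A}_{\boldsymbol{k}}\neq\varnothing$ since any $\boldsymbol{j}\in\boldsymbol{k}+\Gamma$ satisfies $x^*_{\boldsymbol{j}}\in\mathcal{A}_{\boldsymbol{k}}$. Finally, for arbitrary $x\in\X$ and $\boldsymbol{j}\in\Z^d$, writing $a=x_{\boldsymbol{j}}$ one has $c_x(a)=\boldsymbol{j}+\Gamma$, so
\[
x_{\boldsymbol{j}}\in\mathcal{A}_{\boldsymbol{k}}\iff c_{x^*}(a)=\boldsymbol{k}+\Gamma\iff \boldsymbol{j}+\boldsymbol{k}_0\equiv\boldsymbol{k}\pmod{\Gamma},
\]
with $\boldsymbol{k}_0=\boldsymbol{k}_0(x)$ as above, which is the asserted property. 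The main obstacle is the invariance of $c_x(a)-c_x(b)$ in the second step: this is exactly where minimality must be used, to transport occurrences of letters from one configuration of $\X$ to another while controlling positions modulo $\Gamma$; once that is in hand, both the partition and the translation $\boldsymbol{k}_0$ are forced. (I expect the coprimality of $\Gamma$ with $Q\Z^d$ in Definition~\ref{def:height_lattice} to play no role here; only $\mathcal{L}\subseteq\Gamma$ is needed.)
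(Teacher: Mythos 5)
Your argument is correct. One thing to be aware of: the paper itself contains no proof of this statement — it is recorded as a Fact and imported from the literature on higher-dimensional height (Dekking in dimension one; Frank, Bartlett and Cabezas in general), so there is no in-paper argument to compare against. Your proof is a sound, self-contained replacement, and it isolates exactly the right ingredients: colouring each letter by the $\Gamma$-coset of its occurrences is well defined because displacements between equal symbols lie in $\Xi\subseteq\mathcal{L}\subseteq\Gamma$ (with $\Xi$ independent of the reference point, which itself is immediate from minimality), and transporting a two-letter pattern between arbitrary points of $\X$ by minimality is precisely what makes the difference $c_x(a)-c_x(b)$ intrinsic, so that the colouring of any $x$ agrees with that of the reference point $x^*$ up to a single translation $\boldsymbol{k}_0(x)$. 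Your side remarks are also accurate: the pigeonhole bound shows $[\Z^d:\mathcal{L}]<\infty$, hence $\mathcal{D}_\Gamma$ is finite (this also follows a posteriori from the partition itself, since distinct cosets yield disjoint non-empty classes in a finite alphabet), and the argument uses only $\mathcal{L}\subseteq\Gamma$, so it actually proves the statement for every sublattice containing the return module; the coprimality of $\Gamma$ with $Q\Z^d$ in Definition~\ref{def:height_lattice} plays no role here, exactly as you predicted.
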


\subsection{Height and the MEF}

As we have seen in Proposition~\ref{fact:height-1d}, the height lattice is closely related to the continuous eigenfunctions of the shift space. Recall that a \emph{continuous eigenfunction} is a non-zero continuous map $f\colon \mathbb{X}\to\mathbb{C}$ satisfying that, for every $\boldsymbol{n}\in\Z^d$, $f\circ\sigma^{ }_{\boldsymbol{n}}$ is a multiple of $f$. For a minimal system, one may always assume that $f$ takes values in the unit circle $\mathbb{S}^1=\{z\in\C\::\:\lvert z\rvert=1\}$. We write $\mathcal{E}(\mathbb{X})$ for the set of all eigenfunctions of the subshift $\mathbb{X}$; this is a group under ordinary multiplication.

For the following discussion, we shall need some notions from harmonic analysis; we omit most proofs and refer the reader to any textbook on the subject \cite{Loo,Rud}. We have the following known results on Pontryagin duals of locally compact Abelian (LCA) groups.

\begin{fact}\label{fact:Pontryagin-duality}
    Let $G,H$ be LCA groups and $\widehat{G},\widehat{H}$ their Pontryagin duals. We have:
    \begin{enumerate}[label=\textup{(\roman*)},leftmargin=*]
        \item The dual $\widehat{\widehat{G}}$ of $\widehat{G}$ is isomorphic to $G$, where the isomorphism sends every $g\in G$ to the map $e_g\colon\widehat{G}\to\mathbb{S}^1$ given by $e_g(\chi)=\chi(g)$.
        \item Any continuous group homomorphism $f\colon G\to H$ induces a dual group homomorphism $f^*\colon\widehat{H}\to\widehat{G}$ given by $f^*(\chi)=\chi\circ f$. This association is functorial, in the sense that $(f\circ g)^*=g^*\circ f^*$ and $\id_G^*=\id_{\widehat{G}}$ (in particular, for an isomorphism $f$, the equality $(f^*)^{-1}=(f^{-1})^*$ holds), and furthermore, if we identify $G$ with $\widehat{\widehat{G}}$, then $f^{**}=f$.
        \item   Let $(G_i,f_i)_{i\in I}$ be an inverse system of LCA groups and continuous group homomorphisms, such that $G=\varprojlim (G_i,f_i)_{i\in I}$ is also a LCA group. Then the dual of $G$ is the direct limit of the duals of the $\widehat{G}_i$, i.e. $\widehat{G}=\varinjlim(\widehat{G}_i,f^*_i)$.
        \item The dual of a Cartesian product of two LCA groups is the product of their duals, where the isomorphism $\widehat{G}_1\times\widehat{G}_2\cong \widehat{G_1\times G_2}$ maps a pair of characters $(\chi^{ }_1,\chi^{ }_2)$ to the character $\xi((g_1,g_2))=\chi^{ }_1(g_1)\chi^{ }_2(g_2)$ of $G_1\times G_2$.\qed
    \end{enumerate}
\end{fact}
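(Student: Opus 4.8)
The plan is to treat Fact~\ref{fact:Pontryagin-duality} as a compendium of standard results in abstract harmonic analysis, so that the ``proof'' amounts to citing the one deep ingredient and verifying a handful of routine naturality and universal-property statements; no genuinely new argument is needed.

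Part (i) is the Pontryagin--van Kampen duality theorem itself, which we would simply invoke from \cite{Loo, Rud}: the evaluation map $\varepsilon_G\colon G\to\widehat{\widehat G}$ sending $g$ to the character $e_g$ with $e_g(\chi)=\chi(g)$ is a topological group isomorphism. This is the only substantial input; reproving it (via the Fourier inversion theorem together with the structure theory of LCA groups) is outside our scope, and we take it as given. For part (ii), given a continuous homomorphism $f\colon G\to H$, one sets $f^*(\chi)=\chi\circ f$; continuity of $f^*$ for the compact-open topologies is immediate since $f$ carries compacta to compacta, the relations $(f\circ g)^*=g^*\circ f^*$ and $\id_G^*=\id_{\widehat G}$ are an unwinding of definitions (whence $(f^*)^{-1}=(f^{-1})^*$ for isomorphisms), and $f^{**}=f$ is exactly the naturality of $\varepsilon$, i.e.\ $\varepsilon_H\circ f=f^{**}\circ\varepsilon_G$, which one checks by evaluating both sides on $g\in G$ against a character $\chi\in\widehat H$ and reducing to the tautology $\chi(f(g))=\chi(f(g))$.

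Part (iv) is elementary: identifying each $G_k$ with a subgroup of $G_1\times G_2$, functoriality from (ii) gives a continuous homomorphism $\xi\mapsto(\xi|_{G_1},\xi|_{G_2})$ from $\widehat{G_1\times G_2}$ to $\widehat{G_1}\times\widehat{G_2}$, and $(\chi_1,\chi_2)\mapsto\big((g_1,g_2)\mapsto\chi_1(g_1)\chi_2(g_2)\big)$ is a two-sided inverse; both are manifestly continuous homomorphisms. Part (iii) is the point requiring some care: writing $G=\varprojlim(G_i,f_i)$ with canonical projections $p_i\colon G\to G_i$, the maps $p_i^*$ are compatible with the $f_i^*$ and so induce a canonical map $\varinjlim(\widehat{G_i},f_i^*)\to\widehat G$, and one must show it is an isomorphism. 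Surjectivity is the heart of the matter: a continuous character of $G$ is trivial on some neighbourhood of the identity, and since the kernels of the $p_i$ form a neighbourhood basis at the identity of the inverse limit (here we use that the $G_i$ are LCA, and in the intended profinite application that the transition maps are surjective), such a character factors through some $p_j$ and hence lies in $\operatorname{im}p_j^*$; injectivity is then immediate from surjectivity of the transition maps. Alternatively, one may simply quote that Pontryagin duality is an exact contravariant functor converting projective limits into injective limits. Thus the only real obstacle is the bookkeeping around the topology of the inverse limit in (iii); everything else is formal.
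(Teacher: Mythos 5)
Your proposal is correct and matches the paper's treatment: the paper states this as a Fact with no proof at all, explicitly deferring to the standard references \cite{Loo,Rud}, which is exactly the strategy you adopt (cite Pontryagin--van Kampen duality for (i), check the formal naturality statements for (ii) and (iv), and handle (iii) either by quoting the projective-to-injective limit duality or by the factoring argument). The only caveat is that your surjectivity step in (iii) -- a continuous character being trivial on a neighbourhood of the identity -- is not valid for arbitrary LCA inverse limits but only when the kernels of the projections form a neighbourhood basis of open subgroups (the profinite situation actually used in the paper), a restriction you already acknowledge and which your alternative of quoting the general functorial statement covers.
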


The results above are relevant since we deal with generalised odometers, which are inverse limits of (finite) groups, as the maximal equicontinuous factors of our shift space $\X$. Their duals are generalised Prüfer groups, which dictate the eigenfunction structure of the subshift $\X$ in question as follows.

\begin{fact}\label{fact:eigenfunctions_as_characters}
    Let $\X$ be a shift space whose maximal equicontinuous factor is a surjective map $\pi^{ }_{\rm MEF}\colon \X\twoheadrightarrow\mathbb{A}$ onto a LCA group $\mathbb{A}$. For any constant $C\in\mathbb{C}\setminus\{0\}$ and any character $\chi\colon\mathbb{A}\to\mathbb{S}^1$, the function $C\cdot\chi\circ\pi^{ }_{\rm MEF}$ is an eigenfunction of $\X$. Conversely, all eigenfunctions of $\X$ are of this form. \qed
\end{fact}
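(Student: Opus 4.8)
The plan is to use the structural fact recalled just above, namely that the MEF $\mathbb{A}$ is a rotation on a compact Abelian group: there is a continuous homomorphism $\rho\colon\mathbb{Z}^d\to\mathbb{A}$ with dense image, and $\pi^{ }_{\rm MEF}$ intertwines $\sigma^{ }_{\boldsymbol{n}}$ with the translation $R^{ }_{\rho(\boldsymbol{n})}\colon z\mapsto z+\rho(\boldsymbol{n})$. For the first direction I would just compute: given $\chi\in\widehat{\mathbb{A}}$ and $C\in\mathbb{C}\setminus\{0\}$, the function $f=C\cdot\chi\circ\pi^{ }_{\rm MEF}$ is continuous and nowhere zero, and since $\chi$ is multiplicative one gets $f\circ\sigma^{ }_{\boldsymbol{n}}=\chi(\rho(\boldsymbol{n}))\cdot f$, so $f$ is an eigenfunction whose eigenvalue is the character $\boldsymbol{n}\mapsto\chi(\rho(\boldsymbol{n}))$ of $\mathbb{Z}^d$. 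This part is routine.

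For the converse, start from an arbitrary $f\in\mathcal{E}(\mathbb{X})$ (here $\mathbb{X}$ is minimal, as it comes from a primitive substitution). Since $\lvert f\rvert$ is a continuous $\sigma$-invariant function it is constant, so after dividing by it — and absorbing the factor into $C$ at the end — one may assume $f$ is $\mathbb{S}^1$-valued. Writing $f\circ\sigma^{ }_{\boldsymbol{n}}=\lambda(\boldsymbol{n})f$, the cocycle relation $f\circ\sigma^{ }_{\boldsymbol{m}+\boldsymbol{n}}=\lambda(\boldsymbol{m})\lambda(\boldsymbol{n})f$ forces $\lambda\colon\mathbb{Z}^d\to\mathbb{S}^1$ to be a homomorphism. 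Thus $f$ is a factor map onto the $\mathbb{Z}^d$-system on $f(\mathbb{X})\subseteq\mathbb{S}^1$ given by $z\mapsto\lambda(\boldsymbol{n})z$; by minimality $f(\mathbb{X})$ is a minimal set for this rotation, hence a coset $z^{ }_0\cdot K$ of the closed subgroup $K=\overline{\langle\lambda(\mathbb{Z}^d)\rangle}$, and replacing $f$ by $z_0^{-1}f$ (another constant) we obtain a factor map $f\colon\mathbb{X}\to K$ onto the equicontinuous rotation of $K$ by the $\lambda(\boldsymbol{n})$.

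Next I would invoke the defining universal property of $\pi^{ }_{\rm MEF}$: the rotation on $K$ is an equicontinuous factor of $\mathbb{X}$, hence it factors through the MEF, giving a continuous equivariant map $g\colon\mathbb{A}\to K$ with $f=g\circ\pi^{ }_{\rm MEF}$. Equivariance reads $g(z+\rho(\boldsymbol{n}))=\lambda(\boldsymbol{n})g(z)$ for all $z\in\mathbb{A}$ and $\boldsymbol{n}\in\mathbb{Z}^d$; evaluating at $z=\boldsymbol{0}$ gives $\lambda(\boldsymbol{n})=g(\rho(\boldsymbol{n}))/g(\boldsymbol{0})$, and setting $\chi:=g/g(\boldsymbol{0})$ produces a continuous map $\mathbb{A}\to\mathbb{S}^1$ with $\chi(\boldsymbol{0})=1$ and $\chi(z+\rho(\boldsymbol{n}))=\chi(\rho(\boldsymbol{n}))\chi(z)$ for every $z$ and every $\boldsymbol{n}$. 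For fixed $z$ the continuous function $w\mapsto\chi(z+w)-\chi(z)\chi(w)$ vanishes on the dense subgroup $\rho(\mathbb{Z}^d)$, hence identically, so $\chi(z+w)=\chi(z)\chi(w)$ for all $z,w$ and $\chi\in\widehat{\mathbb{A}}$. Unwinding the two normalisations gives $f=C\cdot\chi\circ\pi^{ }_{\rm MEF}$ for a suitable $C\in\mathbb{C}\setminus\{0\}$; alternatively, this last step can be phrased via Fact~\ref{fact:Pontryagin-duality}, since a $\mathbb{Z}^d$-equivariant continuous map between two group rotations is, after translation, a dual homomorphism.

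The part I expect to need the most care is the converse, specifically the two places where minimality enters — reducing to an $\mathbb{S}^1$-valued $f$ and identifying $f(\mathbb{X})$ as a coset of a closed subgroup of $\mathbb{S}^1$ — together with the density argument that upgrades the "$\rho(\mathbb{Z}^d)$-twisted" relation to a genuine character identity. By contrast, the appeal to the universal property of $\pi^{ }_{\rm MEF}$ is immediate from its definition, and the forward direction is a one-line computation.
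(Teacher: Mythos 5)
Your argument is correct. Note that the paper gives no proof of this Fact at all (it is stated with a reference to K\r{u}rka's one-dimensional treatment), so there is nothing internal to compare against; what you supply is exactly the standard argument the paper implicitly relies on, and it works verbatim for $\mathbb{Z}^d$-actions: the forward direction by direct computation, and for the converse, normalising $f$ to be $\mathbb{S}^1$-valued, noting that $f$ is then a factor map onto a minimal rotation on a coset $z_0K$ of the closed subgroup $K=\overline{\lambda(\mathbb{Z}^d)}\le\mathbb{S}^1$, invoking the universal property of the MEF to write $f=g\circ\pi_{\rm MEF}$, and using density of $\rho(\mathbb{Z}^d)$ in $\mathbb{A}$ to upgrade the equivariance of $g$ to multiplicativity of $\chi=g/g(\boldsymbol{0})$, i.e.\ $\chi\in\widehat{\mathbb{A}}$. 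Two small points are worth making explicit. First, the reduction to $\lvert f\rvert$ constant tacitly uses $\lvert\lambda(\boldsymbol{n})\rvert=1$; this is not automatic from the definition of eigenfunction but follows since $\sigma_{\boldsymbol{n}}$ is a homeomorphism, so $\lVert f\circ\sigma_{\boldsymbol{n}}\rVert_\infty=\lVert f\rVert_\infty$, and only then is $\lvert f\rvert$ genuinely shift-invariant (and nowhere zero, by minimality, which also guarantees $f(\X)$ is minimal for the rotation and hence a single coset). Second, minimality of $\X$ and density of $\rho(\mathbb{Z}^d)$ in $\mathbb{A}$ are not stated in the Fact itself but are needed for the converse; both hold in the paper's setting (primitive substitutions), and for a minimal system the density is automatic since the MEF is itself a minimal rotation. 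With these two remarks added, your proof is complete.
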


\begin{fact}
    Let $\X$ be a minimal shift space and $\sim$ be the equivalence relation on $\mathcal{E}(\X)$ defined by
        $f\sim g\iff(\exists\, C\in\mathbb{C}\setminus\{0\})\colon f=C\cdot g$.
    Then $\widehat{\mathbb{A}}\cong \mathcal{E}(\X)/{\sim}$, where the isomorphism maps the character $\chi$ to the equivalence class of $\chi\circ\pi^{ }_{\rm MEF}$. \qed
\end{fact}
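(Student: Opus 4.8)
The plan is to write down the candidate isomorphism explicitly and to check it is a group isomorphism, with essentially all of the content already contained in Fact~\ref{fact:eigenfunctions_as_characters}. Set
\[
\Psi\colon\widehat{\mathbb{A}}\longrightarrow\mathcal{E}(\X)/{\sim},\qquad
\Psi(\chi)\coloneqq\big[\,\chi\circ\pi^{ }_{\rm MEF}\,\big]_{\sim}.
\]
The first thing I would check is that $\Psi$ is well defined. By the first assertion of Fact~\ref{fact:eigenfunctions_as_characters}, $\chi\circ\pi^{ }_{\rm MEF}$ is an eigenfunction of $\X$; it is not identically zero, since $\chi$ is $\mathbb{S}^1$-valued and $\pi^{ }_{\rm MEF}$ is onto, so its $\sim$-class is a genuine element of $\mathcal{E}(\X)/{\sim}$. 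Moreover $\Psi$ is defined directly on characters, not on classes of anything, so there is no compatibility to verify on the source side.

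Next I would record that $\mathcal{E}(\X)/{\sim}$ is indeed a group: the relation $\sim$ is a congruence for the pointwise product on $\mathcal{E}(\X)$, because $(Cf)\cdot(C'g)=(CC')(fg)$; the class of the constant function $1$ is the neutral element; and, since an eigenfunction of a minimal system has constant modulus and hence never vanishes, $[f]_{\sim}$ is inverted by $[1/f]_{\sim}$. Granting this, the pointwise identity $(\chi^{ }_1\chi^{ }_2)\circ\pi^{ }_{\rm MEF}=(\chi^{ }_1\circ\pi^{ }_{\rm MEF})\cdot(\chi^{ }_2\circ\pi^{ }_{\rm MEF})$ shows that $\Psi$ is a group homomorphism, with the trivial character sent to the class of $1$. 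Surjectivity of $\Psi$ is exactly the converse half of Fact~\ref{fact:eigenfunctions_as_characters}: any eigenfunction $f$ can be written as $C\cdot(\chi\circ\pi^{ }_{\rm MEF})$ for suitable $C\in\mathbb{C}\setminus\{0\}$ and $\chi\in\widehat{\mathbb{A}}$, hence $[f]_{\sim}=\Psi(\chi)$.

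For injectivity I would compute $\ker\Psi$. If $\Psi(\chi)$ is the neutral element, then $\chi\circ\pi^{ }_{\rm MEF}\equiv C$ for some $C\in\mathbb{C}\setminus\{0\}$; since $\pi^{ }_{\rm MEF}\colon\X\twoheadrightarrow\mathbb{A}$ is surjective, this means $\chi$ is the constant function with value $C$ on $\mathbb{A}$, and evaluating at $0\in\mathbb{A}$ gives $C=\chi(0)=1$, so $\chi$ is trivial. Thus $\ker\Psi$ is trivial and $\Psi$ is an isomorphism of abstract groups, which is the assertion; if one wishes to record it as an isomorphism of topological groups, it suffices to observe that $\widehat{\mathbb{A}}$ is discrete, being the dual of the compact group $\mathbb{A}$, so no continuity needs to be checked.

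I do not expect any serious obstacle: the statement is essentially a repackaging of Fact~\ref{fact:eigenfunctions_as_characters}. The one point meriting care is the passage through \emph{minimality} of $\X$ --- it is minimality that makes eigenfunctions unimodular (hence nowhere zero, so that $\mathcal{E}(\X)/{\sim}$ is a group), and, more importantly, it underlies Fact~\ref{fact:eigenfunctions_as_characters} itself, since minimality forces two eigenfunctions with the same eigenvalue character to differ by a nonzero constant; without it the quotient $\mathcal{E}(\X)/{\sim}$ would in general be strictly larger than $\widehat{\mathbb{A}}$.
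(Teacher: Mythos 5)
Your proof is correct. The paper does not actually prove this statement --- it is recorded as a Fact and deferred to K\r{u}rka's book, with the remark that the generalisation to $\Z^d$-actions is immediate --- and your argument is exactly the standard verification that is being implicitly invoked: define $\chi\mapsto[\chi\circ\pi^{ }_{\rm MEF}]_{\sim}$, note it is a homomorphism, get surjectivity from the converse half of Fact~\ref{fact:eigenfunctions_as_characters}, and get injectivity from surjectivity of $\pi^{ }_{\rm MEF}$ together with $\chi(0)=1$. Your observation that minimality is what makes eigenfunctions unimodular (so that $\mathcal{E}(\X)/{\sim}$ is a group) and what underlies Fact~\ref{fact:eigenfunctions_as_characters} is also the right place to locate the real content; there is no gap.
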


Hence, we shall ignore any multiplicative constants and assume that all eigenfunctions are of the form $\chi\circ\pi^{ }_{\rm MEF}$ from now on. The interested reader may consult \cite{K} for a proof in the one-dimensional case; the generalisation to $\Z^d$-actions is immediate.  

\subsection{MEF for substitutions and height eigenfunctions}

Given a substitution $\theta$ with associated (expansive) inflation matrix $Q$, the MEF of $\shift{\theta}$ is a finite index extension of the $Q$-adic odometer obtained as the inverse limit $\Z^d_Q:=\varprojlim (\Z^d/Q^k\Z^d,Q)$. More precisely, if $\Gamma$ is the height lattice of $\shift{\theta}$, we have
\begin{equation}\label{eq:MEF-genform}
    \mathbb{A}\cong(\mathbb{Z}^d_Q\times(\Z^d/\Gamma)), 
\end{equation}
with the canonical actions of $\mathbb{Z}^d$ on both components.
This is the decomposition of  $\mathbb{A}$ as a product of its torsion-free and torsion component, respectively. For $x\in \X_{\theta}$, one can then write 
\[
\pi^{ }_{\textrm{MEF}}(x)=(\pi^{ }_{\textrm{tile}}(x),\pi^{ }_{\text{height}}(x)),\]
with $\pi^{ }_{\text{tile}}(x)\in \Z_Q^d$ and 
$\pi^{ }_{\text{height}}(x)\in \Z^d/\Gamma$.
This decomposition is central in the proof of Theorem~\ref{thm:height}.

Fact~\ref{fact:Pontryagin-duality} then tells us how every eigenfunction looks like, up to multiplication by a constant: as $\widehat{\mathbb{A}}$ is isomorphic to the product of the direct limit of the groups $\Z^d/Q^k\Z^d$ with the finite group $\Z^d/\Gamma$, any character of $\mathbb{A}$ decomposes as a product of a character of $\Z_Q^d$ (which may be interpreted as a group homomorphism $\Z_Q^d\to\mathbb{S}^1$ where the value of $f(\boldsymbol{z})$ depends only on finitely many digits of $\boldsymbol{z}$) and a character associated to the height. This translates to the following.

\begin{theorem}\label{thm:eigenfunction-decomp}
    Every eigenfunction $f\colon \X\to\mathbb{S}^1$ is a product of two eigenfunctions $f_{\rm tile}$ and $h$, where the former, for some $k$, satisfies that $f_{\rm tile}\circ\sigma^{ }_{\boldsymbol{n}}=f_{\rm tile}$ for all $\boldsymbol{n}\in Q^k\Z^d$, and $h$ satisfies the analogous condition $h\circ\sigma^{ }_{\boldsymbol{n}}=h$ for all $\boldsymbol{n}\in\Gamma$.
\end{theorem}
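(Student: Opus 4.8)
The plan is to read off the claimed factorisation from the product decomposition \eqref{eq:MEF-genform} of the MEF, using Pontryagin duality and the fact that a character of a profinite group factors through a finite quotient. First, by Fact~\ref{fact:eigenfunctions_as_characters} (and the convention that multiplicative constants are irrelevant, so that eigenfunctions may be identified with elements of $\widehat{\mathbb{A}}$ via $\mathcal{E}(\X)/{\sim}$), I would write $f=\chi\circ\pi^{ }_{\rm MEF}$ for a character $\chi\colon\mathbb{A}\to\mathbb{S}^1$. By \eqref{eq:MEF-genform} we have $\mathbb{A}\cong\Z^d_Q\times(\Z^d/\Gamma)$ as topological groups with their canonical $\Z^d$-actions, with $\pi^{ }_{\rm MEF}(x)=(\pi^{ }_{\rm tile}(x),\pi^{ }_{\rm height}(x))$. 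By Fact~\ref{fact:Pontryagin-duality}(iv) the dual splits, $\widehat{\mathbb{A}}\cong\widehat{\Z^d_Q}\times\widehat{\Z^d/\Gamma}$, so there are characters $\chi^{ }_1\colon\Z^d_Q\to\mathbb{S}^1$ and $\chi^{ }_2\colon\Z^d/\Gamma\to\mathbb{S}^1$ with $\chi(\boldsymbol{z}_1,\boldsymbol{z}_2)=\chi^{ }_1(\boldsymbol{z}_1)\,\chi^{ }_2(\boldsymbol{z}_2)$. Setting $f_{\rm tile}:=\chi^{ }_1\circ\pi^{ }_{\rm tile}$ and $h:=\chi^{ }_2\circ\pi^{ }_{\rm height}$ then gives $f=f_{\rm tile}\cdot h$, and each factor is again an eigenfunction by Fact~\ref{fact:eigenfunctions_as_characters} applied to the corresponding equicontinuous factor.

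Next I would check the two invariance conditions. Since $\Z^d$ acts on $\Z^d/\Gamma$ by translation by the residue class and $\pi^{ }_{\rm height}$ intertwines $\sigma^{ }_{\boldsymbol{n}}$ with this action, we have $\pi^{ }_{\rm height}\circ\sigma^{ }_{\boldsymbol{n}}=\pi^{ }_{\rm height}+(\boldsymbol{n}\bmod\Gamma)$; for $\boldsymbol{n}\in\Gamma$ this translation is trivial, whence $h\circ\sigma^{ }_{\boldsymbol{n}}=h$. For $f_{\rm tile}$, the key point is that $\Z^d_Q=\varprojlim(\Z^d/Q^k\Z^d,Q)$ is profinite: by Fact~\ref{fact:Pontryagin-duality}(iii) its dual is $\varinjlim(\widehat{\Z^d/Q^k\Z^d},\cdot)$, so $\chi^{ }_1$ lies in the image of some $\widehat{\Z^d/Q^k\Z^d}$, i.e.\ $\chi^{ }_1=\psi\circ p_k$ where $p_k\colon\Z^d_Q\to\Z^d/Q^k\Z^d$ is the canonical projection and $\psi$ is a character of the finite quotient — equivalently, $\chi^{ }_1(\boldsymbol{z})$ depends only on the first $k$ digits of $\boldsymbol{z}$. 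Since $\pi^{ }_{\rm tile}$ intertwines $\sigma^{ }_{\boldsymbol{n}}$ with translation by $\boldsymbol{n}$ on $\Z^d_Q$, and any $\boldsymbol{n}\in Q^k\Z^d$ acts trivially modulo $Q^k\Z^d$, we obtain $f_{\rm tile}\circ\sigma^{ }_{\boldsymbol{n}}=f_{\rm tile}$ for all $\boldsymbol{n}\in Q^k\Z^d$, as required.

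The only substantive input beyond this diagram chase is the statement that a continuous character of the $Q$-adic odometer has finite image and hence factors through a single finite quotient $\Z^d/Q^k\Z^d$ — this is precisely what produces the exponent $k$ in the statement — together with the assertion that the splitting \eqref{eq:MEF-genform} is compatible with the $\Z^d$-actions on the two factors, so that $\pi^{ }_{\rm tile}$ and $\pi^{ }_{\rm height}$ genuinely intertwine the shift with the stated translations. I expect the care to be concentrated there (constants, base points, and the precise form of $\pi^{ }_{\rm tile}$ and $\pi^{ }_{\rm height}$); once these are in place, the factorisation and both invariance identities follow formally from Facts~\ref{fact:eigenfunctions_as_characters} and~\ref{fact:Pontryagin-duality}.
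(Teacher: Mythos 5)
Your proposal is correct and follows essentially the same route as the paper's proof: both write $f=\chi\circ\pi^{ }_{\rm MEF}$ via Fact~\ref{fact:eigenfunctions_as_characters}, split $\chi$ through the product decomposition \eqref{eq:MEF-genform} using Fact~\ref{fact:Pontryagin-duality}, use that a character of $\Z_Q^d$ factors through a finite quotient $\Z^d/Q^k\Z^d$ to get the exponent $k$, and conclude with the intertwining relations for $\pi^{ }_{\rm tile}$ and $\pi^{ }_{\rm height}$. No substantive difference.
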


\begin{proof}
    Let $f$ be an arbitrary eigenfunction. By Lemma~\ref{fact:eigenfunctions_as_characters}, $f(x)=C\cdot\chi(\pi^{ }_{\rm MEF}(x))$ for some character $\chi\in\widehat{\mathbb{A}}$ and some constant $C\ne 0$. By the product structure of $\mathbb{A}$, seen in Eq.~\eqref{eq:MEF-genform}, and Fact~\ref{fact:Pontryagin-duality}, there exist $\chi_{\rm tile}\in\widehat{\Z_Q^d}$ and $\chi_{\rm height}\in\widehat{\Z^d/\Gamma}$, such that:
        \[\chi\circ\pi^{ }_{\rm MEF}(x) = \chi_{\rm tile}(\pi_{\rm tile}(x))\cdot\chi_{\rm height}(\pi_{\rm height}(x)). \]
    By the structure of $\Z_Q^d$ as an inverse limit, one may interpret the Pr\"ufer group $\widehat{\Z_Q^d}$ as the nested union of finite subgroups $\Z^d/Q^k\Z^d$, seen, as subgroups of $\widehat{\Z_Q^d}$, and thus $\chi_{\rm tile}$ factors as a composition of group homomorphisms $\Z^d\to\Z^d/Q^k\Z^d\to\mathbb{S}^1$, where the first homomorphism is the canonical quotient morphism. As the latter has kernel $Q^k\Z^d$, one has $\chi_{\rm tile}(\boldsymbol{m}+\boldsymbol{n})=\chi_{\rm tile}(\boldsymbol{m})$, for any $\boldsymbol{n}\in Q^k\Z^d$, the latter identified with the corresponding subgroup of $\Z_Q^d$. A simpler version of the same argument shows that $\chi_{\rm tile}(\boldsymbol{m}+\boldsymbol{n})=\chi_{\rm tile}(\boldsymbol{m})$ for $\boldsymbol{n}\in\Gamma$.
    
    Now, since $\pi_{\rm tile}\circ\sigma^{ }_{\boldsymbol{n}}(x) = \pi_{\rm tile}(x) + \boldsymbol{n}$, and the same holds for $\pi_{\rm height}$ modulo $\Gamma$, one can choose $f_{\rm tile} = C\cdot\chi_{\rm tile}\circ\pi_{\rm tile}$ and $h=\chi_{\rm height}\circ\pi_{\rm height}$, from which the result immediately follows.
\end{proof}

In light of the preceding theorem, we define a \emph{height eigenfunction} as an eigenfunction satisfying $h\circ\sigma^{ }_{\boldsymbol{n}}=h$ for any $\boldsymbol{n}\in\Gamma$. The collection of height eigenfunctions fully characterises the height lattice, as seen from the following.

\begin{lemma}\label{lem:height_lattice_as_common_stab}
For any $\boldsymbol{m}\in\Z^d\setminus\Gamma$, there exists a height eigenfunction $h$ such that $h\circ\sigma^{ }_{\boldsymbol{m}}\ne h$. As a consequence, the following equality holds:
    \[\Gamma = \{\boldsymbol{n}\in\Z^d\::\: h\circ\sigma^{ }_{\boldsymbol{n}}=h\text{ for all height eigenfunctions }h\}.\]
\end{lemma}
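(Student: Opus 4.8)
The plan is to work through the dual group structure established in Fact~\ref{fact:Pontryagin-duality} and Theorem~\ref{thm:eigenfunction-decomp}. First I would reduce the claim about height eigenfunctions to a claim about characters of the finite group $\Z^d/\Gamma$: by Theorem~\ref{thm:eigenfunction-decomp} (together with Fact~\ref{fact:eigenfunctions_as_characters}), a height eigenfunction is precisely a function of the form $\chi_{\rm height}\circ\pi^{ }_{\rm height}$ for some character $\chi_{\rm height}\in\widehat{\Z^d/\Gamma}$, and the relation $h\circ\sigma^{ }_{\boldsymbol{n}}=h$ translates, via $\pi^{ }_{\rm height}\circ\sigma^{ }_{\boldsymbol{n}}(x)=\pi^{ }_{\rm height}(x)+\boldsymbol{n}\bmod\Gamma$, into $\chi_{\rm height}(\boldsymbol{n}+\Gamma)=1$. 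So the set on the right-hand side of the claimed equality equals the common kernel $\bigcap_{\chi\in\widehat{\Z^d/\Gamma}}\ker(\chi\circ q)$, where $q\colon\Z^d\to\Z^d/\Gamma$ is the quotient map.

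The core of the argument is then the standard fact that the characters of a finite (or, more generally, locally compact) abelian group separate points: for any finite abelian group $H$ and any nonzero $\bar{\boldsymbol{m}}\in H$, there is a character $\chi\in\widehat{H}$ with $\chi(\bar{\boldsymbol{m}})\ne 1$. This is exactly Fact~\ref{fact:Pontryagin-duality}(i) applied to $H=\Z^d/\Gamma$: the canonical map $H\to\widehat{\widehat{H}}$ sending $\bar{\boldsymbol{m}}$ to the evaluation $e_{\bar{\boldsymbol{m}}}$ is an isomorphism, so if $\bar{\boldsymbol{m}}\ne 0$ then $e_{\bar{\boldsymbol{m}}}$ is not the trivial character, i.e. there exists $\chi$ with $e_{\bar{\boldsymbol{m}}}(\chi)=\chi(\bar{\boldsymbol{m}})\ne 1$. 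Thus, given $\boldsymbol{m}\in\Z^d\setminus\Gamma$, its image $\bar{\boldsymbol{m}}=q(\boldsymbol{m})$ is a nonzero element of $\Z^d/\Gamma$, so we obtain $\chi_{\rm height}$ with $\chi_{\rm height}(\bar{\boldsymbol{m}})\ne 1$; the corresponding height eigenfunction $h=\chi_{\rm height}\circ\pi^{ }_{\rm height}$ then satisfies $h\circ\sigma^{ }_{\boldsymbol{m}}=\chi_{\rm height}(\,\cdot\,+\bar{\boldsymbol{m}})\circ\pi^{ }_{\rm height}\ne h$, which is the first assertion.

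The displayed equality follows formally: the inclusion $\Gamma\subseteq\{\boldsymbol{n}\colon h\circ\sigma^{ }_{\boldsymbol{n}}=h\text{ for all height eigenfunctions }h\}$ is immediate since $\boldsymbol{n}\in\Gamma$ forces $\pi^{ }_{\rm height}\circ\sigma^{ }_{\boldsymbol{n}}=\pi^{ }_{\rm height}$ and hence $h\circ\sigma^{ }_{\boldsymbol{n}}=h$ for every $h$, while the reverse inclusion is the contrapositive of the first assertion: if $\boldsymbol{n}\notin\Gamma$ then some height eigenfunction fails to be fixed by $\sigma^{ }_{\boldsymbol{n}}$. I do not anticipate a serious obstacle here; the only point requiring a little care is the bookkeeping between the three descriptions of a height eigenfunction (as a function on $\X$, as a character of $\mathbb{A}$, and as a character of $\Z^d/\Gamma$) and making sure that the projection formula $\pi^{ }_{\rm height}\circ\sigma^{ }_{\boldsymbol{n}}=\pi^{ }_{\rm height}+\boldsymbol{n}\bmod\Gamma$ is invoked correctly, both of which are already set up in the paragraph preceding Theorem~\ref{thm:eigenfunction-decomp} and in its proof. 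One should also note that this uses Theorem~\ref{thm:eigenfunction-decomp} essentially: it guarantees that the two factors of a general eigenfunction can be separated, so that ``height eigenfunction'' is a well-defined and nontrivial notion capturing exactly the $\Z^d/\Gamma$-part of $\widehat{\mathbb{A}}$.
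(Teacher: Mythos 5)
Your argument is correct, but it takes a genuinely different route from the paper. The paper's proof is purely combinatorial and does not touch the MEF at all: starting from the alphabet partition of Fact~\ref{fact:alph-partition}, it builds a fundamental domain for $\Gamma$ of the special form $\bigcup_{j=0}^{r-1}\mathcal{D}_\Gamma'+j\boldsymbol{m}$ (with $r\ge 2$ the order of $[\boldsymbol{m}]$ in $\Z^d/\Gamma$), coarsens the partition into $r$ classes accordingly, and defines the eigenfunction explicitly by $f(x)=\ee^{2\pi\ii j/r}$ whenever $x_0$ lies in the $j$-th class, so that $f\circ\sigma_{\boldsymbol{m}}=\ee^{2\pi\ii/r}f\ne f$. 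You instead pull back characters of the finite group $\Z^d/\Gamma$ through $\pi_{\rm height}$ and invoke separation of points (Fact~\ref{fact:Pontryagin-duality}(i)). Your route is shorter and cleaner, at the price of leaning on the product structure of the MEF in Eq.~\eqref{eq:MEF-genform} and on Fact~\ref{fact:eigenfunctions_as_characters}; the paper's construction is more self-contained, produces a concrete radius-zero eigenfunction, and keeps the lemma independent of the precise MEF formula, which is aesthetically safer since the lemma is itself closely tied to the statement that the torsion part of the MEF is $\Z^d/\Gamma$.

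One small caveat: your claim that height eigenfunctions are \emph{precisely} the functions $\chi_{\rm height}\circ\pi_{\rm height}$ with $\chi_{\rm height}\in\widehat{\Z^d/\Gamma}$ is not a formal consequence of Theorem~\ref{thm:eigenfunction-decomp} alone; the nontrivial direction is exactly Lemma~\ref{lem:characters_of_height_eigenfunctions}, proved later via a coprimality-plus-minimality argument. Fortunately your proof never needs that direction: for the first assertion you only need that each $\chi_{\rm height}\circ\pi_{\rm height}$ \emph{is} a height eigenfunction (immediate from $\pi_{\rm height}\circ\sigma_{\boldsymbol{n}}=\pi_{\rm height}+[\boldsymbol{n}]_\Gamma$ and $\lvert h\rvert\equiv 1$), and the inclusion $\Gamma\subseteq\{\boldsymbol{n}:h\circ\sigma_{\boldsymbol{n}}=h\text{ for all }h\}$ holds by the very definition of a height eigenfunction. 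So the argument stands once that phrase is weakened to the one inclusion actually used.
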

\begin{proof}
    Given a fundamental domain $\mathcal{D}_{\Gamma}$ for $\Gamma$ and an associated partition $\{\mathcal{A}_{\boldsymbol{k}}\}_{\boldsymbol{k}\in \mathcal{D}_{\Gamma}}$ of the alphabet, let $\mathcal{D}_{\Gamma}'\subseteq \mathcal{D}_{\Gamma}$ be a subset satisfying the following two conditions:
\begin{itemize}
    \item for any $\boldsymbol{k},\boldsymbol{k}'\in \mathcal{D}_{\Gamma}', \boldsymbol{k}-\boldsymbol{k}'$ is not a multiple of $\boldsymbol{m}$, and
    \item every element of $\mathcal{D}_{\Gamma}$ is of the form $\boldsymbol{k}+c\boldsymbol{m}$ for some $\boldsymbol{k}\in \mathcal{D}_{\Gamma}',c\in\Z$.
\end{itemize}
It immediately follows that the set $\bigcup_{j=0}^{r-1} \mathcal{D}_{\Gamma}'+j\boldsymbol{m}$ is another fundamental domain for $\Gamma$, where $r$ is the order of $[\boldsymbol{m}]$ in $\Z^d/\Gamma$; note that $r\ge 2$, as $\boldsymbol{m}\notin\Gamma$. With appropriate choices of $\mathcal{D}_{\Gamma}$ and $\mathcal{D}_{\Gamma}'$, we may ensure that this new fundamental domain equals $\mathcal{D}_{\Gamma}$. We may then write $\mathcal{A}_j=\bigcup_{\boldsymbol{k}\in \mathcal{D}_{\Gamma}'+j\boldsymbol{m}}\mathcal{A}_{\boldsymbol{k}}$, with $0\le j<r$.  The collection $\{\mathcal{A}_0,\mathcal{A}_1,\dotsc,\mathcal{A}_{r-1}\}$ is thus a partition of $\mathcal{A}$. The map $\X\to\mathbb{S}^1$ given by $f(x)=\ee^{2\pi\ii j/r} $ if $x_0\in\mathcal{A}_j$ is easily seen to be a height eigenfunction, which satisfies $f\circ \sigma^{ }_{\boldsymbol{m}}=\ee^{2\pi\ii/r} f\ne f$, as $r\ge 2$. The second claim follows immediately.
\end{proof}

\subsection{Height and extended symmetries}

Eigenfunctions have natural interactions with extended symmetries, as seen in the following results.

\begin{proposition}
    If $f\colon \X\to\mathbb{S}^1$ is a continuous eigenfunction, and $\Phi\in \mathcal{N}(\X)$, then $f\circ \Phi$ is a continuous eigenfunction as well.
\end{proposition}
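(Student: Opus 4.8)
The plan is to reduce this to the defining relation of $\mathcal{N}(\X)$ together with the scalar-multiplier property of eigenfunctions. Write $A=\psi(\Phi)\in\textnormal{GL}(d,\mathbb{Z})$ for the linear component of $\Phi$, so that $\Phi\circ\sigma^{ }_{\boldsymbol{n}}=\sigma^{ }_{A\boldsymbol{n}}\circ\Phi$ for every $\boldsymbol{n}\in\Z^d$. Since $f$ is a continuous eigenfunction, for each $\boldsymbol{m}\in\Z^d$ there is a constant $\chi(\boldsymbol{m})\in\mathbb{S}^1$ (the corresponding eigenvalue) with $f\circ\sigma^{ }_{\boldsymbol{m}}=\chi(\boldsymbol{m})\,f$; recall also that $\boldsymbol{m}\mapsto\chi(\boldsymbol{m})$ is a character of $\Z^d$.

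First I would observe that $g:=f\circ\Phi$ is continuous, being a composition of continuous maps, and non-vanishing: since $\Phi$ is a homeomorphism of $\X$ it is in particular surjective, so $g$ and $f$ have the same range, which avoids $0$ (indeed lies in $\mathbb{S}^1$). Then I would compute, for an arbitrary $\boldsymbol{n}\in\Z^d$,
\[
g\circ\sigma^{ }_{\boldsymbol{n}} = f\circ\Phi\circ\sigma^{ }_{\boldsymbol{n}} = f\circ\sigma^{ }_{A\boldsymbol{n}}\circ\Phi = \bigl(\chi(A\boldsymbol{n})\,f\bigr)\circ\Phi = \chi(A\boldsymbol{n})\,g,
\]
using the normaliser relation $\Phi\circ\sigma^{ }_{\boldsymbol{n}}=\sigma^{ }_{A\boldsymbol{n}}\circ\Phi$ in the second step and the eigenfunction property of $f$ (evaluated at the vector $A\boldsymbol{n}$) in the third. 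Hence $g\circ\sigma^{ }_{\boldsymbol{n}}$ is a scalar multiple of $g$ for every $\boldsymbol{n}$, so $g=f\circ\Phi$ is a continuous eigenfunction; moreover its eigenvalue character is $\boldsymbol{n}\mapsto\chi(A\boldsymbol{n})$, i.e.\ the pullback $\chi\circ A$ of the eigenvalue character of $f$ by $A$.

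There is essentially no obstacle here: the statement is a formal consequence of the definitions, and the only point that requires a moment's care is the non-vanishing of $f\circ\Phi$, which follows at once from the surjectivity of $\Phi$. I would also remark, as a pointer to what follows, that the same computation identifies the induced action of $\mathcal{N}(\X)$ on eigenvalues (equivalently, on $\widehat{\mathbb{A}}$) with the dualisation of the $\textnormal{GL}(d,\mathbb{Z})$-action on the relevant index lattices, which is precisely the mechanism used in the sequel to constrain how extended symmetries interact with height eigenfunctions, and hence with the height lattice $\Gamma$ in Theorem~\ref{thm:height}.
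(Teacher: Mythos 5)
Your argument is correct and is essentially identical to the paper's proof: both use the normaliser relation $\Phi\circ\sigma^{ }_{\boldsymbol{n}}=\sigma^{ }_{A\boldsymbol{n}}\circ\Phi$ and the eigenfunction property of $f$ at $A\boldsymbol{n}$ to show $(f\circ\Phi)\circ\sigma^{ }_{\boldsymbol{n}}$ is a scalar multiple of $f\circ\Phi$, with continuity following from $\Phi$ being a homeomorphism. Your extra remarks (non-vanishing via surjectivity, and the pullback $\chi\circ A$ of the eigenvalue character) are correct but not needed beyond what the paper states.
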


\begin{proof}
    As $\Phi\in \text{Homeo}(\X)$, $f\circ\Phi$ is evidently continuous. For any $\boldsymbol{n}\in\Z^d$, we have:
    \begin{align*}
        (f\circ\Phi) \circ \sigma^{ }_{\boldsymbol{n}} &= f\circ(\Phi \circ \sigma^{ }_{\boldsymbol{n}}) = (f\circ \sigma^{ }_{A\boldsymbol{n}}) \circ \Phi \\
        &= (\lambda_{A\boldsymbol{n}}\cdot f)\circ \Phi = \lambda_{A\boldsymbol{n}}\cdot (f\circ \Phi),
    \end{align*}
    and hence the composition of $f\circ\Phi$ with a shift map is a multiple of $f\circ\Phi$.
\end{proof}

Thus, an extended symmetry $\Phi$ naturally induces a group automorphism $\Phi^\dagger\colon\mathcal{E}(X)\to\mathcal{E}(X)$, given by $f\mapsto f\circ\Phi$. Since $\Phi^\dagger(C\cdot f)=C\cdot\Phi^\dagger(f)$, this map in turn induces a group automorphism $\Phi^*\colon\widehat{\mathbb{A}}\to\widehat{\mathbb{A}}$, and both maps are related by the equality:
    \[\Phi^\dagger(\chi\circ\pi_{\rm tile}) = \Phi^*(\chi)\circ\pi_{\rm tile}.\]
We want to show that the map $\Phi^\dagger$ sends height eigenfunctions to height eigenfunctions; this imposes a significant restriction on the admissible $A\in \text{GL}(d,\mathbb{Z})$ that could be associated to an extended symmetry $\Phi$, as we shall see below. The induced map $\Phi^*$ on $\widehat{\mathbb{A}}$ will play a key role, due to the following characterisation of height eigenfunctions in terms of the decomposition $\widehat{\mathbb{A}}\cong\widehat{\mathbb{Z}_Q^d}\times\widehat{\Z^d/\Gamma}$.

\begin{lemma}\label{lem:characters_of_height_eigenfunctions}
    The function $h=\chi\circ\pi^{ }_{\rm MEF}$ is a height eigenfunction if, and only if, $\chi=(\mathbf{1}_{\widehat{\Z_Q^d}},\chi^{ }_2)$, with $\chi^{ }_2\in\widehat{\Z^d/\Gamma}$.
\end{lemma}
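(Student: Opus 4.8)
The plan is to pass to the dual side and read off both implications from the product decomposition $\widehat{\mathbb{A}}\cong\widehat{\Z_Q^d}\times\widehat{\Z^d/\Gamma}$ supplied by Eq.~\eqref{eq:MEF-genform} together with Fact~\ref{fact:Pontryagin-duality}(iv). Writing $\chi=(\chi_1,\chi_2)$ with $\chi_1\in\widehat{\Z_Q^d}$ and $\chi_2\in\widehat{\Z^d/\Gamma}$, the point is to compute the eigenvalue of $h=\chi\circ\pi_{\rm MEF}$ in each shift direction in terms of $\chi_1$ and $\chi_2$; the backward implication is then immediate, and the forward one reduces to showing that a character of the odometer which is trivial on the image of the height lattice must itself be trivial.

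First I would record the eigenvalue computation. The factor map intertwines $\sigma^{ }_{\boldsymbol{n}}$ with translation on $\mathbb{A}$ by the image of $\boldsymbol{n}$; concretely, $\pi_{\rm tile}\circ\sigma^{ }_{\boldsymbol{n}}=\pi_{\rm tile}+\boldsymbol{n}$ and $\pi_{\rm height}\circ\sigma^{ }_{\boldsymbol{n}}$ is $\pi_{\rm height}$ translated by $\boldsymbol{n}$ in $\Z^d/\Gamma$ (as used in the proof of Theorem~\ref{thm:eigenfunction-decomp}). Since characters are multiplicative, this yields
\[ h\circ\sigma^{ }_{\boldsymbol{n}} \;=\; \chi_1(\boldsymbol{n})\,\chi_2(\boldsymbol{n})\cdot h, \]
where $\chi_1(\boldsymbol{n})$ abbreviates the value of $\chi_1$ on the canonical image of $\boldsymbol{n}$ in $\Z_Q^d$ and $\chi_2(\boldsymbol{n})$ the value of $\chi_2$ on the coset $\boldsymbol{n}+\Gamma$. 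As $h$ takes values in $\mathbb{S}^1$ and is in particular nowhere zero, $h$ is a height eigenfunction exactly when $\chi_1(\boldsymbol{n})\chi_2(\boldsymbol{n})=1$ for every $\boldsymbol{n}\in\Gamma$. For $\boldsymbol{n}\in\Gamma$ the coset $\boldsymbol{n}+\Gamma$ is the identity of $\Z^d/\Gamma$, so $\chi_2(\boldsymbol{n})=1$, and the condition collapses to $\chi_1|_\Gamma\equiv 1$. The implication ($\Leftarrow$) is now trivial: if $\chi_1=\mathbf{1}_{\widehat{\Z_Q^d}}$ then a fortiori $\chi_1|_\Gamma\equiv 1$, so $h$ is a height eigenfunction.

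For ($\Rightarrow$), I must upgrade $\chi_1|_\Gamma\equiv 1$ to $\chi_1=\mathbf{1}_{\widehat{\Z_Q^d}}$. By the structure of $\widehat{\Z_Q^d}$ as the nested union $\varinjlim\Z^d/Q^k\Z^d$ (Fact~\ref{fact:Pontryagin-duality}(iii)), $\chi_1$ factors through the canonical quotient $\Z^d\twoheadrightarrow\Z^d/Q^k\Z^d$ for some $k$, say $\chi_1(\boldsymbol{n})=\bar\chi_1(\boldsymbol{n}+Q^k\Z^d)$ for a character $\bar\chi_1$ of $\Z^d/Q^k\Z^d$. Triviality of $\chi_1$ on $\Gamma$ says that $\bar\chi_1$ vanishes on the image of $\Gamma$ in $\Z^d/Q^k\Z^d$. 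Here I invoke the key property of the height lattice: $\Gamma$ is coprime with every power $Q^k\Z^d$, i.e.\ $\Gamma+Q^k\Z^d=\Z^d$ for all $k\ge 1$. (This follows from $Q\Gamma\subseteq\Gamma$ together with $\Gamma+Q\Z^d=\Z^d$ by an immediate induction — apply $Q$ to $\Gamma+Q^{k}\Z^d=\Z^d$, add $\Gamma$, and use $Q\Gamma\subseteq\Gamma$; the inclusion $Q\Gamma\subseteq\Gamma$ is a standard feature of the height lattice, forced by recognizability and the alphabet partition of Fact~\ref{fact:alph-partition}; see \cite{Frank2,Cab2}.) Hence the image of $\Gamma$ is all of $\Z^d/Q^k\Z^d$, so $\bar\chi_1\equiv 1$, whence $\chi_1$ vanishes on the dense image of $\Z^d$ in $\Z_Q^d$ and therefore $\chi_1=\mathbf{1}_{\widehat{\Z_Q^d}}$ by continuity. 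Together with the normalisation $\chi=(\chi_1,\chi_2)$ this gives precisely the stated dichotomy.

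The only genuinely non-formal ingredient is the iterated coprimality $\Gamma+Q^k\Z^d=\Z^d$ for all $k$ — equivalently the $Q$-invariance of $\Gamma$ — and this is where I expect the main obstacle; everything else is bookkeeping with characters. I would either quote it from the literature on heights of $\Z^d$-substitutions (\cite{Frank2,Cab2}) or, for a self-contained treatment, derive $Q\Gamma\subseteq\Gamma$ from recognizability of $\X_\theta$ and Fact~\ref{fact:alph-partition} (the height-class of a letter sitting inside a supertile is determined by the height-class of the letter it inflates, which forces $Q\Gamma\subseteq\Gamma$) and then run the one-line induction above.
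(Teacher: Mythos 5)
Your proof is correct, and in the converse direction it takes a genuinely different (if related) route from the paper's. The paper splits the eigenfunction itself as $h=h_{\rm tile}\cdot h_2$ with $h_{\rm tile}=\chi^{ }_1\circ\pi_{\rm tile}$, shows $h_{\rm tile}$ is invariant both under $\sigma^{ }_{\boldsymbol{m}}$ for $\boldsymbol{m}\in\Gamma$ and for $\boldsymbol{m}\in Q^k\Z^d$ (the latter from the direct-limit structure of $\widehat{\Z_Q^d}$), and then uses coprimality of $\Gamma$ with $Q^k\Z^d$ together with \emph{minimality of $\X_\theta$} and surjectivity of $\pi_{\rm tile}$ to force $h_{\rm tile}$ to be constant and $\chi^{ }_1=\mathbf{1}$. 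You instead stay entirely on the dual side: after reducing the height condition to $\chi^{ }_1\rvert_\Gamma\equiv 1$, you factor $\chi^{ }_1$ through a finite quotient $\Z^d/Q^k\Z^d$ and observe that the image of $\Gamma$ there is everything, so $\chi^{ }_1$ is trivial by density and continuity. This replaces the dynamical step (minimality, constancy along orbits) with a purely algebraic one, which is shorter; the paper's version keeps the argument phrased in terms of the objects (eigenfunctions on $\X$, $\pi_{\rm tile}$) that it reuses in later proofs such as Proposition~\ref{prop:pure-base}. Both arguments hinge on exactly the same nontrivial input, namely $\Gamma+Q^k\Z^d=\Z^d$ for \emph{all} $k\ge 1$; the paper asserts this as being ``by definition'', whereas Definition~\ref{def:height_lattice} only states coprimality with $Q\Z^d$, and for arbitrary lattices coprimality with $Q\Z^d$ does not imply coprimality with $Q^2\Z^d$. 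Your explicit flagging of this point, with the induction via $Q\Gamma\subseteq\Gamma$ (or a citation to the height-lattice literature), is therefore a welcome addition rather than a gap — it is the same background fact the paper itself is implicitly using.
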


\begin{proof}
    Suppose $\chi=(\mathbf{1}_{\widehat{\Z_Q^d}},\chi^{ }_2)$ is a character from $\widehat{\mathbb{A}}$ and $h=\chi\circ\pi^{ }_{\rm MEF}$ the associated eigenfunction. Then, using the equality $\pi^{ }_{\rm MEF}(\sigma^{ }_{\boldsymbol{m}}(x)) = \pi^{ }_{\rm MEF}(x) + (\boldsymbol{m},[\boldsymbol{m}]_\Gamma)$, we have
    \begin{align*}
        h \circ \sigma^{ }_{\boldsymbol{m}}(x) &= \mathbf{1}_{\Z_Q^d}(\pi_{\rm tile}(x)+\boldsymbol{m})\cdot\chi^{ }_2(\pi_{\rm height}(x)+[\boldsymbol{m}]_\Gamma) \\
        &= 1\cdot \chi^{ }_2(\pi_{\rm height}(x)+[\boldsymbol{m}]_\Gamma) 
        = 1\cdot \chi^{ }_2(\pi_{\rm height}(x))\cdot \chi^{ }_2([\boldsymbol{m}]_\Gamma)
        =h(x)\cdot\chi^{ }_2([m]_\Gamma),
    \end{align*}
    and, if $\boldsymbol{m}\in\Gamma$, we have $[m]_\Gamma=[0]_\Gamma$, so that $\chi^{ }_2([m]_\Gamma)=1$ and thus $h\circ\sigma^{ }_{\boldsymbol{m}}=h$.

    Conversely, let $h=\chi\circ\pi^{ }_{\rm MEF}$ be a height eigenfunction associated to a character $\chi$, and suppose $\chi=(\chi^{ }_1,\chi^{ }_2)$. Thus, if we write $h_{\rm tile}(x)=\chi^{ }_1\circ\pi_{\rm tile}$ and $h_2=\chi^{ }_2\circ\pi_{\rm height}$, by the above argument $h_2$ is also a height eigenfunction. Thus, for any $\boldsymbol{m}\in\Gamma$, we must have:
        \[h_{\rm tile}\circ\sigma^{ }_{\boldsymbol{m}}=\frac{h\circ\sigma^{ }_{\boldsymbol{m}}}{h_2\circ\sigma^{ }_{\boldsymbol{m}}}=\frac{h}{h_2}=h_{\rm tile},\]
    so that $h_{\rm tile}$ is also a height eigenfunction.
    
    On the other hand, as $\widehat{\Z_Q^d}$ is a direct limit of the family of finite groups $\Z^d/Q^k\Z^d$, this group may be thought of as a nested union $\bigcup_{k\ge 1} G_k$ of groups of maps $\Z_Q^d\to\mathbb{S}^1$, where the value of $\eta(\boldsymbol{z})$ depends only on the last $k$ digits of $\boldsymbol{z}$ in its $Q$-adic expansion, for $\eta\in G_k$. Thus, in particular, for any $\boldsymbol{m}\in Q^k\Z^d$, we must have $\eta(\boldsymbol{z}+\boldsymbol{m})=\eta(\boldsymbol{z})$. This applies, in particular, to $\chi^{ }_1$. Thus, we have, for some $k$ and any $\boldsymbol{m}\in Q^k\Z^d$:
        \[h_{\rm tile}\circ\sigma^{ }_{\boldsymbol{m}}(x)=\chi^{ }_1(\pi_{\rm tile}(\sigma^{ }_{\boldsymbol{m}}(x)))=\chi^{ }_1(\pi_{\rm tile}(x)+\boldsymbol{m})=\chi^{ }_1(\pi_{\rm tile}(x))=h_{\rm tile}(x), \]
    and thus $h_{\rm tile}\circ\sigma^{ }_{\boldsymbol{m}+\boldsymbol{n}}=h_{\rm tile}$ for any $\boldsymbol{m}\in\Gamma,\boldsymbol{n}\in Q^k\Z^d$.
    
    However, by definition, the height lattice $\Lambda$ is chosen to be coprime to any supertile lattice $Q^k\Z^d$, and thus any $\boldsymbol{n}\in\Z^d$ is a sum of an element of $\Gamma$ with one from $Q^k\Z^d$. Thus, $h_{\rm tile}$ is constant along the orbit of any point $x$; by minimality, this implies that $h_{\rm tile}=\chi^{ }_1\circ \pi_{\rm tile}$ is a constant function. As $\pi_{\rm tile}$ is surjective, the character $\chi^{ }_1$ must equal the value $\chi^{ }_1(\boldsymbol{0})=1$ everywhere, and thus $\chi^{ }_1=\mathbf{1}_{\widehat{\Z_Q^d}}$.
\end{proof}
From this lemma, we can show that the map $\Phi^\dagger$ preserves height eigenfunctions. We need the following duality result.
\begin{lemma}\label{lem:dual_subgroup_preserving}
    Let $G=G_1\times G_2$ be a product of two LCA groups, and $\widehat{G}=\widehat{G}_1\times\widehat{G}_2$ be its dual under the standard isomorphism. Suppose that $\varphi\colon G\to G$ is a group endomorphism such that $\varphi[G_1\times\{\mathbf{1}_{G_2}\}]\subseteq G_1\times\{\mathbf{1}_{G_2}\}$, and let $\varphi^*\colon\widehat{G}\to\widehat{G}$ be the dual group endomorphism. Then $\varphi^*[\widehat{G}_1\times\{\mathbf{1}_{\widehat{G}_2}\}]\subseteq \widehat{G}_1\times\{\mathbf{1}_{\widehat{G}_2}\}$.
\end{lemma}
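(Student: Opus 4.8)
The plan is to reduce everything to the polar (annihilator) correspondence attached to the duality pairing between $G$ and $\widehat{G}$. For a closed subgroup $H\le G$ I write $H^{\perp}=\{\chi\in\widehat{G}\colon \chi|_{H}\equiv\mathbf{1}\}$. The single fact I would record first is functorial and immediate from the formula $\varphi^{*}(\chi)=\chi\circ\varphi$ in Fact~\ref{fact:Pontryagin-duality}(ii): if $\varphi[H]\subseteq H$, then $\varphi^{*}[H^{\perp}]\subseteq H^{\perp}$, because for $\chi\in H^{\perp}$ and $h\in H$ one has $(\chi\circ\varphi)(h)=\chi(\varphi(h))=\mathbf{1}$ as $\varphi(h)\in H$. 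Only this forward direction is needed; I would note in passing that it is in fact an equivalence via $\varphi^{**}=\varphi$ and $(H^{\perp})^{\perp}=H$, but would not use that.

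The second step is to recognise the subgroup appearing in the conclusion as a polar, being careful about which factor it annihilates. Under the standard isomorphism of Fact~\ref{fact:Pontryagin-duality}(iv), a pair $(\chi_{1},\mathbf{1}_{\widehat{G}_2})\in\widehat{G}_1\times\{\mathbf{1}_{\widehat{G}_2}\}$ corresponds to the character $(g_{1},g_{2})\mapsto\chi_{1}(g_{1})$ of $G$, i.e.\ to exactly those characters that are trivial on the \emph{second} factor. Hence $\widehat{G}_1\times\{\mathbf{1}_{\widehat{G}_2}\}=(\{\mathbf{1}_{G_1}\}\times G_2)^{\perp}$. It is essential to keep the coordinates straight, since the annihilator exchanges the two factors: dually, $\{\mathbf{1}_{\widehat{G}_1}\}\times\widehat{G}_2=(G_1\times\{\mathbf{1}_{G_2}\})^{\perp}$.

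Combining these, the target inclusion $\varphi^{*}[\widehat{G}_1\times\{\mathbf{1}\}]\subseteq\widehat{G}_1\times\{\mathbf{1}\}$ is literally the statement $\varphi^{*}[(\{\mathbf{1}\}\times G_2)^{\perp}]\subseteq(\{\mathbf{1}\}\times G_2)^{\perp}$, which by the functorial fact follows as soon as one knows that $\varphi$ preserves the second factor, $\varphi[\{\mathbf{1}_{G_1}\}\times G_2]\subseteq\{\mathbf{1}_{G_1}\}\times G_2$. So the crux is precisely this second-factor invariance. Writing $\varphi$ in block form $\varphi(g_{1},g_{2})=(\alpha(g_{1})+\beta(g_{2}),\,\delta(g_{2}))$, the vanishing lower-left block records the standing hypothesis that $G_1\times\{\mathbf{1}\}$ is $\varphi$-invariant, whereas what the conclusion demands is the vanishing of the cross term $\beta\colon G_2\to G_1$; a one-line computation confirms this, since for $\chi=(\chi_{1},\mathbf{1})$ one gets $(\chi\circ\varphi)(\mathbf{1}_{G_1},g_{2})=\chi_{1}(\beta(g_{2}))$, which is trivial for every $\chi_{1}$ exactly when $\beta=0$.

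The main obstacle is therefore securing this second-factor invariance, and here the structure of $\mathbb{A}$ must be used. In the setting of interest $G_2=\Z^{d}/\Gamma$ is finite while $G_1=\Z_{Q}^{d}$ is torsion-free, so $\{\mathbf{1}_{G_1}\}\times G_2$ is exactly the torsion subgroup of $\mathbb{A}=\Z_{Q}^{d}\times(\Z^{d}/\Gamma)$. Being characteristic, it is carried into itself by any (endo)morphism $\varphi$, so $\varphi[\{\mathbf{1}\}\times G_2]\subseteq\{\mathbf{1}\}\times G_2$ holds automatically; feeding this into the polar correspondence of the first paragraph yields $\varphi^{*}[\widehat{G}_1\times\{\mathbf{1}\}]\subseteq\widehat{G}_1\times\{\mathbf{1}\}$, as asserted. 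As a self-contained alternative I would present the direct computation above, which bypasses the annihilator machinery and makes transparent both the reduction and the exact point at which invariance of the torsion factor is invoked.
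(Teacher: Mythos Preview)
Your annihilator reduction is correct and exposes a genuine problem: the conclusion is equivalent, via $(\{\mathbf{1}\}\times G_2)^{\perp}=\widehat{G}_1\times\{\mathbf{1}\}$, to $\varphi[\{\mathbf{1}\}\times G_2]\subseteq\{\mathbf{1}\}\times G_2$, which is \emph{not} implied by the stated hypothesis $\varphi[G_1\times\{\mathbf{1}\}]\subseteq G_1\times\{\mathbf{1}\}$. In fact the lemma is false as written: take $G_1=G_2=\Z$ (discrete) and $\varphi(m,n)=(m+n,n)$. Then $\varphi$ preserves $\Z\times\{0\}$, yet $\varphi^*(\chi_1,\mathbf{1})=(\chi_1,\chi_1)\notin\widehat{\Z}\times\{\mathbf{1}\}$ whenever $\chi_1\ne\mathbf{1}$. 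The paper's proof goes wrong at exactly the point your block-form analysis flags: it asserts that $\varphi(g_1,g_2)=(\varphi_1(g_1),\psi(g_1,g_2))$ with first coordinate depending only on $g_1$, but this does not follow from the hypothesis (in the counterexample the first coordinate is $m+n$).

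Your retreat to the specific structure of $\mathbb{A}$ is the right repair. When $G_2$ is torsion and $G_1$ torsion-free, the cross term $\beta\colon G_2\to G_1$ vanishes automatically (equivalently, $\{\mathbf{1}\}\times G_2$ is the torsion subgroup, hence characteristic), and then your annihilator argument goes through cleanly. So you have not proved the lemma in its stated generality---nothing can---but you have correctly diagnosed the gap and supplied a valid argument under the additional torsion hypothesis. The most economical fix to the paper would be to replace the lemma's hypothesis by $\varphi[\{\mathbf{1}\}\times G_2]\subseteq\{\mathbf{1}\}\times G_2$, for which your first paragraph already constitutes a complete proof; the torsion observation then belongs (and partly already appears) in the proof of Corollary~\ref{cor:height-to-height}, where it is used to verify this corrected hypothesis.
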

\begin{proof}
    Note that, since we identify a pair $(\chi^{ }_1,\chi^{ }_2)\in\widehat{G}_1\times\widehat{G}_2$ with the character $G_1\times G_2\to\mathbb{S}^1$ given by $(g_1,g_2)\mapsto\chi^{ }_1(g_1)\chi^{ }_2(g_2)$, a character $\chi\in\widehat{G}$ belongs to $\widehat{G}_1\times\{\mathbf{1}_{\widehat{G}_2}\}$ if, and only if, $\chi(g_1,g_2)=\chi(g_1,\mathbf{1}_{G_2})$, for any choice of $g_2\in G_2$.

    Now, as $\varphi$ maps the set $G_1\times\{\mathbf{1}_{G_2}\}$ into itself, there must exist a group endomorphism $\varphi_1\colon G_1\to G_1$ and a function $\psi\colon G_1\times G_2\to G_2$ (which is not necessarily a group morphism) such that
    \begin{itemize}
        \item $\varphi(g_1,g_2)=(\varphi_1(g_1),\psi(g_1,g_2))$, and
        \item $\psi(g_1,\mathbf{1}_{G_2})=\mathbf{1}_{G_2}$, for any choice of $g_1\in G_1$.
    \end{itemize}
    
    Let $\chi=(\chi^{ }_1,\mathbf{1}_{\widehat{G}_2})\in\widehat{G}_1\times\{\mathbf{1}_{\widehat{G}_2}\}$, and take any arbitrary element $(g_1,g_2)\in G$. We have:
    \begin{align*}
        \varphi^*(\chi)(g_1,g_2) &= \chi(\varphi(g_1,g_2)) = \chi(\varphi_1(g_1),\psi(g_1,g_2)) \\
        &= \chi^{ }_1(\varphi_1(g_1))\cdot \mathbf{1}_{\widehat{G}_1}(\psi(g_1,g_2))= \chi^{ }_1(\varphi_1(g_1))\cdot \mathbf{1}_{\widehat{G}_1}(\psi(g_1,\mathbf{1}_{G_2}))\\
        &=\chi(\varphi(g_1,\mathbf{1}_{G_2})) 
        \varphi^*(\chi)(g_1,\mathbf{1}_{G_2}),
    \end{align*}
    and thus $\varphi^*(\chi)$ also satisfies the condition $\varphi^*(\chi)(g_1,g_2)=\varphi^*(\chi)(g_1,\mathbf{1}_{G_2})$, so it maps an element of $\widehat{G}_1\times\{\mathbf{1}_{\widehat{G}_2}\}$ to another element of this subgroup.
\end{proof}
\begin{corollary}\label{cor:height-to-height}
    The map $\Phi^\dagger$ maps height eigenfunctions to height eigenfunctions.
\end{corollary}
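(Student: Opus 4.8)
The plan is to deduce Corollary~\ref{cor:height-to-height} by combining Lemma~\ref{lem:characters_of_height_eigenfunctions} with the duality statement in Lemma~\ref{lem:dual_subgroup_preserving}, applied to the product decomposition $\mathbb{A}\cong\mathbb{Z}_Q^d\times(\Z^d/\Gamma)$ of Eq.~\eqref{eq:MEF-genform}. First I would recall that any $\Phi\in\mathcal{N}(\X)$ induces, as already observed above, a group automorphism $\Phi^*\colon\widehat{\mathbb{A}}\to\widehat{\mathbb{A}}$ characterised by $\Phi^\dagger(\chi\circ\pi^{ }_{\rm MEF})=\Phi^*(\chi)\circ\pi^{ }_{\rm MEF}$. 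By Lemma~\ref{lem:characters_of_height_eigenfunctions}, $\Phi^\dagger$ maps height eigenfunctions to height eigenfunctions precisely when $\Phi^*$ maps the subgroup $\{\mathbf{1}_{\widehat{\Z_Q^d}}\}\times\widehat{\Z^d/\Gamma}$ into itself. So the corollary reduces to verifying this containment for $\Phi^*$.

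To invoke Lemma~\ref{lem:dual_subgroup_preserving} with $G_1=\Z_Q^d$, $G_2=\Z^d/\Gamma$, I need to know that the dual endomorphism $\varphi:=(\Phi^*)^*=\Phi^{**}$ of $\mathbb{A}$ preserves the torsion-free factor $\Z_Q^d\times\{\mathbf{1}\}$; applying the lemma to $\varphi$ then yields that $\varphi^*=\Phi^*$ preserves $\{\mathbf{1}_{\widehat{\Z_Q^d}}\}\times\widehat{\Z^d/\Gamma}$, which is what we want. By Pontryagin biduality (Fact~\ref{fact:Pontryagin-duality}(i)--(ii), with $\Phi^{**}$ identified with the map $\mathbb{A}\to\mathbb{A}$ that $\Phi$ induces on the MEF, i.e.\ $\boldsymbol{z}\mapsto\kappa(\Phi)+\zeta(\Phi)(\boldsymbol{z})$ from the $\kappa$-cocycle), the relevant endomorphism of $\mathbb{A}$ is the affine map $z\mapsto\kappa(\Phi)+\zeta(\Phi)(z)$; after translating we may work with its linear part $\zeta(\Phi)\in\Aut(\mathbb{A})$, and it suffices to show $\zeta(\Phi)$ preserves the torsion-free component $\Z_Q^d$. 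But the torsion-free part of an LCA group is preserved by every automorphism: $\Z_Q^d$ is exactly the connected-component-free, torsion-free, divisible-complement part — more directly, $\Z^d/\Gamma$ is the torsion subgroup of $\mathbb{A}$ and is therefore characteristic, and $\zeta(\Phi)$ must fix it setwise. Dually, $\zeta(\Phi)^*$ fixes $\widehat{\Z^d/\Gamma}$; but a cleaner route is: the torsion subgroup $\mathbb{A}_{\rm tor}=\{0\}\times(\Z^d/\Gamma)$ is characteristic in $\mathbb{A}$, so $\varphi[\mathbb{A}_{\rm tor}]\subseteq\mathbb{A}_{\rm tor}$, and then the dual of the inclusion $\mathbb{A}_{\rm tor}\hookrightarrow\mathbb{A}$ together with Lemma~\ref{lem:dual_subgroup_preserving} (with the roles of $G_1,G_2$ swapped) gives the claim. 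I would phrase the argument in whichever of these equivalent ways keeps bookkeeping minimal.

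Concretely, I would write: let $\varphi\colon\mathbb{A}\to\mathbb{A}$ be the homeomorphism induced by $\Phi$ on the MEF, so that $\pi^{ }_{\rm MEF}\circ\Phi=\varphi\circ\pi^{ }_{\rm MEF}$; by the $\kappa$-cocycle, $\varphi(z)=\kappa(\Phi)+\zeta(\Phi)(z)$ with $\zeta(\Phi)\in\Aut(\mathbb{A})$. The torsion subgroup $\{\mathbf{0}\}\times(\Z^d/\Gamma)$ is characteristic, hence $\zeta(\Phi)$-invariant; writing $\zeta(\Phi)$ in the block form of Lemma~\ref{lem:dual_subgroup_preserving} (with $G_1=\Z^d/\Gamma$, $G_2=\Z_Q^d$ — note $\zeta(\Phi)$ fixes the torsion part but need not fix a complement, which is exactly the generality the lemma allows), the dual $\zeta(\Phi)^*$ preserves $\widehat{\Z^d/\Gamma}\times\{\mathbf{1}_{\widehat{\Z_Q^d}}\}$; equivalently, unwinding the identification $\widehat{\mathbb{A}}\cong\widehat{\Z_Q^d}\times\widehat{\Z^d/\Gamma}$, the map $\Phi^*$ preserves $\{\mathbf{1}_{\widehat{\Z_Q^d}}\}\times\widehat{\Z^d/\Gamma}$. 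By Lemma~\ref{lem:characters_of_height_eigenfunctions}, this says exactly that $\Phi^\dagger$ sends height eigenfunctions to height eigenfunctions, completing the proof.

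The main obstacle is purely bookkeeping: correctly tracking which factor plays the role of $G_1$ versus $G_2$ in each application of Lemma~\ref{lem:dual_subgroup_preserving} and of Fact~\ref{fact:Pontryagin-duality}(iv), and making sure the ``$\varphi$ preserves $G_1\times\{\mathbf{1}\}$'' hypothesis is instantiated on the side where it is actually true (the torsion part is invariant, a chosen torsion-free complement generally is not) — so the lemma must be applied with the torsion part as $G_1$, and then dualised. There is no analytic difficulty; everything rests on the characteristic-ness of the torsion subgroup of $\mathbb{A}$ and functoriality of Pontryagin duality, both already available in the excerpt.
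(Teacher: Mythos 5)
Your proposal is correct and follows essentially the same route as the paper: reduce via Lemma~\ref{lem:characters_of_height_eigenfunctions} to showing that $\Phi^*$ preserves $\{\mathbf{1}_{\widehat{\Z_Q^d}}\}\times\widehat{\Z^d/\Gamma}$, use that the torsion subgroup of $\mathbb{A}$ is characteristic, and conclude with Lemma~\ref{lem:dual_subgroup_preserving} and Pontryagin duality; the paper merely works with the abstract bidual automorphism $\Phi^{**}$ instead of identifying the induced map on $\mathbb{A}$ through the $\kappa$-cocycle. The only slip is that $\Phi^{**}$ is a group automorphism and so cannot literally equal the affine map $z\mapsto\kappa(\Phi)+\zeta(\Phi)(z)$ — it corresponds to the linear part $\zeta(\Phi)$ — but you correct for this yourself when passing to $\zeta(\Phi)$, so the argument stands.
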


\begin{proof}
    Let $h=\chi\circ\pi^{ }_{\rm MEF}$ be a height eigenfunction. By the characterisation in Lemma~\ref{lem:characters_of_height_eigenfunctions}, one has that $\chi=(\mathbf{1}_{\widehat{\Z_Q^d}},\chi_{\rm height})\in \{\mathbf{1}_{\widehat{\Z_Q^d}}\}\times\widehat{\Z^d/\Gamma}$. By definition, $\Phi^\dagger(h)=\Phi^*(\chi)\circ\pi^{ }_{\rm MEF}$. Thus, the result immediately follows from Lemma~\ref{lem:characters_of_height_eigenfunctions} by showing that $\Phi^*(\chi)\in  \{\mathbf{1}_{\widehat{\Z_Q^d}}\}\times\widehat{\Z^d/\Gamma}$, or, equivalently, that $\Phi^*$ maps this subgroup to itself.

    Note that, as $\Z_Q^d$ is a torsion-free group, and $\Z^d/\Gamma$ is a finite Abelian group, the torsion subgroup of $\mathbb{A}=\Z_Q^d\times\Z^d/\Gamma$ must equal $\{\mathbf{1}_{\Z_Q^d}\}\times\Z^d/\Gamma$. This is a characteristic subgroup of $\mathbb{A}$, and is thus preserved by any group automorphism $\mathbb{A}\to\mathbb{A}$; this applies, in particular, to the dual map $\Phi^{**}=(\Phi^*)^*$, as the dual of a bijective homomorphism is also bijective due to functoriality. Then, by appealing to Lemma~\ref{lem:dual_subgroup_preserving}, we see that the map $\Phi^{***}=(\Phi^{**})^*$ must map  $\{\mathbf{1}_{\widehat{\Z_Q^d}}\}\times\widehat{\Z^d/\Gamma}$ to itself as well. Since by Pontryagin duality $\Phi^{***}=\Phi^*$, the result follows.
\end{proof}

Now we have everything we need to give a proof of Theorem~\ref{thm:height} here, namely, that for any extended symmetry $\Phi$ with associated matrix $A$ we must have $A\Gamma=\Gamma$. Note that we do not assume here that $\Phi$ is radius-zero.

\begin{proof}[Proof of Theorem~\ref{thm:height}]
    Let $\Phi$ be an extended symmetry with associated matrix $A$. For  $f\in\mathcal{E}(\X)$, let $\operatorname{stab}(f)$ be the set $\{\boldsymbol{n}\in\Z^d\::\:f\circ\sigma^{ }_{\boldsymbol{n}}=f\}$. A quick computation yields
        \begin{align*}
            \boldsymbol{n}\in\operatorname{stab}(f\circ\Phi) &\iff f\circ\Phi=f\circ\Phi\circ\sigma^{ }_{\boldsymbol{n}}=f\circ\sigma^{ }_{A\boldsymbol{n}}\circ\Phi
            \\
            &\iff f\circ\sigma^{ }_{A\boldsymbol{n}}=f \iff A\boldsymbol{n}\in\operatorname{stab}(f), \end{align*}
    so $\operatorname{stab}(f\circ \Phi)=A^{-1}\operatorname{stab}(f)$. 
    Let $\mathcal{H}$ be the set of all height eigenfunctions. Lemma~\ref{lem:height_lattice_as_common_stab} shows that $\Gamma=\bigcap_{h\in\mathcal{H}}\operatorname{stab}(h)$, hence
    \[
        A^{-1}\Gamma = \bigcap_{h\in\mathcal{H}} A^{-1}\operatorname{stab}(h) = \bigcap_{h\in\mathcal{H}} \operatorname{stab}(h\circ\Phi).
    \]
    By Corollary~\ref{cor:height-to-height}, the set $\{h\circ\Phi\::\:h\in\mathcal{H}\}$ equals $\mathcal{H}$, so the above intersection runs through all height eigenfunctions and consequently the right-hand side equals $\Gamma$. Thus, $A^{-1}\Gamma=\Gamma$, which is equivalent to the desired result as $A\in\operatorname{GL}(d,\Z)$.
\end{proof}

 For $\boldsymbol{m}\in \mathbb{Z}^d$, a sublattice $\Gamma\leq \mathbb{Z}^d$ and a choice of fundamental domain $\mathcal{D}_{\Gamma}$, we denote by $(\boldsymbol{m})_{\Gamma}$ the unique element in $\mathcal{D}_{\Gamma}$ that is equivalent to $\boldsymbol{m}$ mod $\Gamma$. 
We will also need the notion of border-forcing for substitutions. 
A substitution $\theta\colon\mathcal{A}\to\mathcal{A}^{\mathcal{D}}$ with shape $\mathcal{D}$ is said to \emph{force the border} up to radius $r$ if, for any $x,y\in\X_\theta$ with $x_{\boldsymbol{0}}=y_{\boldsymbol{0}}$, then $\theta(x)\rvert_{\mathcal{D}+[-r,r]^d} = \theta(y)\rvert_{\mathcal{D}+[-r,r]^d}$, that is, the symbols around a supertile are also determined by the type of supertile up to a radius $r$. It is known that $\X_{\theta}$ is topologically conjugate to shift space of a substitution that forces the border up to any required radius, by using the standard technique known as \emph{collaring} and taking powers of the substitution; see \cite{Kel,Sadun}.

The following is a generalisation of \cite[Thm.~14]{Dek}  and \cite[Props.~3.4~\&~3.5]{CQY} to block substitutions.

\begin{proposition}\label{prop:pure-base}
    Given an aperiodic, primitive block substitution $\theta$ on an alphabet $\mathcal{A}$ with height lattice $\Gamma$, then there is a shift space $\mathbb{Y}_{\boldsymbol{0}}$ with torsion-free MEF such that
    \[
        (\X_\theta, \sigma) \cong (\mathbb{Y}_{\boldsymbol{0}}\times \mathcal{D}_{\Gamma}, T), \quad \text{with}\quad  T_{\boldsymbol{m}}(x,\boldsymbol{j})= (\sigma^{ }_{\boldsymbol{q}}(x),(\boldsymbol{j} + \boldsymbol{m})_\Gamma),
    \]   
    where $\Gamma=H\mathbb{Z}^d,\,\mathcal{D}_{\Gamma}\simeq\mathbb{Z}^d/\Gamma$, and $\boldsymbol{q}= H^{-1}(\boldsymbol{j}+\boldsymbol{m}- (\boldsymbol{j}+\boldsymbol{m})_\Gamma)) $. Furthermore, if the matrix $Q'=H^{-1}QH$ has integer entries, $\mathbb{Y}$ can be made substitutive, i.e. there exists a substitution $\vartheta$ with trivial height lattice and inflation matrix $Q'$ such that $\mathbb{Y}=\X_\vartheta$. Finally, any  $\Phi\in \mathcal{C}(\X_\theta)$ is of the form
        $\Phi_{\boldsymbol{i}}(x,\boldsymbol{j})= (\sigma^{ }_{\boldsymbol{q}}\Phi'(x), (\boldsymbol{j} + \boldsymbol{i})_\Gamma)$, with $\Phi'\in \mathcal{C}(\mathbb{Y}_{\boldsymbol{0}})$.
\end{proposition}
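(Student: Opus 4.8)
# Proof Proposal for Proposition~\ref{prop:pure-base}

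The plan is to build the conjugacy explicitly from the alphabet partition guaranteed by the height lattice, then verify substitutivity via a change of coordinates, and finally deduce the form of centraliser elements by a transport-of-structure argument. I will work throughout with the partition $(\mathcal{A}_{\boldsymbol{k}})_{\boldsymbol{k}\in\mathcal{D}_\Gamma}$ from Fact~\ref{fact:alph-partition}, after passing (by collaring) to a substitution that forces the border, so that the supertile decomposition is given by a genuine local rule.

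First I would construct the map $\X_\theta\to\mathbb{Y}_{\boldsymbol{0}}\times\mathcal{D}_\Gamma$. Given $x\in\X_\theta$, Fact~\ref{fact:alph-partition} produces a unique class $\boldsymbol{k}_0=\boldsymbol{k}_0(x)\in\mathcal{D}_\Gamma$ such that $x_{\boldsymbol{j}}\in\mathcal{A}_{\boldsymbol{k}}$ iff $\boldsymbol{j}+\boldsymbol{k}_0\equiv\boldsymbol{k}\pmod\Gamma$; this is the $\mathcal{D}_\Gamma$-coordinate. For the $\mathbb{Y}_{\boldsymbol{0}}$-coordinate, I would define $\mathbb{Y}_{\boldsymbol{0}}$ as the subshift over $\mathcal{A}$ (or over a suitably collared alphabet) obtained by restricting any $x\in\X_\theta$ with $\boldsymbol{k}_0(x)=\boldsymbol{0}$ to the sublattice $\Gamma=H\Z^d$ and then relabelling $H\Z^d$ with $\Z^d$ via $H^{-1}$; minimality of $\X_\theta$ makes this independent of the representative. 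The inverse map reassembles $x$ from $(y,\boldsymbol{j})$ by placing, at position $\boldsymbol{n}\in\Z^d$, the symbol dictated by $y$ at the lattice point $\boldsymbol{n}-(\boldsymbol{n})_{\Gamma-\boldsymbol{j}}$ (appropriately shifted); one checks this is continuous, bijective and shift-compatible, with the $\Z^d$-action on the product being exactly $T_{\boldsymbol{m}}(x,\boldsymbol{j})=(\sigma^{ }_{\boldsymbol{q}}(x),(\boldsymbol{j}+\boldsymbol{m})_\Gamma)$ where $\boldsymbol{q}=H^{-1}(\boldsymbol{j}+\boldsymbol{m}-(\boldsymbol{j}+\boldsymbol{m})_\Gamma)$, since moving by $\boldsymbol{m}$ in $\Z^d$ corresponds to moving by $\boldsymbol{q}$ in the $\Gamma$-lattice coordinates plus a ``carry'' in $\mathcal{D}_\Gamma$. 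That $\mathbb{Y}_{\boldsymbol{0}}$ has torsion-free MEF follows because the $\mathcal{D}_\Gamma$-factor absorbs exactly the torsion part $\Z^d/\Gamma$ of $\mathbb{A}$ in the decomposition \eqref{eq:MEF-genform}, so what remains projects onto $\Z^d_Q$ only.

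For substitutivity, I would show that when $Q'=H^{-1}QH\in\mathrm{GL}(d,\Z)$ (equivalently $Q\Gamma\subseteq\Gamma$, which holds automatically since the height lattice is forced to be $Q$-invariant as part of being coprime to $Q\Z^d$ and containing the return module), the supertile structure of $\theta$ restricted to $\Gamma$ and pushed through $H^{-1}$ is itself a digit substitution $\vartheta$ with inflation matrix $Q'$: a level-one $\vartheta$-supertile is read off from a level-one $\theta$-supertile by recording only the symbols lying in $H\Z^d$, which are consistent across supertiles of the same type precisely because of the border-forcing reduction and the compatibility of $\Gamma$ with $Q\Z^d$. Primitivity and aperiodicity of $\vartheta$ are inherited from $\theta$, and $\vartheta$ has trivial height lattice because we have quotiented out all the torsion. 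Finally, for $\Phi\in\mathcal{C}(\X_\theta)$: transporting through the conjugacy, $\Phi$ becomes a self-homeomorphism of $\mathbb{Y}_{\boldsymbol{0}}\times\mathcal{D}_\Gamma$ commuting with $T$; since the $\mathcal{D}_\Gamma$-factor is the maximal finite equicontinuous factor and $\Phi$ must respect the factor map onto it (any automorphism commutes with the MEF factor up to the induced translation, and here that translation is trivial on the torsion part by the same argument as in Theorem~\ref{thm:height} applied with $A=\mathrm{id}$, forcing $A\Gamma=\Gamma$ trivially), $\Phi$ acts as the identity on the $\mathcal{D}_\Gamma$-coordinate, hence has the stated skew-product form $\Phi_{\boldsymbol{i}}(x,\boldsymbol{j})=(\sigma^{ }_{\boldsymbol{q}}\Phi'(x),(\boldsymbol{j}+\boldsymbol{i})_\Gamma)$ with $\Phi'$ necessarily in $\mathcal{C}(\mathbb{Y}_{\boldsymbol{0}})$.

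The main obstacle I anticipate is making the ``restrict to $H\Z^d$ and relabel'' construction genuinely well-defined and substitutive: one must confirm that the image subshift $\mathbb{Y}_{\boldsymbol{0}}$ does not depend on which representative $x$ with $\boldsymbol{k}_0(x)=\boldsymbol{0}$ is chosen (this is where minimality and recognizability enter), and that the induced substitution rule on $\mathbb{Y}_{\boldsymbol{0}}$ is constant-shape with shape a digit set for $Q'$ — the arithmetic here is exactly the verification that $H^{-1}(\mathcal{D}+Q\,\text{(stuff in }\Gamma\text{)})$ partitions correctly, which requires the coprimality of $\Gamma$ and $Q\Z^d$ together with $Q\Gamma\subseteq\Gamma$. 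The integrality hypothesis on $Q'$ is precisely what guarantees this goes through; without it one only gets the topological conjugacy but not the substitutive model, which is why it appears as a separate clause.
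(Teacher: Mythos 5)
Your overall plan coincides with the paper's: code $\X_\theta$ along $\Gamma=H\Z^d$, present it as a suspension of a base $\mathbb{Y}_{\boldsymbol{0}}$ over $\mathcal{D}_\Gamma$, and use collaring/border forcing to make the base substitutive when $H^{-1}QH$ is integral. However, as written the construction of $\mathbb{Y}_{\boldsymbol{0}}$ has a gap: if you literally restrict a point $x$ with $\boldsymbol{k}_0(x)=\boldsymbol{0}$ to the sublattice $H\Z^d$ over the alphabet $\mathcal{A}$, the resulting map is not injective --- the letters on one coset of $\Gamma$ do not determine the configuration on the other cosets, so the ``inverse map'' you describe cannot reconstruct $x$ from $(y,\boldsymbol{j})$ and no conjugacy results. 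The repair is precisely the paper's choice of alphabet: the letters of $\mathbb{Y}$ are the full patches $\sigma^{ }_{H\boldsymbol{n}}(x)\rvert_{\mathcal{D}_\Gamma}$, i.e.\ one works with $\Psi(x)_{\boldsymbol{n}}=\sigma^{ }_{H\boldsymbol{n}}(x)\rvert_{\mathcal{D}_\Gamma}$, which is injective and satisfies $\sigma^{ }_{\boldsymbol{n}}\circ\Psi=\Psi\circ\sigma^{ }_{H\boldsymbol{n}}$; your parenthetical ``suitably collared alphabet'' gestures at this, but the whole argument (including the reassembly and the definition of $\vartheta$) must be carried out with that patch alphabet. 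Two further points: the claim that $Q\Gamma\subseteq\Gamma$ ``holds automatically'' is unjustified and would make the integrality hypothesis vacuous, contradicting the remark following Proposition~\ref{prop:pure-base} in the paper (keep it as a hypothesis, as your closing sentence in fact does); and ``the $\mathcal{D}_\Gamma$-factor absorbs the torsion'' is not by itself a proof that the MEF of $\mathbb{Y}_{\boldsymbol{0}}$ is torsion-free --- one must rule out that the induced action $\sigma^H$ on $X_{\boldsymbol{0}}$ acquires torsion eigenvalues of its own, which the paper does via the eigenfunction correspondence and the coprimality argument of Lemma~\ref{lem:characters_of_height_eigenfunctions}.

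The argument for the last clause is incorrect. An automorphism $\Phi\in\mathcal{C}(\X_\theta)$ does \emph{not} act as the identity on the $\mathcal{D}_\Gamma$-coordinate: already the shift $\sigma^{ }_{\boldsymbol{e}_1}$ induces the translation by $[\boldsymbol{e}_1]$ on $\Z^d/\Gamma$, which is nontrivial whenever $\boldsymbol{e}_1\notin\Gamma$. Theorem~\ref{thm:height} with $A=\operatorname{id}$ only constrains the linear part (it gives the tautology $\Gamma=\Gamma$) and says nothing about the translation $\kappa(\Phi)$; moreover $\Z^d/\Gamma$ is not the ``maximal finite equicontinuous factor'' (there are arbitrarily large finite factors $\Z^d/Q^k\Z^d$), it is the torsion part of the MEF. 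The correct argument --- and the reason the statement allows a general $\boldsymbol{i}$ --- is that $\Phi$ induces on this torsion factor a homeomorphism commuting with all rotations of the finite abelian group, hence a translation by some $[\boldsymbol{i}]\in\Z^d/\Gamma$; after correcting by a shift (equivalently, keeping track of the fibrewise correction $\sigma^{ }_{\boldsymbol{q}}$), $\Phi$ maps each fibre $\mathbb{Y}_{\boldsymbol{0}}\times\{\boldsymbol{j}\}$ onto the appropriate fibre, and its restriction commutes with the induced $\Z^d$-action $\sigma^H$, which yields $\Phi'\in\mathcal{C}(\mathbb{Y}_{\boldsymbol{0}})$ and the stated skew-product form. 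This is the CQY-style argument the paper invokes; as you have it, your reasoning would prove the stronger and false statement that $\boldsymbol{i}=\boldsymbol{0}$ always.
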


\begin{proof}
		Let $\mathcal{D}_{\Gamma}\ni \boldsymbol{0}$ be a fundamental domain for $\Gamma$, and let $\mathcal{B}:=\mathcal{L}_{\mathcal{D}_{\Gamma}}(\X_{\theta})$, that is, the set of all legal patterns of $\X_\theta$ with support $\mathcal{D}_{\Gamma}$. Define a map $\Psi\colon\X_\theta\to\mathcal{B}^{\Z^d}$ via the following equation:
		\[\Psi(x)_{\boldsymbol{n}} \coloneqq \sigma^{ }_{H\boldsymbol{n}}(x)\rvert_{\mathcal{D}_{\Gamma}}. \]
	As $\Gamma + \mathcal{D}_{\Gamma} = H\Z^d + \mathcal{D}_{\Gamma} = \Z^d$, it is clear that this map is injective, as if $x_{\boldsymbol{m}}\ne y_{\boldsymbol{m}}$ for some $\boldsymbol{m}\in H\boldsymbol{n}+\mathcal{D}_{\Gamma}$, then $\Psi(x)_{\boldsymbol{n}}\ne\Psi(y)_{\boldsymbol{n}}$. As well, a routine check shows that $\Psi$ is continuous, and its own definition shows that the following equation holds.
		\begin{equation}
			\label{eq:shift_commuting_for_height}\sigma^{ }_{\boldsymbol{n}}\circ \Psi = \Psi\circ\sigma^{ }_{H\boldsymbol{n}},
		\end{equation}
	which in particular implies that $\mathbb{Y}=\Psi[\X_{\theta}]$ is shift-invariant and closed, hence a subshift.
	
	Now, let $\{\mathcal{A}_{\boldsymbol{k}}\}_{\boldsymbol{k}\in \mathcal{D}_{\Gamma}}$ be the alphabet partition induced by $\Gamma$ as in Fact~\ref{fact:alph-partition}. Note that $\X_{\theta}$ partitions into $\lvert \mathcal{D}_{\Gamma}\rvert$ subsets $\{X_{\boldsymbol{k}}\}_{\boldsymbol{k}\in \mathcal{D}_{\Gamma}}$ given by
		$X_{\boldsymbol{k}}\coloneqq\{x\in\X_\theta\::\: x_{\boldsymbol{0}}\in\mathcal{A}_{\boldsymbol{k}}\}$,
	and that, for each $\boldsymbol{k}\in \mathcal{D}_{\Gamma}$, the shift map $\sigma^{ }_{\boldsymbol{k}}$ maps bijectively $X_{\boldsymbol{0}}$ into $X_{\boldsymbol{k}}$, while $\sigma^{ }_{\boldsymbol{m}}[X_{\boldsymbol{k}}]=X_{\boldsymbol{k}}$ for any $\boldsymbol{m}\in \Gamma$. Thus, $\mathbb{Y}$ is the disjoint union of $\lvert \mathcal{D}_{\Gamma}\rvert$ pairwise conjugate subshifts $\mathbb{Y}_{\boldsymbol{k}}=\Psi[X_{\boldsymbol{k}}]$, where $\Psi\circ\sigma^{ }_{\boldsymbol{k}}\circ\Psi^{-1}$ is a conjugacy between $\mathbb{Y}_{\boldsymbol{0}}$ and $\mathbb{Y}_{\boldsymbol{k}}$. Eq.~\eqref{eq:shift_commuting_for_height} then implies that $\Psi$, restricted appropriately, is a conjugacy between the $\mathbb{Z}^d$-group action
		$\sigma^H\colon
		(\boldsymbol{n},x) \mapsto \sigma^{ }_{H\boldsymbol{n}}(x)$ 
	on $X_{\boldsymbol{0}}$ and the shift action $\sigma$ on $\mathbb{Y}_{\boldsymbol{0}}$. From here, it is easy to see that the corresponding action $\sigma^H$ on $X_{\boldsymbol{k}}$ is conjugate to $\mathbb{Y}_{\boldsymbol{0}}$ via the map $\Psi\circ\sigma^{ }_{-\boldsymbol{k}}$.
	
	Now, let $Y$ be the set $\mathbb{Y}_{\boldsymbol{0}}\times (\Z^d/\Gamma)$. As $\mathcal{D}_{\Gamma}$ is a fundamental domain for $\Gamma$, it may be bijectively identified with the group $\Z^d/\Gamma$, as every equivalence class has a unique representative in $\mathcal{D}_{\Gamma}$. For any $\boldsymbol{n}\in\Z^d$ the following map $T_{\boldsymbol{n}}\colon Y\to Y$ is well-defined:
		\[T_{\boldsymbol{n}}(x,(\boldsymbol{k})_{\Gamma})=(\sigma^{ }_{H^{-1}(\boldsymbol{n}+\boldsymbol{k}- (\boldsymbol{n}+\boldsymbol{k})_\Gamma )}(x),(\boldsymbol{n}+\boldsymbol{k})_{\Gamma}). \]
    It is clear that the maps $T_{\boldsymbol{n}}$ are continuous. Furthermore, the map $\psi\colon Y\to\X_\theta$ given by
		\[\psi(x,(\boldsymbol{k})_{\Gamma})=\sigma^{ }_{\boldsymbol{k}}\circ\Psi^{-1}(x),\quad\boldsymbol{k}\in D_{\Gamma},\]
	naturally sends each subset $\mathbb{Y}_{\boldsymbol{0}}\times\{(\boldsymbol{k})_{\Gamma}\}$ to $X_{\boldsymbol{k}}$ for each $\boldsymbol{k}\in D_{\Gamma}$, and is thus a (continuous) bijection. Furthermore, it is equivariant as a consequence of Eq.~\eqref{eq:shift_commuting_for_height}, as seen below:
		\begin{align*}
			\psi\circ T_{\boldsymbol{n}}(x,(\boldsymbol{k})_{\Gamma}) &=\psi(\sigma^{ }_{H^{-1}(\boldsymbol{n}+\boldsymbol{k}-(\boldsymbol{n}+\boldsymbol{k})_\Gamma)},(\boldsymbol{n}+\boldsymbol{k})_{\Gamma}) \\
			&=\sigma^{ }_{(\boldsymbol{n}+\boldsymbol{k})_\Gamma}(\Psi^{-1}\circ\sigma^{ }_{H^{-1}(\boldsymbol{n}+\boldsymbol{k}-(\boldsymbol{n}+\boldsymbol{k})_\Gamma)}(x))\\
			&=\sigma^{ }_{(\boldsymbol{n}+\boldsymbol{k})_\Gamma} (\sigma^{ }_{HH^{-1}(\boldsymbol{n}+\boldsymbol{k}-(\boldsymbol{n}+\boldsymbol{k})_\Gamma)}\circ\Psi^{-1}(x)) \\
			&=\sigma^{ }_{\boldsymbol{n}+\boldsymbol{k}}\circ\Psi^{-1}(x) = \sigma^{ }_{\boldsymbol{n}}(\sigma^{ }_{\boldsymbol{k}}\circ\Psi^{-1}(x))=\sigma^{ }_{\boldsymbol{n}}\circ\psi(x,(\boldsymbol{k})_{\Gamma}).
		\end{align*}
	
	Thus, $Y$ is a suspension of the subshift $\mathbb{Y}_{\boldsymbol{0}}$ which is conjugate to the original shift space $\X_\theta$, via the homeomorphism $\psi$. Note that a similar construction can be done with any of the $\mathbb{Y}_{\boldsymbol{k}}$, as they are all conjugate. The claim about the MEF of $\mathbb{Y}_{\boldsymbol{0}}$ follows from the fact that any eigenfunction $f\colon\mathbb{X}_\theta\to\mathbb{S}^1$ induces an eigenfunction $f'\colon\mathbb{Y}_{\boldsymbol{0}}\to\mathbb{S}^1$ defined by $f'=f\circ\Psi$, where the correspondence $f\mapsto f'$ is surjective. Any height eigenfunction is mapped to the trivial eigenfunction $\mathbf{1}$ by this map, while the same coprimality argument as in Lemma~\ref{lem:characters_of_height_eigenfunctions} shows that tiling eigenfunctions cannot be mapped to $\mathbf{1}$. Hence, the Pontryagin dual of the MEF of $\mathbb{Y}_{\boldsymbol{0}}$ is isomorphic to $\widehat{\Z^d_Q}$, wherefore the MEF of $\mathbb{Y}_{\boldsymbol{0}}$ must be isomorphic to $\Z_Q$ by Pontryagin duality, with no torsion component.
	
	It remains to show that $\mathbb{Y}_{\boldsymbol{0}}$ (or, more precisely, some $\mathbb{Y}_{\boldsymbol{k}}$) is substitutive when $Q'=H^{-1}QH$ has integral entries. Since the image of any of the sets $X_{\boldsymbol{k}}$ under $\theta$ is contained in some $X_{\boldsymbol{k}'}$, we may assume without loss of generality that $\theta[X_{\boldsymbol{0}}]\subseteq X_{\boldsymbol{0}}$, as it is not hard to check that, for some power of $\theta$, the relation $\theta^N[X_{\boldsymbol{k}}]\subseteq X_{\boldsymbol{k}}$ will hold, and our construction for $X_{\boldsymbol{0}}$ will work in the same way for any of the other $\boldsymbol{k}$. Under the assumption on $Q'$, one then may construct the substitution $\vartheta$ as follows. 
\begin{enumerate}[label=\textup{(\roman*)},leftmargin=*]
    \item[(i)] The matrix $Q'=H^{-1}QH$ has the same eigenvalues as $Q$ (and hence is also expansive), so if it has integral entries it is a suitable candidate for the inflation matrix of $\vartheta$. Define the set $\mathcal{D}_\vartheta$ as the maximal set satisfying the condition $H\mathcal{D}_\vartheta \subseteq Q\mathcal{D}_\Gamma + \mathcal{D}$; as translations of $Q\mathcal{D}_\Gamma + \mathcal{D}$ along the lattice $QH\Z^d$ tile the plane, this condition ensures that translates of $H\mathcal{D}_\vartheta$ along $HQ'\Z^d$ tile the lattice $H\Z^d$ as well, that is, that $\mathcal{D}_\vartheta$ is a fundamental domain for the lattice $Q'\Z^d.$
    \item[(ii)] Note that, for every $P$ with $\text{supp}(P)=\mathcal{D}_{\Gamma}$, $\text{supp}(\theta(P))=Q\mathcal{D}_{\Gamma}+\mathcal{D}$. There exists $r>0$ such that $Q\mathcal{D}_{\Gamma}+\mathcal{D}+[-r,r]^d\supseteq H\mathcal{D}_\vartheta+\mathcal{D}_{\Gamma}$. 
    \item[(iii)] Consider a collaring $\theta^{(c)}$ of some power of $\theta$ that is border-forcing with radius $r$; see Figure~\ref{fig:pure base} below.
    \item[(iv)]  The new alphabet for $\vartheta$ is the set $\mathcal{B}$ of all legal patches $P\in\mathcal{L}_{\mathcal{D}_\Gamma}(\mathbb{X}_{\theta^{(c)}})$ supported on $\mathcal{D}_{\Gamma}$ such that $P_{\boldsymbol{0}}\in\mathcal{A}_{\boldsymbol{0}}$. Since the support condition in (ii) is satisfied, one can construct the substitution $\vartheta$ which yields $\mathbb{Y}_0$ (with $\text{supp}(\vartheta)=\mathcal{D}_{\vartheta}$ and  $\mathcal{A}\simeq \mathcal{L}_{\mathcal{D}_{\Gamma}}$) as follows: for all $P\in\mathcal{B}$, we take any $x\in\X_\theta$ such that $x\rvert_{\mathcal{D}_\Gamma}=P$, and define $\vartheta(P)_{\boldsymbol{k}} = \theta(x)\rvert_{H\boldsymbol{k}+\mathcal{D}_\Gamma}$ (note that this corresponds to a symbol from $\mathcal{B}$ due to the condition $\theta[\X_{\boldsymbol{0}}]\subseteq\X_{\boldsymbol{0}}$) for all $\boldsymbol{k}\in\mathcal{D}_\vartheta$; compare \cite[Thm.~14]{Dek}. The border-forcing property ensures that this definition is unambiguous, as it does not depend on the chosen $x$ since the image of all possible candidates will match in a set containing $H\mathcal{D}_\vartheta+D_\Gamma$.
\end{enumerate}
\begin{figure}[!h]
    \centering

    \begin{tabular}{ll}
        \multirow{3}{*}{\smash{\includegraphics[width=0.215\linewidth]{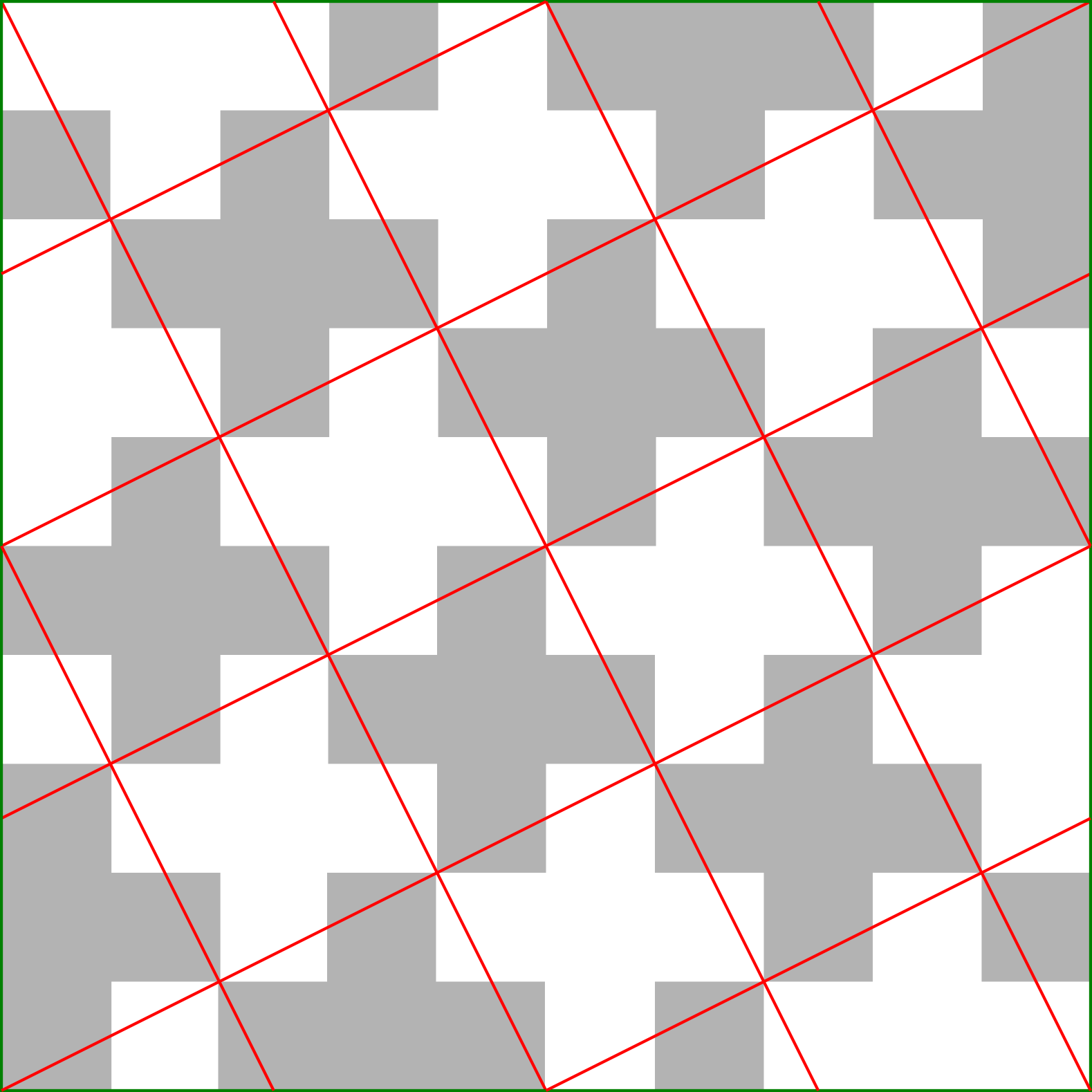}}\vspace{-0.18\linewidth}} & 
             \includegraphics[width=0.18\linewidth]{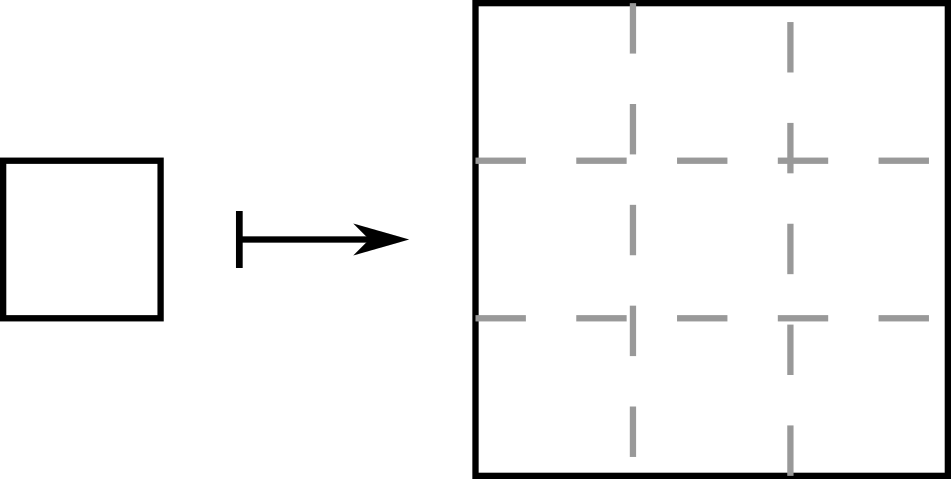} \\
             & \\
              & $ Q = \begin{bmatrix}
                3 & 0 \\ 0 & 3
            \end{bmatrix}, \qquad H=\begin{bmatrix}
                2 & -1 \\
                1 & 2
            \end{bmatrix} $
    \end{tabular} \\ \phantom{this line is intentionally left blank} \\
    \begin{tabular}{cc}
         \includegraphics[width=0.3\linewidth]{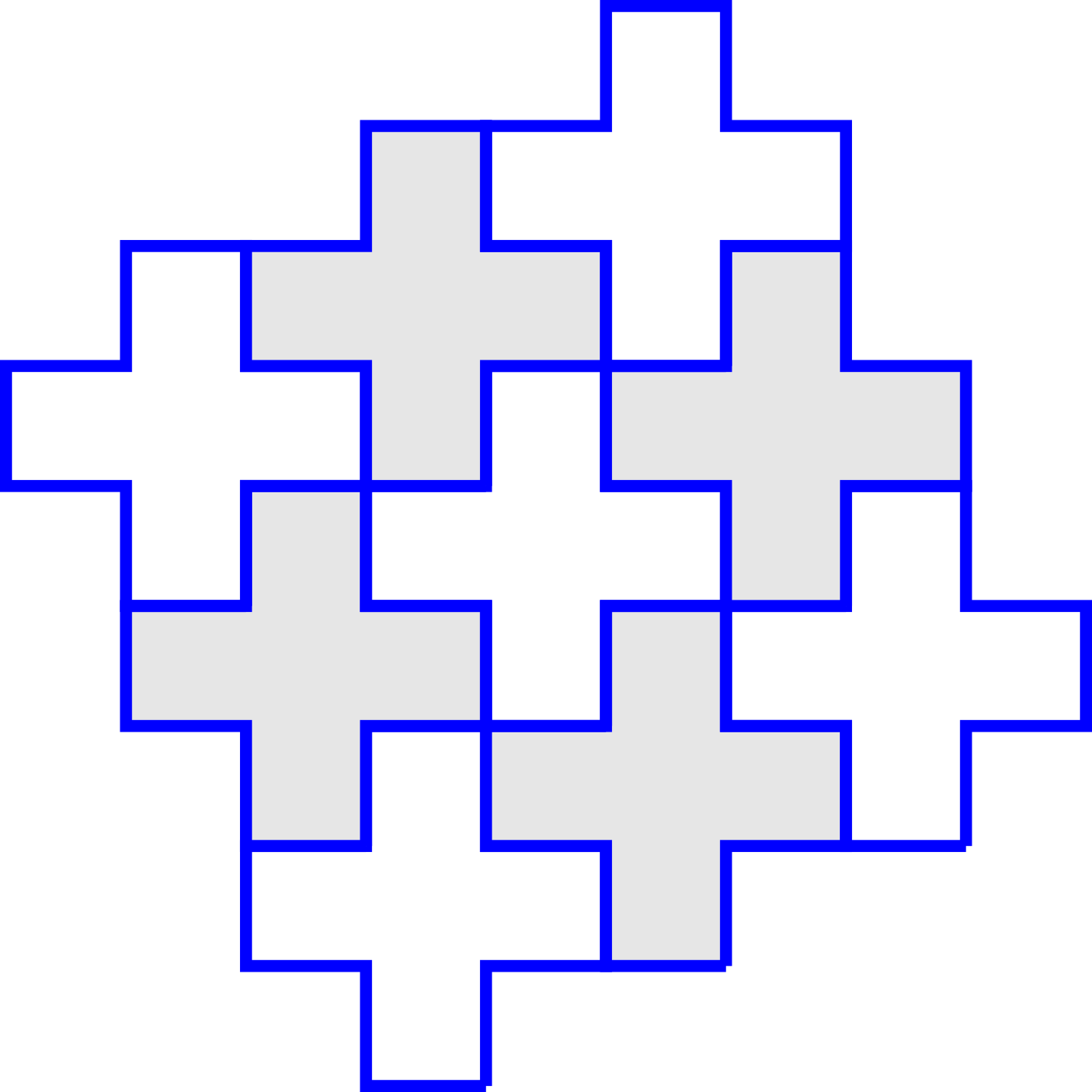}  & 
         \includegraphics[width=0.3\linewidth]{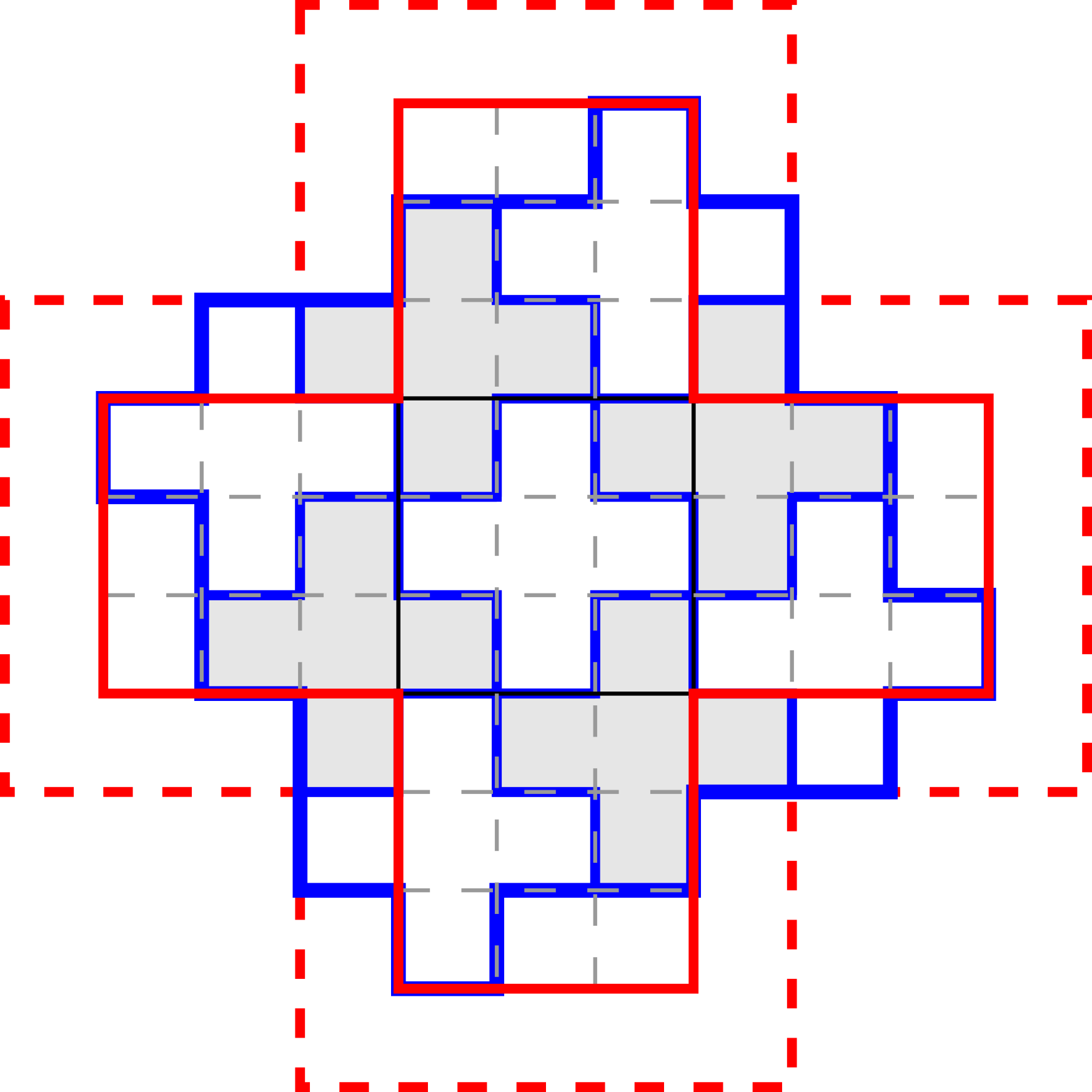}
    \end{tabular}
    \caption{A possible height lattice and fundamental domain for a $3\times 3$ rectangular substitution, with the associated inflation and height matrices. A pattern of shape $\mathcal{D}$ in the pure base corresponds to a pattern of shape $H\mathcal{D}+\mathcal{D}_\Gamma$ (bottom left). However, applying the substitution map to a pattern $P$ of shape $\mathcal{D}_\Gamma$ determines the symbols on a pattern of shape $Q\mathcal{D}_\Gamma + \mathcal{D}$. If the substitution forces the border up to radius $1$ (bottom right), the pattern $\theta(P)$ determines a pattern of shape $\mathcal{D}$ on the pure base (since $H\mathcal{D}+\mathcal{D}_\Gamma\subseteq Q\mathcal{D}_\Gamma+\mathcal{D}+[-1,1]^2$), allowing us to define a substitution (where $Q'=Q$).}
    \label{fig:pure base}
\end{figure}

The proof for the claim for the symmetries is analogous to the one-dimensional case treated in \cite{CQY}, wherefore we leave the details to the reader.
\end{proof}

\begin{remark}
    The condition on $H^{-1}QH$ being integral is naturally satisfied in several common scenarios, e.g. when the substitution is hypercubic, or both the supertile and height lattices are rectangular. Conversely, some (possibly weaker) form of this hypothesis must be required: if $\mathbb{Y}_{\boldsymbol{0}}=\X_\vartheta$ is substitutive with matrix $Q'$ and $x\in\X_\theta$ is a fixed point (and we assume, without loss of generality, that $\Psi(x)\in \mathbb{Y}$ is also fixed for $\vartheta$), a $\mathcal{D}_\Gamma$-shaped pattern in $x$ at position $H\boldsymbol{n}$ determines the symbols near position $QH\boldsymbol{n}$ as well, by virtue of $x$ being a fixed point, but also determines the symbol $\Psi(x)_{\boldsymbol{n}}$ and thus the symbols of $\Psi(x)$ near $Q'\boldsymbol{n}$ as well, since $\vartheta(\Psi(x))=\Psi(x)$. Consequently, the symbol in $x$ at position $\boldsymbol{n}$ determines the symbols of $x$ near position $HQ'\boldsymbol{n}$. Since these correlations produce corresponding eigenfunctions, we see that the scales $(Q^kH)_{k\ge 1}$ and $(H(Q')^k)_{k\ge 1}$ define the same odometer, from which it follows that some telescoping of both scales must match; compare \cite[Lem.~2]{Cor06}.
\end{remark}

With the Proposition~\ref{prop:pure-base} and Theorem~\ref{thm:height}, one can exploit the representation of $\mathbb{X}_{\theta}$ as a suspension of $\X_{\vartheta}$ to relate extended symmetries of the two shifts under some additional assumptions; compare \cite[Rem.~2]{BRY}.

\begin{proposition}
    In the setting of Proposition~\ref{prop:pure-base}, suppose further that $H^{-1}A=A H^{-1}$, for $A\in \textnormal{GL}(d,\mathbb{Z})$.  
    Then, every $\Phi\in \mathcal{N}(\X_{\vartheta})$ with matrix $A$ gives rise to  some $\Psi\in \mathcal{N}(\X_{\theta})$ in the following way 
    \begin{align*}
        \Psi:=\Phi_{\boldsymbol{i}}(\boldsymbol{x}, \boldsymbol{j})= (\sigma^{ }_{A^{-1}\boldsymbol{t}}(\Phi(x)), (\boldsymbol{i}+A \boldsymbol{j})_\Gamma),
    \end{align*}
for some $\boldsymbol{i}\in \mathcal{D}_{\Gamma}$, where $\boldsymbol{t}=(\boldsymbol{i}+A\odot \boldsymbol{j})-(\boldsymbol{i}+A\odot \boldsymbol{j})_\Gamma$.  
\end{proposition}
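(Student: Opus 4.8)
The plan is to work entirely in the suspension model $(\mathbb{Y}_{\boldsymbol{0}}\times\mathcal{D}_\Gamma,T)$ furnished by Proposition~\ref{prop:pure-base}: since the homeomorphism $\psi$ of that proposition is a $\Z^d$-equivariant conjugacy onto $(\X_\theta,\sigma)$, exhibiting the asserted $\Psi$ as an element of $\mathcal{N}(\X_\theta)$ with linear component $A$ amounts to checking that the displayed formula defines a self-homeomorphism of $\mathbb{Y}_{\boldsymbol{0}}\times\mathcal{D}_\Gamma$ satisfying $\Psi\circ T_{\boldsymbol{n}}=T_{A\boldsymbol{n}}\circ\Psi$ for every $\boldsymbol{n}\in\Z^d$. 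Thus there are three things to verify: $\Psi$ is well defined as a map into $\mathbb{Y}_{\boldsymbol{0}}\times\mathcal{D}_\Gamma$, it is a homeomorphism, and it intertwines $T$ with itself along $A$.

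First I would unpack the hypothesis. The relation $H^{-1}A=AH^{-1}$ is equivalent to $AH=HA$, which gives $A\Gamma=AH\Z^d=HA\Z^d=H\Z^d=\Gamma$ and likewise $A^{-1}\Gamma=\Gamma$ (consistent, a posteriori, with Theorem~\ref{thm:height}). Hence $A$ descends to a bijection of $\Z^d/\Gamma\cong\mathcal{D}_\Gamma$, so for fixed $\boldsymbol{i}$ the second coordinate $\boldsymbol{j}\mapsto(\boldsymbol{i}+A\boldsymbol{j})_\Gamma$ of $\Psi$ is an affine bijection of $\mathcal{D}_\Gamma$; and the carry $\boldsymbol{t}$ lies in $\Gamma=H\Z^d$, so $A^{-1}\boldsymbol{t}\in A^{-1}H\Z^d=HA^{-1}\Z^d\subseteq\Z^d$ and $\sigma_{A^{-1}\boldsymbol{t}}$ is a legitimate shift of $\mathbb{Y}_{\boldsymbol{0}}$ (the $H^{-1}$-rescaling of these $\Gamma$-carries is tracked exactly as in the definition of $T$). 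Since $\Phi\in\mathcal{N}(\X_\vartheta)$ we have $\Phi(x)\in\X_\vartheta=\mathbb{Y}_{\boldsymbol{0}}$ and $\mathbb{Y}_{\boldsymbol{0}}$ is shift-invariant, so $\Psi$ indeed maps the model to itself; continuity is immediate as $\mathcal{D}_\Gamma$ is finite and $\Phi$ and all shifts are continuous. Invertibility is obtained by writing the analogous formula built from $\Phi^{-1}$ (which has matrix $A^{-1}$) and the inverse affine bijection of $\mathcal{D}_\Gamma$, and checking $\Psi\circ\Psi^{-1}=\id$ using additivity of the carry, $\boldsymbol{a}+\boldsymbol{b}-(\boldsymbol{a}+\boldsymbol{b})_\Gamma=\big((\boldsymbol{a})_\Gamma+\boldsymbol{b}-((\boldsymbol{a})_\Gamma+\boldsymbol{b})_\Gamma\big)+\big(\boldsymbol{a}-(\boldsymbol{a})_\Gamma\big)$; this is routine.

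The crux is the identity $\Psi\circ T_{\boldsymbol{n}}=T_{A\boldsymbol{n}}\circ\Psi$. On the $\mathcal{D}_\Gamma$-coordinate this is simply $(\boldsymbol{i}+A\boldsymbol{j}+A\boldsymbol{n})_\Gamma$ computed two ways, which agree by linearity of $A$ and $A\Gamma=\Gamma$. On the $\mathbb{Y}_{\boldsymbol{0}}$-coordinate one expands both composites: the shift $\sigma_{H^{-1}(\,\cdot\,)}$ generated by $T_{\boldsymbol{n}}$ must be pushed through $\Phi$ via the defining relation $\Phi\circ\sigma_{\boldsymbol{m}}=\sigma_{A\boldsymbol{m}}\circ\Phi$ of $\mathcal{N}(\X_\vartheta)$; then $A$ must be interchanged with the $H^{-1}$-factors inside the shift indices, which is exactly where $H^{-1}A=AH^{-1}$ is used; finally the two resulting shift vectors are reconciled by the same telescoping identity for $\Gamma$-carries that makes $T$ a $\Z^d$-action in the proof of Proposition~\ref{prop:pure-base}, conjugated throughout by $A^{-1}$. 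I expect this bookkeeping — tracking precisely which $\Gamma$-carries are produced on each side, and verifying that $A^{-1}$ commutes past every one of them — to be the only real obstacle; the rest is formal. Once it is established, $\Psi$ is an extended symmetry of $(\mathbb{Y}_{\boldsymbol{0}}\times\mathcal{D}_\Gamma,T)$ with linear component $A$, hence its transport $\psi\circ\Psi\circ\psi^{-1}$ is the claimed element of $\mathcal{N}(\X_\theta)$; the dependence on $\boldsymbol{i}$ only alters $\Psi$ by a power of the shift, so any choice of $\boldsymbol{i}\in\mathcal{D}_\Gamma$ is admissible.
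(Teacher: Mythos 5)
Your overall strategy coincides with the paper's: pass to the suspension model of Proposition~\ref{prop:pure-base} and verify directly that $\Psi\circ T_{\boldsymbol{n}}=T_{A\boldsymbol{n}}\circ\Psi$, pushing the shift through $\Phi$ via $\Phi\circ\sigma^{ }_{\boldsymbol{m}}=\sigma^{ }_{A\boldsymbol{m}}\circ\Phi$, interchanging $A$ with $H^{-1}$ by the hypothesis $AH=HA$, and using $A\Gamma=\Gamma$ to handle the $\mathcal{D}_\Gamma$-coordinate (your derivation of $A\Gamma=\Gamma$ directly from $AH=HA$ is in fact cleaner than the paper's appeal to Theorem~\ref{thm:height}, which concerns symmetries of $\X_\theta$ rather than of the pure base). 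The remarks on well-definedness, continuity, invertibility and the role of $\boldsymbol{i}$ are fine.

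The problem is that the one step you defer --- ``tracking precisely which $\Gamma$-carries are produced on each side, and verifying that $A^{-1}$ commutes past every one of them'' --- is the entire content of the paper's proof, and as you have set it up it does not close. Writing out both sides with the first coordinate $\sigma^{ }_{A^{-1}\boldsymbol{t}}(\Phi(x))$ taken literally from the displayed formula, the two shift vectors agree only if $A^{-1}\boldsymbol{w}=H^{-1}\boldsymbol{w}$ for the carry $\boldsymbol{w}=A(\boldsymbol{n}+\boldsymbol{j})_\Gamma-(\boldsymbol{i}+A\boldsymbol{n}+A\boldsymbol{j})_\Gamma-A\boldsymbol{j}+(\boldsymbol{i}+A\boldsymbol{j})_\Gamma\in\Gamma$, which is nonzero in general (already for $A=\mathrm{id}$, where the claim must reduce to the symmetry formula at the end of Proposition~\ref{prop:pure-base}, the requirement becomes $\boldsymbol{w}=H^{-1}\boldsymbol{w}$ and fails whenever $\boldsymbol{w}\neq\boldsymbol{0}$). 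The verification closes only when the $\Gamma$-carry is rescaled by $H^{-1}$, i.e.\ with $\sigma^{ }_{H^{-1}\boldsymbol{t}}(\Phi(x))$ in the first coordinate, exactly as in the definition of $T$ and in the paper's own computation (the $A^{-1}$ in the proposition's display is evidently a slip). So the bookkeeping is not ``formal'': carried out as you describe it, the crucial step fails, and identifying the correct $H^{-1}$-rescaling is precisely what a complete proof has to exhibit.
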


    \begin{proof}
We want to show that  $ \Phi_{\boldsymbol{i}}$ is an extended symmetry: 
It is clearly a bijection since $\Phi$ is  a bijection on the pure base and $\boldsymbol{i},\boldsymbol{j}$ determine every position uniquely. It then suffices to show that $T^{A\boldsymbol{n}} \circ \Phi_{\boldsymbol{i}} = \Phi_{\boldsymbol{i}} \circ T^{\boldsymbol{n}}$.
\begin{align*}
T^{A\boldsymbol{n}} \circ \Phi_{\boldsymbol{i}}(y,\boldsymbol{j})
&= \big(\sigma^{ }_{H^{-1}(A\boldsymbol{n}+(\boldsymbol{i}+A\odot\boldsymbol{j})_\Gamma -(A\boldsymbol{n}+\boldsymbol{i}+A\odot\boldsymbol{j}))_\Gamma}\sigma^{ }_{H^{-1}(\boldsymbol{i}+A\odot\boldsymbol{j}-(\boldsymbol{i}+A\odot\boldsymbol{j})_\Gamma)}\Phi(y), (\boldsymbol{i} + A \odot \boldsymbol{j} + A \boldsymbol{n})_\Gamma \big) \\ 
&= \big(\sigma^{ }_{H^{-1}[(\boldsymbol{i}+A\boldsymbol{n}+A\odot\boldsymbol{j})-(\boldsymbol{i}+A\boldsymbol{n}+A\odot\boldsymbol{j})_\Gamma]} \Phi(y), (i+A\odot\boldsymbol{j}+A\boldsymbol{n})_\Gamma \big)
\end{align*}
\begin{align*}
 \Phi_{\boldsymbol{i}}  \circ T^{\boldsymbol{n}}(y,\boldsymbol{j}) &=\Phi_{\boldsymbol{i}}\big(\sigma^{ }_{H^{-1}\big(\boldsymbol{j}+\boldsymbol{n}-(\boldsymbol{j}+\boldsymbol{n})_\Gamma\big)}(y), (\boldsymbol{j}+\boldsymbol{n})_\Gamma\big) \\
 &= \big( \sigma^{ }_{H^{-1}\big(\boldsymbol{i}+A(\boldsymbol{j+n})_\Gamma - (\boldsymbol{i} +A(\boldsymbol{j+n})_\Gamma\big)_\Gamma} \Phi (\sigma^{ }_{H^{-1}(\boldsymbol{j}+\boldsymbol{n}-(\boldsymbol{j}+\boldsymbol{n})_\Gamma}(y)), (\boldsymbol{i}+A(\boldsymbol{j}+\boldsymbol{n})_\Gamma)_\Gamma \big)
 \\
 &= \big( \sigma^{ }_{H^{-1}\big(\boldsymbol{i}+A(\boldsymbol{j+n})_\Gamma - (\boldsymbol{i} +A(\boldsymbol{j+n})\big)_\Gamma}  \sigma^{ }_{H^{-1}(A(\boldsymbol{j}+\boldsymbol{n})-A(\boldsymbol{j}+\boldsymbol{n})_\Gamma)}\Phi(y), (\boldsymbol{i}+A(\boldsymbol{j}+\boldsymbol{n}))_\Gamma \big) \\
 &= \big(\sigma^{ }_{H^{-1}[(\boldsymbol{i}+A\boldsymbol{n}+A\boldsymbol{j})-(\boldsymbol{i}+A\boldsymbol{n}+A\boldsymbol{j})_\Gamma]} \Phi(y), (i+A\boldsymbol{j}+A\boldsymbol{n})_\Gamma \big)
 \end{align*}
Note that $A(\boldsymbol{j}+\boldsymbol{n})_\Gamma=(A\boldsymbol{j}+A\boldsymbol{n})_\Gamma$ holds since $A$ leaves $\Gamma$ invariant by Theorem~\ref{thm:height}. 
    \end{proof}

\begin{example}
    Consider the following $4\times 4$ rectangular substitution $\theta$ on six letters:
    \begin{align*}
        a & \mapsto \begin{array}{|c|c|c|c|}
             \hline
             a&b&c&d \\
             \hline
             d&e&f&a \\
             \hline
             d&e&f&a \\
             \hline
             a&b&c&d \\
             \hline
        \end{array} &
        b & \mapsto \begin{array}{|c|c|c|c|}
             \hline
             e&f&d&e \\
             \hline
             b&c&a&b \\
             \hline
             b&c&a&b \\
             \hline
             e&f&d&e \\
             \hline
        \end{array} &
        c & \mapsto \begin{array}{|c|c|c|c|}
             \hline
             f&a&b&c \\
             \hline
             c&d&e&f \\
             \hline
             c&d&e&f \\
             \hline
             f&a&b&c \\
             \hline
        \end{array} \\
        d & \mapsto \begin{array}{|c|c|c|c|}
             \hline
             d&e&f&a \\
             \hline
             a&b&c&d \\
             \hline
             a&b&c&d \\
             \hline
             d&e&f&a \\
             \hline
        \end{array} &
        e & \mapsto \begin{array}{|c|c|c|c|}
             \hline
             b&c&a&b \\
             \hline
             e&f&d&e \\
             \hline
             e&f&d&e \\
             \hline
             b&c&a&b \\
             \hline
        \end{array} &
        f & \mapsto \begin{array}{|c|c|c|c|}
             \hline
             c&d&e&f \\
             \hline
             f&a&b&c \\
             \hline
             f&a&b&c \\
             \hline
             c&d&e&f \\
             \hline
        \end{array}
    \end{align*}
Any point in the shift $\mathbb{X}_\theta$ can be written as a concatenation of $3\times 1$ patterns of the form $abc$ or $def$, so the height lattice for this substitution is $\Gamma = 3\mathbb{Z}\times\mathbb{Z}$. This substitution is bijective, so the quotient $\mathcal{N}(\mathbb{X}_\theta)/\mathcal{C}(\mathbb{X}_\theta)$ is isomorphic to a subgroup of $D_4$; compare \cite{B}. 

Further inspection reveals that the pure base of this subshift is the classic two-dimensional version of the Thue--Morse substitutive shift (indeed, the two symbols $0$ and $1$ of the Thue--Morse shift correspond to $3\times 1$ blocks of the form $abc$ or $def$, respectively). By Proposition~\ref{prop:pure-base} and \cite{B}, one can check that $(a\,d)(b\,e)(c\,f)$ induces the only non-trivial symmetry (modulo a shift), corresponding to the symmetry induced by $(0\,1)$ in the Thue--Morse substitution.

It is not hard to check that $\theta^2$ satisfies the condition from \cite{BLM} of having identity columns on all four corners, so we may use the criteria from there to determine which letter swaps engender extended symmetries. As $\theta^2$ is bijective, a necessary condition for a pair $(\tau,A)$ to generate an extended symmetry in this scenario is for the columns $\theta^2_{\boldsymbol{j}}$ and $\theta^2_{A^{-1}\odot\boldsymbol{j}}$ to be conjugate, for which they ought to have the same cycle structure. 

Consider the pairs 
\[
A_1 = \begin{bmatrix}
    1 & 0 \\
    0 & -1
\end{bmatrix}\text{ with }\tau^{ }_1=\text{id}\quad \quad \quad \text{and}\quad \quad \quad 
A_2 = \begin{bmatrix}
    -1 & 0 \\
    0 & 1
\end{bmatrix}\text{ with }\tau^{ }_2=(a\,c)(d\,f).
\]
One can verify using the criteria in \cite{BLM} that both pairs generate valid extended symmetries. 

Both of these correspond to the reflections along the vertical and horizontal axis in the original two-dimensional Thue--Morse substitution. Thus, it would be natural to expect that rotations by $\frac{1}{2}\pi$, which are compatible with the Thue--Morse substitution, also correspond to an extended symmetry for $\theta$; however, this is not the case.

Indeed, if $R$ is the associated rotation matrix, the column $\theta^2_{\boldsymbol{e}_1}$ corresponds to the cyclic permutation $(a\,b\,c\,d\,e\,f)$, while $\theta^2_{R\odot\boldsymbol{e}_1}=\theta^2_{15\boldsymbol{e}_1 + 14\boldsymbol{e}_2}$ is the order $2$ permutation $(a\,d)(b\,e)(c\,f)$, which cannot be conjugate to an order $6$ cyclic permutation. As $\theta$ is bijective, there are no other compatible matrices in $D_4$ that are not in the subgroup $\langle A_1,A_2\rangle$, and thus, the quotient $\mathcal{N}(\mathbb{X}_\theta)/\mathcal{C}(\mathbb{X}_\theta)$ is isomorphic to $C_2\times C_2$, in contrast to the original Thue--Morse substitution for which this quotient is isomorphic to $D_4$. We remark that this is consistent with the height lattice condition: we have that $R\Gamma=\mathbb{Z}\times 3\mathbb{Z}\ne\Gamma$, so $R$ cannot be associated to a valid extended symmetry.
\end{example}

\section{A counterexample}\label{sec:kappa}

In this short section, we discuss an example of a strongly injective substitution that admits a radius-$0$ extended symmetry with properties that are contradictory to Properties \textbf{(3)} and \textbf{(4)} of Proposition~\ref{fact:rad-0-symm} for 
radius-$0$ symmetries (namely, the $\kappa$-value and being supertile shuffling). We stress that this is not a counterexample for Proposition~\ref{fact:rad-0-symm} itself, but an example to show that properties of the centraliser do not necessarily translate to the normaliser.

We recall that a one-dimensional substitution $\theta$ is called \emph{ strongly left- (right-) injective} if it does not admit a pair of left (right) infinite fixed points $x,y$ such that $x,y$ agree everywhere except at their $0$th entry. If $\theta$ is both strongly left and strongly right injective, then it is called \emph{strongly injective}; see \cite[Sec.~4]{MY}.  

\begin{proposition}
There is a substitution $\varrho$, which admits a radius-$0$ extended symmetry $\Phi$, such that \textnormal{(i)} $\kappa(\Phi)\neq 0$ and \textnormal{(ii)}
 $\Phi$ is a radius-$0$ extended symmetry that is not supertile-shuffling.
\end{proposition}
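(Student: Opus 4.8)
The plan is to construct an explicit one-dimensional constant-length substitution $\varrho$ on a small alphabet and exhibit the desired extended symmetry $\Phi = \tau \circ m$, where $m$ is the mirroring map $m(x)_n = x_{-1-n}$ (so the linear component is $A = (-1) \in \mathrm{GL}(1,\Z)$) and $\tau$ is a suitable letter exchange. To make $\kappa(\Phi) \neq 0$, I want the reversing symmetry to genuinely shift the supertile structure: concretely, I would arrange the substitution so that $m$ does \emph{not} map the set of level-$n$ supertiles $\{\varrho^n(a)\}$ to itself, but rather to \emph{reflected} supertiles that sit at a half-integer-type offset relative to the original tiling hierarchy. A natural way to engineer this is to take a substitution of even length $L$ where the reversed image $\widetilde{\varrho^n(a)}$ of an $n$-supertile is not of the form $\varrho^n(\tau(b))$ for any $b$, but where $\sigma_k \circ \varrho^n \circ \tau$ reverses things consistently for some fixed nonzero offset $k$ depending on $n$. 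Since $\kappa(\Phi)$ is (by the cocycle formula and the definition $\kappa(\Phi) = \pi(\Phi(x)) - \zeta(\Phi)(\pi(x))$) detected by how $\Phi$ translates the tiling factor $\pi_{\mathrm{tile}}\colon \X_\varrho \to \Z_L$, a persistent offset of this kind forces $\kappa(\Phi)$ to be a nonzero element of $\Z_L$ (in fact a $2$-adic-type rational), contradicting the analogue of Property \textbf{(3)} in Fact~\ref{fact:rad-0-symm}. I will also check strong injectivity directly from the fixed-point structure of $\varrho$, ruling out coincident pairs of one-sided fixed points differing only at the origin.

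The key steps, in order, are: (1) write down the substitution $\varrho$ explicitly (I anticipate length $2$ or $4$ on three or four letters, chosen so that reading the level-$1$ supertiles backwards produces patterns that are themselves legal but cannot be resynchronised to the supertile grid by $\tau$ alone); (2) identify the candidate $\tau \in \Sigma_{\mathcal A}$ and verify, using Proposition~\ref{prop:supertile-shuffling}'s companion reasoning or a direct argument, that $\Phi = \tau \circ m$ is indeed a well-defined self-homeomorphism commuting with the shift up to sign, i.e.\ $\Phi \in \mathcal{N}(\X_\varrho)$ — this amounts to showing $(\tau \circ A)(\mathcal L_\varrho) \subseteq \mathcal L_\varrho$, which for a short substitution is a finite check on the language; (3) compute $\kappa(\Phi)$ by tracking the action of $\Phi$ on $\pi_{\mathrm{tile}}$, using the relation $\Phi \circ \varrho^n = \sigma_{\boldsymbol\ell_n} \circ \varrho^n \circ \Phi$ (as in Eq.~\eqref{eq:Phi-commute-sub}, but now with the \emph{reflected} indexing) to extract the sequence $(\boldsymbol\ell_n)$ and show it converges to a nonzero element of $\Z_L$; (4) conclude that $\Phi$ is not supertile-shuffling, since a supertile-shuffling element would have $\kappa$-value $0$ by the same bookkeeping, or alternatively observe directly that no permutation of $\{\varrho^n(a)\}$ can realise the reflection; and (5) verify strong injectivity of $\varrho$ from its fixed points.

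The main obstacle will be step (1): designing $\varrho$ so that all three requirements hold simultaneously — aperiodicity and primitivity, strong injectivity, and the existence of a \emph{reflecting} $\tau$ that produces a genuine nonzero offset rather than being absorbable into a relabelling. The tension is that if $\tau$ is chosen too symmetrically, the reversed supertiles realign with the grid and $\kappa(\Phi) = 0$; if $\varrho$ is too rigid, no reversing symmetry exists at all; and strong injectivity further constrains the column structure near the endpoints. I expect the right example to have column number $1 < c_\varrho < |\mathcal A|$ (so it is neither bijective nor Toeplitz, matching the running hypotheses), and I would search among length-$4$ substitutions on four letters, using the coincidence graph and minimal-set formalism of Section~\ref{sec:minsets} to confirm $c_\varrho$ and to compute the $\beta$-maps, which simultaneously certifies that the conditions of Theorem~\ref{thm:supertile-shuffling} \emph{fail} for this $(\tau, A)$ — precisely the obstruction to being supertile-shuffling. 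Once the example is pinned down, steps (2)–(5) are routine finite verifications.
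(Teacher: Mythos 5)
There is a genuine gap: the statement is an existence claim whose proof must exhibit a concrete substitution $\varrho$ together with the extended symmetry $\Phi$ and verify properties (i) and (ii), but your proposal stops at a search strategy (``I would search among length-$4$ substitutions on four letters\dots'') and never produces the substitution, the letter exchange $\tau$, or any of the verifications, so nothing is actually proved. Worse, the verification route you sketch for step (2) is essentially unworkable for the kind of example you need: the only finite certificate you cite for $\Phi=\tau\circ m\in\mathcal{N}(\X_\varrho)$ is the supertile-level criterion of Proposition~\ref{prop:supertile-shuffling}/Theorem~\ref{thm:supertile-shuffling}, which is precisely the supertile-shuffling condition that your example must \emph{violate}; and the claim that ``$(\tau\circ A)(\mathcal{L}_\varrho)\subseteq\mathcal{L}_\varrho$ is a finite check'' is not justified, since the language is infinite and a radius-zero map that is not supertile-shuffling admits no obvious finite certificate read off from the substitution rule alone. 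So even granting that a suitable $\varrho$ exists, your plan has no working method to certify that the candidate $(\tau,m)$ is realised by a self-homeomorphism of $\X_\varrho$.

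The missing idea, which is how the paper's Example~\ref{ex:kappa-nonzero} proceeds, is to \emph{transport} an obvious symmetry through an explicit conjugacy rather than to certify it directly. One starts from the palindromic bijective substitution $a\mapsto abcba$, $b\mapsto bcacb$, $c\mapsto cabac$, whose mirror reversor is immediate, collars it so that the recoded shift is no longer palindromic, and then applies the recoding algorithm of \cite{MY} to obtain a conjugate, strongly injective, non-palindromic substitution $\varrho$ on six letters. Existence of a radius-zero reversor for $\X_\varrho$ is then automatic from the conjugacy, and the quantitative claims are read off from the unique irregular fibre over $-\tfrac{1}{4}\in\Z_5$: its six points are fixed by $\sigma\circ\varrho$, the left-/right-asymptotic pair structure forces $\tau=(0\,3)(1\,6)(2\,5)$, the resulting $\Phi$ does not map fixed points to fixed points (whence $\kappa(\Phi)\neq 0$), and condition \textbf{(1)} of Theorem~\ref{thm:supertile-shuffling} fails, e.g. for the minimal set $M=\{1,3,5\}$, so $\Phi$ is not supertile-shuffling. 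Your intuition that a nonzero $\kappa$-value forces failure of supertile-shuffling is in the right spirit and consistent with the paper, but without an explicit $\varrho$ and a viable replacement for your step (2) — either a conjugacy-transport argument as above, or an asymptotic-pair/irregular-fibre argument showing the candidate is realised — the proposal does not establish the proposition.
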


\begin{example}\label{ex:kappa-nonzero}
    Start with the palindromic substitution $\theta\colon a\mapsto abcba,\, b\mapsto bcacb, \,c\mapsto cabac$. 
    It is clear that this is bijective and that it has a nontrivial reversor given by $m(x)_n:=x_{-n}$, with $\tau=\text{id}$. We can collar this substitution from the left to obtain a conjugate shift space that is no longer palindromic. For example, applying a $2$-left collaring results in a substitution over $\mathcal{A}^{(3)}=\mathcal{L}_3(\mathbb{X}_\theta)$ with rules similar to the following:
    $\prescript{ca}{}{b}\mapsto\prescript{ba}{}{b}\prescript{ab}{}{c}\prescript{bc}{}{a}\prescript{ca}{}{c}\prescript{ac}{}{b}$. To see that this substitution is no longer palindromic, it suffices to note that the mirroring map $m$ no longer maps $\mathbb{X}_\theta$ to itself, as e.g., the word $\prescript{ba}{}{b}\prescript{ab}{}{c}\prescript{bc}{}{a}$ is legal but $\prescript{bc}{}{a}\prescript{ab}{}{c}\prescript{ba}{}{b}$ is not.
    
    However, as each symbol corresponds to a $3$-letter block in the original subshift with appropriate overlaps, composing $m$ with a letter swap of the form $\prescript{ab}{}{c}\mapsto abc\stackrel{m}{\mapsto} cba\mapsto\prescript{cb}{}{a}$ results in a valid extended symmetry, a natural consequence of the conjugacy between the new subshift and the original. Since the new substitution is no longer (strongly) injective, we use the algorithm described in \cite{MY}, to obtain a new, strongly injective substitution whose subshift is conjugate to $\mathbb{X}_{\theta}$, while not being palindromic:  
     \begin{align*}
          \varrho:      0 &\mapsto 30261, & 1 &\mapsto 31603, & 2 &\mapsto 52603, \\
                        3 &\mapsto 53025, & 5 &\mapsto 15261, & 6 &\mapsto 16025.
    \end{align*}      
  As this substitution has height 1, the tiling and maximal equicontinuous factors coincide. By inspecting the coincidence graph, we see that the only irregular fibre (modulo $\mathbb{Z}$) corresponds to $(\dotsc 11111)_5=-\frac{1}{4}$; this fibre has cardinality $6$, while all regular fibres have cardinality $3$.
    
    From the equations $\pi^{ }_{\text{MEF}}\circ\varrho(x)=5\cdot\pi^{ }_{\text{MEF}}(x)$ and $\pi^{ }_{\text{MEF}}\circ\sigma(x)=\pi^{ }_{\text{MEF}}+1$, which are consequences of recognizability, it immediately follows that $\sigma\circ\varrho$ maps the fibre $\pi^{-1}_{\text{MEF}}[-\frac{1}{4}]$ to itself. Moreover, since the second column of $\varrho$ is the identity, the six points of this fibre are fixed points for $\sigma\circ\varrho$: indeed, these points have a first order supertile located at $[-1,4]$, so the identity column of $\varrho$ in this supertile is located at the $0$ position. Thus, these fixed points are entirely determined by the symbol at $0$, and are as follows.
    \begin{align*}
        x^{(0)} &= \dotsc 53025\underline{3.\textcolor{red}{\bf 0}261}52603\dotsc &
        x^{(1)} &= \dotsc 53025\underline{3.\textcolor{red}{\bf 1}603}16025\dotsc\\
        x^{(2)} &= \dotsc 15261\underline{5.\textcolor{red}{\bf 2}603}16025\dotsc &
        x^{(3)} &= \dotsc 15261\underline{5.\textcolor{red}{\bf 3}025}30261\dotsc\\
        x^{(5)} &= \dotsc 31603\underline{1.\textcolor{red}{\bf 5}261}52603\dotsc &
        x^{(6)} &= \dotsc 31603\underline{1.\textcolor{red}{\bf 6}025}30261\dotsc
    \end{align*}
   The pairs $\{x^{(0)},x^{(1)}\},\{x^{(2)},x^{(3)}\},\{x^{(5)},x^{(6)}\}$ are \emph{left-asymptotic} (LA), i.e., the two points of each pair coincide at every position on a left-infinite ray $(-\infty,k_0]$. Similarly, the three pairs $\{x^{(0)},x^{(5)}\},\{x^{(1)},x^{(2)}\},\{x^{(3)},x^{(6)}\}$ are \emph{right-asymptotic} (RA), with an analogous definition. By the CHL theorem, a nontrivial reversor $\Phi\in \mathcal{N}(\mathbb{X}_{\varrho})$ must map a left-asymptotic pair to a right-asymptotic pair and vice versa.

    By Proposition~\ref{prop:ext-sym-kappa}, we may assume (by composing with a shift if necessary) that the  $\Phi$ maps the fiber $\pi^{-1}_{\text{MEF}}[-\frac{1}{4}]$ to itself, and hence permutes the above six points. By minimality, the induced permutation $\tau\colon\mathcal{A}\to\mathcal{A}$ on the alphabet given by $\Phi(x^{(a)})=x^{(\tau(a))}$ determines $\Phi$ completely. The previous observation about asymptotic pairs shows that if, for instance, $\Phi(x^{(0)})=x^{(0)}$, we would necessarily have $\tau(1)=5$, as the LA pair $\{x^{(0)},x^{(1)}\}$ must be mapped to the RA pair  $\{x^{(0)},x^{(5)}\}$. By inspecting the permutations that satisfy these constraints, we obtain that the permutation $\tau = (0\,3)(1\,6)(2\,5)$ induces a valid extended symmetry $\Phi=(\tau,m)$, which relabels every symbol via $\tau$ and then applies a mirroring. However, $\Phi$ does not follow the criteria from \cite{BRY}, as it does not send fixed points to fixed points, and it is not supertile-shuffling. This does not contradict Theorem~\ref{thm:supertile-shuffling}, since $\tau$ fails to satisfy the  condition  $\theta_{1}(\tau[M])=\tau[\theta_{3}(M)]$ for all $M\in\mathcal{X}$; this fails, for example, for $M=\{1,3,5\}$.
\end{example}

\section{The structure of irregular fibres in higher dimensions}\label{sec:irregular-fibres}

We dedicate this section to study the properties of the tiling factor $\pi_{\rm tile}\colon\X_\theta\to\Z_Q$, which codifies the positions of the supertiles into a $Q$-adic number and is a powerful tool to study the symmetries of $\X_\theta$. From Proposition~\ref{prop:pure-base}, we can assume without loss of generality that the shift spaces under study have trivial height lattice, so that the tiling factor coincides with the maximal equicontinuous factor, that is, $\mathbb{A}=\Z_{Q}$ in Eq.~\eqref{eq:MEF-genform}. One of the key properties of $\pi_{\rm tile}$ is that any automorphism or extended symmetry $\Phi\colon \X_\theta\to \X_\theta$ preserves the cardinality of the fibres of $\pi_{\rm tile}$; that is, for any $x\in \X_\theta$, we have $\lvert\pi_{\rm tile}^{-1}(\pi_{\rm tile}(x)) \rvert = \lvert\pi_{\rm tile}^{-1}(\pi_{\rm tile}(\Phi(x))) \rvert$; thus, it is natural to study how these cardinalities behave. In particular, we are interested in the irregular fibres; compare Section~\ref{sec:mef-kappa}. 

As we shall see, there exists a dense subset $\mathcal{R}\subset \Z_Q^d$ where $\lvert\pi_{\rm tile}^{-1}(z) \rvert$ always equals the column number $c_\theta$, so we refer to the sets $\pi_{\rm tile}^{-1}(\boldsymbol{z})$ with $\boldsymbol{z}\in \mathcal{R}$ (and, by abuse of terminology, the corresponding $\boldsymbol{z}$ themselves) as \emph{regular fibres}; any element of $\Z_Q^d\setminus\mathcal{R}$ will necessarily satisfy $|\pi_{\rm tile}^{-1}(\boldsymbol{z})|>c_\theta$, so we refer to it as an \emph{irregular fibre}.
The following result in one dimension relates irregular fibres to the graph $\mathcal{G}(\theta)$ defined in Section~\ref{sec:minsets}.
Here, $\theta$ is a one-dimensional length-$\ell$ substitution, which is aperiodic and primitive.

\begin{theorem}[\hspace{-0.02em}{\cite[Lem.~3.12]{CQY}}]\label{thm:CQY-irreg}
    Let $G:=\widetilde{\mathcal{G}}(\theta)^{\textnormal{op}}$ be the graph obtained from $\mathcal{G}(\theta)$ by removing every subset of $\mathcal{A}$ of cardinality $c_\theta$ and reversing all edges. This graph defines an one-sided sofic shift $\mathcal{Z}_\theta\coloneqq \X_G$ over the alphabet $\{0,1,\dotsc,\ell-1\}$; we identify the elements of $\mathcal{Z}_\theta$ with points from the odometer $\mathbb{Z}_\ell$ in a natural way. Then, any $z\in\mathbb{Z}_\ell\setminus\mathbb{Z}$ constitutes an irregular fibre if, and only if, $z+N\in\mathcal{Z}_\theta$ for some integer $N\in\mathbb{Z}$. \qed
\end{theorem}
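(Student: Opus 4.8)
The plan is to follow the one‑dimensional strategy of Coven--Quas--Yassawi, the new ingredients being a careful $Q$‑adic digit bookkeeping and the replacement of the "reverse itinerary'' of a fibre by a walk in a reversed coincidence graph. Throughout, write $\boldsymbol{z}\in\mathbb{Z}^d_Q$ as a $Q$‑adic digit string $(\boldsymbol{z}_0,\boldsymbol{z}_1,\dotsc)$ with $\boldsymbol{z}_i\in\mathcal{D}$, and recall from Section~\ref{sec:subs} that the column of $\theta^{k-j}$ at the position with $Q$‑adic expansion $[\boldsymbol{z}_{k-1},\dotsc,\boldsymbol{z}_j]$ is $\theta_{\boldsymbol{z}_j}\circ\dotsb\circ\theta_{\boldsymbol{z}_{k-1}}$. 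Since $\theta_{\boldsymbol{z}_k}(\mathcal{A})\subseteq\mathcal{A}$ these column sets decrease with $k$, so one may set $W_j(\boldsymbol{z}):=\bigcap_{k>j}\theta_{\boldsymbol{z}_j}\circ\dotsb\circ\theta_{\boldsymbol{z}_{k-1}}(\mathcal{A})$; each $W_j(\boldsymbol{z})$ is a vertex of $\mathcal{G}(\theta)$, one has $W_j(\boldsymbol{z})=\theta_{\boldsymbol{z}_j}(W_{j+1}(\boldsymbol{z}))$, and hence $c_\theta\leqslant|W_j(\boldsymbol{z})|\leqslant|W_{j+1}(\boldsymbol{z})|\leqslant|\mathcal{A}|$. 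Define $\mathcal{Z}_\theta$ to be the one‑sided sofic shift $\mathbb{X}_G$ of label sequences of right‑infinite walks in $G:=\widetilde{\mathcal{G}}(\theta)^{\mathrm{op}}$ (the coincidence graph with all minimal sets deleted and all edges reversed), identified with a subset of $\mathbb{Z}^d_Q$ in the natural way.

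The first step is a purely combinatorial lemma: for \emph{every} $\boldsymbol{w}\in\mathbb{Z}^d_Q$, one has $\boldsymbol{w}\in\mathcal{Z}_\theta$ if and only if $|W_0(\boldsymbol{w})|>c_\theta$, and moreover $|W_0(\boldsymbol{w})|=|\{x_{\boldsymbol{0}}:x\in\pi_{\mathrm{tile}}^{-1}(\boldsymbol{w})\}|$. For the first claim, if $|W_0(\boldsymbol{w})|>c_\theta$ then monotonicity forces $|W_j(\boldsymbol{w})|>c_\theta$ for all $j$, so each $W_j(\boldsymbol{w})$ is a vertex of $\widetilde{\mathcal{G}}(\theta)$ and $W_{j+1}(\boldsymbol{w})\xrightarrow{\ \boldsymbol{w}_j\ }W_j(\boldsymbol{w})$ is an edge there — here one uses that minimal sets map to minimal sets, so an edge between two non‑minimal subsets survives the deletion — and reversing this infinite walk exhibits $\boldsymbol{w}\in\mathbb{X}_G$; conversely a walk $V_0\to V_1\to\dotsb$ in $G$ reading the digits of $\boldsymbol{w}$ has $\theta_{\boldsymbol{w}_j}(V_{j+1})=V_j$, hence $V_0=\theta_{\boldsymbol{w}_0}\circ\dotsb\circ\theta_{\boldsymbol{w}_{k-1}}(V_k)\subseteq W_0(\boldsymbol{w})$ for all $k$, so $|W_0(\boldsymbol{w})|\geqslant|V_0|>c_\theta$. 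The stated identity holds because $x_{\boldsymbol{0}}$ lies in every column set $\theta^k_{\boldsymbol{j}_k(\boldsymbol{w})}(\mathcal{A})$, while conversely every $a\in W_0(\boldsymbol{w})$ occurs at the origin of arbitrarily large supertiles compatible with $\boldsymbol{w}$, and a compactness argument produces $x\in\pi_{\mathrm{tile}}^{-1}(\boldsymbol{w})$ with $x_{\boldsymbol{0}}=a$.

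Next I would compute fibre cardinalities. For $\boldsymbol{z}$ outside a closed nowhere‑dense "boundary'' set $\mathcal{B}$ — the odometer points whose nested central supertiles fail to exhaust $\mathbb{Z}^d$, a set containing $\mathbb{Z}^d$ — recognizability identifies $\pi_{\mathrm{tile}}^{-1}(\boldsymbol{z})$ with the set of sequences $(a_k)_{k\geqslant0}$, $a_k\in\mathcal{A}$, with $a_k=\theta_{\boldsymbol{z}_k}(a_{k+1})$; each such sequence has $a_j\in W_j(\boldsymbol{z})$, and once $|W_j(\boldsymbol{z})|$ has reached its eventual value $M$ the columns $\theta_{\boldsymbol{z}_j}\colon W_{j+1}(\boldsymbol{z})\to W_j(\boldsymbol{z})$ are bijections, so the sequence is determined by its tail and $|\pi_{\mathrm{tile}}^{-1}(\boldsymbol{z})|=\lim_j|W_j(\boldsymbol{z})|$. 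Thus $\boldsymbol{z}\notin\mathcal{B}$ is irregular iff $|W_j(\boldsymbol{z})|>c_\theta$ for all large $j$. Combined with the lemma this gives the easy implication for all $\boldsymbol{z}$: if $\boldsymbol{z}+\boldsymbol{n}\in\mathcal{Z}_\theta$ then $|\pi_{\mathrm{tile}}^{-1}(\boldsymbol{z}+\boldsymbol{n})|\geqslant|W_0(\boldsymbol{z}+\boldsymbol{n})|>c_\theta$, and since $\boldsymbol{z}$ and $\boldsymbol{z}+\boldsymbol{n}$ lie in the same $\mathbb{Z}^d$‑orbit their fibres are equinumerous, so $\boldsymbol{z}$ is irregular.

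For the remaining implication — $\boldsymbol{z}\notin\mathcal{B}$ irregular $\Rightarrow$ $\boldsymbol{z}+\boldsymbol{n}\in\mathcal{Z}_\theta$ for some $\boldsymbol{n}\in\mathbb{Z}^d$ — I use two facts. First, replacing the leading digits $\boldsymbol{z}_0,\dotsc,\boldsymbol{z}_{m-1}$ of $\boldsymbol{z}$ by arbitrary $\boldsymbol{d}_0,\dotsc,\boldsymbol{d}_{m-1}\in\mathcal{D}$ yields exactly the translate $\boldsymbol{z}+\boldsymbol{n}$ with $\boldsymbol{n}=\sum_{i<m}Q^i(\boldsymbol{d}_i-\boldsymbol{z}_i)\in\mathbb{Z}^d$, and then $W_0(\boldsymbol{z}+\boldsymbol{n})=\theta_{\boldsymbol{d}_0}\circ\dotsb\circ\theta_{\boldsymbol{d}_{m-1}}(W_m(\boldsymbol{z}))$. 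Second, irregularity gives a constant $M>c_\theta$ and an index $J_0$ with $|W_j(\boldsymbol{z})|=M$ for $j\geqslant J_0$; as $\widetilde{\mathcal{G}}(\theta)$ is finite, some vertex $V^{\ast}$ of cardinality $M$ equals $W_j(\boldsymbol{z})$ for infinitely many $j$, and the stretch of the itinerary between two such indices is a cycle of some length $L$ through $V^{\ast}$ using only cardinality‑$M$ vertices of $\widetilde{\mathcal{G}}(\theta)$. Taking $m$ to be one of those indices and choosing $\boldsymbol{d}_0,\dotsc,\boldsymbol{d}_{m-1}$ to spell $\lfloor m/L\rfloor$ complete traversals of this cycle followed by a truncated one keeps every intermediate image of cardinality $M$, so $|W_0(\boldsymbol{z}+\boldsymbol{n})|=M>c_\theta$ and the lemma yields $\boldsymbol{z}+\boldsymbol{n}\in\mathcal{Z}_\theta$. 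The step I expect to be the main obstacle is precisely this length‑matching, which is where the "$+\boldsymbol{n}$'' of the statement is genuinely used: an integer translate alters only finitely many leading digits, so the prefix one may insert has a length prescribed by the chosen itinerary index, and one must manufacture a cardinality‑preserving walk of exactly that length — the recurrence‑plus‑cycle mechanism above is what makes this work, but pinning it down (and disposing of the set $\mathcal{B}$: the points of $\mathbb{Z}^d$ form one fibre‑cardinality class, whose fibres may exceed $\lim_j|W_j|$ due to freedom in the unfilled directions, and for which one argues via the asymptotic‑pair picture of Example~\ref{ex:kappa-nonzero} that an irregular such fibre still admits a position $\boldsymbol{n}$ with $|W_0(\boldsymbol{z}+\boldsymbol{n})|>c_\theta$) is the bulk of the work.
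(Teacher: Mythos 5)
Your argument is correct for the statement as given, and it is essentially the CQY-style argument that the paper itself does not reproduce (it cites \cite[Lem.~3.12]{CQY} and only proves the higher-dimensional analogue in Section~\ref{sec:irregular-fibres}), so the fair comparison is with that Section-6 proof. You organise things around the stabilised column sets $W_j(\boldsymbol{z})=\bigcap_{k>j}\theta_{\boldsymbol{z}_j}\circ\cdots\circ\theta_{\boldsymbol{z}_{k-1}}(\mathcal{A})$ and prove the exact fibre formula $|\pi_{\rm tile}^{-1}(z)|=\lim_j|W_j(z)|$ off the boundary set, whereas the paper works with the fibre-pattern sets $v_k=\{x^{(k)}|_{B_J}\colon\pi_{\rm tile}(x)=\boldsymbol{z}\}$, a separation radius $N$, the windows $W_{J',m}$, and a finiteness (K\"onig-type) extraction of an infinite path through supersets $\bar v_k\supseteq v_k$; your route buys a cleaner cardinality identity and, in particular, an explicit treatment of the ``$+N$'' step (prefix replacement $=$ integer translate, plus the recurrence-and-cycle length-matching in $\widetilde{\mathcal{G}}(\theta)$), which the paper passes over by declaring Theorem~\ref{thm:irreg-fibres} an immediate corollary. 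Two remarks. First, for the one-dimensional statement you are actually asked to prove, your residual worry about the boundary set $\mathcal{B}$ is vacuous: in $d=1$ the points whose nested supertiles fail to exhaust $\mathbb{Z}$ are exactly the eventually-$0$ and eventually-$(\ell-1)$ digit strings, i.e.\ $\mathcal{B}=\mathbb{Z}$, which the hypothesis $z\in\mathbb{Z}_\ell\setminus\mathbb{Z}$ excludes, so your proof is complete there. Second, be aware that your sketch does not scale as stated to the paper's Theorem~\ref{thm:irreg-fibres}: for $d\geq 2$ the set $\mathcal{B}$ strictly contains $\mathbb{Z}^d$ (points with some but not all coordinates integral), those points are not excluded and are not handled by the plain coincidence graph or by an asymptotic-pair argument — this is precisely where the paper's derived substitutions $\partial_J\theta$ and the union $\mathcal{Z}_\theta=\bigcup_J X_J$ are needed — but that concerns the generalisation, not the present statement.
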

Thus, $\mathcal{Z}_\theta$ determines all irregular fibres except perhaps for the fixed points of $\theta$ and their shifts, for which there are dedicated techniques to study them separately.
The proof can be modified in a non-trivial way to handle $d$-dimensional block substitutions, which we do in this section.

To this end, we define a more general family of graphs $\mathcal{G}(\theta,J),J\subseteq\{1,\dotsc,d\}$ that completely describe the fibre structure of a rectangular substitution by considering these edge cases. These graphs will be defined via the use of auxiliary lower-dimensional substitutions, which we call \emph{$J$-derived substitutions}.

\subsection{Derived substitutions and the graph $\widetilde{\mathcal{G}} (\theta, J)^{\mathrm{op}}$}

Before we go to the definition of derived substitutions, we introduce the following notations. For the rest of this section, we let $\theta \colon \mathcal{A} \rightarrow \mathcal{A}^R$ be a $d$-dimensional
  primitive, aperiodic, block substitution, whose support is $R = \prod_{j =
  1}^d [0, \ell_j - 1]$. For a fixed  $1 \leqslant k \leqslant  d$, define 
 $\mathcal{A}_k =\mathcal{L}_{B_k} (\theta)$ to be the set of all
  $\theta$-legal patterns of support $B_k = \{ 0 \}^{k - 1} \times \{ - 1, 0
  \} \times \{ 0 \}^{d - k}$.  Note that $\lvert B_k \rvert = 2$, and thus we identify a  pattern $P$ with support $B_k$ with the two-letter word $P_{(0, \ldots, - 1,
  \ldots 0)} P_{(0, \ldots, 0, \ldots, 0)}$. Define  $R_k := \prod_{1
  \leqslant j \leqslant d}^{j \neq k} [0 , \ell_j - 1]$.

\begin{definition}\label{def:kth-derived}
Let $\theta$ be as above.  Its $k$-th
  \emph{derived substitution}, $\partial_k \theta \colon \mathcal{A}_k
  \to \mathcal{A}_k^{R_k}$, is a $(d - 1)$-dimensional substitution
  over the alphabet $\A_k$ with support $R_k$, and is defined by
  \[ \partial_k \theta (ab) = [\theta_{(n_1, \ldots,
     n_{k - 1}, \ell_k - 1, n_{k + 1}, \ldots, n_d)} (a) \theta_{(n_1, \ldots,
     n_{k - 1}, 0, n_{k + 1}, \ldots, n_d)} (b)]_{0 \leqslant n_j < \ell_j}^{j
     \neq k} . \]
     for a legal pattern $P=ab$.
\end{definition}

\begin{example}[180°] \label{ex:180}
Consider the substitution
 \begin{align*}
\theta\,\,\colon &
\begin{squarecells}{1}
$a$  \nl
\end{squarecells}\,\,\mapsto\,\, 
\begin{squarecells}{2}
$c$ & $b$ \nl
$a$ & $c$  \nl
\end{squarecells}
\quad \quad \quad 
\begin{squarecells}{1}
$b$  \nl
\end{squarecells}\,\,\mapsto\,\, 
\begin{squarecells}{2}
$a$ & $c$ \nl
$c$ & $b$  \nl
\end{squarecells}
\quad \quad \quad 
\begin{squarecells}{1}
$c$  \nl
\end{squarecells}\,\,\mapsto\,\, 
\begin{squarecells}{2}
$a$ & $b$ \nl
$b$ & $a$  \nl
\end{squarecells}
\end{align*}
The corresponding derived substitutions $\partial_{J}\theta$ for $J=\left\{1\right\}$ and $J=\left\{2\right\}$ are
\begin{align*}
\partial^{ }_1 \theta\,\,\colon &
\begin{squarecells}{2}
    $a$  & $a$ \nl
\end{squarecells}\,\,\mapsto\,\, 
\begin{squarecells}{4}
    $c$ & $a$ & $b$ & $c$  \nl
\end{squarecells} \quad \quad
\begin{squarecells}{2}
    $a$  & $b$ \nl
\end{squarecells}\,\,\mapsto\,\, 
\begin{squarecells}{4}
    $c$ & $c$ & $b$ & $b$  \nl
\end{squarecells} \quad \quad
\begin{squarecells}{2}
    $a$  & $c$ \nl
\end{squarecells}\,\,\mapsto\,\, 
\begin{squarecells}{4}
    $c$ & $b$ & $b$ & $a$  \nl 
\end{squarecells} \\
&\begin{squarecells}{2}
    $b$  & $a$ \nl
\end{squarecells}\,\,\mapsto\,\, 
\begin{squarecells}{4}
    $a$ & $a$ & $c$ & $c$  \nl
\end{squarecells} \quad \quad
\begin{squarecells}{2}
    $b$  & $b$ \nl
\end{squarecells}\,\,\mapsto\,\, 
\begin{squarecells}{4}
    $a$ & $c$ & $c$ & $b$  \nl
\end{squarecells} \quad \quad
\begin{squarecells}{2}
    $b$  & $c$ \nl
\end{squarecells}\,\,\mapsto\,\, 
\begin{squarecells}{4}
    $a$ & $b$ & $c$ & $a$  \nl
\end{squarecells} \\
&\begin{squarecells}{2}
    $c$  & $a$ \nl
\end{squarecells}\,\,\mapsto\,\, 
\begin{squarecells}{4}
    $a$ & $a$ & $b$ & $c$  \nl
\end{squarecells} \quad \quad
\begin{squarecells}{2}
    $c$  & $b$ \nl
\end{squarecells}\,\,\mapsto\,\, 
\begin{squarecells}{4}
    $a$ & $c$ & $b$ & $b$  \nl
\end{squarecells} \quad \quad
\begin{squarecells}{2} 
    $c$  & $c$ \nl
\end{squarecells}\,\,\mapsto\,\, 
\begin{squarecells}{4}
    $a$ & $b$ & $b$ & $a$  \nl
\end{squarecells}
\end{align*} 

 \begin{align*}
\partial^{ }_2 \theta \,\,\colon 
&\begin{squarecells}{2}
    $a$  & $b$ \nl
\end{squarecells}\,\,\mapsto\,\, 
\begin{squarecells}{4}
    $c$ & $c$ & $b$ & $a$  \nl
\end{squarecells} \quad \quad
\begin{squarecells}{2}
    $a$  & $c$ \nl
\end{squarecells}\,\,\mapsto\,\, 
\begin{squarecells}{4}
    $c$ & $b$ & $b$ & $a$  \nl
\end{squarecells} \\
& \begin{squarecells}{2}
    $b$  & $a$ \nl
\end{squarecells}\,\,\mapsto\,\, 
\begin{squarecells}{4}
    $b$ & $a$ & $c$ & $c$  \nl 
\end{squarecells} \\
& \begin{squarecells}{2}
    $c$  & $b$ \nl
\end{squarecells}\,\,\mapsto\,\, 
\begin{squarecells}{4}
    $a$ & $c$ & $b$ & $a$  \nl
\end{squarecells} \quad \quad
\begin{squarecells}{2}
    $c$  & $c$ \nl
\end{squarecells}\,\,\mapsto\,\, 
\begin{squarecells}{4}
    $a$ & $b$ & $b$ & $a$  \nl
\end{squarecells}
\end{align*}
Since $\partial_1\theta$ is defined on an alphabet with $9$ letters and $\partial_2\theta$ on one with just $5$, it is immediate that this example cannot have a supertile-shuffling extended symmetry with $A=\text{Rot}_{\pi/2}$.
\end{example}

\begin{remark}
  Note that primitivity and aperiodicity are not necessarily inherited by $\partial_k \theta$. While adding this
  condition would simplify some proofs, our results hold regardless of these
  properties, so we are not going to make any assumptions about $\partial_k
  \theta$. We note, though, that $\X_{\partial_k \theta}$ is
  guaranteed to be non-empty, as we assume $\ell_k > 1$ for all $k$; any
  element $x \in \X_{\theta}$ with $\pi_{\rm tile} (x)_k = 0$
  induces an element of $\X_{\partial_k \theta}$ naturally and vice
  versa.
\end{remark}

We now generalise Definition~\ref{def:kth-derived} to arbitrary subsets of $\{1,\ldots,d\}$.

\begin{definition}
  Given $J = \{ j_1, \ldots, j_r \} \subseteq \{1 \dotsc d\}$ with $j_1 < \cdots <
  j_r$, and under the above hypotheses, define the $J$-{\emph{derived
  substitution}} $\partial_J \theta$ as the substitution $\partial_{j_1}
  (\partial_{j_2} (\cdots \partial_{j_r} \theta))$ obtained by taking derived
  substitutions inductively in decreasing order of indices. The
  $J$-\emph{derived coincidence graph} $\mathcal{G} (\theta, J) \coloneqq
  \mathcal{G} (\partial_J \theta)$ is the coincidence graph of the substitution $\partial_J \theta$.
\end{definition}

\begin{remark}
  The definition of $\mathcal{G} (\theta, J)$ makes sense even when $J =
  \varnothing$ or $J = \{ 1, \ldots, d \}$. In the first case, we just have
  $\mathcal{G} (\theta, \varnothing) =\mathcal{G} (\theta)$; in the second
  case, we can regard $\mathcal{G} (\theta, \{ 1, \ldots, d \})$ as a
  label-less directed graph that represents the natural map $\mathcal{L}_{\{ -
  1, 0 \}^d} (\X_{\theta}) \rightarrow \mathcal{L}_{\{ - 1, 0 \}^d}
  (\X_{\theta})$ induced by the corner columns of $\theta$, and every
  vertex of this graph that lies on a strongly connected component corresponds
  to a valid seed for a periodic point of $\theta$.
\end{remark}

The following lemma ensures that this definition is consistent and depends
only on the set of indices $J$, and not the order in which derived
substitutions are taken. 

\begin{lemma}
  For $i < j$, we have that $\partial_i (\partial_j \theta) = \partial_{j - 1}
  (\partial_i \theta)$, up to a relabelling.
\end{lemma}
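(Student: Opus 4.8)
The statement is essentially a bookkeeping/commutativity fact about taking derived substitutions along two different axes. The plan is to unfold both sides of the claimed identity directly from Definition~\ref{def:kth-derived} and check they agree symbol-by-symbol, keeping careful track of how the supporting rectangles $R_k$, the two-letter alphabets $\mathcal{A}_k$, and the index shifts behave. First I would fix $i<j$ and set up notation: the substitution $\partial_j\theta$ is $(d-1)$-dimensional, supported on $R_j=\prod_{1\le m\le d}^{m\ne j}[0,\ell_m-1]$, over the alphabet $\mathcal{A}_j=\mathcal{L}_{B_j}(\theta)$ of legal two-letter patterns along the $j$-th axis. When we then apply $\partial_i$ to it, the relevant axis of $\partial_j\theta$ is still "the $i$-th coordinate of $\mathbb{Z}^d$", but because coordinate $j$ has been deleted and $i<j$, in the $(d-1)$-dimensional indexing this is coordinate $i$ — hence the output $\partial_i(\partial_j\theta)$ is supported on the rectangle obtained from $R_j$ by further deleting the $i$-th coordinate, i.e.\ on $R_{\{i,j\}}:=\prod_{m\ne i,j}[0,\ell_m-1]$. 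Symmetrically, $\partial_j\theta$ has its $j$-th axis reindexed as the $(j-1)$-st after deleting coordinate $i<j$, which is exactly the reason the right-hand side reads $\partial_{j-1}(\partial_i\theta)$ rather than $\partial_j(\partial_i\theta)$. So the first real step is to justify this reindexing bijection precisely.

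Next I would identify the common alphabet. A letter of $\partial_i(\partial_j\theta)$ is a legal two-letter pattern (along axis $i$) whose entries are letters of $\mathcal{A}_j$, i.e.\ legal two-letter patterns along axis $j$; unravelling, this is exactly a legal pattern of $\theta$ supported on $B_i+B_j=\{0\}^{i-1}\times\{-1,0\}\times\{0\}^{j-i-1}\times\{-1,0\}\times\{0\}^{d-j}$ — a $2\times 2$ "square" in the $(i,j)$-plane. The same unravelling applied to $\partial_{j-1}(\partial_i\theta)$ yields the identical set of legal $2\times 2$ patterns; legality is an intrinsic property of $\X_\theta$ and does not depend on the order of nesting, so the two alphabets coincide canonically. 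With alphabets and supports matched up, the last step is to evaluate both derived substitutions on such a $2\times 2$ seed $P$ and read off the image at an arbitrary position $\boldsymbol{n}\in R_{\{i,j\}}$: by Definition~\ref{def:kth-derived} applied twice, each entry of $\partial_i(\partial_j\theta)(P)$ is obtained by selecting the four columns $\theta_{\boldsymbol{m}}$ of $\theta$ where the $i$-coordinate of $\boldsymbol{m}$ is in $\{\ell_i-1,0\}$ and the $j$-coordinate is in $\{\ell_j-1,0\}$ (and the other coordinates are prescribed by $\boldsymbol{n}$), and arranging their outputs into a $2\times 2$ pattern; doing the same for $\partial_{j-1}(\partial_i\theta)(P)$ produces the same four columns at the same four positions, arranged into the same $2\times 2$ pattern. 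Hence the two images agree, which is the claim.

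The only genuine obstacle is notational: the index arithmetic "$j\mapsto j-1$ after deleting a coordinate below it" has to be handled with enough care that the reader believes the relabelling is well-defined and is an isomorphism of substitutions (same alphabet, same shape up to the canonical identification of $R_{\{i,j\}}$, same rule). Everything else is a mechanical double unfolding of the definition — there is no dynamical input, only the observation that legality of a finite pattern in $\X_\theta$ is independent of how we choose to nest the two derivations, and that the column-selection prescriptions in the two orders pick out the same corner columns of $\theta$. I would therefore present the argument as: (1) the canonical bijection $R_j\setminus\{i\}\text{-axis}\cong R_{\{i,j\}}\cong R_i\setminus\{(j-1)\text{-axis}\}$ and its effect on indices; (2) the identification of both alphabets with legal $B_i+B_j$-patterns of $\theta$; (3) a one-line verification that both $J$-derived rules send such a pattern to the $2\times 2$ arrangement of the corresponding corner columns of $\theta$. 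An induction on $|J|$ then immediately gives the order-independence of $\partial_J\theta$ for general $J$, which is the purpose of the lemma.
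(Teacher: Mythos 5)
Your overall strategy is the same as the paper's: unfold Definition~\ref{def:kth-derived} twice, track the reindexing $j\mapsto j-1$ caused by deleting the coordinate $i<j$ (the paper packages this into the support identity $(Q^{(j)})^{(i)}=(Q^{(i)})^{(j-1)}$), and observe that both orders of derivation apply the same four corner columns of $\theta$ to the same $2\times 2$ seed. Your steps (1) and (3) are fine and match what the paper does (the paper leaves the rule verification of your step (3) implicit in its pattern correspondence).

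The gap is in step (2), the identification of the alphabets. By the definition, the alphabet of $\partial_i(\partial_j\theta)$ is the set of $\partial_j\theta$-legal patterns of support $B_i$, i.e.\ pairs of letters of $\mathcal{A}_j$ occurring adjacently along axis $i$ in some iterate $(\partial_j\theta)^k$; unravelled, these are the $\theta$-legal $2\times2$ patterns in the $(i,j)$-plane that occur straddling a level-$k$ supertile interface \emph{in direction $j$} — not the set of all $\theta$-legal patterns supported on $B_i+B_j$, and not obviously the same set as the one obtained by nesting in the other order, which imposes occurrence across a direction-$i$ interface instead. These conditions genuinely differ in general: for instance, in the product of the substitution $a\mapsto ab,\ b\mapsto aa$ (placed along axis $i$) with the Thue--Morse substitution along axis $j$, the word $ab$ is legal along axis $i$ but is never an interface word in that direction (every $\theta^k$-image of a letter begins with $a$), so the $2\times2$ patterns with $i$-word $ab$ arise when one derives along $j$ first but not when one derives along $i$ first. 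Hence your justification that ``legality is an intrinsic property of $\X_\theta$ and does not depend on the order of nesting'' assumes exactly the point at issue; the content of the lemma sits in matching the two boundary-occurrence conditions. The paper's proof does this by identifying letters of the iterated derived substitution with $\theta$-legal patterns \emph{appearing in the shared border between supertiles} in the deleted directions — a condition symmetric in $i$ and $j$ (equivalently, one restricts to the letters actually produced by the corner columns) — and only then reduces to the support computation. To repair your step (2) you would need to replace ``all legal $B_i+B_j$-patterns'' by this boundary (corner) characterisation and argue that both nesting orders yield it.
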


\begin{proof}
  By definition, an element of the alphabet for $\partial_i (\partial_j
  \theta)$ corresponds to two elements of the alphabet $\mathcal{A}_j$ that,
  adjacent along the $i$-th coordinate, form a legal pattern for $\partial_j
  \theta$. Legal patterns with support $P$ in $\partial_j \theta$ are, by
  definition of the latter substitution, in a $1$-$1$ correspondence with
  legal patterns for $\theta$ whose support is of the form:
  \[ P^{(j)} = \{ (n_1, \ldots, n_d) \in \mathbb{Z}^d : n_j \in \{ - 1, 0 \},
     (n_1, \ldots, \widehat{n}_j, \ldots, n_d) \in P \}, \]
  that appear in the boundary between two supertiles. In our particular
  scenario, the alphabet for $\partial_i (\partial_j \theta)$ is mapped
  bijectively to the set of all $\theta$-legal patterns appearing in a shared
  border between two supertiles, with support $\{ 0 \}^{i - 1} \times \{ - 1,
  0 \} \times \{ 0 \}^{j - i - 1} \times \{ - 1, 0 \} \times \{ 0 \}^{d - j}$;
  by the same reasoning, a legal $\partial_i (\partial_j \theta)$ pattern with
  support $Q$ can be identified with a $\partial_j \theta$-legal pattern with
  support $Q^{(i)}$, and these patterns are in turn identified with
  $\theta$-legal patterns with support $(Q^{(j)})^{(i)} = \{ (n_1, \ldots,
  n_d) \in \mathbb{Z}^d : n_i, n_j \in \{ - 1, 0 \}, (n_1, \ldots, \widehat{n}_i,
  \ldots, \widehat{n}_j, \ldots, n_d) \in Q \}$ that appear in the appropriate
  boundary. Applying the same reasoning to $\partial_i \theta$, the result
  follows immediately from the equality $(Q^{(j)})^{(i)} = (Q^{(i)})^{(j -
  1)}$ when $i < j$, which is a simple computation.
\end{proof}

Before we go to the main result linking the graph  $\mathcal{G} (\theta, J)$ and irregular fibres, we introduce further notation for ease of presentation. We define $\mathcal{G} (\theta, J)^{\mathrm{op}}$ to be the graph obtained from $\mathcal{G} (\theta, J)$ by reversing the direction of the edges (\emph{reversed graph}).  The vertices of $\mathcal{G} (\theta, J)^{\mathrm{op}}$
  remain to be the sets of legal patterns in $\mathbb{X}_{\theta}$ with support $B_J = \prod_{j \notin J} \{ 0 \} \times \prod_{j \in J} \{ - 1, 0 \}$.
  Finally, we denote by $\widetilde{\mathcal{G}} (\theta, J)^{\mathrm{op}}$ to be the subgraph of 
$\mathcal{G} (\theta, J)^{\mathrm{op}}$ with the vertices with cardinality equal to the column number $c_{\theta}$ removed (\emph{reversed and pruned graph}). 

\begin{example}[90-degree example, non-bijective]\label{ex:90-deg-marginal}
Consider the non-bijective substitution
 \begin{align*}
\theta\,\,\colon &
\begin{squarecells}{1}
$a$  \nl
\end{squarecells}\,\,\mapsto\,\, 
\begin{squarecells}{2}
$c$ & $b$ \nl
$b$ & $c$  \nl
\end{squarecells}
\quad \quad \quad 
\begin{squarecells}{1}
$b$  \nl
\end{squarecells}\,\,\mapsto\,\, 
\begin{squarecells}{2}
$a$ & $c$ \nl
$c$ & $a$  \nl
\end{squarecells}
\quad \quad \quad 
\begin{squarecells}{1}
$c$  \nl
\end{squarecells}\,\,\mapsto\,\, 
\begin{squarecells}{2}
$a$ & $b$ \nl
$b$ & $a$  \nl
\end{squarecells}
\end{align*}
with combinatorially-identical derived substitutions
\begin{align*}
\partial_1=\partial_2 \theta\,\,\colon &
\begin{squarecells}{2}
    $a$  & $b$ \nl
\end{squarecells}\,\,\mapsto\,\, 
\begin{squarecells}{4}
    $c$ & $c$ & $b$ & $a$  \nl
\end{squarecells} \quad \quad
\begin{squarecells}{2}
    $a$  & $c$ \nl
\end{squarecells}\,\,\mapsto\,\, 
\begin{squarecells}{4}
    $c$ & $b$ & $b$ & $a$  \nl 
\end{squarecells} \\
&\begin{squarecells}{2}
    $b$  & $a$ \nl
\end{squarecells}\,\,\mapsto\,\, 
\begin{squarecells}{4}
    $a$ & $b$ & $c$ & $c$  \nl
\end{squarecells} \quad \quad
\begin{squarecells}{2}
    $b$  & $c$ \nl
\end{squarecells}\,\,\mapsto\,\, 
\begin{squarecells}{4}
    $a$ & $b$ & $c$ & $a$  \nl
\end{squarecells} \\
&\begin{squarecells}{2}
    $c$  & $a$ \nl
\end{squarecells}\,\,\mapsto\,\, 
\begin{squarecells}{4}
    $a$ & $b$ & $b$ & $c$  \nl
\end{squarecells} \quad \quad
\begin{squarecells}{2} 
    $c$  & $c$ \nl
\end{squarecells}\,\,\mapsto\,\, 
\begin{squarecells}{4}
    $a$ & $b$ & $b$ & $a$  \nl
\end{squarecells}
\end{align*}

Below we have the graph $\widetilde{\mathcal{G}}(\theta,J)^{\text{op}}$ (with vertices $D,E,F$), where the pruned arrows (corresponding to edges connected to minimal sets) are drawn with dotted lines. We omit the labels for simplicity.
\begin{center}
  \begin{tikzpicture}[->,>=stealth',shorten >=1pt,auto,node distance=1.5cm,
                    semithick]
  \tikzstyle{every state}=[fill=none,text=black]
    \node (11)                    {$\textcolor{gray}{A}$};
    \node (21) [below of=11, left =1cm of 11]       {$\textcolor{gray}{B}$};
    \node (22) [below of=11, right =1cm of 11]       {$\textcolor{gray}{C}$};
    \node[state] (31) [below of=21, left = 0.8cm of 21]                   {$D$};
    \node[state] (32) [below of=21, right =1cm of 21]                   {$E$};
    \node[state] (33) [below of=22, right = 0.8cm of 22]                   {$F$};

  \path (11) edge  [dashed, bend left=20,gray]    node {} (21)
        (11) edge  [dashed, bend right=20,gray]    node {} (22)
        
        (21) edge  [dashed, bend left=20,gray]    node {} (11)
        (21) edge  [dashed, bend left=20,gray]    node {} (22)
        (21) edge  [dashed,gray ]    node {} (31)

        (22) edge  [dashed, bend right=20,gray]    node {} (11)
        (22) edge  [dashed, bend left=20,gray]    node {} (21)
        (22) edge  [dashed,gray ]    node {} (33)

        (31) edge  []    node {} (32)

        (33) edge  []    node {} (32)
        (31) edge  [loop left]    node {} (31)
        (33) edge  [loop right]    node {} (33)
        
        ;
\end{tikzpicture}
\end{center}
Here, the minimal sets are $A=\{cc\}, B=\{ab \}, C=\{ba \}$, while $D=\{ba,ca,bc,cc\}, E =\{bc,cc,cb,ac,ab,ba,ca\}$ and $ F=\{cb,ac,ab,cc\}$.
\end{example}

\begin{remark}
    To show the contrast to the previous example we present the derived coincidence graphs for Example~\ref{ex:180}. Here, the graphs $\widetilde{\mathcal{G}}(\theta,J)^{\text{op}}$ look vastly different depending on the direction; see Figure~\ref{fig:180-graphs}. 
\begin{figure}[!h]
    \subfloat[$J=\left\{1\right\}$]{
  \begin{tikzpicture}[->,>=stealth',shorten >=1pt,auto,node distance=1cm,
                    semithick]
  \tikzstyle{every state}=[fill=none,text=black]
    \node[state] (11)                    {$A$};
    \node[state] (21) [below of=11, left =0.5cm of 11]       {$B$};
    \node[state] (22) [below of=11, right =0.5cm of 11]       {$C$};
    \node[state] (31) [below=0.7cm of 21]                   {$D$};
    \node[state] (32) [right =0.5cm of 31]                   {$E$};
    \node[state] (33) [below=0.7cm of 22]                   {$F$};

  \path (11) edge  [bend left=20]    node {} (21)
        (11) edge  [bend right=20]    node {} (22)
        
        (21) edge  [bend left=20]    node {} (11)
        (21) edge  [bend left=-20]    node {} (22)
        (21) edge  []    node {} (31)

        (22) edge  [bend right=20]    node {} (11)
        (22) edge  [bend left=60]    node {} (21)
        (22) edge  []    node {} (33)

        (31) edge  []    node {} (32)

        (33) edge  []    node {} (32)        
            ;
\end{tikzpicture}
}\quad \quad 
\subfloat[$J=\left\{2\right\}$]{
  \begin{tikzpicture}[->,>=stealth',shorten >=1pt,auto,node distance=2cm,
                    semithick]
  \tikzstyle{every state}=[fill=none,text=black]

    \node[state] (11)                    {$A'$};
    \node[state] (12) [right of =11]       {$B'$};
    \node[state] (13) [right of =12]       {$C'$};

  \path (11) edge  []    node {} (12)

        (12) edge  [bend right=20]    node {} (11)
        (12) edge  [bend left=20]    node {} (11)

        (12) edge  []    node {} (13)
        
        ;
\end{tikzpicture}
}\caption{$\widetilde{\mathcal{G}}(\theta,J)^{\text{op}}$ for Example~\ref{ex:180}}\label{fig:180-graphs}
\end{figure}
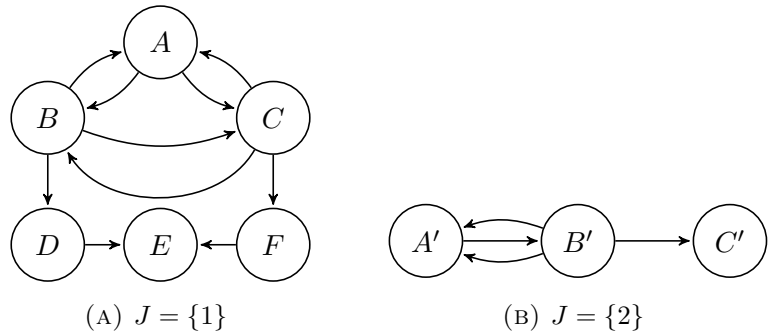
\end{remark}

\subsection{Derived substitutions and irregular fibres}

 Let $\boldsymbol{z} = (z^{(1)},
  \ldots, z^{(d)}) \in \mathbb{Z}_{\boldsymbol{\ell}}$, where each $z^{(j)}$ has
  a base $\ell_j$ expansion given by $\ldots z^{(j)}_2 z^{(j)}_1 z^{(j)}_0$. For each $k\in\N$, define $\boldsymbol{n}_k =
  (z_k^{(1)}, \ldots, z_k^{(j)}, \ldots, z_k^{(d)})$. Further, for $J\subseteq \left\{1,\ldots, d\right\}$, let $\widehat{\boldsymbol{n}}_k :=\widehat{\boldsymbol{n}}^{(J)}_k=
  (z^{(1)}_k, \ldots, \widehat{z}^{(j)}_k, \ldots, z_k^{(d)})$, where the coordinates corresponding to $j\in J$ are deleted.
The next result provides a characterisation of fibres with certain cardinality bounds. 

\begin{theorem}
  Let $\theta : \mathcal{A} \rightarrow \mathcal{A}^R$ be a injective, primitive and
  aperiodic block substitution in $\Z^d$, with expansive matrix $Q=\textnormal{diag}(\ell_1,\ldots,\ell_d)$.
  Take $J = \{ 1 \leqslant j \leqslant d : z^{(j)} \in \mathbb{Z} \}$ and
  suppose that $J = \{ j_1, \ldots, j_r \} \neq [1 ,\ldots, d]$ (that is, at least
  one coordinate is not an integer).  Then the following are equivalent
  \begin{enumerate}
      \item[\textnormal{(i)}] $\pi_{\mathrm{tile}}^{- 1} [\{
  \boldsymbol{z} \}] \geqslant C$ 
  \item[\textnormal{(ii)}] there exists an infinite path
  $\ldots e_2 e_1 e_0 $ in $\mathcal{G} (\theta, J)^{\mathrm{op}}$ that passes
   through vertices of cardinality at least $C$ infinitely many times, and
  where the label of edge $e_k$ corresponds to the tuple $(z_k^{(1)}, \ldots,
  \widehat{z}_k^{(j_1)}, \ldots, \widehat{z}_k^{(j_m)}, \ldots, \widehat{z}_k^{(j_d)},
  \ldots, z_k^{(d)}) \in \prod_{j \in J^c} [0 ,\ldots, \ell_j - 1]$.
  \end{enumerate}
\end{theorem}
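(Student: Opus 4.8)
The plan is to adapt the one-dimensional argument of Coven--Quas--Yassawi (Theorem~\ref{thm:CQY-irreg}) by organising the recognizability data of $\theta$ along the ``mixed'' $Q$-adic coordinate $\boldsymbol{z}$, where some coordinates are honest integers (indexed by $J$) and the rest are genuinely $\ell_j$-adic. The key observation is that the fibre $\pi_{\mathrm{tile}}^{-1}[\{\boldsymbol{z}\}]$ can be reconstructed level by level: an element $x \in \pi_{\mathrm{tile}}^{-1}[\{\boldsymbol{z}\}]$ is determined by the sequence of symbols $(s_k)_{k \geq 0}$, where $s_k$ is the ``seed'' of the level-$k$ supertile of $x$ containing the origin, and these symbols are linked by the column maps: $s_k = \theta_{\boldsymbol{n}_k}(s_{k+1})$, where $\boldsymbol{n}_k$ is the $k$-th digit tuple of $\boldsymbol{z}$. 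The set of valid such sequences — hence $\lvert\pi_{\mathrm{tile}}^{-1}[\{\boldsymbol{z}\}]\rvert$ — is thus the set of paths in the coincidence graph $\mathcal{G}(\theta)$ read along the digits of $\boldsymbol{z}$, but reversed (whence the $\mathrm{op}$); this is precisely the content of Theorem~\ref{thm:CQY-irreg} in the genuinely aperiodic directions. The novelty here is handling the coordinates $j \in J$ where $z^{(j)} \in \mathbb{Z}$: along such a direction the supertile nesting stabilises, so that instead of tracking a single symbol one must track a two-symbol pattern straddling a fixed supertile boundary — this is exactly what the derived substitution $\partial_j\theta$ encodes, and iterating over all $j \in J$ gives $\partial_J\theta$, whose alphabet consists of patterns on $B_J = \prod_{j \notin J}\{0\} \times \prod_{j \in J}\{-1,0\}$.

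First I would make precise the reduction: fix $\boldsymbol{z}$ with $J = \{j : z^{(j)} \in \mathbb{Z}\} \neq \{1,\dots,d\}$, and, after applying a shift $\sigma_{\boldsymbol{m}}$ for suitable $\boldsymbol{m} \in \mathbb{Z}^d$ (which changes neither the fibre cardinality nor membership in the sofic shift, only translates $\boldsymbol{z}$), assume that every $z^{(j)}$ for $j \in J$ has all-zero digits, i.e.\ $z^{(j)} = 0$. Then for $j \in J$ the origin always sits on the ``lower-left'' corner (coordinate $0$) of every level-$k$ supertile in that direction, so adjacent to the origin across the supertile boundary in the $-\boldsymbol{e}_j$ direction one sees a position that, relative to the level-$k$ supertile on the other side, is at coordinate $\ell_j - 1$. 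This is the geometric content of Definition~\ref{def:kth-derived}: the pair of symbols at positions $0$ and $-\boldsymbol{e}_j$ (legal, hence an element of $\mathcal{A}_k = \mathcal{A}_j$) is carried from level $k+1$ to level $k$ by the map $\partial_j\theta$ evaluated at the appropriate digit tuple. Doing this simultaneously for all $j \in J$ shows that an element of $\pi_{\mathrm{tile}}^{-1}[\{\boldsymbol{z}\}]$ is determined by a sequence of $\partial_J\theta$-legal patterns (elements of the alphabet of $\partial_J\theta$, identified with legal patterns on $B_J$) linked by the columns of $\partial_J\theta$ at the digit tuples $\widehat{\boldsymbol{n}}_k = (z_k^{(j)})_{j \notin J}$; conversely every such compatible sequence arises from a unique point of the fibre. (Here one uses that at least one coordinate is genuinely $\ell_j$-adic, so $\partial_J\theta$ is a nontrivial substitution in $d - \lvert J\rvert \geq 1$ dimensions and the coincidence graph machinery of Section~\ref{sec:minsets} applies; recognizability of $\theta$, guaranteed by aperiodicity and injectivity, is what makes the correspondence bijective.)

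With the reduction in place, the equivalence becomes a statement about paths in $\mathcal{G}(\partial_J\theta) = \mathcal{G}(\theta, J)$. A valid sequence of seeds as above is exactly a right-infinite path $\dots e_2 e_1 e_0$ in $\mathcal{G}(\theta, J)^{\mathrm{op}}$ (reversal, because the column map sends level $k+1$ data to level $k$ data, i.e.\ the edges of $\mathcal{G}$ point from coarser to finer while the path is read from finer to coarser) whose $k$-th edge carries the label $\widehat{\boldsymbol{n}}_k$. The cardinality $\lvert\pi_{\mathrm{tile}}^{-1}[\{\boldsymbol{z}\}]\rvert$ equals the number of symbols reachable at ``level $0$'' via such paths, which is bounded below by $C$ precisely when the path can be taken to pass through vertices of cardinality $\geq C$ infinitely often: if the vertex cardinalities were eventually $< C$ along every such path, a König-type / pigeonhole argument (the vertex set is finite, cardinalities are non-increasing along $\mathcal{G}(\theta)$-edges, hence eventually constant along any $\mathrm{op}$-path) would cap the fibre at $c_\theta \leq \lvert\text{(stable vertex)}\rvert < C$; conversely, passing through a vertex $U$ with $\lvert U\rvert \geq C$ infinitely often forces, by recognizability, at least $\lvert U\rvert \geq C$ distinct lifts all the way down. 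This is where the argument requires the most care — matching the combinatorial ``eventually-stable cardinality'' behaviour of paths in the coincidence graph to the exact fibre count, and verifying that the edge-label bookkeeping (the tuple $\widehat{\boldsymbol{n}}_k$ with $J$-coordinates deleted) is consistent across the inductive peeling-off of the directions in $J$, which is exactly the content of the preceding lemma on $\partial_i(\partial_j\theta) = \partial_{j-1}(\partial_i\theta)$. I would organise this final step as: (a) non-increasingness of vertex cardinality along $\mathcal{G}(\theta,J)$-edges; (b) the fibre count equals $\liminf$ of cardinalities along the unique digit-labelled $\mathrm{op}$-path-family, made precise; (c) translate ``$\liminf \geq C$'' into ``passes through vertices of cardinality $\geq C$ infinitely often,'' using that along any actual path the cardinality is eventually constant.

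The main obstacle I expect is step (b)/(c): cleanly proving that the fibre cardinality is computed by the graph, i.e.\ that no ``spurious'' lifts appear and none are lost, when the direction set $J$ is nonempty. In the pure one-dimensional (or pure $d$-dimensional, $J$ full) cases this is handled in Theorem~\ref{thm:CQY-irreg} and the second Remark after the $J$-derived definition respectively; the mixed case needs the derived-substitution reduction to be set up carefully enough that recognizability of $\theta$ descends to the relevant statement about $\partial_J\theta$-configurations, even though $\partial_J\theta$ need not be aperiodic or primitive (cf.\ the Remark warning about this). I would circumvent the lack of primitivity by working directly with $\X_\theta$ and only using $\partial_J\theta$ as a bookkeeping device for the symbol-pairs, rather than invoking minimality of $\X_{\partial_J\theta}$.
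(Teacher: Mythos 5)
Your overall route is the paper's: reduce to $z^{(j)}=0$ for $j\in J$ by shifting, track the $B_J$-patterns of the desubstituted points $x^{(k)}$, observe that these are carried from level $k+1$ to level $k$ by the columns of $\partial_J\theta$ at the digits $\widehat{\boldsymbol{n}}_k$, and translate the fibre cardinality into label-compatible paths in $\mathcal{G}(\theta,J)^{\mathrm{op}}$. However, the two load-bearing steps are either missing or misattributed, and they are exactly where the paper spends its effort. First, your claim that ``every compatible sequence arises from a unique point of the fibre'' does not follow from recognizability: recognizability gives uniqueness of desubstitution, i.e.\ the map from fibre points to backward chains of $B_J$-patterns, not the existence of a point of $\X_\theta$ with the prescribed tiling address realizing a given chain. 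The paper supplies this with a genuine argument in its (ii)$\Rightarrow$(i) direction: pigeonhole a vertex $v^*$ recurring along the path, show the induced self-map of $v^*$ is a bijection, construct points $u^{(P)}_m$ with $u^{(P)}_m|_{B_J}=P$ and $\pi_{\mathrm{tile}}(u^{(P)}_m)\equiv\boldsymbol{t}_{k_m}\pmod{Q^{k_m}}$, and pass to a limit by compactness. Some realization-by-compactness argument of this kind must appear in your proof; without it the asserted bijection with the inverse limit is unproven in precisely the direction needed for (ii)$\Rightarrow$(i).

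Second, for (i)$\Rightarrow$(ii) your steps (b)--(c) presuppose that the fibre count is computed by labelled paths, which is the statement to be proved; to break the circle one must actually manufacture an infinite labelled path in the finite graph from the fibre data. The paper does this by choosing a separation radius $N$ for the $C$ fibre points, showing that $x^{(m)}|_{B_J}$ determines $x$ on the window $W_m$ of Eq.~\eqref{eq:W_m}, and that $W_m$ eventually contains $[-N,N-1]^d$ \emph{because every coordinate outside $J$ is a non-integer} (both $t^{(j)}_m$ and $\ell_j^m-t^{(j)}_m$ tend to infinity); this gives $|v_k|\geqslant C$ for $v_k=\{x^{(k)}|_{B_J}\colon \pi_{\mathrm{tile}}(x)=\boldsymbol{z}\}$ and all large $k$, after which one embeds $v_k$ into graph vertices $\bar v_k\supseteq v_k$ and extracts an infinite path with labels $\widehat{\boldsymbol{n}}_k$ by a K\H{o}nig-type finiteness argument. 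In your plan the hypothesis ``at least one coordinate is not an integer'' is invoked only to say that $\partial_J\theta$ is a nontrivial lower-dimensional substitution, which is not where it is needed; its real role is the two-sided window growth just described, and that step is never addressed in your sketch. Alternatively, your inverse-limit picture can be made rigorous by showing that the eventual images $E_m$ of the full alphabet under the digit columns stabilize, are themselves vertices of $\mathcal{G}(\theta,J)$, and form a path with the correct labels through vertices of cardinality equal to the fibre count; but either way, what you defer as ``(b) made precise'' is the actual content of the theorem, so as written the proposal has a genuine gap rather than a complete argument.
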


\begin{proof}
  For simplicity we will assume that $z^{(j)} = 0$ for every $j
  \in J$. Equivalently, this means that in every deleted coordinate in $\widehat{\boldsymbol{n}}_k$, the label of  edge $e_k$, is $0$ for all $k$. 

  We first show that (ii) implies (i).   Let $e_0 e_1 e_2 \ldots$ be an infinite path in this graph and $v_k
  =\mathfrak{i} (e_k)$ be the corresponding initial vertex of each edge. We can
  identify each $v_k$ with some subset of $\mathcal{L}_{B_J}
  (\mathbb{X}_{\theta})$, and, by the definition of $\mathcal{G}
  (\theta,J)$, the existence of an edge from $v_k$ to
  $v_{k + 1}$ with label $\widehat{\boldsymbol{n}}_k$ implies that, for every
  pattern $P \in v_k$ there exists some pattern $P' \in v_{k + 1}$ such that
  $  \theta (P') |_{B_J + \boldsymbol{n}_k} = P$. Thus, given a $x =
  x^{(0)} \in \pi_{\mathrm{tile}}^{- 1} [\{ \boldsymbol{z} \}]$, as by
  desubstitution there is a uniquely defined infinite sequence of points $\{
  x^{(k)} \}_{k \in \mathbb{N}} \subseteq \mathbb{X}_{\theta}$ such that
  $x^{(k)} = \sigma^{ }_{{\boldsymbol{n}_k} } \circ \theta (x^{(k + 1)})$, we have
  that if $  x^{(k + 1)} |_{B_J} = P'$, then $  x^{(k)}
  |_{B_J} = P$; see Figure~\ref{fig:recog-P}.

 \begin{figure}
 \begin{center}
      \subfloat[Appearance of the patches $P$ and $P'$ in $x^{(k)}$ and $x^{(k+1)}$, respectively. Here $J=\left\{2\right\}$ and $B_J=\{(0,0),(0,-1)\}$.]{\includegraphics[scale=0.75]{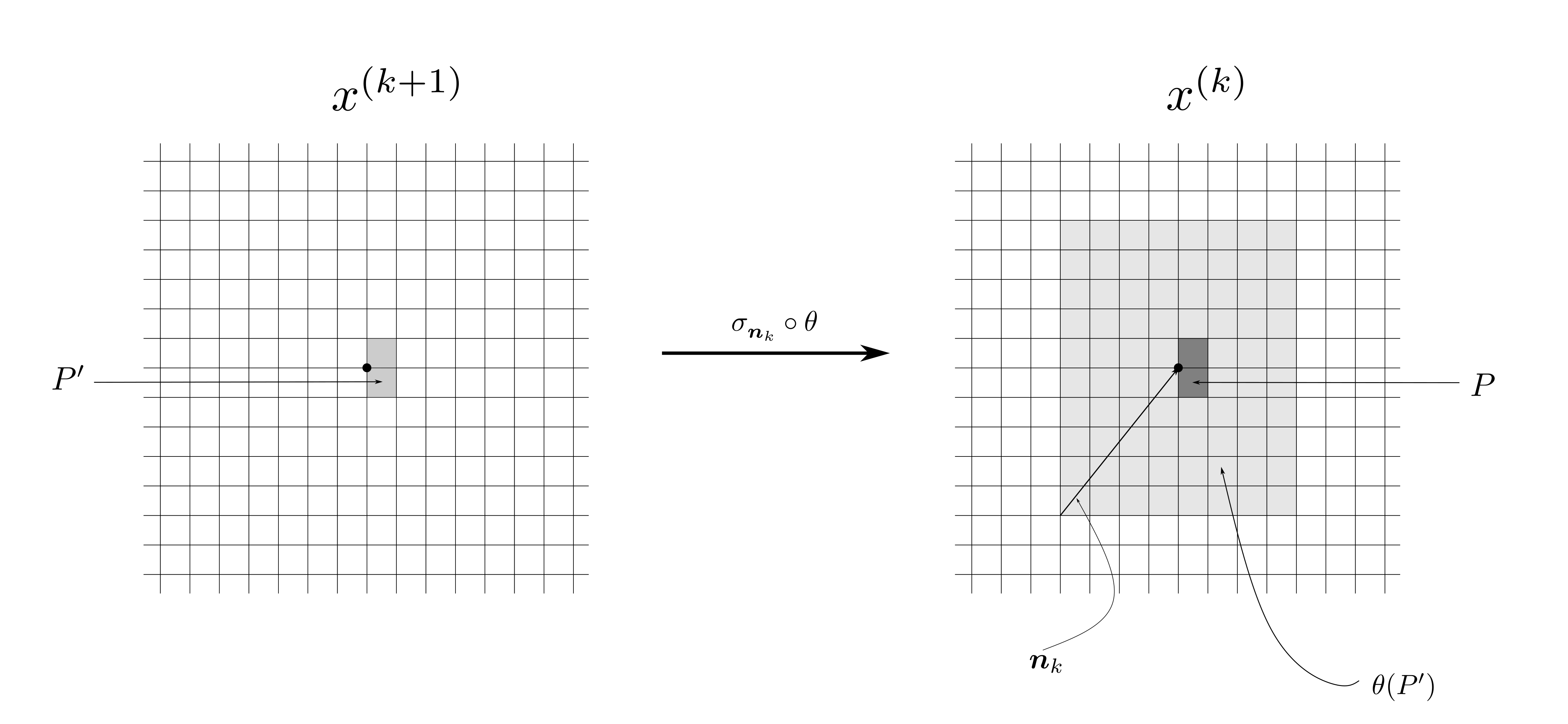}}
      
     \subfloat[A section of $\mathcal{G}(\theta,J)$ with vertices $v_k,v_{k+1}\subset \mathcal{L}_{B_J}(\mathbb{X}_{\theta})$ that encodes the relation in (A)]{\includegraphics[scale=0.75]{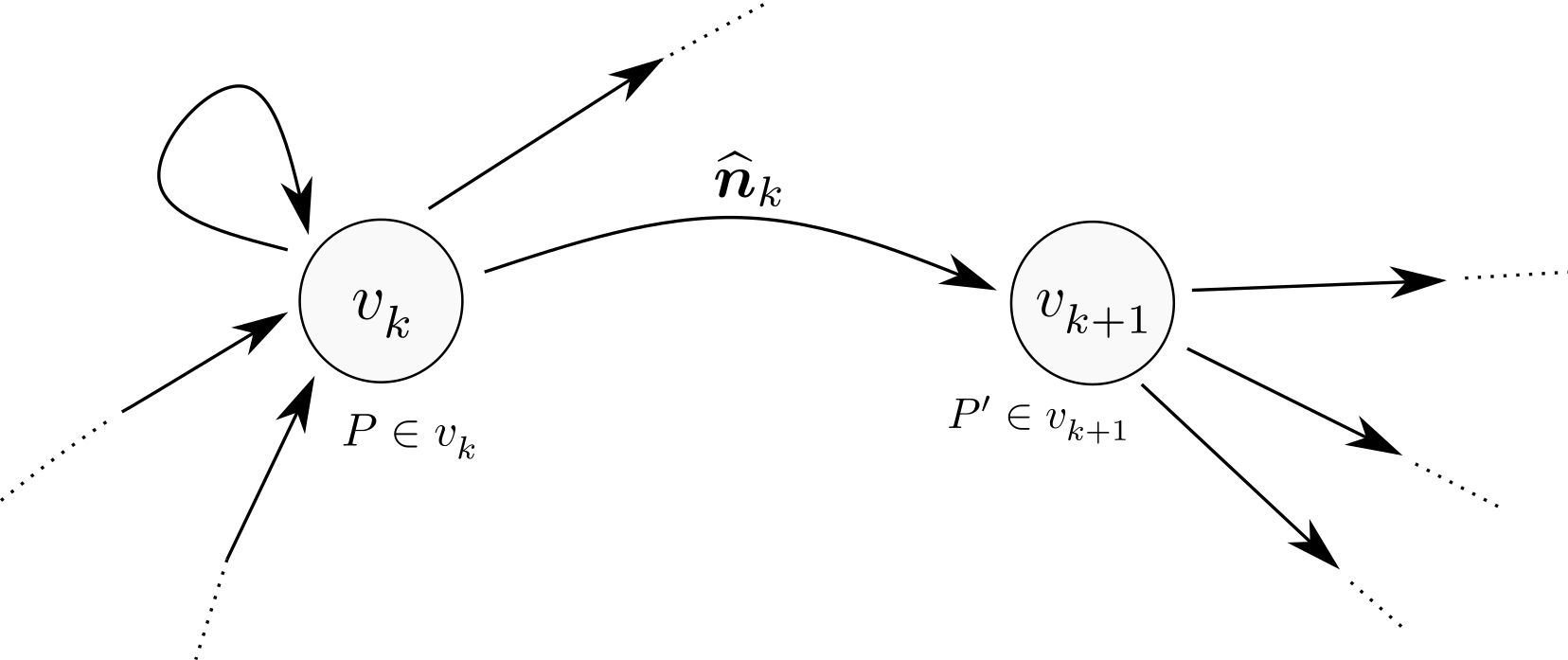}}
 \end{center}
 \caption{Illustration of the properties of the points $x^{(k)}$ and $x^{(k+1)}$ and its corresponding encoding in a section of the graph $\mathcal{G}(\theta,J)$}
     \label{fig:recog-P}
 \end{figure}
  
  Given that $\mathcal{G} (\theta, J)$ is a finite graph, there is some vertex
  $v^{{*}}$ such that $v^{*} = v_k$ for infinitely many
  indices $k_0 < k_1 < k_2 < \cdots \in \mathbb{N}$. By desubstituting $x$
  appropriately if required, we may assume without loss of generality that
  $v_0 = v^{*}$, as both $\theta$ and $\sigma^{ }_{\boldsymbol{n}_k}$ are
  injective functions; equivalently, we assume that $k_0 = 0$. Write
  $t^{(j)}_m$ for the integer with $\ell_j$-adic expansion $z^{(j)}_{m - 1}
  z^{(j)}_{m - 2} \ldots z^{(j)}_0$, and $\boldsymbol{t}_m = (t_m^{(1)}, \ldots,
  t_m^{(d)})$, for any $m\in\mathbb{N}$. By construction, $\boldsymbol{t}_m\to \boldsymbol{z}$ with respect to the topology of  $\mathbb{Z}_{\boldsymbol{\ell}}$.

  Given that every element of $v^{*}$ is a legal pattern $P$, we may
  take some point $u^{(P)} \in \mathbb{X}_{\theta}$ such that $  u^{(P)}
  |_{B_J} = P$. It is easy to see that, for every $m$, $\left.
  \sigma^{ }_{\boldsymbol{t}_{k_m}} \circ \theta^{k_m} (u^{(P)}) \right|_{B_J} \in
  v^{*}$ from our discussion above.  This implies that the
  map $P \mapsto \left. \sigma^{ }_{\boldsymbol{t}_{k_m}} \circ \theta^{k_m} (u^{(P)})
  \right|_{B_J}$ is a surjection from $v^{*}$ onto itself, and hence a
  bijection as $v^{*}$ is finite.
  
  From this, it is not hard to see
  that, for any fixed $m \in \mathbb{N}$ and $P \in v^{*}$, we can
  choose $u^{(P)}_m$ such that \textbf{(1)} $ u^{(P)}_m |_{B_J} = P$,  and \textbf{(2)} there is
  some $u'\in \mathbb{X}_{\theta} $ with $  u' |_{B_J} \in v^{*}$ such that
  $\sigma^{ }_{\boldsymbol{t}_{k_{m + 1}}} \circ \theta^{k_{m + 1}} (u') = u^{(P)}_m$, hence
  \[
  \pi_{\mathrm{tile}} (u_m^{(P)}) \equiv \boldsymbol{t}_{k_m}~\pmod {\ell^{k_m}}.\]

  By compactness, there is some subsequence of $(u_m^{(P)})_{m \in \mathbb{N}}$
  converging to some $\widetilde{u}^{(P)} \in \mathbb{X}_{\theta}$. From \textbf{(1)} we
  immediately see that $ \widetilde{u}^{(P)} |_{B_J} = P$, and \textbf{(2)} and
  continuity imply that $\pi_{\mathrm{tile}} (\widetilde{u}^{(P)}) =
  \boldsymbol{z}$. Hence, there is at least one element $x = \widetilde{u}^{(P)}
  \in \pi_{\mathrm{tile}}^{- 1} [\{ \boldsymbol{z} \}]$ satisfying $  x
  |_{B_J} = P$, for every $P \in v^{*}$, from which $| \pi_{\mathrm{tile}}^{-1} [\{ \boldsymbol{z} \}] | \geqslant | v^{*} |$ is immediate.  This proves the
  first half of the theorem.

  Now, suppose that $| \pi_{\mathrm{tile}}^{- 1} [\{ \boldsymbol{z} \}] | = C$.
  There exists some $N > 0$ such that, for every $x \neq y \in
  \pi_{\mathrm{tile}}^{- 1} [\{ \boldsymbol{z} \}]$, we have $  x
  |_{[- N ,\ldots, N - 1]^d} \neq   y |_{[- N ,\ldots, N - 1]^d}$. Let
  $x = \sigma^{ }_{\boldsymbol{t}_m} \circ \theta^m (x^{(m)})$, which exists by recognisability.
  Recall that, for a rectangular substitution, one has $\text{supp}(\theta^m)=\prod_{j\in \left\{1,\ldots,d\right\}}[0,\ell^{m}_{j}-1]$.

  Fix a subset $J^{\prime}\subseteq J$ and define 
  $\boldsymbol{\varepsilon}_{J'} = - \sum_{j \in J'} \boldsymbol{e }_j$ (where $\boldsymbol{e}_j$ is the $j$-th  element of the canonical basis). 
  Define 
\[ W_{J', m} = \left[\text{supp}(\theta^m)+Q^m\boldsymbol{\varepsilon}_{J}\right]-\boldsymbol{t}_m\subset \mathbb{Z}^d.
\]
This set satisfies the following conditions.
\begin{enumerate}
    \item $W_{J',m}$ is the support of the supertile that contains $\boldsymbol{\varepsilon}_{J'}-\boldsymbol{t}_m$ in $x$
    \item The support $B_J$ of a pattern $P$ contained in some vertex of
  $\mathcal{G} (\theta,J)$ intersects each $W_{J', m}$ in exactly one
  point, for each $J' \subseteq J$.
\end{enumerate}
  Hence, $  x^{(m)} |_{B_J}$
  determines $x^{(0)}$ in the whole of $W_m = \bigcup_{J' \subseteq J} W_{J',
  m}$. An illustration for $W_m$ is provided in Figure~\ref{fig:Tm}.

  \begin{figure}[!h]
      \centering
      \includegraphics[width=0.5\linewidth]{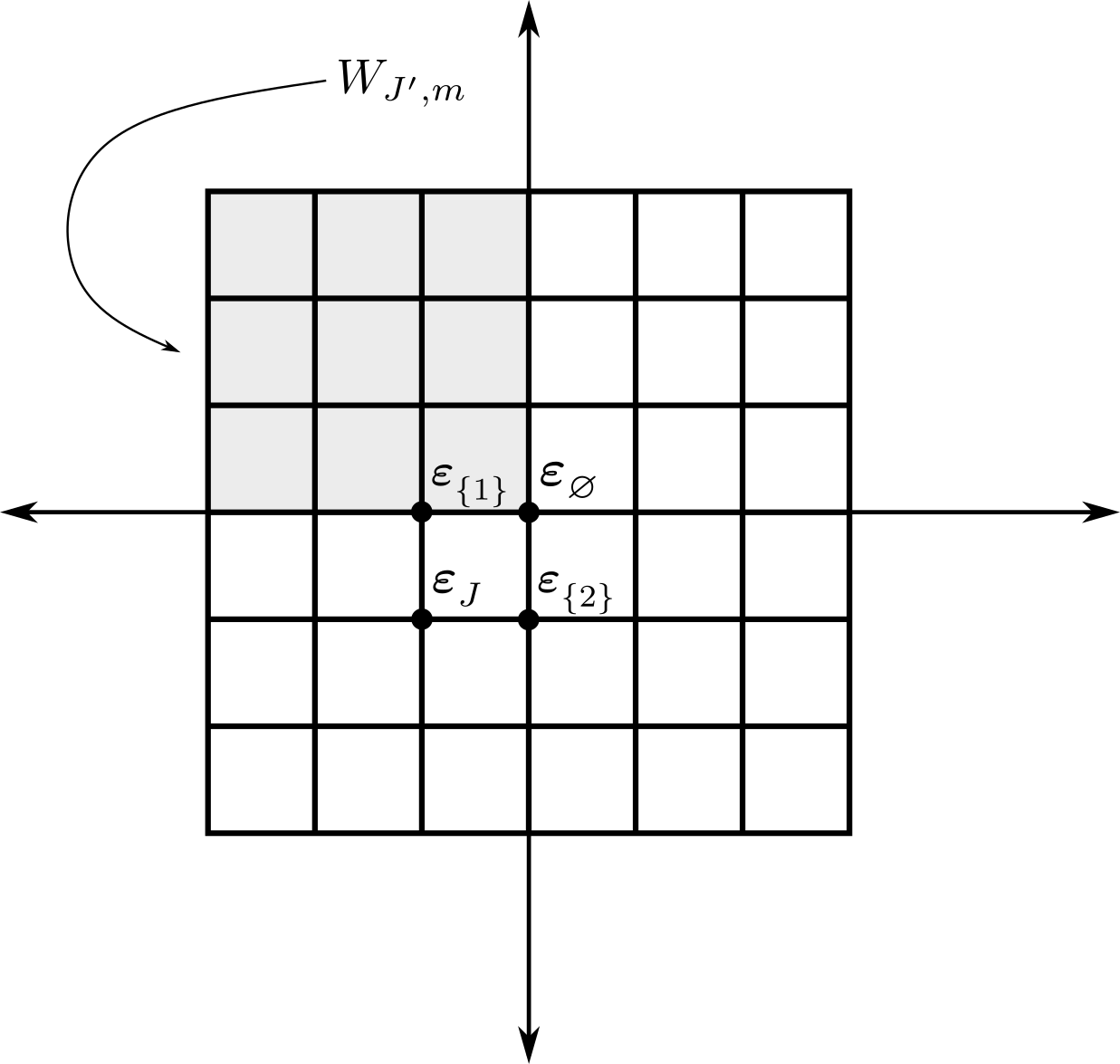}
      \caption{An illustration for $W_m$. Here, $m=1, Q=\text{diag}(3,3), J=\left\{1,2\right\}$ and $\boldsymbol{t}_m=0$. The locations of $\boldsymbol{\varepsilon}_{J'}$, for each $J'\subseteq J$, are also indicated. The gray region corresponds to $W_{J',m}$, where $J'=\left\{1\right\}$.}
      \label{fig:Tm}
  \end{figure}

 Direct computation shows that $W_m$ can be written as:
  \begin{equation}\label{eq:W_m}
  W_m = \left( \prod_{j \in J}  [- \ell_j^m ,\ldots, \ell_j^m - 1] \right)
     \times \left( \prod_{j \notin J}  [0 ,\ldots, \ell_j^m - 1] -
     \widehat{\boldsymbol{t}}_m \right), \end{equation}
  where, as before, $\widehat{\boldsymbol{t}}_m$ is the vector obtained from
  $\boldsymbol{t}_m$ by removing all coordinates with indices in $J$ (without
  loss of generality assumed to be $0$). Since $z^{(j)} \notin \mathbb{Z}$ for
  $j \notin J$, it is not hard to see that the second component in Eq.~\eqref{eq:W_m} grows to cover all of $\mathbb{Z}^{d - | J |}$ as $m$ increases. The
  same evidently holds for the first component in this product, which covers all of
  $\mathbb{Z}^{| J |}$. Thus, $[- N , N - 1]^d \subseteq W_m$ for a
  sufficiently large value of $m$, and all subsequent values of $m$ thereof,
  implying that:
  \[ x \neq y \in \pi_{\mathrm{tile}}^{- 1} [\{ \boldsymbol{z} \}] \Longrightarrow
       x |_{W_m} \neq   y |_{W_m} \Longrightarrow  
     x^{(m')} |_{B_J} \neq   y^{(m')} |_{B_J}, \text{ for all } m'
     \geqslant m. \]
  If we write \[v_k = \{ x^{(k)} |_{B_J} : \pi_{\mathrm{tile}}  (x) =
  \boldsymbol{z} \},\] 
  it is clear then that for any $k \geqslant m$ we must have
  $| v_k | \geqslant C = | \pi_{\mathrm{tile}}^{- 1} [\{ \boldsymbol{z} \}] |$. Recall that $\widehat{\boldsymbol{n}}_k =
  (z^{(1)}_k, \ldots, \widehat{z}^{(j_i)}_k, \ldots, z_k^{(d)})$, where the coordinates corresponding to $j_i\in J$ are deleted.
  Given that $v_k$ corresponds to symbols that appear in a specific
  column of $(\partial_J \theta)^k$, there is by definition of $\partial_J \theta$ at least one
  vertex $\bar{v}_k$ in $\mathcal{G} (\theta,J)$ such that $v_k
  \subseteq \bar{v}_k$, for any $k$, and that any such choice of $\bar{v}_k$,
  for $k \geqslant 1$, necessarily has an inbound edge with label
  $\widehat{\boldsymbol{n}}_k$, such that the initial vertex of such an edge has
  to be a set that contains $v_{k - 1}$. 
  This is because the image of $\bar{v}_k$ under the column  $((\partial_J \theta)^k)^{ }_{\widehat{\boldsymbol{n}}_k}$ has to contain the image of $v_k$ under the same column,
  which in turn contains $v_{k - 1}$.

  As $\mathcal{G} (\theta,J)$ is finite, we have finitely many
  choices for each possible combination of $\bar{v}_k$ and the needed inbound
  edge, so this implies that we can choose the $\bar{v}_k$ in such a way that
  there is an infinite path $e_0 e_1 e_2 \ldots$ with labels
  $\widehat{\boldsymbol{n}}_0, \widehat{\boldsymbol{n}}_1,
  \widehat{\boldsymbol{n}}_2, \ldots$ passing through them. For every $k
  \geqslant m$, we have $| \bar{v}_k | \geqslant | v_k | \geqslant C$, which
  proves the converse implication.
\end{proof}

We now have the following higher-dimensional version of Theorem~\ref{thm:CQY-irreg}. In what follows, we define $X_J$ to be the one-sided edge shift generated by the graph $\widetilde{\mathcal{G}}(\theta,J)^{\text{op}}$. Here, 
we take the reversed graph to be consistent with \cite{CQY}, which considers elements of a sofic shift as \emph{left}-infinite sequences. 
Define 
\[
\mathcal{Z}_{\theta}=\bigcup_{J\subset [1,\ldots,d]} X_J\subseteq R^{\mathbb{N}},
\]
which is sofic. We identify elements of $\mathcal{Z}_\theta$ with points from the $\mathbb{Z}^d_Q$ in a natural way.

\begin{theorem}\label{thm:irreg}
   Let $\theta$ and $J$ be as in the previous theorem.   Any $z\in\mathbb{Z}^d_Q \setminus\mathbb{Z}^d$ constitutes an irregular fibre (that is, $|\pi_{\textnormal{MEF}}^{-1}(\boldsymbol{z})|>c_{\theta}$) if, and only if, $\boldsymbol{z}+N\in\mathcal{Z}_\theta$ for some $N\in\mathbb{Z}^d$.
\end{theorem}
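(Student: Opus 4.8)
The proof follows the one-dimensional argument of \cite[Lem.~3.12]{CQY}, with the preceding theorem (applied with $C = c_\theta+1$) doing the heavy lifting and a monotonicity observation bridging the gap. Since we work in the trivial height case, $\pi_{\mathrm{tile}}=\pi^{ }_{\mathrm{MEF}}$, and I would first record that both sides of the claimed equivalence depend only on the orbit of $\boldsymbol{z}$ under two operations: translation by $\Z^d$ and the odometer self-map $\boldsymbol{z}\mapsto Q^{-1}(\boldsymbol{z}-\boldsymbol{n}_0)$ (removal of the lowest $Q$-adic digit $\boldsymbol{n}_0$). For irregularity, $\pi_{\mathrm{tile}}\circ\sigma^{ }_{\boldsymbol{n}}=\pi_{\mathrm{tile}}+\boldsymbol{n}$ makes $\sigma^{ }_{\boldsymbol{n}}$ restrict to a bijection $\pi_{\mathrm{tile}}^{-1}(\boldsymbol{z})\to\pi_{\mathrm{tile}}^{-1}(\boldsymbol{z}+\boldsymbol{n})$, while recognisability, via $x=\sigma^{ }_{\boldsymbol{n}_0}\circ\theta(x^{(1)})$ with $\boldsymbol{n}_0$ the lowest digit of $\pi_{\mathrm{tile}}(x)$, gives a bijection $\pi_{\mathrm{tile}}^{-1}(\boldsymbol{z})\to\pi_{\mathrm{tile}}^{-1}(Q^{-1}(\boldsymbol{z}-\boldsymbol{n}_0))$. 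For the right-hand side, $\Z^d$-invariance is built into the statement, and since each $X_J$ is a one-sided edge shift it is invariant under the shift, i.e.\ under removing the lowest digit. Using the $\Z^d$-freedom I normalise $\boldsymbol{z}$ so that $z^{(j)}=0$ for every $j\in J:=\{j\colon z^{(j)}\in\Z\}$; as $\boldsymbol{z}\notin\Z^d$ we have $J\neq\{1,\dots,d\}$, which places us within the hypotheses of the previous theorem.

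By that theorem with $C=c_\theta+1$, $\boldsymbol{z}$ is an irregular fibre precisely when there is a forward-infinite path $v_0\to v_1\to\cdots$ in $\mathcal{G}(\theta,J)^{\mathrm{op}}$ whose $k$-th edge carries the reduced digit tuple $\widehat{\boldsymbol{n}}_k$ of $\boldsymbol{z}$ and which visits vertices of cardinality $\ge c_\theta+1$ infinitely often. The crucial point is that an edge of $\mathcal{G}(\theta,J)$ sends a vertex $U$ to the image $(\partial_J\theta)_{\widehat{\boldsymbol{n}}}[U]$ of $U$ under a column of $\partial_J\theta$, so cardinality is non-increasing along $\mathcal{G}(\theta,J)$, hence non-decreasing along $\mathcal{G}(\theta,J)^{\mathrm{op}}$; therefore ``$\ge c_\theta+1$ infinitely often'' upgrades to ``$\ge c_\theta+1$ from some index $K$ onward''. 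I would also note that every vertex $U$ of $\mathcal{G}(\theta,J)$ has cardinality $\ge c_\theta$: restricting a $B_J$-pattern to the corner $\boldsymbol{0}$ identifies $\{P_{\boldsymbol{0}}\colon P\in U\}$ with a corner-column image $(\theta^n)_c[\mathcal{A}]$ of a power of $\theta$, which has at least $c_\theta$ elements. Consequently the pruning that defines $\widetilde{\mathcal{G}}(\theta,J)^{\mathrm{op}}$ deletes exactly the cardinality-$c_\theta$ vertices, i.e.\ $\widetilde{\mathcal{G}}(\theta,J)^{\mathrm{op}}$ is precisely the subgraph on vertices of cardinality $\ge c_\theta+1$.

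This yields both implications. If $\boldsymbol{z}$ is irregular, the tail $v_K\to v_{K+1}\to\cdots$ lies entirely inside $\widetilde{\mathcal{G}}(\theta,J)^{\mathrm{op}}$, so its label sequence spells a point of $X_J\subseteq\mathcal{Z}_\theta$; that point is the odometer-iterate $Q^{-K}(\boldsymbol{z}-\boldsymbol{t}_K)$ of $\boldsymbol{z}$, and the invariances above then produce an honest $\boldsymbol{N}\in\Z^d$ with $\boldsymbol{z}+\boldsymbol{N}\in\mathcal{Z}_\theta$ — equivalently, one re-selects a length-$K$ initial segment of the path inside $\widetilde{\mathcal{G}}(\theta,J)^{\mathrm{op}}$ ending at $v_K$, thereby altering only finitely many digits of $\boldsymbol{z}$. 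Conversely, if $\boldsymbol{z}+\boldsymbol{N}\in X_{J'}\subseteq\mathcal{Z}_\theta$, then after re-normalising and matching $J'$ with $J=\{j\colon z^{(j)}\in\Z\}$ the reduced digits of $\boldsymbol{z}$ label a forward-infinite path in $\widetilde{\mathcal{G}}(\theta,J)^{\mathrm{op}}$, every vertex of which is big, so the previous theorem forces $|\pi_{\mathrm{tile}}^{-1}(\boldsymbol{z})|\ge c_\theta+1$; hence $\boldsymbol{z}$ is irregular.

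The step I expect to be the main obstacle is the passage, in the forward direction, from ``some odometer-iterate of $\boldsymbol{z}$ lies in $\mathcal{Z}_\theta$'' to the sharper ``$\boldsymbol{z}+\boldsymbol{N}\in\mathcal{Z}_\theta$ for some $\boldsymbol{N}\in\Z^d$'': this requires extending the big tail backwards through $K$ additional big vertices inside $\widetilde{\mathcal{G}}(\theta,J)^{\mathrm{op}}$, which must be justified from finiteness of the graph together with the structure of its strongly connected components of big vertices (the full graph $\mathcal{G}(\theta,J)$ sources, such as $\mathcal{A}_J$, are dead ends of $\mathcal{G}(\theta,J)^{\mathrm{op}}$ and cannot occur on an infinite op-path). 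A secondary nuisance is reconciling, in the converse direction, the set $J'$ with $\boldsymbol{z}+\boldsymbol{N}\in X_{J'}$ against the canonical $J$ when $J'\subsetneq J$; here one invokes the natural identification of $\X_{\partial_J\theta}$ with the configurations in $\X_\theta$ whose tiling factor vanishes on the $J$-coordinates, and the compatibility $\partial_J\theta=\partial_{J\setminus J'}(\partial_{J'}\theta)$ of derived substitutions. The remaining ingredients — the translation and desubstitution bijections, monotonicity of cardinalities, and the bound $|U|\ge c_\theta$ — are routine.
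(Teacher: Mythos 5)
Your proof is correct and follows exactly the route the paper intends: the paper records no separate proof of this theorem, presenting it as a direct consequence of the preceding fibre-cardinality theorem, and your bridging observations (invariance of both sides under integer translation and desubstitution, monotonicity of vertex cardinalities along the reversed graph, the bound $\lvert U\rvert\ge c_\theta$ for every vertex so that pruning leaves exactly the big vertices) are precisely the details left implicit there. The step you flag as the main obstacle is settled just as you indicate: the all-big tail is infinite while the graph is finite, so some big vertex recurs and therefore lies on an all-big cycle of $\widetilde{\mathcal{G}}(\theta,J)^{\mathrm{op}}$, and winding around that cycle yields an incoming all-big path of any prescribed length, which alters only finitely many digits of $\boldsymbol{z}$ (i.e.\ adds an integer); for the converse with $J'\subsetneq J$, note that only the (ii)$\Rightarrow$(i) half of the preceding theorem is needed and its proof never uses that the index set is the full set of integer coordinates, so no graph-level conversion between $\partial_{J'}\theta$ and $\partial_J\theta$ is actually required.
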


Theorem~\ref{thm:irreg-fibres} then follows as an immediate corollary.

\subsection{Examples}

\begin{example}\label{ex:coinc-C4}
From a substitution $\theta$ with support $R$, one can construct the \emph{column cardinality mosaic} $\mathfrak{C}_{\theta}\subset \mathbb{N}^{R}$ via  via $(\mathfrak{C}_{\theta})^{ }_{\boldsymbol{n}}=\text{card}(\theta_{\boldsymbol{n}}(\mathcal{A}))$.
Here, we have a substitution that admits a coincidence, and admits a supertile-shuffling extended symmetry, for every $A\in C_4$. 

\begin{align*}
\theta\,\,\colon &
\begin{squarecells}{1}
$a$  \nl
\end{squarecells}\,\,\mapsto\,\, 
\begin{squarecells}{2}
$b$ & $e$ \nl
$e$ & $e$  \nl
\end{squarecells}
\quad \quad \quad 
\begin{squarecells}{1}
$b$  \nl
\end{squarecells}\,\,\mapsto\,\, 
\begin{squarecells}{2}
$e$ & $c$ \nl
$e$ & $e$  \nl
\end{squarecells}
\quad \quad \quad 
\begin{squarecells}{1}
$c$  \nl
\end{squarecells}\,\,\mapsto\,\, 
\begin{squarecells}{2}
$e$ & $e$ \nl
$e$ & $d$  \nl
\end{squarecells}\\
&
\begin{squarecells}{1}
$d$  \nl
\end{squarecells}\,\,\mapsto\,\, 
\begin{squarecells}{2}
$e$ & $e$ \nl
$a$ & $e$  \nl
\end{squarecells}
\quad \quad \quad 
\begin{squarecells}{1}
$e$  \nl
\end{squarecells}\,\,\mapsto\,\, 
\begin{squarecells}{2}
$a$ & $b$ \nl
$d$ & $c$  \nl
\end{squarecells}
\end{align*}

One can compute that there exists a power of $\theta$ that admits a coincidence. 
It is visually clear that the rotation $A=\text{Rot}_{-\frac{\pi}{2}}$, together with the permutation $\tau$ satisfies the condition $\theta^n(\tau(a))_{\boldsymbol{n}}=\tau(\theta^n(a))_{A\odot \boldsymbol{n}}$,  for all $\boldsymbol{n}\in \text{supp}(\theta)$. It is clear that  $\mathfrak{C}_{\theta}$ has to stay invariant under $A\in \text{GL}(d,\mathbb{Z})$ if $A$ induces a supertile-shuffling extended symmetry, as one can deduce from Theorem ~\ref{thm:supertile-shuffling}. Clearly, $\mathfrak{C}_{\theta}$ in Figure~\ref{fig: ex-C4} does not permit reflections along diagonals.

\begin{center}
\begin{figure}[!h]
\begin{subfigure}[b]{0.3\textwidth}
    \centering
    \includegraphics[scale=0.5]{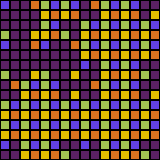}
    \caption{$\theta(a)$}
\end{subfigure}\begin{subfigure}[b]{0.3\textwidth}
    \includegraphics[scale=0.5]{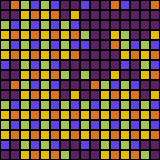}
    \centering
    \caption{$\theta(b)$}
\end{subfigure}\begin{subfigure}[b]{0.3\textwidth}
    \centering
    \includegraphics[scale=0.5]{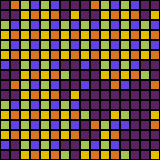}
    \caption{$\theta(c)$}
    \end{subfigure}
    \vspace{2mm}
    
\begin{subfigure}[b]{0.3\textwidth}
    \includegraphics[scale=0.5]{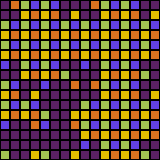} 
        \centering
    \caption{$\theta(d)$}
    \end{subfigure}\begin{subfigure}[b]{0.3\textwidth}
    \centering
    \includegraphics[scale=0.5]{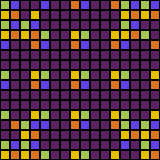}
    \caption{$\theta(e)$}
    \end{subfigure}\begin{subfigure}[b]{0.3\textwidth}
    \includegraphics[scale=0.5]{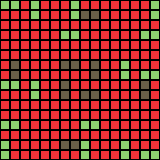}   
        \centering
    \caption{$|\theta_{\boldsymbol{j}}(\mathcal{A})|\in 
            \{\textcolor{lime}{1}, \textcolor{red}{2}, \textcolor{olive}{3}\}$}
    \end{subfigure}
    \caption{The level-4 supertiles of the substitution in Example~\ref{ex:coinc-C4} }
    \label{fig: ex-C4}
\end{figure}
\end{center}
\end{example}

In the following example, we show that the invariance of $\mathfrak{C}_{\theta}$ under $A$ does not guarantee the existence of an extended symmetry with such a geometric map.

\begin{example}\label{ex:manta-ray}
      Consider $\theta\colon \mathcal{A}\to \mathcal{A}^{R}$ defined in Figure~\ref{fig:manta-ray-def}, where one has $\mathcal{A}=\big\{j^{ }_i\colon 0\leqslant i,j\leqslant 2\big\}$ and $R=[0,4]^2$.
      This substitution is primitive, aperiodic, has trivial height lattice, and has column number  $c_{\theta}=6$. Clearly, $\mathfrak{C}_{\theta}$  has full $D_4$-symmetry, but the level-2 supertiles reveal that there cannot exist any supertile-shuffling element of $\mathcal{N}(\mathbb{X}_{\theta})$ apart from one where $A$ is reflection along the  main diagonal. 

\begin{figure}[!h]
\begin{center}
 $0^{ }_i \mapsto $
 \begin{squarecells2}{5}        
            $0^{ }_i$&$0^{ }_i$&$0^{ }_i$&$0^{ }_i$& $0^{ }_i$ \nl
            $0^{ }_i$&$0^{ }_i$&$0^{ }_i$&$0^{ }_i$& $0^{ }_i$ \nl
            $0^{ }_i$&$0^{ }_i$&$2_i$&$0^{ }_i$& $0^{ }_i$ \nl
            $0^{ }_i$&$0^{ }_i$&$0^{ }_i$&$0^{ }_i$& $0^{ }_i$ \nl
            $0^{ }_i$&$0^{ }_i$&$0^{ }_i$&$0^{ }_i$& $0^{ }_{i+1}$ \nl
\end{squarecells2} \;
 $1^{ }_i \mapsto $
 \begin{squarecells2}{5}        
            $1^{ }_i$&$1^{ }_i$&$1^{ }_i$&$1^{ }_i$& $1^{ }_i$ \nl
            $1^{ }_i$&$1^{ }_i$&$2^{ }_i $&$1^{ }_i$& $1^{ }_i$ \nl
            $1^{ }_i$&$2^{ }_i $&$0^{ }_i$&$2^{ }_i $& $1^{ }_i$ \nl
            $1^{ }_i$&$1^{ }_i$&$2^{ }_i $&$1^{ }_i$& $1^{ }_i$ \nl
            $1^{ }_i$&$1^{ }_i$&$1^{ }_i$&$1^{ }_i$& $1^{ }_{i+1}$ \nl
\end{squarecells2} \;
 $2^{ }_i   \mapsto $
 \begin{squarecells2}{5}        
            $2^{ }_i $&$2^{ }_i $&$2^{ }_i $&$2^{ }_i $& $2^{ }_i $ \nl
            $2^{ }_i $&$1^{ }_i$&$1^{ }_i$&$1^{ }_i$& $2^{ }_i $ \nl
            $2^{ }_i $&$1^{ }_i$&$0^{ }_i$&$1^{ }_i$& $2^{ }_i $ \nl
            $2^{ }_i $&$1^{ }_i$&$1^{ }_i$&$1^{ }_i$& $2^{ }_i $ \nl
            $2^{ }_i $&$2^{ }_i $&$2^{ }_i $&$2^{ }_i $& $2^{ }_{i+1}$ \nl
\end{squarecells2}
\end{center}
\caption{The substitution in Example~\ref{ex:manta-ray}}\label{fig:manta-ray-def}
\end{figure}

It is easy to see from the definition of the substitution that the ``manta ray pattern" present on the lower right side of each supertile exists for other supertiles with $i\in \left\{1,2\right\}$ as well.
For this example, one can compute the two (non-trivial) derived substitutions $\partial_{1}\theta$ and $\partial_{2}\theta$, which correspond to the two cardinal directions in $\mathbb{Z}^2$. Both of them are bijective and non-primitive. From the latter, it follows that the corresponding graphs $\mathcal{G}(\theta,J)$ are disconnected. 
\begin{figure}[h]
\begin{center}
\begin{subfigure}[b]{0.4\textwidth}
    \centering
    \includegraphics[scale=0.5]{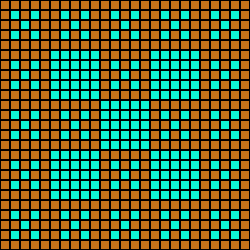}
    \caption{$\mathfrak{C}_\theta$}
\end{subfigure}
\begin{subfigure}[b]{0.4\textwidth}
    \centering
     \includegraphics[scale=0.5]{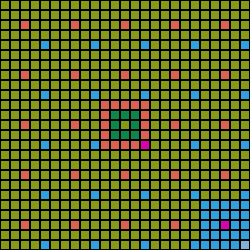}
    \caption{$\theta(0_0)$}
    \end{subfigure} \\
\begin{subfigure}[b]{0.4\textwidth}
    \includegraphics[scale=0.5]{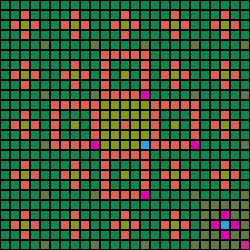}
        \centering
    \caption{$\theta(\mathbf{1}_0)$}
    \end{subfigure}
\begin{subfigure}[b]{0.4\textwidth}
    \centering
    \includegraphics[scale=0.5]{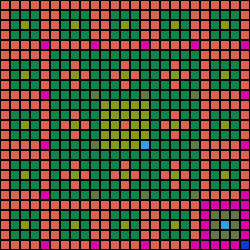}
    \caption{$\theta(2_0)$}
    \end{subfigure}
    \caption{Some supertiles for $\theta$ and the column cardinality mosaic $\mathfrak
    {C}_{\theta}$}\label{fig: manta ray}
    \end{center}
\end{figure}

Since the alphabets for the derived substitutions are quite large, we omit them in the paper and just discuss the relevant quantities for the irregular fibres. This substitution admits $657$ fixed points, which are responsible for an orbit of irregular fibres (\textit{fixed point irregular}). Here, one has $|\pi^{-1}_{\text{MEF}}(\boldsymbol{z})|=657$, whenever $\boldsymbol{z}\in \mathbb{Z}^2\subset \mathbb{A}$.
Another class of irregular fibres (\textit{boundary irregular}) 
come from $\widetilde{\mathcal{G}}(\theta,J)^{\text{op}}$ obtained from $\partial_{1}\theta$ and $\partial_{2}\theta$, which both account for fibres of cardinality 81, respectively. The \textit{interior irregular} fibres come from $\widetilde{\mathcal{G}}(\theta)^{\text{op}}$ and each of them has fibre cardinality $9$. Finally, the regular fibres have cardinality $c_{\theta}=6$.

Because of the cardinality restrictions, boundary irregular fibres should be mapped to boundary irregular fibres by the $\kappa$-cocycle; see Sec.~\ref{sec:ext-sym}. One then gets the following conclusions regarding an extended symmetry $\Phi\in \mathcal{N}(\mathbb{X}_\theta)$.

\begin{itemize}
    \item Any matrix $A$ that generates a $\Phi\in \mathcal{N}(\mathbb{X}_\theta)$ must be in $D_4$ (axes are mapped to axes)
    \item For any $\Phi\in \mathcal{N}(\mathbb{X}_\theta)$, $\kappa(\Phi)=0$, since it must map fixed points to fixed points.
\end{itemize}

From the ``manta ray" argument above, one can also exclude rotations in $D_4$. On the other hand, reflection along the diagonal together with the letter-exchange map $\tau=\text{id}$ generates a valid element of $\mathcal{N}(\mathbb{X}_\theta)$.

Combining together these observations, one gets $\mathcal{N}(X)\supset(\mathbb{Z}^2\ltimes C_2)\times C_3^2$. Note that the $C_3^2$ part comes from radius-zero elements of $\mathcal{C}(\mathbb{X}_\theta)$, which are generated by letter-swaps of the form 
\[
\tau^{ }_1\colon j_i\to ((j+1)\mod 3)_i\quad \text{and}\quad \tau^{ }_2\colon j_i\to j_{(i+1)\mod 3}
\]
One can easily check that both induce supertile-shuffling symmetries. The $C_2$ component is precisely the one generated by the reflection. We suspect that the inclusion above is indeed an equality, but the $\kappa$-value and irregular fibre analysis does not suffice to conclude this. The missing ingredient is a version of strong injectivity (see \cite[Thm.~33]{MY} and Proposition~\ref{fact:rad-0-symm}) in the higher-dimensional case to conclude that all elements of $\mathcal{C}(\mathbb{X}_\theta)$ (modulo shifts) are radius-zero, and hence are all letter swaps.
\exend
\end{example}

\begin{example}[Modified half-hex substitution]\label{ex:half-hex-mod}
    Consider the substitution
    \begin{center}
        $\varrho \colon\quad$  $a \mapsto$ \begin{squarecells}{2}
            $a$ & $b$ \nl
            $a$ & $c$\nl
        \end{squarecells} \quad \quad      
         $b \mapsto$ \begin{squarecells}{2}     
            $a$ & $b$ \nl
            $b$ & $c$ \nl
      \end{squarecells} \quad \quad  $c \mapsto$ \begin{squarecells}{2}
            $a$ & $b$ \nl
            $c$ & $c$\nl
        \end{squarecells}\quad .
\end{center}

    This substitution $\varrho$ is a rectangular encoding of the pattern of hexagons observed in the half-hex tiling; see \cite{BG}, and is the first example of a zero entropy $\mathbb{Z}^d$ shift space, where the normaliser is an infinite-index extension of the centraliser. In fact, every matrix $M\in \operatorname{GL}(2,\mathbb{Z})$ induces an extended symmetry, that is,  $\mathcal{N}(\mathbb{X}_\varrho) \cong \mathbb{Z}^2 \rtimes \operatorname{GL}(2,\mathbb{Z})$; see \cite{CabP} for details. Here, one can show that $\mathcal{C}(\mathbb{X}_\varrho)=\langle\sigma^{ }_{(1,0)},\sigma^{ }_{(0,1)}\rangle\cong\mathbb{Z}^2$; see \cite[Thm.~3.1]{Olli}. It is worth noting here that all elements of $\mathcal{N}(\mathbb{X}_\varrho)$ are, up to a shift, of radius zero.

    \begin{figure}
        \centering
        \raisebox{3em}{\includegraphics[width=0.3\linewidth]{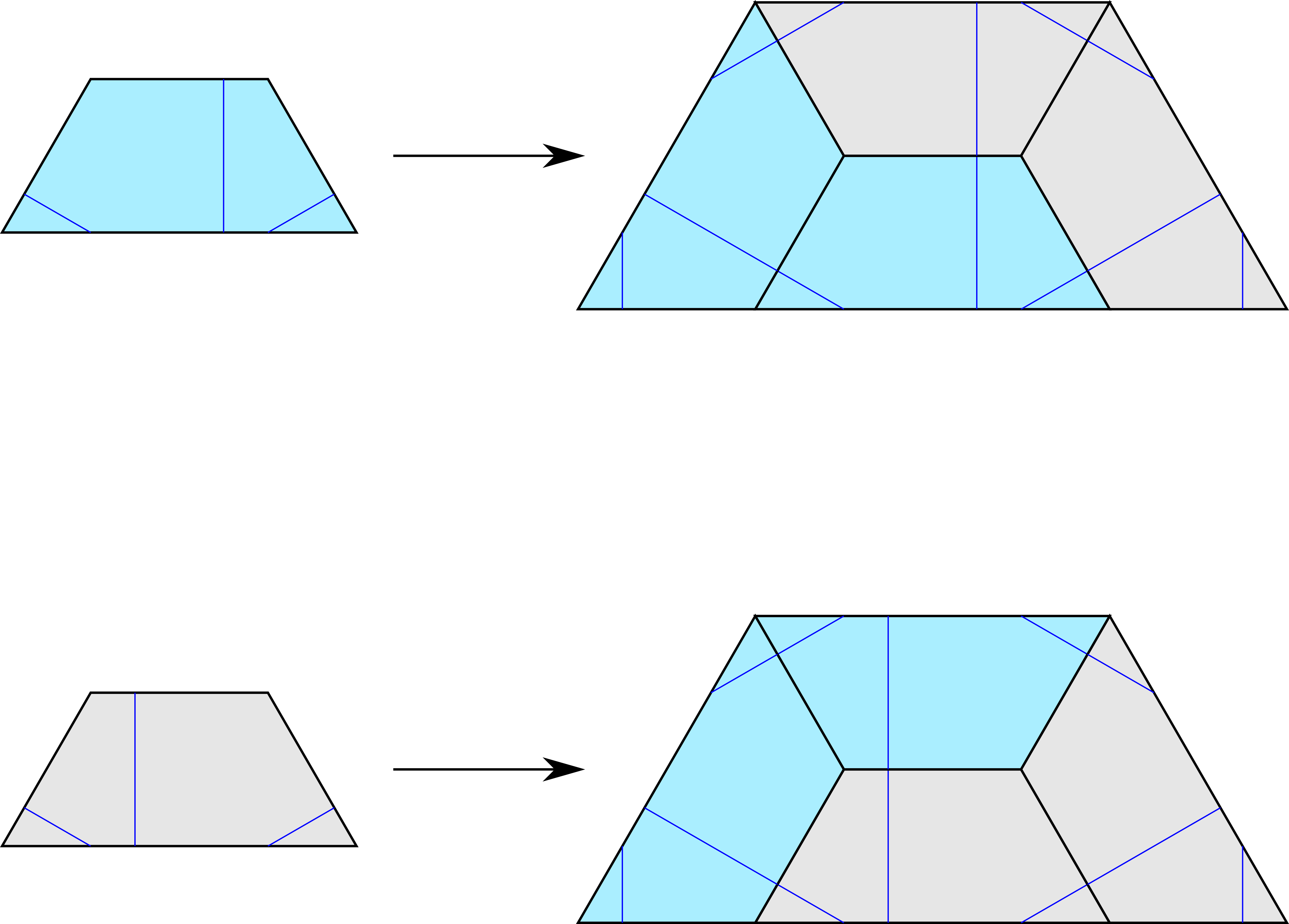}} %
        \hspace{1cm}\includegraphics[width=0.4\linewidth]{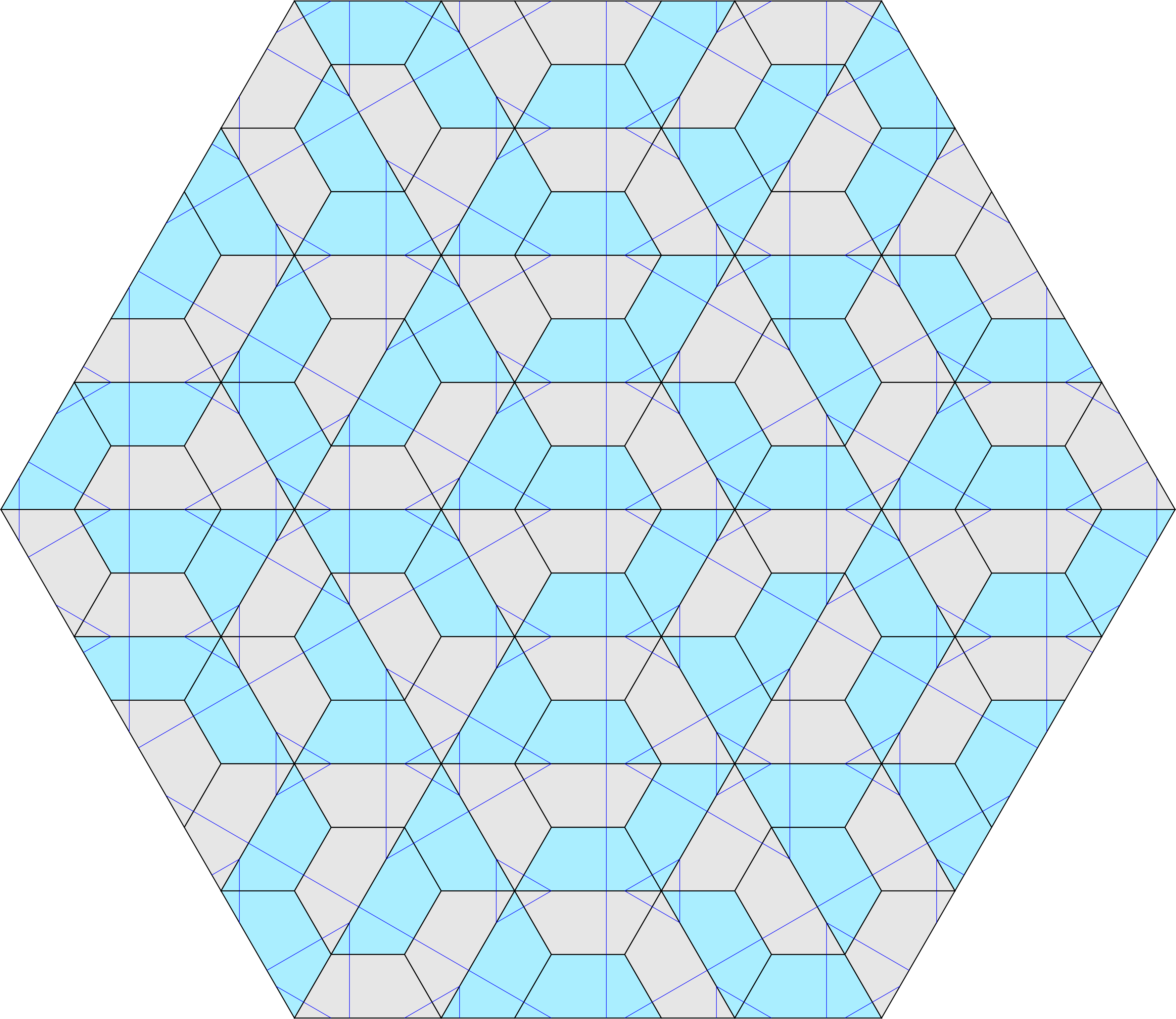}
        \caption{The inflation rule for the half-hex tiling, with decorations, and a patch for this tiling. The decorations induce the appearance of two distinct chiralities, distinguished by color.}
        \label{fig:decorated_half_hex}
    \end{figure}
    Consider now the oriented version $\widetilde{\varrho}$ of $\varrho$ which corresponds to the addition of decorations similar to those found on the Taylor tiling; see \cite[Ex.~6.6]{BG} and Figure~\ref{fig:decorated_half_hex}.

\begin{figure}[!h]    
\begin{center}
         $a^\pm\mapsto$ \begin{squarecells3}{2}
            $a^\pm$ & $b^-$ \nl
            $a^\pm$ & $c^-$\nl
        \end{squarecells3} \quad $b^\pm \mapsto$ \begin{squarecells3}{2}
            $a^+$ & $b^\pm$ \nl
            $b^\pm$ & $c^\mp$\nl
        \end{squarecells3}  \quad
        $c^\pm \mapsto$ \begin{squarecells3}{2}
            $a^-$ & $b^+$ \nl
            $c^\pm$ & $c^\pm$\nl
        \end{squarecells3}
\end{center}\caption{The substitution $\widetilde{\varrho}$ on the six-letter alphabet $\left\{a^\pm,b^\pm,c^\pm\right\}$}

\end{figure}    
    This new substitution once again has a coincidence, so it is an almost-1-1 extension of the underlying odometer $\mathbb{Z}_2\times \mathbb{Z}_2$, which is its MEF. This implies, as well, that the natural factor map $\pi\colon\mathbb{X}_{\tilde{\varrho}}\twoheadrightarrow\mathbb{X}_\varrho$ obtained by removing the orientations (that is, both $a^+$ and $a^-$ are mapped to $a$, and so on) is almost-1-1 as well, as the maximal equicontinuous factor map $\varphi_{\tilde{\varrho}}$ of the new shift space factors naturally through $\pi$ as $\pi^{ }_{\text{MEF}}:=\varphi_{\tilde{\varrho}} = \varphi_{\varrho}\circ\pi$.
    
    The column $\tilde{\varrho}_{(0,0)}$ has cardinality $6$, while every other column of the form $(\tilde{\varrho}^n)_{(j,0)},(\tilde{\varrho}^n)_{(0,j)}$ or $(\tilde{\varrho}^n)_{(j,j)}$ has cardinality $2$, and all other columns in any power of $\tilde{\varrho}$ have cardinality $1$.
   Direct computation of the derived substitutions or their associated graphs show that the only irregular fibres not of cardinality $2$ are those associated with the fixed points, for which there exist exactly $6$ possible seeds, as seen in Figure~\ref{fig:coincidence_graph_dechalfhex}. One may alternatively argue that the oriented version of the half-hex substitution has a $2$-to-$1$ factor map onto the standard half-hex substitution by removing the decorations, and the latter has a single irregular fibre of cardinality $3$, with all others being of cardinality $1$. Either argument shows that every fibre of $\pi^{ }_{\text{MEF}}$ satisfies
    \[
    |\pi^{-1}_{\text{MEF}}[\boldsymbol{z}]|\in \left\{1,2,6\right\},
    \]
    with the $6$-fibres corresponding (modulo a shift) to the fixed points of the substitution and the $2$-fibres corresponding to pairs of points $\{x^+,x^-\}$ which match everywhere in $\mathbb{Z}^2$ except on a horizontal, vertical or diagonal (north-east) ``fault line'', where they differ on the sign (orientation) of the symbol repeated along that line (e.g. we may have $x^\pm_{(j,j)}=b^\pm$ for every $j\in\mathbb{Z}$, while $x^+_{(j,k)} = x^-_{(j,k)}$ whenever $j\ne k$).

    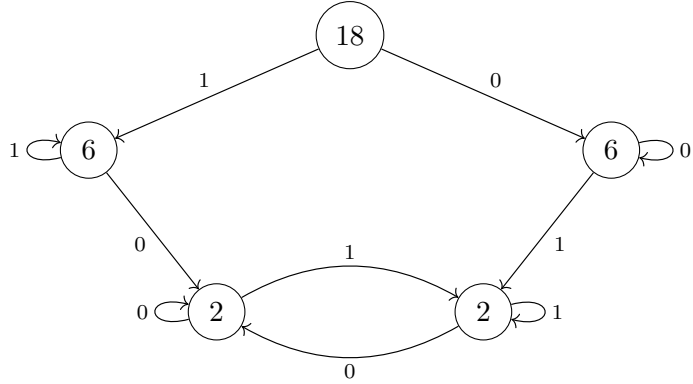
\begin{figure}
        \centering
        \begin{tikzcd}
            & & |[draw,circle]| 18 \arrow[dll,"1"'] \arrow[drr,"0"] \\
            |[draw,circle]| 6\arrow[ddr,"0"'] \arrow[loop left]{r}{1}  & & & & |[draw,circle]|\arrow[ddl,"1"] 6 \arrow[loop right]{r}{0}  \\
            \\
            & |[draw,circle]| 2 \arrow[rr,bend left,"1"] \arrow[loop left]{r}{0}  & & |[draw,circle]| 2 \arrow[ll,bend left,"0"] \arrow[loop right]{r}{1} 
        \end{tikzcd}
        \caption{The coincidence graph associated to the horizontal derived substitution obtained from the decorated half-hex substitution. The numbers indicate the cardinality of the set associated to each vertex. The graph associated to the vertical derived substitution has the same structure.}
        \label{fig:coincidence_graph_dechalfhex}
    \end{figure}
    
    An application of the CHL theorem is thus sufficient to ensure that any extended symmetry must map any such fibre $\{x^+,x^-\}$ to another pair of the same kind $\{y^+,y^-\}$, as extended symmetries preserve proximal and distal pairs (similar to Example~\ref{ex:kappa-nonzero} above). This, in turn, shows that the associated matrix $M$ of any such extended symmetry must map the set $\{(1,0),(0,1),(1,1),(-1,0),(0,-1),(-1,-1)\}$ to itself, as this set corresponds to the possible directions of the ``fault lines''. Thus, the only possible matrices are those coming from the natural embedding of the group $D_6$ of symmetries of the hexagon into $\operatorname{GL}(2,\mathbb{Z})$. It is relatively easy to find extended symmetries representative of two generators of this group, and a slightly more involved argument shows that there are no nontrivial standard symmetries besides the shift maps, proving that in this scenario $\mathcal{N}(\mathbb{X}_{\tilde{\varrho}})\cong\mathbb{Z}^2\rtimes D_6$. 
    
    We note, finally, that $\mathbb{X}_{\tilde{\varrho}}$ is the unique minimal subset of an aperiodic zero-entropy shift of finite type $\mathbb{X}_{\rm ST}$ given by local rules described by the possible decorations, and thus the ``fracture method'' described in \cite{B} for the Robinson shift can also be employed here to show that $\mathcal{N}(\mathbb{X}_{\rm ST})\cong\mathbb{Z}^2\rtimes D_6$ as well.
\end{example}

\section*{Acknowledgments}

The authors would like to thank Michael Baake, Sebasti\'{a}n Donoso and Reem Yassawi for fruitful discussions. This work was supported by the German Research Foundation (DFG) under SFB-TRR 358/1 (2023)--91392403; AB-G also received funding from ANID/FONDECYT Postdoctorado 3230159 (year 2023).  NM would like to thank the Center for Mathematical Modelling (CMM) at Santiago, Chile where part of this work is completed (under a Postdoctoral Visiting Fellowship).

\bibliographystyle{alpha}

\end{document}